\newcommand{\bneqn}{\vspace{-0.25cm}\begin{eqnarray}}
\newcommand{\eneqn}{\end{eqnarray}}
\newtheorem{theorem}{Theorem}
\newtheorem{lemma}[theorem]{Lemma}
\newtheorem{remark}[theorem]{Remark}
\newtheorem{proposition}[theorem]{Proposition}
\newtheorem{corollary}[theorem]{Corollary}
\title{Exact and asymptotic distribution theory for the empirical correlation of two AR(1) processes with Gaussian increments}
\author{Philip A. Ernst\footnote{Department of Mathematics, Imperial College London, London SW7 2AZ, UK}\,
 and Dongzhou Huang\footnote{Department of Statistics, Colorado State University, Fort Collins, CO 80523, USA}}
\begin{document}
\maketitle

\begin{abstract}
This paper begins with a study of the exact distribution of the empirical correlation of two independent AR(1) processes with Gaussian increments. We proceed to develop rates of convergence for the distribution of the scaled empirical correlation to the standard Gaussian distribution in both Wasserstein distance and Kolmogorov distance. Given $n$ data points, we prove the convergence rate in Wasserstein distance is $n^{-1/2}$ and the convergence rate in Kolmogorov distance is $n^{-1/2} \sqrt{\ln n}$. We conclude by extending these results to two AR(1) processes with correlated Gaussian increments.
\end{abstract}

\indent \indent \textbf{MSC 2020 Codes} Primary: 60G15, 60F05. Secondary: 62M10.\\

\indent \indent \textbf{Keywords}: Autoregressive processes; Gaussian processes; Kolmogorov distance; \indent \indent Wasserstein distance; Wiener chaos; Yule's ``nonsense correlation.''

\section{Introduction.}

It is well known that the distribution of the empirical correlation (defined in \eqref{eq:defforthetan} below) of two independent and identically distributed (i.i.d.) simple random walks is widely dispersed and frequently large in absolute value (see histograms in \cite[p.1791]{ernst2017}) and \cite[p.35]{yule1926we}). In addition, the observed correlation has a very different distribution than that of the nominal $t$-distribution.  The phenomenon was first (empirically) observed in 1926 by the famed British statistician G. Udny Yule, who called it ``nonsense correlation'' (\cite{yule1926we}). More than ninety years later, Ernst, Shepp, and Wyner (\cite{ernst2017}) succeeded in analytically calculating the variance of this distribution to be .240522.

Yule's ``nonsense correlation'' provides a stark warning that, in the case of two sequences of i.i.d. random walks (or, alternatively, two independent standard Wiener processes), empirical correlation cannot be used to test the independence of the two processes. However, this may not necessarily hold for other classes of stochastic processes, especially those admitting stationary structures. In particular, for discrete-time processes, it is a well-known result that the autocorrelation and cross-correlation (which corresponds to empirical correlation in our framework) of two independent AR(1) processes follow asymptotic normal distributions with zero mean. A formal proof of this result is contained in \cite[Theorem 11.2.2]{brockwell2002introduction}. For further details, we also refer readers to the monographs \cite{shumway2000time,hannan2009multiple} for an overview. Furthermore, for continuous-time processes, the authors of \cite{ernst2019distribution} proved that, for two independent Ornstein-Uhlenbeck processes, the scaled empirical correlation asymptotically converges to a Gaussian random variable with mean zero and variance $\frac{1}{2r}$, where $r$ is the mean reversion parameter of the Ornstein-Uhlenbeck process (see \cite[Theorem 4]{ernst2019distribution}). Accordingly, and desirably, the variance tends to zero as the mean reversion parameter tends towards $\infty$. Later, the rate of this convergence was studied in \cite{douissi2021asymptotics}.

However, the aforementioned results are insufficient for practical purposes. For instance, \cite{ernst2019distribution} and \cite{douissi2021asymptotics} focus mainly on continuous-time processes, whereas real-world observations are discrete-time. Although \cite{douissi2021asymptotics} also examined the empirical correlation between discrete-time processes sampled from independent Ornstein-Uhlenbeck processes, their results depend on the observation mesh approaching $0$ -- a requirement infeasible for given discrete-time datasets. This leads us to consider the discrete-time version of Ornstein-Uhlenbeck process, which arises from equidistant sampling of its continuous counterpart and is provably an AR(1) process. Additionally, existing literature on empirical correlation of independent AR(1) processes has focused primarily on asymptotic results, with less comprehensive analysis of exact distributions or convergence rates. These gaps motivate our study, and we thus consider the following two AR(1) processes: they are derived from equidistant sampling of two independent Ornstein-Uhlenbeck processes, followed by appropriate scaling.

Consider
\begin{equation}
\begin{cases}
X_{n} = \alpha X_{n-1} + \xi_{n}, \quad n \in \mathds{N}_{+} \\
X_0 =0
\end{cases}
\text{  and  }
\begin{cases}
Y_{n} = \alpha Y_{n-1} + \eta_{n}, \quad n \in \mathds{N}_{+} \\
Y_0 =0
\end{cases},  \label{eq:defofAR(1)processes}
\end{equation}
where $|\alpha|<1$ and $\xi_1, \xi_2, \xi_3, \dots$, $\eta_1, \eta_2, \eta_3, \dots$ are independent standard normal random variables. In fact, $\{X_n\}_{n=0}^{\infty}$ and $\{Y_n\}_{n}^{\infty}$ are two independent AR(1) processes with Gaussian increments. Notably, in our model, the initial conditions are set as $X_0 = 0$ and $Y_0 = 0$, chosen to align with the discrete-time analog of the Ornstein-Uhlenbeck processes studied in \cite{douissi2021asymptotics}. We believe that our analytical framework remains applicable if $X_0$ and $Y_0$ follow i.i.d. zero-mean Gaussian distributions, independent of $\xi$ and $\eta$, though this extension would introduce additional computational complexity. Furthermore, the empirical correlation for $\{X_n\}_{n=0}^{\infty}$ and $\{Y_n\}_{n=0}^{\infty}$ is defined in the standard way
\begin{equation}
\theta_{n} := \frac{ \frac{1}{n} \sum_{i=1}^{n} X_i Y_i - \frac{1}{n^2} (\sum_{i=1}^{n} X_i) (\sum_{i=1}^{n} Y_i) }{  \sqrt{ \frac{1}{n} \sum_{i=1}^{n} X_i^2 - \frac{1}{n^2} (\sum_{i=1}^{n} X_i)^2 } \sqrt{ \frac{1}{n} \sum_{i=1}^{n} Y_i^2 - \frac{1}{n^2} (\sum_{i=1}^{n} Y_i)^2 }}. \label{eq:defforthetan}
\end{equation}

Before presenting our results on $\theta_n$, we mention relevant literature. Beyond the aforementioned works, residual cross-correlations are also used to examine the independence of time series, with related work including \cite{hong1996testing,haugh1976checking,himdi1997tests,robbins2015cross}. In machine learning, testing the independence of observations often relies on the Hilbert–Schmidt independence criterion (HSIC). See, for example, \cite{gretton2005kernel,gretton2007kernel,smola2007hilbert,zhang2008kernel}. Furthermore, studies on the convergence rates for estimation include \cite{douissi2020ar,douissi2022berry,es2019optimal}.

In recent literature, distance covariance has emerged as an additional methodology for testing the independence of stochastic processes. The concept of distance correlation is first introduced by Sz{\'e}kely et al. \cite{szekely2007measuring}, and further explored by Sz{\'e}kely and Rizzo in a series of papers \cite{szekely2009brownian,szekely2012uniqueness,szekely2013distance,szekely2014partial}. Subsequently, distance correlation is also applied to measuring the dependence between two stochastic processes. Relevant work in this area includes \cite{Matsui2017distance,Davis2018application,10.3150/20-BEJ1206,betken2021bootstrap,betken2021longrange}. Specifically, Matsui et al. \cite{Matsui2017distance} introduce a formulation of distance covariance for two stochastic processes on some interval. Later, Davis et al. \cite{Davis2018application} establish the asymptotic theory for the sample auto- and cross-distance correlation functions of two stationary multivariate time series under a strong mixing condition. In parallel, Betken et al. \cite{betken2021bootstrap} present a test for dependence between two strictly stationary time series, based on a bootstrap procedure for distance covariance. For stationary, long-range dependent time series, Betken and Dehling \cite{betken2021longrange} develop a test statistic for independence that relies on a linear combination of empirical distance cross-covariances, defined for two processes observed at fixed time lags. Further, Dehling et al. \cite{10.3150/20-BEJ1206} utilize distance covariance to construct a measure of independence for a pair of stochastically continuous and bounded stochastic processes on the unit interval, given sample data consisting of discretizations from an i.i.d. sequence on the same partition. 

However, our study differs from existing literature in two aspects: (i) We focus primarily on testing the independence of two AR(1) processes via the empirical correlation; (ii) In addition to examining the asymptotic normality of the empirical correlation, we also provide both the exact distribution for any given $n$ and the rate of convergence to the limiting distribution. While the rate of convergence helps determine when the number of data points $n$ is sufficient for using asymptotic normality in independence testing, the results on the exact distribution provide a remedy to the accuracy problems caused by having insufficient data points.

We proceed to introduce our results. Section \ref{sec:exactdistribution} studies the exact distribution of the empirical correlation $\theta_n$ (defined in \eqref{eq:defforthetan}). To this end, we derive formulas for all the moments of $\theta_n$ for any given $n$ and use the first $10$ moments to approximate the density of $\sqrt{n} \theta_n$. The analysis is based on a symbolically tractable integro-differential representation expression for moments (see, for example, \cite{ernst2021yule,ernst2019distribution,Sawa}), which requires explicitly computing the tri-variate moment generating function (MGF) of the three sums comprising $\theta_n$. To achieve this, we represent these sums as quadratic forms in the increments $\xi$ and $\eta$ to obtain matrix $K_n$ (defined in \eqref{eq:defKn}), then explicitly calculate its ``alternative characteristic polynomial'' $d_n(\lambda)$ (defined in \eqref{polynow}), yielding a closed-form for the tri-variate MGF $\phi_n$. In contrast to the matrix $K_n$ in \cite{ernst2021yule}, the matrix $K_n$ here is considerably more complex, and indeed none of the techniques developed in \cite{ernst2021yule} can be applied. In computing $d_n(\lambda)$, the key observation is that the matrix $I_n - \lambda K_n$ can be decomposed as the sum of two matrices. The first matrix (after elementary matrix operations) is an invertible tri-diagonal matrix which exhibits self-similarity (with the exception of one cell) and the second matrix has rank $1$. Using this, the determinant of $I_n - \lambda K_n$ can be expressed in terms of the determinant and cofactors of the aforementioned tri-diagonal matrix, which can be explicitly derived by solving a second-order recursion formula. The explicit calculation of $d_{n}(\lambda)$ constitutes a fundamental mathematical tool in this paper. This is because (i) it serves as a key ingredient in computing the tri-variate MGF, which underpins the formulas for the moments of $\theta_n$; and (ii) it plays an important role in estimating the negative moments and the moment generating functions of elements forming the denominator of $\theta_n$, which are crucial for deriving the rates of convergence in Section \ref{sec:mainconvergence}. The full details of these calculations are provided in Appendix A.

Section \ref{sec:mainconvergence} establishes convergence rates for the distribution of the scaled empirical correlation (i.e. $\sqrt{n} \theta_n$ with a normalization) of two independent AR(1) processes with Gaussian increments to the standard normal distribution. Specifically, we find that the convergence rate in Wasserstein distance is $n^{-1/2}$; in Kolmogorov distance, it is $n^{-1/2}  \sqrt{\ln n }$. The derivation of the convergence rate relies upon the estimates of the eigenvalues of $K_n$ in Section \ref{subsec:estimateeigenvalues}.
In Section \ref{subsec:nominatorconvergence}, we analyze the convergence of the scaled numerator of $\theta_n$ to the standard normal distribution and establish its convergence rate as $n^{-1/2}$. This result draws heavily on the work of Nourdin and Peccati \cite{nourdin2009stein}, who showed that the Kolmogorov distance between the distribution of a $q$-th Wiener chaos random variable $X$ and the standard normal distribution can be bounded using the Malliavin derivative $DX$. The Nourdin-Peccati result extends straightforwardly to Wasserstein distance and total variation; further details are available in \cite{nourdin2012normal,nualart2018introduction}. Section \ref{subsec:convergencewasserstein} and Section \ref{subsec:convergenceKolmogorov} are devoted to utilizing the convergence rate of the scaled numerator of $\theta_n$ to determine the convergence rate of the entire fraction (in both Wasserstein distance and Kolmogorov distance). The work relies extensively on the explicit expression of the aforementioned alternative characteristic polynomial $d_{n}(\lambda)$. For Wasserstein distance, the key ingredient is developing a uniform upper bound for the negative second moments of the elements of the denominator of $\theta_n$. After extensive calculation, this reduces to a uniform lower bound of $(n-1)$ times the product of the positive eigenvalues of $K_n$. Using the explicit expression for the alternative characteristic function $d_{n}(\lambda)$, the product of these positive eigenvalues can be calculated explicitly, yielding the required uniform lower bound. For Kolmogorov distance, the analysis requires estimating the tails of the elements in the denominator of $\theta_n$. These estimates are derived from the explicit expressions of their moment generating functions, which can be represented in terms of $d_n(\lambda)$. These two parts culminate in convergence rates of $n^{-1/2}$ for Wasserstein distance and $n^{-1/2} \sqrt{\ln n}$ for Kolmogorov distance. Notably, our result slightly improves upon that in \cite{douissi2021asymptotics}, where the rate of convergence in Kolmogorov distance is $T^{-1/2} \ln T$; here, $T$ represents the length of the Ornstein–Uhlenbeck process and therefore is analogous to the sample size $n$ for a discrete-time process. The improvement arises from our use of the explicit forms of the moment generating functions of two elements comprising the denominator of $\theta_n$, whereas the authors of \cite{douissi2021asymptotics} relied on the upper bounds for these moment generating functions.

In addition to the independent case, we also investigate the distribution and asymptotic behavior of the empirical correlation between two AR(1) processes with correlated Gaussian increments. Formally, two AR(1) processes are correlated with coefficient $r$ if the pairs of increments $(\xi_n, \eta_{n})$ are i.i.d. Gaussian random vectors with mean zero and covariance matrix $$\begin{pmatrix}
1 & r \\
r & 1
\end{pmatrix},$$ where $|r| \leq 1$. We begin by examining the distribution of the empirical correlation through the calculation of its moments at all orders. The details are provided in Section \ref{sbsec:numericorrelatedincrements}. Furthermore, in Section \ref{sec:Power}, we find the convergence rate of the scaled empirical correlation to the standard normal to be $n^{-1/2} \sqrt{\ln n}$ in Kolmogorov distance. This proves that the scaled empirical correlation is asymptotically normal with mean $r$. Unlike the independent case, deriving the convergence rate for the correlated case requires much more effort. This is because, when we re-express $\theta_n - r$ as a fraction (see \eqref{eqc:reformationoftheta-r}), the numerator is no longer a second Wiener-chaos variable. To tackle this issue, we multiply a factor in both numerator and denominator, archiving a fourth Wiener-chaos variable in the numerator. The full details are contained in Appendix C. With the results on convergence rate for both independent and correlated cases, we are capable of analyzing the asymptotic statistical power of the test for the region \begin{equation*}
|\sqrt{n}\, \theta_n| > c,
\end{equation*}
for any constant $c$.

Another process of interest shall be the nearly nonstationary AR(1) process, which follows the same recursion formula as in \eqref{eq:defofAR(1)processes} but allows the coefficient $\alpha$ to vary with $n$ and tend to 1 as $n$ approaches $\infty$. For more details on the nearly nonstationary AR(1) process and two different methods for parameterizing $\alpha_n$, refer to \cite{phillips1987towards} and \cite{cox1991maximum}. When $\alpha_n \equiv 1$, this process corresponds to the random walk. Consequently, the empirical correlation between two such independent processes is Yule's ``nonsense correlation'', with a widely dispersed limiting distribution (see \cite{phillips1986understanding,ernst2017}). When $\alpha_n \equiv \alpha$ which $|\alpha| <1$, it reduces to the cases studied in the present paper. It would be interesting to explore how the distribution and asymptotic behavior of the empirical correlation change as $\alpha_n$ converges to $1$ at different rates. While we believe our approaches could address these questions, they fall beyond the scope of this paper and are left for future research. Additionally, we are looking to address the limitations of the Gaussian distribution assumption and extend our results to more general AR(1) processes. However, without the Gaussian condition, the tri-variate moment generating function becomes extremely complicated and seems nearly impossible to have an explicit expression. This partly explains the limited results regarding exact distributions in this area.

The remainder of the paper is organized as follows. In Section \ref{sec:preliminaries}, we introduce some key elements of analysis on Wiener space, which enable us to work with convergence rates for the scaled numerator of the empirical correlation $\theta_n$. We also introduce some necessary notation, including that of the ``kernel'' matrix $K_n$, its ``alternative characteristic polynomial'' $d_{n}(\lambda)$ and the tri-variate moment generating function $\phi_{n}$. Section \ref{sec:exactdistribution} is devoted to the exact distribution theory of $\theta_n$ for the model of two independent AR(1) processes with Gaussian increments. We start by explicitly calculating $d_n$ and $\phi_n$, and continue with an explicit formula for the second moment of $\sqrt{n}\theta_n$ for any $n$. Additionally, we provide numerical results for moments of $\sqrt{n} \theta_n$, facilitating the approximation of the density of $\sqrt{n} \theta_n$. In Section \ref{sec:mainconvergence}, we study the asymptotic behavior of the empirical correlation for two independent AR(1) processes with Gaussian increments. We also establish the rates of convergence to the standard normal distribution in both Wasserstein distance and Kolmogorov distance. In Section \ref{sec:Power}, we extend these results to the model of AR(1) processes with correlated Gaussian increments, allowing us to study the asymptotic statistical power of the test.

\section{Preliminaries.} \label{sec:preliminaries}
This first subsection introduces fundamental tools for analysis on Wiener space. The exposition below very closely follows \cite[Section 2]{douissi2021asymptotics} and \cite[Section 2]{douissi2020ar}. For further details about the analysis on Wiener space, we refer the reader to \cite{nourdin2012normal} and \cite{nualart2018introduction}. 

\subsection{Fundamental Tools for Analysis on Wiener space}
Let $(\Omega, \mathcal{F}, P)$ denote the Wiener space of a standard Wiener process $W$ and let the space $\mathcal{H}$ be denoted as  $$\mathcal{H}=L^2( \mathbb{R}_{+} ).$$ For two deterministic functions $f$ and $g$, the inner product is defined as $$\langle f, g \rangle_{\mathcal{H}}:=\int_{\mathbb{R}_{+}} f(s) g(s) \,ds.$$ The fundamental elements of analysis on Wiener space employed in the present paper are detailed in the bullet points below.
\begin{itemize}
\item \textbf{Wiener chaos expansion}. For all $q\geq 1$, let $\mathcal{H}_{q}$ denote the $q$-th Wiener chaos of $W$. 
Note that Wiener chaoses of different orders are orthogonal in $L^2(\Omega)$. The fact that any $X \in L^2(\Omega)$ can be orthogonally decomposed as
\begin{equation}
X = E[X] + \sum_{q=1}^{\infty} X_q   \label{eq:chaosexpansion}
\end{equation}
for some $X_q \in \mathcal{H}_{q}$ and for every $q \geq 1$ is known as the ``Wiener chaos expansion.''
\item \textbf{Symmetrization and symmetric tensor space}. For a mapping $f : \mathbb{R}_{+}^{n} \rightarrow \mathbb{R}$, its symmetrization is given by
\begin{equation*}
\tilde{f}(t_1, \dots, t_n) = \frac{1}{n!} \sum_{\sigma} f\left( t_{\sigma(1)}, \dots, t_{\sigma(n)} \right),
\end{equation*}
where the sum is taken over all permutations $\sigma$ of $\{1,2, \dots, n\}$. Let $f \, \widetilde{\otimes} \, g$ denote the symmetrization of the tensor $f\otimes g$. The symmetric tensor space $\mathcal{H}^{\odot q}$ is the linear space of the symmetrization of functions from the tensor space $\mathcal{H}^{\otimes q}$.
\item \textbf{Multiple Wiener integrals}. 
The mapping $\mathcal{I}_{q}$ is the so-called ``multiple Wiener integral.'' The mapping $\mathcal{I}_{q}$ is a contraction; that is, for any $f \in \mathcal{H}^{\otimes q}$,
\begin{equation*}
\left\lVert \mathcal{I}_{q}(f) \right\rVert_{\mathcal{H}_{q}} \leq \sqrt{q!} \left\lVert f \right\rVert_{\mathcal{H}^{\otimes q}}.
\end{equation*}
It is useful to note that, for any $f \in \mathcal{H}^{\otimes q}$, $\mathcal{I}_{q}( \tilde{f} ) = \mathcal{I}_{q}(f).$ 

If we consider the symmetric tensor space $\mathcal{H}^{\odot q}$, then $\mathcal{I}_{q}$ is a linear isometry between the symmetric tensor product $\mathcal{H}^{\odot q}$ (equipped with the modified norm $ \lVert \cdot \rVert_{\mathcal{H}^{\odot q}} $) and $\mathcal{H}_{q}$. For any nonnegative integers $p$ and $q$, and functions $f \in \mathcal{H}^{\odot p}$ and $g \in \mathcal{H}^{\odot q}$, the formula for the inner product is given by
\begin{equation} \label{eq:innerproductWeinerIntegral}
E\left[ \mathcal{I}_{p}(f) \mathcal{I}_{q}(g) \right]
= \begin{cases}
p! \left \langle f ,g \right \rangle_{\mathcal{H}^{\otimes p}} & \quad \text{if $p=q$} \\
0 & \quad \text{otherwise}
\end{cases},
\end{equation}

Hence, for both $X$ and its Wiener chaos expansion in \eqref{eq:chaosexpansion} above, each term $X_q$ is a multiple Wiener integral $\mathcal{I}_{q}(f_q)$ for some $f_q \in \mathcal{H}^{\odot q}$.
\item \textbf{Product formula and contractions}. Let $f \in \mathcal{H}^{\otimes p}$ and $g \in \mathcal{H}^{\otimes q}$. For any $r = 0, \dots, p \wedge q$, the contraction of $f$ and $g$ of order $r$ is defined as an element in $\mathcal{H}^{\otimes (p+q -2r)}$ and is given by
\begin{eqnarray*}
&&( f \otimes_r g )(s_1, \dots, s_{p-r}, t_1, \dots, t_{q-r}) \\
&:=& \int_{ \mathbb{R}_{+}^{r} } f( s_1, \dots, s_{p-r}, u_1, \dots, u_r  )\, g( t_1, \dots, t_{q-r}, u_1, \dots, u_r ) \, du_1 \cdots du_r.
\end{eqnarray*}
Furthermore, the product of two multiple Wiener integrals satisfies 
\begin{equation}
\mathcal{I}_{p}(f) \mathcal{I}_{q}(g) = \sum_{r=0}^{ p \wedge q } r! \binom{p}{r} \binom{q}{r} \mathcal{I}_{p+q -2r}( f \widetilde{\otimes}_{r} g ), \label{eq:productformulaforWienerintegral}
\end{equation}
where $f \widetilde{\otimes}_r g$ denotes the symmetrization of $f \otimes_r g$.

\item \textbf{Hypercontractivity in Wiener chaos}. For $q \geq 2$, every random variable in $q$-th Wiener chaos admits the ``hypercontractivity property.'' The hypercontractivity property implies the equivalence in $\mathcal{H}_q$ of all $L^p$ norms. In particular, for any $F \in \mathcal{H}_{q}$, and for $1<p<r<\infty$, 
\begin{equation}
\left( E\left[ |F|^{r} \right] \right)^{1/r} \leq \left( \frac{r-1}{p-1} \right)^{q/2} \left( E\left[ |F|^{p} \right] \right)^{1/p}.  \label{eq:hypercontractivitypropertyforchaos}
\end{equation}
\item \textbf{The Malliavin derivative}. Consider a function $\Phi \in C^{1}( \mathbb{R}^{n} )$ with bounded derivative. Let $h_1, \dots, h_n \in \mathcal{H}$. Let the Wiener integral $\int_{\mathbb{R}_{+}} h(s) \,dW(s)$ be denoted by $W(h)$. The ``Malliavin derivative'' $D$ of the random variable $X:= \Phi(W(h_1), \dots, W(h_n) )$ obeys the following chain rule:
\begin{equation*}
DX : X \mapsto D_{s} X := \sum_{k=1}^{n} \frac{\partial \Phi}{\partial x_{k}}( W(h_1), \dots, W(h_n) ) \, h_{k}(s) \in L^{2}(\Omega \times \mathbb{R}_{+}).
\end{equation*}
One may proceed to extend the Malliavin derivative $D$ to the Gross-Sobolev subset $\mathbb{D}^{1,2} \subsetneqq L^2(\Omega)$. This is done by closing $D$ inside $L^2(\Omega)$ under the norm
\begin{equation*}
\lVert X \rVert_{1,2}^{2} = E\left[ X^2 \right] + E\left[ \int_{\mathbb{R}_{+}} | D_s X|^2 \, ds \right].
\end{equation*}
A useful fact is that all Wiener chaos random variables are in the domain $\mathbb{D}^{1,2}$ of $D$. For any $X \in L^2(\Omega)$ with Wiener chaos expansion $E[X]+ \sum_{q} \mathcal{I}_{q}(f_q),$ $X \in \mathbb{D}^{1,2}$ if and only if $\sum_{q} q q! \lVert f_q \rVert_{\mathcal{H}^{\otimes q}}^2  < \infty$.
\item \textbf{Generator} $L$ \textbf{of the Ornstein-Uhlenbeck semigroup}. The linear operator $L$ is diagonal under the Wiener chaos expansion of $L^2(\Omega)$. Further, $\mathcal{H}_q$ is the eigenspace of $L$ with eigenvalue $-q$ (or, equivalently, for $X\in \mathcal{H}_q$, $LX = -q X$). Finally, the operator $- L^{-1}$ is the negative pseudo-inverse of $L$. This means that for any $X\in \mathcal{H}_q$, $- L^{-1} X = q^{-1} X$.
\item \textbf{Kolmogorov distance, Wasserstein distance, and total variation}. Let $X$ and $Y$ be two real-valued random variables. We define the Kolmogorov distance between the law of $X$ and the law of $Y$ as
\begin{equation*}
d_{Kol}(X,Y) = \sup_{z \in \mathbb{R}} \left| P(X \leq z) - P(Y \leq z)\right|.
\end{equation*}
We define the total variation between the law of $X$ and the law of $Y$ by
\begin{equation}
d_{TV}(X,Y) = \sup_{ A \in \mathcal{B}(\mathbb{R}) } \left| P(X  \in A ) - P(Y  \in A )\right|,  \label{eqchaos:defofTVdistance}
\end{equation}
where the supremum is taken over all Borel sets on $\mathbb{R}$.
If $X$ and $Y$ are integrable, we may define the Wasserstein distance between the law of $X$ and the law of $Y$ by
\begin{equation*}
d_{W}(X,Y) = \sup_{f \in \mathrm{Lip}(1)} \left| Ef(X) - Ef(Y) \right|,
\end{equation*}
where $\mathrm{Lip}(1)$ is the set of all Lipschitz functions with Lipschitz constant $\leq 1$. Moreover, let $X$ be a mean zero random variable with $ X \in \mathbb{D}^{1,2}$ and let the random variable $Y$ be standard normal. The following upper bounds for Kolmogorov distance, Wasserstein distance, and total variation are given below (see Chapter 8.2, 8.3 in \cite{nualart2018introduction} or Theorem 2.4 in \cite{nourdin2009stein}):
\begin{eqnarray}
&& d_{Kol}(X,Y) \leq \sqrt{E\left[ \left( 1- \langle DX, - DL^{-1}X \rangle_{\mathcal{H}} \right)^2 \right]}, \label{eq:firstmalliavinboundforKolmogorov} \\ [1mm]
&& d_{W}(X,Y) \leq \sqrt{\frac{2}{\pi}} \sqrt{E\left[ \left( 1- \langle DX, - DL^{-1}X \rangle_{\mathcal{H}} \right)^2 \right]},  \label{eq:firstmalliavinboundforWasserstein} \\ [1mm]
&& d_{TV}(X,Y) \leq 2 \sqrt{E\left[ \left( 1- \langle DX, - DL^{-1}X \rangle_{\mathcal{H}} \right)^2 \right]} . \label{eq:firstmalliavinboundfortotalvariation}
\end{eqnarray}
Note that if, for all $q \geq 2$, $X \in \mathcal{H}_{q}$ , then $ \langle DX, - DL^{-1}X \rangle_{\mathcal{H}} = q^{-1} \lVert DX \rVert_{\mathcal{H}}^{2} $. The following three inequalities hold:
\begin{eqnarray}
&&  d_{Kol} (X,Y) \leq \sqrt{ E\left[ \left(1- q^{-1} \lVert DX \rVert_{\mathcal{H}}^{2}\right)^2 \right] },   \label{eq:malliavinboundforKolmogorov}  \\ [1mm]
&&  d_{W} (X,Y) \leq \sqrt{\frac{2}{\pi}} \sqrt{ E\left[ \left(1- q^{-1} \lVert DX \rVert_{\mathcal{H}}^{2}\right)^2 \right] },   \label{eq:malliavinboundforWasserstein} \\ [1mm]
&& d_{TV} (X,Y) \leq 2 \sqrt{ E\left[ \left(1- q^{-1} \lVert DX \rVert_{\mathcal{H}}^{2}\right)^2 \right] }.   \label{eq:malliavinboundfortotalvariation}
\end{eqnarray}
\end{itemize}

\subsection{Notation.}
This subsection introduces some necessary notation to be utilized in the sequel. We use $I_{n}$ to denote the $n\times n$ identity matrix. For $n \in \mathds{N}_{+}$, we define the $n\times n$ symmetric matrix $K_{n}$ by 
\begin{equation}
K_{n} := \left\{ \frac{1}{n} \, \frac{\alpha^{|k-j|} - \alpha^{k+j}}{1- \alpha^2} - \frac{1}{n^2} \, \frac{(1-\alpha^k)(1- \alpha^j)}{(1-\alpha)^2} \right\}_{j,k=1}^{n}. \label{eq:defKn}
\end{equation}
The ``alternative characteristic polynomial" $d_{n}(\lambda)$ for the matrix $K_{n}$ is defined by 
\begin{equation}\label{polynow}
d_{n}(\lambda) = \det(I_{n} -  \lambda K_n). 
\end{equation} 
In the case that the eigenvalues of $K_{n}$ are known, and are denoted by $\lambda_1, \lambda_2 \cdots, \lambda_n$, the alternative characteristic polynomial can also be expressed as
\begin{equation}
 d_{n}(\lambda) =\prod_{j=1}^{n} (1- \lambda_j \lambda). \label{eq:anotherformofacp}  
\end{equation}
We also define two $ n \times 1$ column random vectors $\mathbf{\Xi}_{n}$ and $\mathbf{H}_{n}$ by
\begin{equation*}
\mathbf{\Xi}_{n} := \left( \xi_{n}, \xi_{n-1} , \dots, \xi_1 \right)^{\intercal}  \quad \text{and} \quad
\mathbf{H}_{n} := \left( \eta_n, \eta_{n-1} , \dots, \eta_1 \right)^{\intercal}, 
\end{equation*}
where $\xi_1, \xi_2, \xi_3, \dots$, $\eta_1, \eta_2, \eta_3, \dots$ are independent standard normal random variables. Let
\begin{eqnarray}
&& Z_{11}^n := \frac{1}{n} \sum_{i=1}^{n} X_i^2 - \frac{1}{n^2} \left(\sum_{i=1}^{n} X_i\right)^2, \label{eq:defz11} \\
&& Z_{22}^n := \frac{1}{n} \sum_{i=1}^{n} Y_i^2 - \frac{1}{n^2} \left(\sum_{i=1}^{n} Y_i\right)^2,  \label{eq:defz22} \\
&& Z_{12}^n := \frac{1}{n} \sum_{i=1}^{n} X_i Y_i - \frac{1}{n^2} \left(\sum_{i=1}^{n} X_i\right) \left(\sum_{i=1}^{n} Y_i\right). \label{eq:defz12}
\end{eqnarray}
Combining the above with \eqref{eq:defforthetan}, one can easily check that $$\theta_n = Z_{12}^n/\sqrt{Z_{11}^n Z_{22}^n}.$$ Finally, let us define the joint moment generating function (mgf) of the random vector $\left( Z^n_{11}, Z^n_{12}, Z^n_{22} \right)$ by
\begin{equation*}
\phi_{n}(s_{11}, s_{12}, s_{22}) := E \left[ \exp \left\{ -\frac{1}{2} \left( s_{11} Z_{11}^{n} + 2 s_{12} Z_{12}^{n} + s_{22} Z_{22}^{n} \right) \right\} \right], 
\end{equation*}
where $s_{11}, s_{12} $ and $s_{22}$ are such that $s_{11}, s_{22} \geq 0$ and $s_{12}^2 \leq s_{11} s_{22}$. These two inequalities ensure that $\phi_{n}(s_{11}, s_{12}, s_{22})$ is well-defined, as we shall see in Section \ref{sec:jointmgf}.

\section{The distribution of $ \sqrt{n}\theta_n$.}  \label{sec:exactdistribution}
This section is devoted to the exact distribution theory of the scaled empirical correlation of two independent AR(1) processes with Gaussian noise. Let two independent AR(1) processes be defined by
\begin{equation*}
\begin{cases}
X_{n} = \alpha X_{n-1} + \xi_{n}, \quad n \in \mathds{N}_{+} \\
X_0 =0
\end{cases}
\text{  and  }
\begin{cases}
Y_{n} = \alpha Y_{n-1} + \eta_{n}, \quad n \in \mathds{N}_{+} \\
Y_0 =0
\end{cases},  
\end{equation*}
where $|\alpha|<1$ and $\xi_1, \xi_2, \xi_3, \dots$, $\eta_1, \eta_2, \eta_3, \dots$ are independent standard normal random variables. We study the empirical correlation (as originally defined in \eqref{eq:defforthetan} above) for the AR(1) processes $\{X_n\}_{n=0}^{\infty}$ and $\{Y_n\}_{n=0}^{\infty}$.
Our interest is to study the exact distribution of $\sqrt{n} \theta_n$ for any fixed $n$ (note that we do not study the distribution of $\theta_n$ since $\theta_n$ converges to $0$). We begin by calculating the joint moment generating function $\phi_{n}(s_{11}, s_{12}, s_{22})$, followed by the formulas for moments of $\sqrt{n} \theta_n$. We then proceed to approximate the distribution of $\sqrt{n} \theta_n$ using its higher-order moments.

\subsection{Calculating the joint moment generating function.} \label{sec:jointmgf}
In this subsection, we derive an expression for the joint moment generating function $\phi_{n}(s_{11}, s_{12}, s_{22})$, which shall serve as the key ingredient for computing the moments of $\theta_{n}$ for all $n$.

By the definition of an AR(1) process, we may easily obtain that, for all $n \in \mathds{N}_{+}$,
\begin{equation*}
X_n = \sum_{k=1}^{n} \alpha^{n-k} \xi_{k} \quad \text{and} \quad
Y_{n} = \sum_{k=1}^{n} \alpha^{n-k} \eta_{k}.
\end{equation*}
Then
\begin{eqnarray}
&& \sum_{i=1}^{n} X_i Y_i = \sum_{i=1}^{n} \left( \sum_{k=1}^{i} \alpha^{i-k} \xi_k \right) \left( \sum_{j=1}^{i} \alpha^{i-j} \eta_j \right) 
= \sum_{j,k=1}^{n} \, \sum_{i= \max(k,j)}^{n} \alpha^{i-k} \alpha^{i-j} \,  \xi_{k} \eta_{j}  \notag \\
&=& \sum_{j,k=1}^{n} \frac{\alpha^{2 \times \max(k,j)} - \alpha^{2n+2}}{1- \alpha^2}\, \alpha^{-k} \alpha^{-j} \,  \xi_{k} \eta_{j} 
= \sum_{j,k=1}^{n} \frac{\alpha^{ |k-j|} - \alpha^{2n+2 -k -j}}{1- \alpha^2}  \,  \xi_{k} \eta_{j}.  \label{eq:expressionforsumXY}
\end{eqnarray}
Further,
\begin{equation*}
\sum_{i=1}^{n} X_i = \sum_{i=1}^{n} \sum_{k=1}^{i} \alpha^{i-k} \,\xi_{k}
=\sum_{k=1}^{n} \sum_{i=k}^{n} \alpha^{i-k}\, \xi_{k}
= \sum_{k=1}^{n} \frac{1- \alpha^{n-k+1}}{1- \alpha}\, \xi_{k}.
\end{equation*}
Similarly,
\begin{equation*}
\sum_{i=1}^{n} Y_i = \sum_{k=1}^{n} \frac{1- \alpha^{n-k+1}}{1- \alpha}\, \eta_{k}.
\end{equation*}
Hence,
\begin{equation}
\left(\sum_{i=1}^{n} X_i\right) \left(\sum_{i=1}^{n} Y_i\right)
= \sum_{j,k=1}^{n} \frac{\left(1- \alpha^{n-k+1}\right) \left(1- \alpha^{n-j+1}\right) }{(1- \alpha)^2}\, \xi_{k} \eta_{j}.  \label{eq:expressionforsumXsumY}
\end{equation}
Combining \eqref{eq:defz12}, \eqref{eq:expressionforsumXY} and \eqref{eq:expressionforsumXsumY} yields
\begin{eqnarray*}
Z_{12}^{n} &=& \sum_{j,k=1}^{n} \left[ \frac{1}{n}\, \frac{\alpha^{ |k-j|} - \alpha^{2n+2 -k -j}}{1- \alpha^2} - \frac{1}{n^2} \frac{\left(1- \alpha^{n-k+1}\right) \left(1- \alpha^{n-j+1}\right) }{(1- \alpha)^2}  \right] \, \xi_{k} \eta_{j}  \\
&=& \sum_{j,k=1}^{n} \left[ \frac{1}{n}\, \frac{\alpha^{ |k-j|} - \alpha^{k + j}}{1- \alpha^2} - \frac{1}{n^2} \frac{\left(1- \alpha^{k}\right) \left(1- \alpha^{j}\right) }{(1- \alpha)^2}  \right] \, \xi_{n +1 - k} \, \eta_{n +1 -j} \\ [1mm]
&=& \mathbf{\Xi}_n^{\intercal} K_n \mathbf{H}_{n},
\end{eqnarray*}
where the second equality holds by making the change of variables: $k = n+1 -k$ and $j= n+1 - j$. Similarly, we have
\begin{equation*}
Z_{11}^{n} = \mathbf{\Xi}_n^{\intercal} K_n \mathbf{\Xi}_{n} \quad  \text{and}  \quad Z_{22}^{n} = \mathbf{H}_n^{\intercal} K_n \mathbf{H}_{n}.
\end{equation*}
\begin{remark} \label{remark:semidefinite}
The above display reveals the matrix $K_n$ is positive semi-definite, since $Z_{11}^{n} \geq 0$. Noting that $Z_{11}^{n} =0$ if and only if $X_1 = X_2 = \dots = X_n$, one can easily see that the rank of $K_n$ is $n-1$.
\end{remark}

Since $K_n$ is a symmetric matrix, it admits the following orthogonal decomposition 
\begin{equation*}
K_{n} = P_{n}^{\intercal} \, \mathrm{diag} (\lambda_1, \lambda_2, \dots, \lambda_{n}) \, P_{n},
\end{equation*}
where $P_n$ is a $n \times n $ orthogonal matrix, $\lambda_1, \lambda_2, \dots, \lambda_{n}$ are eigenvalues of $K_n$ and $\mathrm{diag} (\lambda_1, \lambda_2, \dots, \lambda_{n})$ is a diagonal matrix whose entry in the $j$-th row and the $j$-th column is $\lambda_{j}$. Further, let
\begin{eqnarray*}
&& \mathbf{W}_{n} = \left( W_1, W_2, \dots, W_n \right)^{\intercal} := P_{n}\, \mathbf{\Xi}_n  \\
&& \mathbf{V}_{n} = \left( V_1, V_2, \dots, V_n \right)^{\intercal} := P_{n}\, \mathbf{H}_n.
\end{eqnarray*}
Then
\begin{eqnarray}
Z_{12}^{n} &=& \mathbf{\Xi}_n^{\intercal} K_n \mathbf{H}_{n}
= \mathbf{\Xi}_n^{\intercal} \, P_{n}^{\intercal} \, \mathrm{diag} (\lambda_1, \lambda_2, \dots, \lambda_{n}) \, P_{n} \, \mathbf{H}_{n} \notag \\
&=& \mathbf{W}_{n}^{\intercal} \, \mathrm{diag} (\lambda_1, \lambda_2, \dots, \lambda_{n}) \, \mathbf{V}_n
= \sum_{k=1}^{n} \lambda_k\, W_k V_k.  \label{eq:decompositionforZ12}
\end{eqnarray}
Similarly,
\begin{eqnarray}
&& Z_{11}^{n} = \mathbf{W}_{n}^{\intercal} \, \mathrm{diag} (\lambda_1, \lambda_2, \dots, \lambda_{n}) \, \mathbf{W}_n =  \sum_{k=1}^{n} \lambda_k\, W_k^2, 
\label{eq:decompositionforZ11}  \\
&& Z_{22}^{n} = \mathbf{V}_{n}^{\intercal} \, \mathrm{diag} (\lambda_1, \lambda_2, \dots, \lambda_{n}) \, \mathbf{V}_n = \sum_{k=1}^{n} \lambda_k\, V_k^2 . 
\label{eq:decompositionforZ22}
\end{eqnarray}
Noting that the multivariate standard normal distribution is invariant under orthogonal transformations, combined with the fact that random vectors $\mathbf{\Xi}_n$ and $\mathbf{H}_n$ are independent and have multivariate standard normal distributions, it follows immediately that $W_1, W_2, \dots, W_n$, $V_1, V_2, \dots, V_n$ are independent standard normal random variables.

Before presenting Theorem \ref{thm1} below, we pause to reveal an explicit calculation of the alternative characteristic polynomial $d_{n}(\lambda)$. The proof is relegated to Appendix A.

\begin{lemma}\label{lem1}
The alternative characteristic polynomial $d_{n}(\lambda)$ may be written as
\begin{eqnarray*}
d_{n}(\lambda) &=& \frac{\left(\gamma_{1}(\lambda/n)\right)^{n+1}  - \left(\gamma_{2}(\lambda/n)\right)^{n+1}}{\sqrt{\Delta(\lambda/n)}}
-  \frac{\alpha^2\left[\left(\gamma_{1}(\lambda/n)\right)^{n}  - \left(\gamma_{2}(\lambda/n)\right)^{n}\right]}{\sqrt{\Delta(\lambda/n)}} \\ [2mm]
&& + \frac{\lambda\left[\left(\gamma_{1}(\lambda/n)\right)^{n+1}  + \left(\gamma_{2}(\lambda/n)\right)^{n+1}\right]}{n \,\Delta(\lambda/n)}  
  - \frac{(n-1) \alpha^2 \, \lambda\left[\left(\gamma_{1}(\lambda/n)\right)^{n}  + \left(\gamma_{2}(\lambda/n)\right)^{n}\right]}{n^2 \,\Delta(\lambda/n)}  \\ [2mm]
&& + \frac{2 (n-1) \alpha \, \lambda\left[\left(\gamma_{1}(\lambda/n)\right)^{n+1}  + \left(\gamma_{2}(\lambda/n)\right)^{n+1}\right]}{n \,\Delta(\lambda/n) \left( n (1-\alpha)^2 - \lambda \right)}
- \frac{2 (n-2) \alpha^3 \, \lambda\left[\left(\gamma_{1}(\lambda/n)\right)^{n}  + \left(\gamma_{2}(\lambda/n)\right)^{n}\right]}{n \,\Delta(\lambda/n) \left( n (1-\alpha)^2 - \lambda \right)}  \\  [2mm]
&& - \frac{2 (n+1) \alpha^2 \, \lambda\left[\left(\gamma_{1}(\lambda/n)\right)^{n}  + \left(\gamma_{2}(\lambda/n)\right)^{n}\right]}{n \,\Delta(\lambda/n) \left( n (1-\alpha)^2 - \lambda \right)}
+ \frac{2  \alpha^4 \, \lambda\left[\left(\gamma_{1}(\lambda/n)\right)^{n-1}  + \left(\gamma_{2}(\lambda/n)\right)^{n-1}\right]}{\Delta(\lambda/n) \left( n (1-\alpha)^2 - \lambda \right)} \\[2mm]
&& + \frac{2 \, \alpha^{n+1} (1- \alpha) \, \lambda\left[\gamma_{1}(\lambda/n)  + \gamma_{2}(\lambda/n)\right]}{n \,\Delta(\lambda/n) \left( n (1-\alpha)^2 - \lambda \right)}  
+ \frac{4 \, \alpha^{n+2} (1- \alpha) \, \lambda}{n \,\Delta(\lambda/n) \left( n (1-\alpha)^2 - \lambda \right)},
\end{eqnarray*}
where
\begin{eqnarray}
&& \gamma_1 := \gamma_{1} (\lambda) = \frac{(1-\lambda + \alpha^2) + \sqrt{(1-\lambda+\alpha^2)^2 - 4 \alpha^2}}{2},  \label{eq:defgamma1} \\ [1mm]
&& \gamma_2 := \gamma_{2} (\lambda) = \frac{(1-\lambda + \alpha^2) - \sqrt{(1-\lambda+\alpha^2)^2 - 4 \alpha^2}}{2},  \label{eq:defgamma2}\\ [1mm]
&& \Delta := \Delta(\lambda) = (1-\lambda+\alpha^2)^2 - 4 \alpha^2.  \label{eq:defDelta}
\end{eqnarray}
\end{lemma}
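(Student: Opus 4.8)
The plan is to compute $d_n(\lambda)=\det(I_n-\lambda K_n)$ by exploiting the structure announced in the introduction: after suitable elementary row/column operations, $I_n-\lambda K_n$ becomes the sum of an invertible tri-diagonal matrix $T_n$ (self-similar except in one entry) and a rank-one correction, at which point the matrix determinant lemma reduces everything to quantities attached to $T_n$. First I would write out $I_n-\lambda K_n$ entrywise using \eqref{eq:defKn}: its $(j,k)$ entry is $\delta_{jk}-\frac{\lambda}{n}\,\frac{\alpha^{|k-j|}-\alpha^{k+j}}{1-\alpha^2}+\frac{\lambda}{n^2}\,\frac{(1-\alpha^k)(1-\alpha^j)}{(1-\alpha)^2}$. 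The last term is manifestly rank one, equal to $\frac{\lambda}{n^2(1-\alpha)^2}\,\mathbf{u}\mathbf{u}^\intercal$ with $\mathbf{u}=(1-\alpha,1-\alpha^2,\dots,1-\alpha^n)^\intercal$; the term $-\frac{\lambda}{n}\,\frac{\alpha^{k+j}}{1-\alpha^2}$ is also rank one, equal to $-\frac{\lambda}{n(1-\alpha^2)}\,\mathbf{v}\mathbf{v}^\intercal$ with $\mathbf{v}=(\alpha,\alpha^2,\dots,\alpha^n)^\intercal$. So $I_n-\lambda K_n = A_n - \frac{\lambda}{n(1-\alpha^2)}\mathbf{v}\mathbf{v}^\intercal + \frac{\lambda}{n^2(1-\alpha)^2}\mathbf{u}\mathbf{u}^\intercal$, where $A_n = I_n - \frac{\lambda}{n(1-\alpha^2)}\{\alpha^{|k-j|}\}_{j,k}$. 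The matrix $\{\alpha^{|k-j|}\}$ is the classical Kac–Murdock–Szegő (autocovariance) matrix, whose inverse is tri-diagonal; equivalently $A_n$ itself becomes tri-diagonal after left-multiplying by $(1-\alpha^2)\{\alpha^{|k-j|}\}^{-1}$, or one can directly verify that $(1-\alpha L)(1-\alpha L^\intercal)$-type manipulations bring $A_n$ to tri-diagonal form $T_n$ with a boundary defect in the first and last rows.

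The core computation is then the determinant of a tri-diagonal matrix of the form $T_n$ whose interior rows have the constant pattern $(-\beta,\;1+\text{const},\;-\beta)$. This is exactly a second-order linear recursion with constant coefficients: if $D_k$ denotes the determinant of the top-left $k\times k$ block, then $D_k = c\,D_{k-1} - \beta^2 D_{k-2}$, whose characteristic roots are precisely $\gamma_1,\gamma_2$ from \eqref{eq:defgamma1}–\eqref{eq:defgamma2} (note $\gamma_1\gamma_2=\alpha^2$ and $\gamma_1+\gamma_2 = 1-\lambda/n+\alpha^2$ match the coefficients once $\lambda$ is rescaled to $\lambda/n$, with discriminant $\Delta(\lambda/n)$ as in \eqref{eq:defDelta}). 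Solving the recursion gives $D_k$ as a linear combination of $\gamma_1^{\,k}$ and $\gamma_2^{\,k}$, divided by $\sqrt{\Delta(\lambda/n)}$, with coefficients pinned down by the boundary rows. I would also need the relevant cofactors of $T_n$ — the $(1,1)$, $(n,n)$, $(1,n)$ minors and the like — each of which satisfies the same recursion with shifted initial conditions and hence is again an explicit $\gamma_1,\gamma_2$ combination. The many-term expression for $d_n(\lambda)$ in the statement reflects the bookkeeping of assembling these pieces: the plain $(\gamma_1^{n+1}-\gamma_2^{n+1})/\sqrt{\Delta}$ and $\alpha^2(\gamma_1^n-\gamma_2^n)/\sqrt{\Delta}$ terms come from $\det T_n$ itself, the terms carrying a factor $\lambda/\big(n(1-\alpha)^2-\lambda\big)$ arise from applying the matrix determinant lemma to strip off the $\mathbf{u}\mathbf{u}^\intercal$ rank-one part (the denominator $n(1-\alpha)^2-\lambda$ being $1$ minus $\frac{\lambda}{n^2(1-\alpha)^2}$ times the relevant quadratic form $\mathbf{u}^\intercal A_n^{-1}\mathbf{u}$ up to normalization), and the lone $\alpha^{n+1},\alpha^{n+2}$ terms are the boundary-defect corrections. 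Throughout, the computation is just Cramer-type formulas plus geometric-series summation of $\mathbf{u}$ and $\mathbf{v}$ against the explicit inverse of the tri-diagonal matrix.

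I expect the main obstacle to be handling the rank-one corrections \emph{together} rather than one at a time: the $\mathbf{v}\mathbf{v}^\intercal$ piece is best absorbed into redefining the central tri-diagonal matrix (so it effectively disappears into the $\gamma_i$), while the $\mathbf{u}\mathbf{u}^\intercal$ piece must be peeled off via the matrix determinant lemma, which requires an explicit closed form for $\mathbf{u}^\intercal T_n^{-1}\mathbf{u}$ — a double geometric sum against a tri-diagonal inverse whose entries themselves involve $\gamma_1,\gamma_2$. Keeping the boundary terms (the non-self-similar first/last rows, which contribute the $\alpha^{n+1}$ and $\alpha^{n+2}$ terms) correctly paired through both operations is where sign and index errors are most likely, and is presumably why the authors relegate the full computation to Appendix A. A useful sanity check along the way: since $K_n$ has rank $n-1$ (Remark \ref{remark:semidefinite}), $d_n(\lambda)$ must be a polynomial of degree exactly $n-1$ in $\lambda$, and $d_n(0)=1$; both can be verified from the claimed formula by examining the leading behavior of $\gamma_1,\gamma_2$ as $\lambda\to\infty$ and the cancellations at $\lambda=0$, and one can further cross-check at $\alpha=0$, where $K_n$ reduces to $\frac1n I_n-\frac1{n^2}\mathbf{1}\mathbf{1}^\intercal$ and $d_n(\lambda)=(1-\lambda/n)^{n-1}$ should drop out.
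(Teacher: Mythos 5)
Your proposal follows essentially the same route as the paper's Appendix A: peel off the $(1-\alpha^k)(1-\alpha^j)$ term as a rank-one correction via the matrix determinant lemma, reduce the remaining AR(1)-covariance-type matrix to a tri-diagonal matrix with a single boundary defect (the paper does this by simultaneous $(-\alpha)$-shift column and row operations, which also absorbs your $\mathbf{v}\mathbf{v}^\intercal$ piece, exactly as you anticipate), solve the resulting constant-coefficient second-order recursion to get $\gamma_1,\gamma_2$, and evaluate the quadratic form of the rank-one vector against the adjugate by geometric summation. The only cosmetic quibble is that the denominator $n(1-\alpha)^2-\lambda$ arises in the paper from the identity $(\gamma_{1,n}-\alpha)^2/\gamma_{1,n}=(1-\alpha)^2-n\lambda$ inside those geometric sums rather than directly from the determinant-lemma factor, but your plan and sanity checks (degree $n-1$, $d_n(0)=1$, the $\alpha=0$ collapse to $(1-\lambda/n)^{n-1}$) are all sound.
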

\begin{proof}
See Appendix A.
\end{proof}

With the above preparation in hand, we may now calculate the joint mgf $\phi_{n}(s_{11}, s_{12}, s_{22})$.

\begin{theorem}\label{thm1}
The joint moment generating function may be calculated as
\begin{equation*}
\phi_{n}(s_{11}, s_{12}, s_{22})= \left( d_{n}(\rho) \, d_{n}(\upsilon)  \right)^{-1/2} 
\end{equation*}
where $\rho$ and $\upsilon$ are defined as follows:
\begin{eqnarray}
&&\rho := \rho\left(s_{11}, s_{12}, s_{22}\right) = -\frac{s_{11}+s_{22} + \sqrt{(s_{11}-s_{22})^2 + 4 s_{12}^{2}}}{2} ,  \label{eq:rho} \\ [1mm]
&&\upsilon :=  \upsilon\left(s_{11}, s_{12}, s_{22}\right) = -\frac{s_{11}+s_{22} - \sqrt{(s_{11}-s_{22})^2 + 4 s_{12}^{2}}}{2}.  \label{eq:upsilon} 
\end{eqnarray}
\end{theorem}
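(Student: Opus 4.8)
\textbf{Proof proposal for Theorem \ref{thm1}.}
The plan is to use the orthogonal diagonalization of $K_n$ already recorded in \eqref{eq:decompositionforZ11}--\eqref{eq:decompositionforZ22}. Let $S$ denote the $2\times 2$ symmetric matrix with diagonal entries $s_{11},s_{22}$ and off-diagonal entry $s_{12}$, so that $s_{11}W_k^2 + 2s_{12}W_kV_k + s_{22}V_k^2 = (W_k,V_k)\,S\,(W_k,V_k)^{\intercal}$. Then, by \eqref{eq:decompositionforZ11}--\eqref{eq:decompositionforZ22},
$$s_{11}Z_{11}^{n} + 2 s_{12} Z_{12}^{n} + s_{22}Z_{22}^{n} = \sum_{k=1}^{n} \lambda_k\,(W_k,V_k)\,S\,(W_k,V_k)^{\intercal}.$$
Since $W_1,\dots,W_n,V_1,\dots,V_n$ are independent standard normals, the pairs $(W_k,V_k)$, $k=1,\dots,n$, are independent, and hence
$$\phi_{n}(s_{11},s_{12},s_{22}) = \prod_{k=1}^{n} E\!\left[\exp\!\left\{-\tfrac{1}{2}\,\lambda_k\,(W_k,V_k)\,S\,(W_k,V_k)^{\intercal}\right\}\right].$$
This reduces the first step to a single bivariate Gaussian quadratic-form expectation.

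For the second step I would invoke the elementary Gaussian identity $E[\exp\{-\tfrac12 x^{\intercal}Ax\}] = \det(I_2+A)^{-1/2}$ for $x\sim N(0,I_2)$, valid whenever $I_2 + A$ is positive definite (it follows in one line by completing the square in the Gaussian density), applied with $A = \lambda_k S$. The hypotheses $s_{11},s_{22}\ge 0$ and $s_{12}^2\le s_{11}s_{22}$ make $S$ positive semidefinite, while Remark \ref{remark:semidefinite} gives that $K_n$ is positive semidefinite, so each $\lambda_k\ge 0$; therefore $I_2+\lambda_k S$ is positive definite, every factor above is finite, and $\phi_n$ is well-defined on exactly the stated parameter region. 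Consequently $\phi_{n} = \prod_{k=1}^{n}\det(I_2+\lambda_k S)^{-1/2}$.

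The third step is purely algebraic. Expanding the $2\times2$ determinant,
$$\det(I_2 + \lambda_k S) = (1+\lambda_k s_{11})(1+\lambda_k s_{22}) - \lambda_k^2 s_{12}^2 = 1 + \lambda_k(s_{11}+s_{22}) + \lambda_k^2\,(s_{11}s_{22}-s_{12}^2).$$
Thus $\det(I_2+\lambda_k S) = (1-\lambda_k t_1)(1-\lambda_k t_2)$, where $t_1,t_2$ are the two roots $t$ of $t^2 + (s_{11}+s_{22})t + (s_{11}s_{22}-s_{12}^2)=0$, i.e. $t = \tfrac{1}{2}\big(-(s_{11}+s_{22})\pm\sqrt{(s_{11}-s_{22})^2+4s_{12}^2}\big)$; these are precisely $\rho$ and $\upsilon$ as defined in \eqref{eq:rho}--\eqref{eq:upsilon} (and both are $\le 0$, so each factor is $\ge 1$). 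Substituting into the product and applying \eqref{eq:anotherformofacp} twice,
$$\phi_{n} = \Big(\prod_{k=1}^{n}(1-\lambda_k\rho)\Big)^{-1/2}\Big(\prod_{k=1}^{n}(1-\lambda_k\upsilon)\Big)^{-1/2} = \big(d_n(\rho)\,d_n(\upsilon)\big)^{-1/2},$$
which is the claim.

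I do not expect a serious obstacle here: the substantive work is Lemma \ref{lem1} (the closed form of $d_n$), and this theorem is a bookkeeping consequence of the diagonalization together with the bivariate Gaussian formula. The only points requiring a little care are (i) verifying positive-definiteness of $I_2+\lambda_k S$ so that the moment generating function genuinely exists on the advertised region, and (ii) getting the signs right in the quadratic so that its roots coincide exactly with the stated $\rho$ and $\upsilon$ rather than with $-\rho,-\upsilon$ or their reciprocals.
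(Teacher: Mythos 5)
Your proof is correct and is essentially the argument the paper intends: the paper omits the proof by citing the analogous Theorem 2 of \cite{ernst2021yule}, but the diagonalization \eqref{eq:decompositionforZ12}--\eqref{eq:decompositionforZ22} set up immediately before the theorem exists precisely to reduce $\phi_n$ to a product of bivariate Gaussian quadratic-form expectations, exactly as you do. Your identification of $\rho,\upsilon$ as the roots of $t^2+(s_{11}+s_{22})t+(s_{11}s_{22}-s_{12}^2)=0$ and the positive-definiteness check both verify correctly.
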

\begin{proof}
Since the proof of Theorem \ref{thm1} is similar to that of Theorem 2 in \cite{ernst2021yule}, we omit the details.
\end{proof}

\subsection{Moments of $ \sqrt{n}\theta_{n}$.}  \label{sec:numerics}
In the previous subsection, we give an exact representation for the joint mgf $\phi_{n}(s_{11}, s_{12}, s_{22})$. In this subsection, we use it to calculate the moments of $\theta_{n}$ by a method provided by Ernst, Rogers, and Zhou (see Proposition 1 in \cite{ernst2019distribution}). The proposition is as follows:
\begin{proposition}[Ernst et al. (2022)]  \label{propErnst2019}
For $m = 0,1,2,\cdots$, we have
\begin{equation}
E \left( \theta_{n}^{m} \right) = \frac{(-1)^{m}}{2^{m} \Gamma(m/2)^{2} }
\int_{0}^{\infty} \int_{0}^{\infty} s_{11}^{m/2-1} s_{22}^{m/2-1}\, \frac{\partial^{m}\phi_{n}}{\partial s_{12}^{m}}(s_{11}, 0 , s_{22})\, ds_{11} ds_{22}. 
\label{eq:allmoments} 
\end{equation}
\end{proposition}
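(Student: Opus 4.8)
The plan is to reduce the moment $E[\theta_n^m]$ to the joint mgf $\phi_n$ via the classical Gamma--integral representation of negative powers, and then to recognize the resulting expectation as an $s_{12}$--derivative of $\phi_n$ evaluated at $s_{12}=0$. I assume $m\geq 1$; the case $m=0$ reads $1=1$ under the convention $1/\Gamma(0)=0$. Two preliminary facts are used. First, $|\theta_n|\leq 1$ almost surely, by Cauchy--Schwarz applied to the numerator and denominator of \eqref{eq:defforthetan}; hence every $E[\theta_n^m]$ exists and lies in $[-1,1]$. Second, $Z_{11}^n>0$ and $Z_{22}^n>0$ almost surely: by Remark \ref{remark:semidefinite}, $\{Z_{11}^n=0\}$ forces $X_1=\dots=X_n$, a Lebesgue--null linear condition on the Gaussian increments, and likewise for $Z_{22}^n$. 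Now for every $z>0$ the substitution $u=sz/2$ gives
\begin{equation*}
z^{-m/2}=\frac{1}{2^{m/2}\,\Gamma(m/2)}\int_{0}^{\infty}s^{m/2-1}e^{-sz/2}\,ds ,
\end{equation*}
so, writing $\theta_n^m=(Z_{12}^n)^m(Z_{11}^n)^{-m/2}(Z_{22}^n)^{-m/2}$ and applying this with $z=Z_{11}^n$ and $z=Z_{22}^n$,
\begin{equation*}
\theta_n^m=\frac{(Z_{12}^n)^m}{2^{m}\,\Gamma(m/2)^2}\int_{0}^{\infty}\!\!\int_{0}^{\infty}(s_{11}s_{22})^{m/2-1}\,e^{-\frac12(s_{11}Z_{11}^n+s_{22}Z_{22}^n)}\,ds_{11}\,ds_{22}.
\end{equation*}

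Next I would take expectations and interchange $E$ with the double integral. To license this, apply Tonelli's theorem to the same expression with $|Z_{12}^n|^m$ in place of $(Z_{12}^n)^m$: integrating out $E$ first via the previous display (run backwards) shows that the resulting nonnegative double integral equals $2^m\Gamma(m/2)^2\,E|\theta_n|^m\leq 2^m\Gamma(m/2)^2<\infty$. Hence Fubini applies to the original expression, yielding
\begin{equation*}
E[\theta_n^m]=\frac{1}{2^{m}\,\Gamma(m/2)^2}\int_{0}^{\infty}\!\!\int_{0}^{\infty}(s_{11}s_{22})^{m/2-1}\,E\!\left[(Z_{12}^n)^m e^{-\frac12(s_{11}Z_{11}^n+s_{22}Z_{22}^n)}\right]ds_{11}\,ds_{22}.
\end{equation*}

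It then remains to identify the inner expectation. For fixed $s_{11},s_{22}\geq 0$, since $\partial_{s_{12}}^m\exp\{-\frac12(s_{11}Z_{11}^n+2s_{12}Z_{12}^n+s_{22}Z_{22}^n)\}=(-Z_{12}^n)^m\exp\{-\frac12(s_{11}Z_{11}^n+2s_{12}Z_{12}^n+s_{22}Z_{22}^n)\}$, differentiating $m$ times under the expectation sign and setting $s_{12}=0$ gives $\partial_{s_{12}}^m\phi_n(s_{11},0,s_{22})=(-1)^m\,E[(Z_{12}^n)^m e^{-\frac12(s_{11}Z_{11}^n+s_{22}Z_{22}^n)}]$; substituting this into the previous display produces exactly \eqref{eq:allmoments}. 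The differentiation under $E$ is legitimate on a small interval around $s_{12}=0$: because $Z_{11}^n,Z_{22}^n\geq 0$ and $s_{11},s_{22}\geq 0$, for $|s_{12}|\leq\varepsilon$ the absolute value of the $j$-th $s_{12}$--partial of the integrand is at most $|Z_{12}^n|^{j}e^{\varepsilon|Z_{12}^n|}\leq(1+|Z_{12}^n|^m)e^{\varepsilon|Z_{12}^n|}$ for all $j\leq m$, and this dominating variable is integrable once $\varepsilon<1/\max_{1\leq k\leq n}\lambda_k$, since by \eqref{eq:decompositionforZ12} $Z_{12}^n=\sum_k\lambda_k W_kV_k$ is a bilinear form in independent standard normals with $E[e^{tZ_{12}^n}]=\prod_k(1-t^2\lambda_k^2)^{-1/2}<\infty$ for $|t|<1/\max_k\lambda_k$.

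I expect this last justification to be the only genuinely delicate point. Unlike an ordinary correlation coefficient, $Z_{12}^n=\mathbf{\Xi}_n^{\intercal}K_n\mathbf{H}_n$ is sign--indefinite and unbounded, with merely sub--exponential (not sub--Gaussian) tails, so the dominated--convergence bound holds only for $|s_{12}|$ small --- which is all that is needed, since \eqref{eq:allmoments} uses the derivative only at $s_{12}=0$. (An alternative is to differentiate the closed form $\phi_n=(d_n(\rho)\,d_n(\upsilon))^{-1/2}$ of Theorem \ref{thm1} directly, but that route requires controlling the zeros of $d_n$ and is less transparent.) The Fubini step, by contrast, is painless precisely because $|\theta_n|\leq 1$ forces the dominating double integral to converge to an explicit finite constant.
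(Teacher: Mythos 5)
Your proof is correct. The paper itself does not prove this proposition --- it is imported verbatim as Proposition 1 of the cited Ernst--Rogers--Zhou (2019) paper --- and your argument (the Gamma-integral representation of $z^{-m/2}$, Tonelli/Fubini justified by $|\theta_n|\leq 1$, and differentiation under the expectation at $s_{12}=0$ licensed by the finite exponential moments of $Z_{12}^n=\sum_k\lambda_k W_kV_k$) is precisely the standard derivation of that cited result, with the integrability checks supplied in the right places.
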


With Proposition \ref{propErnst2019} in hand, we may now derive an explicit formula for the second moment. Before doing so, we pause to calculate the first derivative of $d_{n}(\lambda)$, which plays an important role in the formula.
A direct calculation yields
\begin{eqnarray*}
&& d_{n}'(\lambda)  \\
&=&  \left(d_{n}(\lambda) - p_n(\lambda/n) + \alpha^2\, p_{n-1}(\lambda/n)\right)/\lambda - \frac{n+1}{n} \,r_{n}(\lambda/n) + \frac{1}{n}\, r_{0}(\lambda/n) \, p_{n}(\lambda/n)  \\
&& + \alpha^2 \, r_{n-1}(\lambda/n) - \frac{\alpha^2}{n} \, r_{0}(\lambda/n) \, p_{n-1}(\lambda/n) - \frac{n+1}{n^2}\, \frac{\lambda}{\Delta(\lambda/n)} \, p_{n}(\lambda/n) \\
&& + \frac{2}{n^2} \, \lambda \,  r_{0}(\lambda/n) \, r_{n}(\lambda/n) + \frac{(n-1) \alpha^2}{n^2} \, \frac{\lambda}{\Delta(\lambda/n)}\, p_{n-1}(\lambda/n) \\
&& -\frac{2(n-1) \alpha^2}{n^3} \, \lambda \, r_{0}(\lambda/n) \, r_{n-1}(\lambda/n)   \\
&& - \frac{2(n^2 -1) \alpha}{n^3} \, \lambda \, p_{n}(\lambda/n) \, l_{1}(\lambda/n) + \frac{2(n-1) \alpha}{n^3} \, \lambda \, r_{n}(\lambda/n) l_{2}(\lambda/n)  \\
&& + \frac{2(n -2) \alpha^3}{n^2} \, \lambda \, p_{n-1}(\lambda/n) \, l_{1}(\lambda/n) - \frac{2(n-2) \alpha^3}{n^3} \, \lambda \, r_{n-1}(\lambda/n) l_{2}(\lambda/n)  \\
&& + \frac{2(n +1) \alpha^2}{n^2} \, \lambda \, p_{n-1}(\lambda/n) \, l_{1}(\lambda/n) - \frac{2(n+1) \alpha^2}{n^3} \, \lambda \, r_{n-1}(\lambda/n) l_{2}(\lambda/n)  \\
&& - \frac{2(n -1) \alpha^4}{n^2} \, \lambda \, p_{n-2}(\lambda/n) \, l_{1}(\lambda/n) + \frac{2 \alpha^4}{n^2} \, \lambda \, r_{n-2}(\lambda/n) l_{2}(\lambda/n)  \\
&& - \frac{2\alpha^{n+1} (1- \alpha)}{n^3} \, \lambda \, l_{1}(\lambda/n) + \frac{2 \alpha^{n+1} (1- \alpha)}{n^3} \, \lambda\, r_{0}(\lambda/n)\, l_2(\lambda/n) \\
&& + \frac{4 \alpha^{n+2} (1-\alpha)}{n^3} \, \frac{\lambda}{\Delta(\lambda/n)} \, l_2(\lambda/n).
\end{eqnarray*}
For $n \in \mathds{N}$, we have that
\begin{eqnarray*}
&& p_{n}(\lambda) := \frac{ \gamma_{1}^{n+1}(\lambda) - \gamma_{2}^{n+1}(\lambda) }{ \sqrt{ \Delta(\lambda) } }, \\
&& r_{n}(\lambda) := \frac{\gamma_{1}^{n+1}(\lambda)  +  \gamma_{2}^{n+1}(\lambda)}{\Delta(\lambda)},  \\ 
&& l_{1}(\lambda) := \left[\Delta(\lambda) \left( (1-\alpha)^2  - \lambda\right)\right]^{-1} ,  \\ [1mm]
&& l_{2}(\lambda) := \frac{3 \gamma_{1}(\lambda) + 3 \gamma_{2}(\lambda) + 2 \alpha}{\Delta(\lambda) \left( (1-\alpha)^2  - \lambda\right) }.
\end{eqnarray*}

With the above preparation in hand, we can now provide an explicit integral representation for the second moment of $\sqrt{n} \, \theta_n$.

\begin{theorem}  \label{thm:explicitexpressionforsecondmoment}
The second moment of $\sqrt{n}\, \theta_n$ is
\begin{equation*}
E\left[ \left(\sqrt{n}\, \theta_n \right)^2 \right] 
= \frac{n}{4} \, \int_{0}^{\infty} \int_{0}^{\infty} \frac{ \frac{d_{n}'(-\max(s_{11}, s_{22}))}{ d_{n} (-\max(s_{11}, s_{22}))} -   \frac{d_{n}'(-\min(s_{11}, s_{22}))}{ d_{n} (-\min(s_{11}, s_{22}))}}{\left[ d_{n}(-\max(s_{11}, s_{22})) d_{n}(-\min(s_{11}, s_{22})) \right]^{1/2} |s_{11}-s_{22}|} \, ds_{11} ds_{22}.
\end{equation*}
\end{theorem}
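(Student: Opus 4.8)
The plan is to obtain the formula as a direct consequence of Proposition~\ref{propErnst2019} together with the closed form for $\phi_n$ in Theorem~\ref{thm1}. Since $\theta_n$ is a sample correlation coefficient, $|\theta_n|\le 1$ almost surely, so all moments of $\theta_n$ exist and the representation \eqref{eq:allmoments} is available. Taking $m=2$ there (and using $\Gamma(1)=1$, $s_{11}^{0}s_{22}^{0}=1$) gives
\begin{equation*}
E\bigl[(\sqrt{n}\,\theta_n)^2\bigr] \;=\; n\,E(\theta_n^2) \;=\; \frac{n}{4}\int_{0}^{\infty}\!\!\int_{0}^{\infty}\frac{\partial^{2}\phi_n}{\partial s_{12}^{2}}(s_{11},0,s_{22})\,ds_{11}\,ds_{22},
\end{equation*}
so the entire task reduces to evaluating $\partial^{2}_{s_{12}}\phi_n$ at $s_{12}=0$.

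Fix $s_{11},s_{22}>0$ and write $f(s_{12}):=\sqrt{(s_{11}-s_{22})^{2}+4s_{12}^{2}}$, so that, by Theorem~\ref{thm1},
\begin{equation*}
\phi_n(s_{11},s_{12},s_{22})=\bigl(d_n(\rho)\,d_n(\upsilon)\bigr)^{-1/2},\qquad \rho=-\tfrac12\bigl(s_{11}+s_{22}+f\bigr),\qquad \upsilon=-\tfrac12\bigl(s_{11}+s_{22}-f\bigr).
\end{equation*}
A short computation gives $f'(0)=0$ and $f''(0)=4/|s_{11}-s_{22}|$, hence, regarding $\rho,\upsilon$ as functions of $s_{12}$ near $0$,
\begin{equation*}
\rho(0)=-\max(s_{11},s_{22}),\quad \upsilon(0)=-\min(s_{11},s_{22}),\quad \rho'(0)=\upsilon'(0)=0,\quad \rho''(0)=-\tfrac{2}{|s_{11}-s_{22}|},\quad \upsilon''(0)=\tfrac{2}{|s_{11}-s_{22}|}.
\end{equation*}
Next I would differentiate $\log\phi_n=-\tfrac12\log d_n(\rho)-\tfrac12\log d_n(\upsilon)$ twice in $s_{12}$ by the chain rule; every term in $\partial^{2}_{s_{12}}\phi_n$ that carries a factor $\rho'$ or $\upsilon'$ drops out at $s_{12}=0$, leaving
\begin{equation*}
\frac{\partial^{2}\phi_n}{\partial s_{12}^{2}}(s_{11},0,s_{22}) = \phi_n(s_{11},0,s_{22})\left(-\tfrac12\,\frac{d_n'(\rho_0)}{d_n(\rho_0)}\,\rho''(0)-\tfrac12\,\frac{d_n'(\upsilon_0)}{d_n(\upsilon_0)}\,\upsilon''(0)\right),
\end{equation*}
where $\rho_0=-\max(s_{11},s_{22})$ and $\upsilon_0=-\min(s_{11},s_{22})$. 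Substituting $\phi_n(s_{11},0,s_{22})=[d_n(\rho_0)d_n(\upsilon_0)]^{-1/2}$ together with the values of $\rho''(0),\upsilon''(0)$ produces exactly the integrand in the statement; integrating and multiplying by $n/4$ finishes the proof.

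The chain-rule computation itself is routine, so the points deserving real care are the regularity and integrability questions. First one must confirm that the hypotheses under which Proposition~\ref{propErnst2019} is stated hold in the AR(1) setting (so that \eqref{eq:allmoments} may be invoked), and that $\phi_n$ is twice differentiable in $s_{12}$ near $s_{12}=0$ for almost every $(s_{11},s_{22})$ — differentiability fails only on the diagonal $s_{11}=s_{22}$, where $f(s_{12})=2|s_{12}|$ has a corner, which is a null set. Second, the factor $|s_{11}-s_{22}|^{-1}$ is only an apparent singularity: since $\rho_0-\upsilon_0=-|s_{11}-s_{22}|$ and $\lambda\mapsto d_n'(\lambda)/d_n(\lambda)$ is smooth and strictly decreasing on $(-\infty,0]$ — here I use $d_n(\lambda)=\prod_{j}(1-\lambda_j\lambda)$ with all $\lambda_j\ge 0$ from Remark~\ref{remark:semidefinite} — the numerator is $O(|s_{11}-s_{22}|)$ near the diagonal, so the integrand stays bounded there (and, incidentally, nonnegative, consistent with $E[(\sqrt{n}\,\theta_n)^2]\ge 0$). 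Controlling the integrand for large $s_{11},s_{22}$, and thereby confirming absolute convergence of the double integral, again relies on the explicit form of $d_n$ and $d_n'$ from Lemma~\ref{lem1}; I expect this estimation, rather than anything conceptual, to be the main obstacle.
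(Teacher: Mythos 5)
Your proposal is correct and follows the same route the paper intends (the paper omits the proof, deferring to the analogous Theorem 3 of Ernst et al. (2021), which likewise applies Proposition \ref{propErnst2019} with $m=2$ to the explicit mgf of Theorem \ref{thm1}). Your chain-rule evaluation of $\partial_{s_{12}}^{2}\phi_n(s_{11},0,s_{22})$, including the values $\rho''(0)=-2/|s_{11}-s_{22}|$ and $\upsilon''(0)=2/|s_{11}-s_{22}|$ and the observation that the $\rho',\upsilon'$ terms vanish, reproduces the stated integrand exactly, and your remarks on the removable singularity at $s_{11}=s_{22}$ and on nonnegativity via the monotonicity of $d_n'/d_n$ are accurate.
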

\begin{proof}
Since the proof is nearly identical to that of Theorem 3 in \cite{ernst2021yule}, we omit the details.
\end{proof}

\subsection{Numerics.}
We now turn to numerics. \textsf{Mathematica} allows us to calculate the second moment of the scaled empirical correlation $\sqrt{n} \theta_{n}$ for any given $n$ for any $ \alpha $ satisfying $|\alpha|<1$. The numerical results are summarized in Table \ref{tab:Yulenumerical}.
\begin{table}[H]
\centering
\begin{tabular}{ccccccc}
\toprule[1.5pt]
\specialrule{0em}{2pt}{2pt}
 $n$  & 10  &   20  & 30  &  40  &  50  &  60  \\
 $E \left(\sqrt{n}\theta_{n} \right)^2$  & 1.122613  & 1.068110 & 1.051453 & 1.043226 & 1.038489 & 1.035362 \\
 \hline
 \specialrule{0em}{2pt}{2pt}
 $n$  & 70  &   80  & 90  &  100  &  200  &  300  \\
 $E \left(\sqrt{n}\theta_{n} \right)^2$  & 1.033146 & 1.031493 & 1.030211 & 1.029190 & 1.024627 & 1.023118 \\
 \hline
 \specialrule{0em}{2pt}{2pt}
 $n$  & 400  &   500  & 600  &  700  &  800  &  $\infty$  \\
 $E \left(\sqrt{n}\theta_{n} \right)^2$  & 1.022367 & 1.021917 & 1.021616 & 1.021402 & 1.021242 & 1.020202 \\
\bottomrule[1.5pt]
\end{tabular}
\caption{Numerical Results of the second moment of $\sqrt{n}\theta_n$ for various values of $n$ when $\alpha=0.1$ and the AR(1) processes are independent.}
\label{tab:Yulenumerical}
\end{table}
For higher-order moments as represented in \eqref{eq:allmoments}, we can use \textsf{Mathematica} to perform symbolic high-order differentiation and two-dimensional integration. This then gives the higher moments of $\sqrt{n}\theta_n$ for all $n$. As all odd moments vanish, we need only focus on the even moments. The numerical results for higher-order moments of $\sqrt{30} \, \theta_{30}$ with $\alpha=0.05$ are summarized in Table \ref{tab:Yulenumericalhighmomentfor30}.
\begin{table}[H]
\centering
\begin{tabular}{cccccc}
\toprule[1.5pt]
 $k$  & 2  &   4  & 6  &  8   & 10 \\
 $E \left( \sqrt{30} \, \theta_{30} \right)^{k}$  & 1.038702  & 3.026394 & 11.938520 & 73.447734  & 545.793589 \\
\bottomrule[1.5pt]
\end{tabular}{}
\caption{Numerical Results of higher-order moments of $\sqrt{30}\,\theta_{30}$ when $\alpha=0.05$ and the AR(1) processes are assumed independent.}
\label{tab:Yulenumericalhighmomentfor30}
\end{table}

The numerical results of higher-order moments of $\sqrt{30}\,\theta_{30}$ enable us to approximate its density using the Legendre polynomial approximation method (see, for example, \cite{provost2005moment}). In Figure \ref{Fig.1}, we present an approximation to the density of $\sqrt{30}\, \theta_{30}$ based on its first ten moments. This approximation shows that the distribution of $\sqrt{30}\, \theta_{30}$  very closely resembles the Gaussian.

\begin{figure}[htbp] 
\centering 
\includegraphics[width=0.65\textwidth]{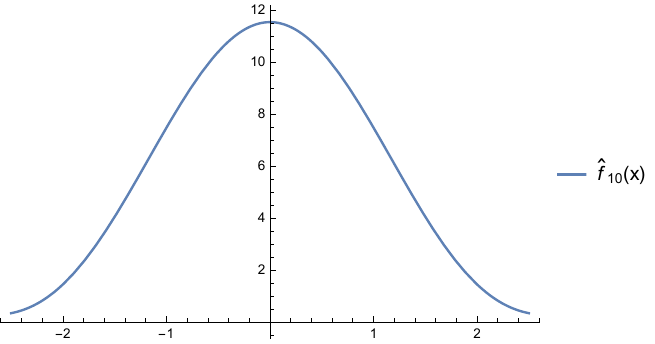} 
\caption{The 10th-order polynomial approximation to the probability density function of $\sqrt{30} \,\theta_{30} $ for $\alpha =0.5$. The AR(1) processes are assumed to be independent.} 
\label{Fig.1} 
\end{figure}

\subsection{Correlated increments.}  \label{sbsec:numericorrelatedincrements}
In the previous subsections, we have studied the distribution of the scaled empirical correlation of two independent AR(1) processes by deriving formulas for the higher-order moments. The same techniques may also be employed if the AR(1) processes are now assumed to have correlated increments. Specifically, let us assume that the pairs of the increments $(\xi_n, \eta_n)$ are i.i.d. Gaussian random vectors with mean zero and covariance matrix $$\begin{pmatrix}
1 & r \\
r & 1
\end{pmatrix},$$ where $|r| \leq 1$. We then say that these two AR(1) processes are correlated with coefficient $r$. With slight abuse of notation, we also denote by $\theta_n$ the empirical correlation of these two correlated AR(1) processes and denote by $\phi_{n}(s_{11}, s_{12}, s_{22})$ the joint moment generating function. After a similar calculation to that above, we obtain 
\begin{equation*}
\phi_{n}(s_{11}, s_{12}, s_{22})= \left( d_{n}(\rho_r) \, d_{n}(\upsilon_r)  \right)^{-1/2},
\end{equation*}
where $\rho_r$ and $\upsilon_r$ are defined as 
\begin{eqnarray*}
&&\rho_r := \rho_{r}\left(s_{11}, s_{12}, s_{22}\right) = -\frac{s_{11}+s_{22} + 2r s_{12} + \sqrt{(s_{11}-s_{22})^2 + 4 (rs_{11} + s_{12})(rs_{22} + s_{12})  }}{2} ,   \\ [1mm]
&&\upsilon_r :=  \upsilon_{r}\left(s_{11}, s_{12}, s_{22}\right) = -\frac{s_{11}+s_{22} + 2r s_{12} - \sqrt{(s_{11}-s_{22})^2 + 4 (rs_{11} + s_{12})(rs_{22} + s_{12})  }}{2}.   
\end{eqnarray*}
Together with Proposition \ref{propErnst2019}, we can derive expressions for the higher-order moments of $\sqrt{n} \,\theta_n$. Furthermore, we can similarly obtain numerical results for the higher-order moments of $\sqrt{n} \, \theta_n$ and subsequently approximate its density using \textsf{Mathematica}. The results are summarized in Table \ref{tab:Yulenumericalhighmomentfor30correlated} and Figure \ref{Fig.2}. The computation of moments beyond the $9$th order requires substantial memory capacity. Therefore, we only provide numerical results for the moments up to order $9$. Additionally, as demonstrated in Figure \ref{Fig.2}, using the first $9$ moments is effective in generating a ``good'' approximation for the density.

\begin{table}[H]
\centering
\begin{tabular}{cccccc}
\toprule[1.5pt]
\specialrule{0em}{2pt}{2pt}
 $k$  & 1  &   2  & 3 &  4 & 5    \\
 $E \left( \sqrt{30} \, \theta_{30} \right)^{k}$  & 0.538403  & 1.309724 & 1.697504  & 4.567613  & 8.285348 \\
 \hline
 \specialrule{0em}{2pt}{2pt}
 $k$  & 6  &   7  & 8  &  9 &  \\
 $E \left( \sqrt{30} \, \theta_{30} \right)^{k}$  & 24.081011 & 52.901232  & 165.222506  & 525.234538 & \\
\bottomrule[1.5pt]
\end{tabular}
\caption{Numerical results of higher-order moments of $\sqrt{30}\,\theta_{30}$ for $\alpha=0.05$. The AR(1) processes are assumed to be correlated with $r =0.1$.}
\label{tab:Yulenumericalhighmomentfor30correlated}
\end{table}

\begin{figure}[htbp] 
\centering 
\includegraphics[width=0.65\textwidth]{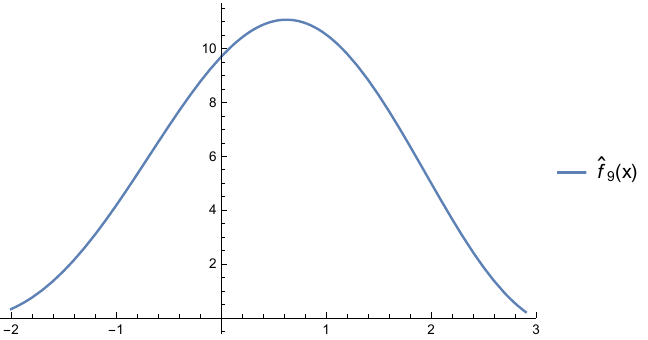} 
\caption{The 9th-order polynomial approximation to the probability density function of $\sqrt{30} \,\theta_{30}$ and $\alpha =0.5$. The AR(1) processes are assumed to be correlated with $r =0.1$.} 
\label{Fig.2} 
\end{figure}

\section{Convergence rate of $ \sqrt{n} \theta_n$.} \label{sec:mainconvergence}
In this section, we shall investigate the convergence rates of $\sqrt{n} \, \theta_n$ to the standard normal distribution. We shall develop upper bounds for the rate of convergence in both Wasserstein distance and Kolmogorov distance.

We begin by writing $\sqrt{n}\, \theta_n$ as 
\begin{equation*}
\sqrt{n}\, \theta_n = \frac{\sqrt{n} \, Z_{12}^{n}}{ \sqrt{ Z_{11}^{n}  Z_{22}^{n} }}
\end{equation*}
We first investigate the convergence rates of the numerator (after scaling) to the standard normal distribution utilizing results of Nourdin and Peccati in  \eqref{eq:malliavinboundforKolmogorov} and \eqref{eq:malliavinboundforWasserstein}. We then study convergence rates for the entire fraction. However, before studying these convergence rates, we first need to prepare several necessary estimates for the eigenvalues of the matrix $K_n$ as defined in \eqref{eq:defKn}.

\subsection{Estimates for eigenvalues of $K_n$.} \label{subsec:estimateeigenvalues}
As before, we use $\lambda_1 \geq \lambda_2 \geq \dots \geq \lambda_n \geq 0$ to denote the eigenvalues of $K_n$. In what follows, we will give four lemmas regarding estimates of power sum symmetric polynomials in $\lambda_1, \lambda_2, \dots, \lambda_n$ and of the product of these positive eigenvalues. The first two lemmas, Lemma \ref{lm:lambda1} and \ref{lm:lambda2}, involve long but direct calculations. In Lemma \ref{lm:lambda4}, we apply Young’s convolution inequality to simplify the calculation. The explicit computation of the product of the positive eigenvalues of $K_n$ in Lemma \ref{lm:productlambda} is due to the explicit expression of $d_{n}$. The proofs of the last three lemmas are much more involved and are deferred to Appendix B. We now continue with Lemma \ref{lm:lambda1} below.

\begin{lemma} \label{lm:lambda1}
Let
\begin{equation*}
\kappa_{1}(n) = - \frac{\alpha^{2}(1- \alpha^{2n}) + (1+ \alpha)^2}{ (1- \alpha^2)^2} + \frac{2\alpha(1+\alpha)(1- \alpha^{n}) - \alpha^2 (1- \alpha^{2n})}{n \, (1-\alpha)^2 (1- \alpha^2)}.
\end{equation*}
Then
\begin{equation*}
\sum_{k=1}^{n} \lambda_k
= \frac{1}{1- \alpha^2} +  \frac{\kappa_{1}(n)}{n}.
\end{equation*}
In particular, for every $n \in \mathds{N}_{+}$,
\begin{equation*}
|\kappa_{1}(n)| \leq \frac{\alpha^2 + (1 + \alpha)^2}{(1- \alpha^2)^2} + \frac{4 |\alpha| + 5 \alpha^2}{(1- \alpha)^2 (1- \alpha^2)} =: C_{1}(\alpha).
\end{equation*}
\end{lemma}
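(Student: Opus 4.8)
The plan is to compute $\sum_{k=1}^n \lambda_k = \operatorname{tr}(K_n)$ directly from the entrywise definition \eqref{eq:defKn}, and then identify the closed form. Since
$$
\operatorname{tr}(K_n) = \sum_{j=1}^n \left[ \frac{1}{n}\,\frac{1 - \alpha^{2j}}{1-\alpha^2} - \frac{1}{n^2}\,\frac{(1-\alpha^j)^2}{(1-\alpha)^2} \right],
$$
the task reduces to evaluating the two elementary geometric-type sums $\sum_{j=1}^n \alpha^{2j}$, $\sum_{j=1}^n \alpha^j$, and $\sum_{j=1}^n \alpha^{2j}$ again (the last coming from expanding $(1-\alpha^j)^2 = 1 - 2\alpha^j + \alpha^{2j}$). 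First I would write $\sum_{j=1}^n \alpha^j = \alpha(1-\alpha^n)/(1-\alpha)$ and $\sum_{j=1}^n \alpha^{2j} = \alpha^2(1-\alpha^{2n})/(1-\alpha^2)$, substitute, and collect terms. The leading term $\frac{1}{n}\cdot n \cdot \frac{1}{1-\alpha^2} = \frac{1}{1-\alpha^2}$ appears from the constant part of $\frac{1-\alpha^{2j}}{1-\alpha^2}$; everything else is $O(1/n)$. Grouping the remaining pieces, the $1/n$ coefficient is exactly $\kappa_1(n)$ as defined, which is a matter of careful bookkeeping: the $-\frac{\alpha^2(1-\alpha^{2n})}{(1-\alpha^2)^2}$ and the $-\frac{(1+\alpha)^2 \cdot \text{(something)}}{\cdots}$ come from combining the $-\frac{1}{n}\cdot\frac{\alpha^2(1-\alpha^{2n})}{(1-\alpha^2)^2}$ term from the first sum with the $O(1/n^2)\cdot n$ contribution $-\frac{1}{n}\cdot\frac{1}{(1-\alpha)^2}$ and its companions from the $(1-\alpha^j)^2$ expansion.

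Once the identity $\sum \lambda_k = \frac{1}{1-\alpha^2} + \frac{\kappa_1(n)}{n}$ is established, the uniform bound $|\kappa_1(n)| \le C_1(\alpha)$ follows from the triangle inequality applied to the explicit formula for $\kappa_1(n)$, using that $|\alpha| < 1$ so that $|1 - \alpha^{2n}| \le 2$, $|1 - \alpha^n| \le 2$, and $\tfrac1n \le 1$ for all $n \in \mathds{N}_+$. Concretely, $\left|\frac{\alpha^2(1-\alpha^{2n}) + (1+\alpha)^2}{(1-\alpha^2)^2}\right| \le \frac{\alpha^2 + (1+\alpha)^2}{(1-\alpha^2)^2}$ (here one should note $\alpha^2(1-\alpha^{2n}) \ge 0$ is not automatic unless $|\alpha|<1$, which it is, and in fact one can bound $|\alpha^2(1-\alpha^{2n})| \le \alpha^2$ since $0 \le \alpha^{2n} \le 1$), and similarly $\left|\frac{2\alpha(1+\alpha)(1-\alpha^n) - \alpha^2(1-\alpha^{2n})}{n(1-\alpha)^2(1-\alpha^2)}\right| \le \frac{4|\alpha| + 5\alpha^2}{(1-\alpha)^2(1-\alpha^2)}$, where the numerator bound uses $|2\alpha(1+\alpha)(1-\alpha^n)| \le 2|\alpha|(1+|\alpha|)\cdot 2 \le 4|\alpha| + 4\alpha^2$ and $|\alpha^2(1-\alpha^{2n})| \le \alpha^2$, totalling $4|\alpha| + 5\alpha^2$. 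Summing the two pieces gives $C_1(\alpha)$.

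The only genuine obstacle here is the algebraic bookkeeping in matching the $O(1/n)$ remainder exactly to the stated $\kappa_1(n)$: there are several fractions with denominators $(1-\alpha)^2$, $(1-\alpha^2)$, and $(1-\alpha^2)^2$ that must be brought to common form and combined, and a sign error or a missing cross-term would produce a $\kappa_1$ that looks superficially different. I would handle this by expanding $(1-\alpha^j)^2 = 1 - 2\alpha^j + \alpha^{2j}$ cleanly, treating the three resulting sums separately, and writing each contribution with its explicit $n$-dependence before collecting. The passage to the uniform bound is then routine. I do not anticipate any conceptual difficulty, as the computation of $\operatorname{tr}(K_n)$ does not require the spectral decomposition or the polynomial $d_n(\lambda)$ at all — it is purely a matter of summing the diagonal entries of $K_n$.
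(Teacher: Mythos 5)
Your proposal is correct and follows exactly the paper's own proof: compute $\sum_k\lambda_k=\mathrm{tr}(K_n)$ from the diagonal entries of $K_n$, expand $(1-\alpha^k)^2=1-2\alpha^k+\alpha^{2k}$, evaluate the geometric sums, and collect the $O(1/n)$ remainder into $\kappa_1(n)$; the uniform bound via $|\alpha^2(1-\alpha^{2n})|\le\alpha^2$ and $|2\alpha(1+\alpha)(1-\alpha^n)|\le 4|\alpha|(1+\alpha)\le 4|\alpha|+4\alpha^2$ is also the argument the paper gives.
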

\begin{proof}
We begin by proving the first assertion.
\begin{eqnarray*}
&& \sum_{k=1}^{n} \lambda_k = \mathrm{tr}(K_n)
= \sum_{k=1}^{n}\left[ \frac{1}{n} \, \frac{1- \alpha^{2k}}{1- \alpha^2}  - \frac{1}{n^2} \, \frac{(1-\alpha^k)^2}{(1-\alpha)^2} \right] \\
&=& \frac{1}{n} \, \sum_{k=1}^{n} \frac{1- \alpha^{2k}}{1- \alpha^2} - \frac{1}{n^2} \, \sum_{k=1}^{n} \frac{1- 2 \alpha^k + \alpha^{2k}}{(1-\alpha)^2} \\
&=& \frac{1}{1- \alpha^2} - \frac{\alpha^2(1- \alpha^{2n})}{n\, (1-\alpha^2)^{2}} - \frac{1}{n \, (1-\alpha)^2} + \frac{2 \alpha(1-\alpha^n)}{n^2 \, (1-\alpha)^3} - \frac{\alpha^2 (1- \alpha^{2n})}{n^2 \,(1-\alpha)^2  (1- \alpha^2)} \\ [1mm]
&=& \frac{1}{1- \alpha^2} - \frac{\alpha^{2}(1- \alpha^{2n}) + (1+ \alpha)^2}{n \, (1- \alpha^2)^2} + \frac{2\alpha(1+\alpha)(1- \alpha^{n}) - \alpha^2 (1- \alpha^{2n})}{n^2 \, (1-\alpha)^2 (1- \alpha^2)}.
\end{eqnarray*}
The second assertion follows immediately by noting that $ |\alpha^2 (1- \alpha^{2n}) | \leq \alpha^2 $ and $| 2 \alpha ( 1+ \alpha)(1- \alpha^n) | \leq 4 |\alpha| (1+ \alpha) $.
\end{proof}

We now continue with Lemma \ref{lm:lambda2} below.

\begin{lemma} \label{lm:lambda2}
There exists a bounded function $\kappa_{2}(n)$ on $\mathds{N}_+$ such that
\begin{equation*}
\sum_{k=1}^{n} \lambda_{k}^{2} = \frac{1+ \alpha^2}{n \, (1- \alpha^2)^3} + \frac{\kappa_{2}(n)}{n^2}.
\end{equation*}
In fact,
\begin{eqnarray*}
|\kappa_{2}(n)| &\leq&  \frac{4}{(1- \alpha^2)^3} \, \sup_{n \in \mathds{N}_{+}} n \alpha^{2n+2} + \frac{4 \alpha^2 + \alpha^4}{(1- \alpha^2)^4}  + \frac{8}{(1- \alpha)^2 ( 1- \alpha^2)}  \\
&& + \frac{16|\alpha|}{(1- |\alpha|)(1- \alpha)^2 ( 1- \alpha^2)} + \frac{16}{(1-\alpha)^4} = : C_2(\alpha).
\end{eqnarray*}
\end{lemma}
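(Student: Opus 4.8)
The plan is to compute $\sum_{k=1}^n \lambda_k^2 = \mathrm{tr}(K_n^2) = \sum_{j,k=1}^n (K_n)_{jk}^2$ directly from the explicit entries of $K_n$ given in \eqref{eq:defKn}. Writing $(K_n)_{jk} = \frac{1}{n} a_{jk} - \frac{1}{n^2} b_{jk}$ with
$a_{jk} = \frac{\alpha^{|k-j|} - \alpha^{k+j}}{1-\alpha^2}$ and $b_{jk} = \frac{(1-\alpha^k)(1-\alpha^j)}{(1-\alpha)^2}$, I would expand the square as
$(K_n)_{jk}^2 = \frac{1}{n^2} a_{jk}^2 - \frac{2}{n^3} a_{jk} b_{jk} + \frac{1}{n^4} b_{jk}^2$, and sum each of the three double sums over $j,k = 1,\dots,n$ separately.

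The leading term is $\frac{1}{n^2}\sum_{j,k} a_{jk}^2$. Here I would further expand $a_{jk}^2 = \frac{1}{(1-\alpha^2)^2}\bigl(\alpha^{2|k-j|} - 2\alpha^{|k-j|+k+j} + \alpha^{2(k+j)}\bigr)$ and evaluate the three resulting geometric-type double sums. The dominant contribution comes from $\sum_{j,k}\alpha^{2|k-j|}$: counting, for each fixed value $d = |k-j|$, the number of pairs, one gets $n + 2\sum_{d=1}^{n-1}(n-d)\alpha^{2d} = \frac{n(1+\alpha^2)}{1-\alpha^2} + O(1)$ uniformly in $n$, where the $O(1)$ remainder is controlled by $\sum_{d\ge 1} d\,\alpha^{2d} < \infty$. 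The cross term $\sum_{j,k}\alpha^{|k-j|+k+j}$ and the term $\sum_{j,k}\alpha^{2(k+j)} = \bigl(\frac{\alpha^2(1-\alpha^{2n})}{1-\alpha^2}\bigr)^2$ are both $O(1)$ uniformly in $n$ (each geometric series converges absolutely), so these contribute only to the $\kappa_2(n)/n^2$ remainder after dividing by $n^2$. Thus $\frac{1}{n^2}\sum_{j,k} a_{jk}^2 = \frac{1+\alpha^2}{n(1-\alpha^2)^3} + O(1/n^2)$, which is exactly the stated main term. The remaining two double sums, $\frac{2}{n^3}\sum_{j,k}a_{jk}b_{jk}$ and $\frac{1}{n^4}\sum_{j,k}b_{jk}^2$, I would bound crudely: since $|b_{jk}| \le \frac{(1+|\alpha|)^2}{(1-\alpha)^2} \le \frac{4}{(1-\alpha)^2}$ uniformly and $\sum_{j,k}|a_{jk}| \le \frac{n(1+\alpha^2)}{(1-\alpha^2)^2} + \cdots = O(n)$ (and $\sum_{j,k}|b_{jk}| = O(n^2)$), both of these are $O(1/n^2)$, contributing to $\kappa_2(n)/n^2$. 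Collecting all the $O(1/n^2)$ pieces gives a bounded $\kappa_2(n)$, and tracking the explicit constants (using $\sup_n n\alpha^{2n+2} < \infty$ for the one term that genuinely needs a uniform-in-$n$ bound on $n$ times a decaying exponential, namely the $\frac{2n}{n^3}$-weighted pieces where a factor of $n$ survives) yields the displayed bound $C_2(\alpha)$.

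The main obstacle is purely bookkeeping: there are nine double sums after full expansion (three from $a_{jk}^2$, four from $a_{jk}b_{jk}$ after expanding $b_{jk}$ into its four monomial pieces, plus the $b_{jk}^2$ terms), and one must verify for each that the contribution is either the main term or genuinely $O(1/n^2)$ with an explicitly computable constant — paying particular attention to the handful of terms carrying a factor like $n\alpha^{2n}$ that are bounded only after invoking $\sup_{n}n\alpha^{2n+2} < \infty$. No conceptual difficulty arises; the care needed is in the absolute convergence of the geometric series (valid since $|\alpha| < 1$) and in not losing a power of $n$ when splitting $|k-j|$-sums. This is the same computational pattern as in Lemma \ref{lm:lambda1} but at second order, hence the proof is deferred to Appendix B as the authors indicate.
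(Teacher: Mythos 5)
Your proposal is correct and follows essentially the same route as the paper's proof in Appendix B: computing $\mathrm{tr}(K_n^2)$ entrywise, expanding the square of $\frac{1}{n}a_{jk}-\frac{1}{n^2}b_{jk}$ into three double sums, extracting the main term $\frac{1+\alpha^2}{n(1-\alpha^2)^3}$ from $\sum_{j,k}\alpha^{2|k-j|}$, and bounding the remainders via $|\alpha^{|k-j|}-\alpha^{k+j}|\leq|\alpha|^{|k-j|}$, $|(1-\alpha^k)(1-\alpha^j)|\leq 4$, and $\sup_{n}n\alpha^{2n+2}<\infty$. The only difference is that the paper carries out the geometric sums exactly to produce the explicit constants in $C_2(\alpha)$, whereas you leave some pieces as $O(1/n^2)$; this is bookkeeping, not a gap.
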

\begin{proof}
See Appendix B.
\end{proof}

\begin{lemma}  \label{lm:lambda4}
Let
\begin{equation*}
C_3(\alpha) := \left( \frac{2^{4/3}}{ (1- \alpha^2)^{4/3} ( 1- |\alpha|^{4/3}) }  + \frac{16}{(1-\alpha)^{8/3}}\right)^3.
\end{equation*}
Then
\begin{equation*}
\sum_{k=1}^{n} \lambda_{k}^{4} \leq C_3 (\alpha) \, n^{-3}.
\end{equation*}
\end{lemma}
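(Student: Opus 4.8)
The plan is to estimate $\sum_{k=1}^{n}\lambda_k^4=\operatorname{tr}(K_n^4)$ by replacing $K_n$ with a symmetric \emph{nonnegative} matrix that dominates it entrywise, splitting that matrix into a Toeplitz part (coming from the AR$(1)$ autocovariances) plus a rank‑one part (coming from the subtracted empirical means), and applying Young's convolution inequality to the Toeplitz part. First I would record the elementary bound
$$
\bigl|(K_n)_{jk}\bigr|\ \le\ \frac{|\alpha|^{|j-k|}}{n(1-\alpha^2)}+\frac{4}{n^2(1-\alpha)^2}\ =:\ (W_n)_{jk},
$$
which uses $\bigl|\alpha^{|j-k|}-\alpha^{j+k}\bigr|=|\alpha|^{|j-k|}\bigl(1-\alpha^{2\min(j,k)}\bigr)\le |\alpha|^{|j-k|}$ and $|1-\alpha^{j}|\le 2$. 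Since $W_n$ is symmetric with nonnegative entries and $|(K_n)_{jk}|\le (W_n)_{jk}$, it follows that $|(K_n^2)_{jk}|\le (W_n^2)_{jk}$, hence $\operatorname{tr}(K_n^4)=\|K_n^2\|_F^2\le \|W_n^2\|_F^2=\operatorname{tr}(W_n^4)$, where $\|\cdot\|_F$ is the Frobenius norm. So it suffices to bound $\operatorname{tr}(W_n^4)$.

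Next I would write $W_n=U_n+V_n$ with $U_n=\bigl\{\tfrac{|\alpha|^{|j-k|}}{n(1-\alpha^2)}\bigr\}_{j,k=1}^{n}$ a scaled symmetric Toeplitz matrix and $V_n=\tfrac{4}{n^2(1-\alpha)^2}\,\mathbf 1\mathbf 1^{\intercal}$ of rank one, and expand $\operatorname{tr}\bigl((U_n+V_n)^4\bigr)$. For the principal term, put $\tilde c=(|\alpha|^{|m|})_{m\in\ZZ}$ and $\tilde T_n=\{|\alpha|^{|j-k|}\}_{j,k=1}^{n}$, so that $\operatorname{tr}(U_n^4)=\tfrac{1}{n^4(1-\alpha^2)^4}\,\|\tilde T_n^2\|_F^2$. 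Since $(\tilde T_n^2)_{il}=\sum_{j=1}^{n}|\alpha|^{|i-j|+|j-l|}\le (\tilde c*\tilde c)_{i-l}$, the convolution being over $\ZZ$, and since at most $n$ pairs $(i,l)\in\{1,\dots,n\}^2$ share a given difference, we get $\|\tilde T_n^2\|_F^2\le n\,\|\tilde c*\tilde c\|_{\ell^2(\ZZ)}^2$. Young's convolution inequality with the exponent relation $\tfrac34+\tfrac34=1+\tfrac12$ gives $\|\tilde c*\tilde c\|_{\ell^2}\le \|\tilde c\|_{\ell^{4/3}}^2$, while $\|\tilde c\|_{\ell^{4/3}}^{4/3}=\sum_{m\in\ZZ}|\alpha|^{4|m|/3}=\tfrac{1+|\alpha|^{4/3}}{1-|\alpha|^{4/3}}\le \tfrac{2}{1-|\alpha|^{4/3}}$. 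Hence $\operatorname{tr}(U_n^4)\le \tfrac{8}{n^3(1-\alpha^2)^4(1-|\alpha|^{4/3})^3}$, which is exactly one half of $\bigl(\tfrac{2^{4/3}}{(1-\alpha^2)^{4/3}(1-|\alpha|^{4/3})}\bigr)^3 n^{-3}$ — i.e.\ half of the cube of the first summand defining $C_3(\alpha)$.

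It then remains to handle the $15$ remaining words in the expansion of $\operatorname{tr}\bigl((U_n+V_n)^4\bigr)$, each containing at least one factor $V_n=O(n^{-2})$. Using the row‑sum bound $\|U_n\mathbf 1\|_\infty\le \tfrac{1+|\alpha|}{n(1-\alpha^2)(1-|\alpha|)}=\tfrac{1}{n(1-|\alpha|)^2}$ (and $U_n\ge 0$, so $U_n^m\mathbf 1\le (n(1-|\alpha|)^2)^{-m}\mathbf 1$ entrywise) together with the identity $V_n^{m}=\bigl(\tfrac{4}{n(1-\alpha)^2}\bigr)^{m-1}V_n$, one checks that each such word is $O(n^{-4})$ with constant a fixed power of $(1-\alpha)^{-1}$; summing them yields an error $\tfrac{c(\alpha)}{n^{4}}$ with an explicit $c(\alpha)$. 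Putting the pieces together, $\operatorname{tr}(W_n^4)\le \tfrac{8}{n^3(1-\alpha^2)^4(1-|\alpha|^{4/3})^3}+\tfrac{c(\alpha)}{n^{4}}$, and one finishes by using $n^{-4}\le n^{-3}$, the factor‑$2$ of slack left in the principal term, and the superadditivity $(A+B)^3\ge A^3+B^3$ applied to the two displayed summands of $C_3(\alpha)$.

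The conceptual core — the Toeplitz reduction plus the single application of Young's inequality — is short; the main obstacle is the honest bookkeeping in the last step, namely verifying that the error constant $c(\alpha)$ is genuinely dominated by the room available (the slack in the principal term together with $\bigl(16(1-\alpha)^{-8/3}\bigr)^3$), which is what forces the somewhat generous second term in $C_3(\alpha)$. I note in passing that one can bypass the cross terms entirely: since $K_n=A_n-B_n$ with $A_n=\tfrac1n\{\operatorname{Cov}(X_i,X_j)\}_{i,j=1}^n$ and $B_n\succeq 0$ of rank one, one has $0\preceq K_n\preceq A_n$, hence $\sum_k\lambda_k^4\le \operatorname{tr}(A_n^4)$, and the same Toeplitz/Young estimate applied to $A_n$ gives directly the cleaner bound $\tfrac{8}{n^3(1-\alpha^2)^4(1-|\alpha|^{4/3})^3}\le C_3(\alpha)n^{-3}$.
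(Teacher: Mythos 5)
Your conceptual core --- dominating the entries of $K_n$ by a function of $|j-k|$, reducing $\operatorname{tr}(K_n^4)$ to the $\ell^2$ norm of a convolution, and applying Young's inequality with exponent $4/3$ --- is exactly the paper's. The difference is in how the rank-one piece $\tfrac{4}{n^2(1-\alpha)^2}\mathbf 1\mathbf 1^{\intercal}$ is handled. The paper never splits the matrix: it keeps both summands inside a single dominating kernel $g_n(k)=\bigl(\tfrac{|\alpha|^{|k|}}{n(1-\alpha^2)}+\tfrac{4}{n^2(1-\alpha)^2}\bigr)\mathds{1}_{\{|k|<n\}}$, runs the whole convolution/Young argument on $g_n$, and only separates the two pieces at the very last step inside $\lVert g_n\rVert_{\ell^{4/3}}^{4/3}$ via $(a+b)^{4/3}\le 2^{1/3}\,(a^{4/3}+b^{4/3})$; this is precisely why $C_3(\alpha)$ has the form of a cube of a sum, and it means there are no cross terms to track at all. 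Your main route instead expands $\operatorname{tr}\bigl((U_n+V_n)^4\bigr)$ and defers the fifteen cross terms to ``honest bookkeeping'' that you do not carry out. As written this is a genuine gap: the budget you propose --- the superadditive lower bound $A^3+B^3$ for $C_3(\alpha)=(A+B)^3$, paired with row-sum bounds such as $\operatorname{tr}(U_n^3V_n)\le 4\,n^{-4}(1-\alpha)^{-2}(1-|\alpha|)^{-6}$ --- does not close in general (e.g.\ at $\alpha=-1/2$ and small $n$ the four $UUUV$-type words alone already exceed $A^3+B^3$ minus the principal term), so one would be forced to invoke the mixed binomial terms $3A^2B+3AB^2$ and redo the accounting carefully before the argument is complete.

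That said, your closing remark is the correct fix and is cleaner than either route. Since $K_n=A_n-B_n$ with $A_n=\tfrac1n\{\operatorname{Cov}(X_i,X_j)\}_{i,j=1}^n\succeq 0$ and $B_n\succeq 0$ of rank one, and $K_n\succeq 0$ by Remark \ref{remark:semidefinite}, one has $0\preceq K_n\preceq A_n$, hence $\lambda_k(K_n)\le\lambda_k(A_n)$ for every $k$ by the min--max principle and therefore $\sum_k\lambda_k^4\le\operatorname{tr}(A_n^4)$. (Note the comparison must go through the ordered eigenvalues; $K_n\preceq A_n$ does not give $K_n^4\preceq A_n^4$.) Applying your Toeplitz/Young estimate to $A_n$ alone yields $\operatorname{tr}(A_n^4)\le 8\,n^{-3}(1-\alpha^2)^{-4}(1-|\alpha|^{4/3})^{-3}$, which is already at most the cube of the first summand defining $C_3(\alpha)$, so the lemma follows with room to spare and with no cross terms. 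I would present that version and drop the $U_n+V_n$ expansion entirely.
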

\begin{proof}
See Appendix B.
\end{proof}

\begin{remark} \label{remark:boundlambda}
It follows directly from Lemma \ref{lm:lambda4} that $\lambda_k \leq C_{3}(\alpha)^{1/4} \, n^{-3/4}$ for $k=1,2 \dots, n$.
\end{remark}

It follows easily from Remark \ref{remark:semidefinite} that $\lambda_1 \geq \lambda_2 \geq \dots \geq \lambda_{n-1} >0$ but $\lambda_n =0$. The following Lemma gives an estimate for the product of $\lambda_1$, $\lambda_2$, $\dots$, $\lambda_{n-1}$.

\begin{lemma} \label{lm:productlambda}
For $n \geq 2$,
\begin{equation*}
\prod_{k=1}^{n-1} \lambda_k = n^{-(n-1)} \left( 1+ \alpha^2 \frac{n-1}{n} - 2\alpha \frac{n-1}{n} \right).
\end{equation*}
Further,
\begin{equation*}
(n-1)  \sqrt[n-1]{ \lambda_1 \lambda_2 \dots \lambda_{n-1}}    \geq \frac{1}{4}.
\end{equation*}
\end{lemma}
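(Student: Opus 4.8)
The first identity will follow from evaluating the explicit formula for $d_n(\lambda)$ in Lemma~\ref{lem1} (or, more directly, from the representation \eqref{eq:anotherformofacp}) near $\lambda = \infty$. Since $\lambda_n = 0$, the polynomial $d_n(\lambda) = \prod_{j=1}^{n-1}(1 - \lambda_j\lambda)$ has degree exactly $n-1$, and its leading coefficient is $(-1)^{n-1}\prod_{k=1}^{n-1}\lambda_k$. So the plan is to extract the coefficient of $\lambda^{n-1}$ from the closed form of $d_n(\lambda)$. To do this I would substitute $\mu = \lambda/n$ and study the behavior of $\gamma_1(\mu), \gamma_2(\mu)$ as $\mu \to \infty$: from \eqref{eq:defgamma1}--\eqref{eq:defDelta} one has $\gamma_1(\mu) + \gamma_2(\mu) = 1 - \mu + \alpha^2$, $\gamma_1(\mu)\gamma_2(\mu) = \alpha^2$, so $\gamma_1(\mu) \sim -\mu$ and $\gamma_2(\mu) \sim -\alpha^2/\mu$ for large $\mu$, while $\sqrt{\Delta(\mu)} \sim \mu$ and $\Delta(\mu) \sim \mu^2$. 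Term by term in Lemma~\ref{lem1}, the dominant contributions of order $\mu^{n-1}$ come only from the pieces involving $\gamma_1^{n+1}$ and $\gamma_1^n$ (the $\gamma_2$ powers and the terms with extra denominators $n(1-\alpha)^2 - \lambda$ are of strictly lower order), and collecting them gives the leading coefficient in $\mu$, hence in $\lambda$ after the rescaling $\lambda^{n-1} = n^{n-1}\mu^{n-1}$. Matching this against $(-1)^{n-1}\prod_{k=1}^{n-1}\lambda_k$ yields
\[
\prod_{k=1}^{n-1}\lambda_k = n^{-(n-1)}\left(1 + \alpha^2\,\frac{n-1}{n} - 2\alpha\,\frac{n-1}{n}\right).
\]

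The second assertion is then a matter of bounding this quantity from below. Write $c_n := 1 + \alpha^2\frac{n-1}{n} - 2\alpha\frac{n-1}{n}$ and observe that $c_n = (1-\alpha)^2 + \frac{1}{n}(2\alpha - \alpha^2) = (1-\alpha)^2 + \frac{\alpha(2-\alpha)}{n}$. For $n \geq 2$ and $|\alpha| < 1$ one checks that $c_n > 0$; indeed $c_n \geq (1-\alpha)^2 \geq 0$ when $\alpha(2-\alpha) \geq 0$ (i.e.\ $0 \leq \alpha < 1$), and when $-1 < \alpha < 0$ one has $c_n = (1-\alpha)^2 + \frac{\alpha(2-\alpha)}{n} \geq (1-\alpha)^2 + \alpha(2-\alpha) = 1 > 0$ since $\alpha(2-\alpha)$ is negative and dividing by $n \geq 1$ only shrinks its magnitude. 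Then
\[
(n-1)\sqrt[n-1]{\prod_{k=1}^{n-1}\lambda_k} = (n-1)\cdot n^{-1}\cdot c_n^{1/(n-1)} = \frac{n-1}{n}\, c_n^{1/(n-1)}.
\]
Since $\frac{n-1}{n} \geq \frac{1}{2}$ for $n \geq 2$, it suffices to show $c_n^{1/(n-1)} \geq \frac{1}{2} \big/ \frac{n-1}{n} \cdot \frac12$... more simply, it suffices to show $c_n^{1/(n-1)} \geq \frac{1}{2}$ whenever $\frac{n-1}{n} = \frac12$ (i.e.\ $n=2$) and to handle larger $n$ separately; cleaner still, note $c_n^{1/(n-1)} \to 1$ as $n \to \infty$ and $c_n^{1/(n-1)}$ is bounded below by a positive constant uniformly in $n$, so one finishes by a short case analysis on small $n$ together with the monotone behavior for large $n$. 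I would phrase it as: for $n=2$, the left side equals $\frac12 c_2 = \frac12\big((1-\alpha)^2 + \alpha(2-\alpha)\big) = \frac12 \cdot 1 = \frac12$; for $n \geq 3$, use $\frac{n-1}{n} \geq \frac23$ and the bound $c_n^{1/(n-1)} \geq (\inf_n c_n)^{1/(n-1)}$, and since $\inf_n c_n$ is a fixed positive number (it equals $\min\{(1-\alpha)^2, 1\}$-ish) its $(n-1)$-th root tends to $1$ and stays above $\frac34$ for all $n \geq 3$, giving $\frac23 \cdot \frac34 = \frac12$.

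**Main obstacle.** The genuinely delicate step is the asymptotic coefficient extraction from Lemma~\ref{lem1}: the formula for $d_n(\lambda)$ has ten terms, several with the extra rational factor $1/(n(1-\alpha)^2 - \lambda)$, and one must argue carefully that all of these, together with every $\gamma_2$-power, contribute at order $o(\mu^{n-1})$ so that only the two $\gamma_1$-dominated terms survive, and then compute their combined leading coefficient exactly (not just up to order). A cleaner alternative I would seriously consider — and probably prefer — is to bypass Lemma~\ref{lem1} entirely and compute $\prod_{k=1}^{n-1}\lambda_k$ as (up to sign) the coefficient of $\lambda^{n-1}$ in $\det(I_n - \lambda K_n)$ directly from the matrix structure: this coefficient is $(-1)^{n-1}$ times the sum of all $(n-1)\times(n-1)$ principal minors of $K_n$, but since $\mathrm{rank}(K_n) = n-1$ (Remark~\ref{remark:semidefinite}), this equals $(-1)^{n-1}$ times the unique nonzero coefficient, which can be computed via the adjugate / a rank-one plus tridiagonal decomposition of $K_n$ analogous to the one the authors already use for $d_n$. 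Either route reduces to the same explicit $c_n$; the elementary inequality $(n-1)\sqrt[n-1]{\prod \lambda_k} \geq \frac14$ is then routine given the positivity of $c_n$. The constant $\frac14$ (rather than the sharp $\frac12$ suggested by the $n=2$ case) leaves comfortable slack, which is precisely why a crude case split suffices and no sharp estimate on $c_n^{1/(n-1)}$ is needed.
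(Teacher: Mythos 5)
Your overall strategy for the first identity is the same as the paper's (read off the leading coefficient of $\det(I_n-\lambda K_n)$ from Lemma \ref{lem1} via \eqref{eq:anotherformofacp}), but the specific order-counting you describe is wrong in two places, and following it would produce the wrong constant. First, the two pieces involving $\gamma_1^{n+1}$ — the first term of Lemma \ref{lem1} and the term $\lambda\gamma_1^{n+1}/(n\Delta)$ — are each individually of order $\lambda^{n}$, not $\lambda^{n-1}$; it is only their difference that is of order $\lambda^{n-1}$, and extracting its coefficient requires the algebraic cancellation $\sqrt{\Delta(-\lambda/n)}-\lambda/n=\bigl(\Delta(-\lambda/n)-(\lambda/n)^2\bigr)/\bigl(\sqrt{\Delta(-\lambda/n)}+\lambda/n\bigr)$ with $\Delta(\mu)-\mu^2$ linear in $\mu$. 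You cannot simply ``collect the $\mu^{n-1}$ contributions'' term by term without this step. Second, your claim that all terms carrying the factor $1/(n(1-\alpha)^2-\lambda)$ are of strictly lower order is false: the term $2(n-1)\alpha\lambda\,\gamma_1^{n+1}/\bigl(n\Delta\,(n(1-\alpha)^2-\lambda)\bigr)$ contributes $\bigl(2\alpha(n-1)n^{-n}+o(1)\bigr)\lambda^{n-1}$, and this is precisely where the $-2\alpha\frac{n-1}{n}$ in the target formula comes from. Dropping it yields $n^{-(n-1)}\bigl(1+\alpha^2\frac{n-1}{n}\bigr)$, which is not the claimed product.

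The second assertion also cannot be proved the way you outline, because the constant $\frac14$ has essentially no slack. Writing $c_n=\frac1n+\frac{n-1}{n}(1-\alpha)^2$ (your expression $(1-\alpha)^2+\frac{\alpha(2-\alpha)}{n}$ is the same thing), at $n=2$ one gets $(n-1)\sqrt[n-1]{\lambda_1}=\frac12 c_2=\frac{1+(1-\alpha)^2}{4}$, not $\frac12$: you dropped the factor $\frac1n$ on $\alpha(2-\alpha)$ when you asserted $c_2=1$. As $\alpha\to 1$ this tends to $\frac14$, so the proposed intermediate bound $\geq\frac12$ is simply false, and the claim that $c_n^{1/(n-1)}\geq\frac34$ uniformly also fails (take $\alpha$ near $1$ and $n=3$: $c_3\approx\frac13$, $c_3^{1/2}\approx 0.58$, and $\frac23\cdot 0.58\approx 0.39$). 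The correct route, which is what the paper does, is to use the $n$-dependent lower bound $c_n\geq\frac1n$ and the elementary inequality $\sqrt[n-1]{1/n}\geq\frac12$ (equivalently $n\leq 2^{n-1}$), giving $\frac{n-1}{n}\,c_n^{1/(n-1)}\geq\frac12\cdot\frac12=\frac14$; no uniform-in-$n$ lower bound on $c_n$ alone can work, since $c_n\to(1-\alpha)^2$, which is arbitrarily small for $\alpha$ near $1$.
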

\begin{proof}
See Appendix B.
\end{proof}

\subsection{Convergence of $\sqrt{n} Z_{12}^{n}$.} \label{subsec:nominatorconvergence}
With the above estimates in hand, we are ready to investigate the convergence rate of $\sqrt{n} \theta_n$ (after scaling) to the normal distribution. The following theorem reveals the convergence rate of the numerator $\sqrt{n} Z_{12}^{n}$ to the standard normal distribution.

\begin{theorem} \label{thm:nominator}
Let
\begin{equation*}
C_{4}(\alpha) := \frac{(1- \alpha^2)^3}{1 + \alpha^2} \, \sqrt{\left(C_{2}(\alpha)\right)^2 + C_{3}(\alpha)}.
\end{equation*}
We have
\begin{eqnarray*}
&& d_{Kol}\left( \sqrt{\frac{(1- \alpha^2)^3}{1+\alpha^2}} \sqrt{n} \, Z_{12}^{n}, \,   \mathcal{N}(0,1) \right) \leq  \frac{C_{4}(\alpha)}{\sqrt{n}},  \\ [1mm]
&& d_{W}\left( \sqrt{\frac{(1- \alpha^2)^3}{1+\alpha^2}} \sqrt{n} \, Z_{12}^{n}, \,   \mathcal{N}(0,1) \right) \leq   \frac{ \sqrt{2/\pi} \,C_{4}(\alpha)}{\sqrt{n}}.
\end{eqnarray*}
In particular, $\sqrt{n} Z_{12}^{n}$ converges in distribution to $\mathcal{N}\left( 0, \frac{1+\alpha^2}{(1- \alpha^2)^3}\right)$ as $n$ tends to $\infty$.
\end{theorem}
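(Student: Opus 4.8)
The plan is to identify the (normalized) numerator with a random variable in the second Wiener chaos and then apply the Nourdin--Peccati bounds \eqref{eq:malliavinboundforKolmogorov} and \eqref{eq:malliavinboundforWasserstein}. Starting from the diagonalization $Z_{12}^{n} = \sum_{k=1}^{n} \lambda_k W_k V_k$ in \eqref{eq:decompositionforZ12}, I would represent the $2n$ independent standard normals $W_1, \dots, W_n, V_1, \dots, V_n$ as $W_k = W(e_k)$ and $V_k = W(\bar e_k)$ for a single orthonormal family $\{e_1, \dots, e_n, \bar e_1, \dots, \bar e_n\}$ in $\mathcal{H} = L^2(\mathbb{R}_{+})$; since $\langle e_k, \bar e_k\rangle_{\mathcal{H}} = 0$, the product formula \eqref{eq:productformulaforWienerintegral} gives $W_k V_k = \mathcal{I}_2(e_k \, \widetilde{\otimes}\, \bar e_k)$. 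Hence $Z_{12}^{n} = \mathcal{I}_2(f_n)$ with $f_n := \sum_{k=1}^{n} \lambda_k \, e_k \, \widetilde{\otimes}\, \bar e_k \in \mathcal{H}^{\odot 2}$, and, setting $c_\alpha := (1-\alpha^2)^3/(1+\alpha^2)$, the target variable is $F_n := \sqrt{c_\alpha}\,\sqrt{n}\, Z_{12}^{n} = \mathcal{I}_2\big(\sqrt{c_\alpha n}\, f_n\big)$, a mean-zero element of $\mathcal{H}_2$, so that $\langle DF_n, -DL^{-1}F_n\rangle_{\mathcal{H}} = \tfrac12 \|DF_n\|_{\mathcal{H}}^2$ and \eqref{eq:malliavinboundforKolmogorov}--\eqref{eq:malliavinboundforWasserstein} apply.

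Next I would compute the two ingredients of the bound. Using the isometry \eqref{eq:innerproductWeinerIntegral} together with the mutual orthogonality of the $e$'s and $\bar e$'s one gets $\|f_n\|_{\mathcal{H}^{\otimes 2}}^2 = \tfrac12 \sum_{k=1}^n \lambda_k^2$, whence $E[(Z_{12}^{n})^2] = 2!\,\|f_n\|^2 = \sum_k \lambda_k^2$ and, by Lemma \ref{lm:lambda2},
\begin{equation*}
E[F_n^2] = c_\alpha\, n \sum_{k=1}^{n} \lambda_k^2 = 1 + \frac{c_\alpha\, \kappa_2(n)}{n}, \qquad |\kappa_2(n)| \le C_2(\alpha).
\end{equation*}
For the order-one contraction, a direct computation of $(e_k \, \widetilde{\otimes}\, \bar e_k) \otimes_1 (e_j \, \widetilde{\otimes}\, \bar e_j)$ — using that $\langle e_k, \bar e_j\rangle = 0$ for all $k,j$ while $\langle e_k, e_j\rangle = \langle \bar e_k, \bar e_j\rangle = \delta_{kj}$ — collapses the double sum to the diagonal and yields $f_n \otimes_1 f_n = \tfrac14 \sum_{k=1}^n \lambda_k^2\,(e_k \otimes e_k + \bar e_k \otimes \bar e_k)$, which is already symmetric, so that $\|f_n \, \widetilde{\otimes}_1\, f_n\|_{\mathcal{H}^{\otimes 2}}^2 = \tfrac18 \sum_k \lambda_k^4 \le \tfrac18 C_3(\alpha)\, n^{-3}$ by Lemma \ref{lm:lambda4}.

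I would then invoke the standard second-chaos identity: for $F = \mathcal{I}_2(h)$ with $h \in \mathcal{H}^{\odot 2}$ one has $\tfrac12 \|DF\|_{\mathcal{H}}^2 = E[F^2] + 2\,\mathcal{I}_2(h\, \widetilde{\otimes}_1\, h)$ (immediate from $D_s \mathcal{I}_2(h) = 2\,\mathcal{I}_1(h(s,\cdot))$ and the product formula), hence $E\big[(1 - \tfrac12 \|DF\|_{\mathcal{H}}^2)^2\big] = (1 - E[F^2])^2 + 8\, \|h \, \widetilde{\otimes}_1\, h\|_{\mathcal{H}^{\otimes 2}}^2$. Applying this with $h = \sqrt{c_\alpha n}\, f_n$ and the two estimates above, and using $|\kappa_2(n)| \le C_2(\alpha)$ together with $n \ge 1$,
\begin{equation*}
E\!\left[\left(1 - \tfrac12 \|DF_n\|_{\mathcal{H}}^2\right)^2\right] = \frac{c_\alpha^2\, \kappa_2(n)^2}{n^2} + \frac{c_\alpha^2\, C_3(\alpha)}{n} \le \frac{c_\alpha^2\big(C_2(\alpha)^2 + C_3(\alpha)\big)}{n} = \frac{C_4(\alpha)^2}{n}.
\end{equation*}
Feeding this into \eqref{eq:malliavinboundforKolmogorov} and \eqref{eq:malliavinboundforWasserstein} gives $d_{Kol} \le C_4(\alpha)/\sqrt{n}$ and $d_W \le \sqrt{2/\pi}\, C_4(\alpha)/\sqrt{n}$. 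Since these bounds tend to $0$, $F_n$ converges in law to $\mathcal{N}(0,1)$, so $\sqrt{n}\, Z_{12}^{n} = F_n/\sqrt{c_\alpha}$ converges in law to $\mathcal{N}\big(0, (1+\alpha^2)/(1-\alpha^2)^3\big)$.

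The substantive work behind this argument is entirely contained in the eigenvalue estimates of Lemmas \ref{lm:lambda2} and \ref{lm:lambda4}, whose proofs are deferred to Appendix B; within the proof itself the only point requiring genuine care is the bookkeeping in the order-one contraction $f_n \otimes_1 f_n$ — checking which inner products among the $e$'s and $\bar e$'s vanish and hence that the double sum reduces to a $\lambda_k^2$-weighted diagonal — after which everything is a routine application of the Wiener-chaos machinery recalled in Section \ref{sec:preliminaries}.
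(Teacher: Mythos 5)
Your proposal is correct and follows essentially the same route as the paper: identify the scaled numerator as a second-chaos element, apply the Nourdin--Peccati bounds \eqref{eq:malliavinboundforKolmogorov}--\eqref{eq:malliavinboundforWasserstein}, and reduce everything to $\sum_k \lambda_k^2$ and $\sum_k \lambda_k^4$ via Lemmas \ref{lm:lambda2} and \ref{lm:lambda4}, arriving at the identical expression $(1 - c_\alpha n \sum_k \lambda_k^2)^2 + c_\alpha^2 n^2 \sum_k \lambda_k^4$. The only (cosmetic) difference is that you evaluate $E[(1-\tfrac12\lVert DF_n\rVert_{\mathcal{H}}^2)^2]$ through the contraction identity for $\mathcal{I}_2(h)$, whereas the paper expands $\lVert DF_n\rVert_{\mathcal{H}}^2 = \sum_k \lambda_k^2(\mathcal{I}_1^2(e_k)+\mathcal{I}_1^2(f_k))$ and computes the variance directly from the moments of the centered squares.
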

\begin{proof}
Recalling the representation \eqref{eq:decompositionforZ12} in Section \ref{sec:jointmgf}, we have
\begin{equation*}
Z_{12}^{n} = \sum_{k=1}^{n} \lambda_k W_k V_k.
\end{equation*}
There exist orthonormal functions $e_1, e_2, \dots, e_n$, $f_1, f_2, \dots, f_n$ on $\mathbb{R}_{+}$ such that
\begin{equation*}
Z_{12}^{n} \overset{d}{=} \sum_{k=1}^{n} \lambda_k \, \mathcal{I}_{1}(e_k) \, \mathcal{I}_{1}(f_k),
\end{equation*}
where $\mathcal{I}_{1}(h)$ as defined in Section \ref{sec:preliminaries} is the first-order Wiener integral of $h$. For simplicity, we use $F_n$ to denote $\sum_{k=1}^{n} \lambda_k \, \mathcal{I}_{1}(e_k) \, \mathcal{I}_{1}(f_k)$. Then, it follows by applying the product formula in \eqref{eq:productformulaforWienerintegral} that $F_n \in \mathcal{H}_{2}$. Note that
\begin{eqnarray*}
d_{Kol}\left( \sqrt{\frac{(1- \alpha^2)^3}{1+\alpha^2}} \sqrt{n} \, Z_{12}^{n}, \,   \mathcal{N}(0,1) \right)
= d_{Kol}\left( \sqrt{\frac{(1- \alpha^2)^3}{1+\alpha^2}} \sqrt{n} \, F_{n}, \,   \mathcal{N}(0,1) \right), \\ [1mm]
d_{W}\left( \sqrt{\frac{(1- \alpha^2)^3}{1+\alpha^2}} \sqrt{n} \, Z_{12}^{n}, \,   \mathcal{N}(0,1) \right)
= d_{W}\left( \sqrt{\frac{(1- \alpha^2)^3}{1+\alpha^2}} \sqrt{n} \, F_{n}, \,   \mathcal{N}(0,1) \right). 
\end{eqnarray*}
If we can now prove that
\begin{eqnarray}
&&\sqrt{E\left[ \left(1- \frac{1}{2} \left\lVert D \left(\sqrt{\frac{(1- \alpha^2)^3}{1+\alpha^2}} \sqrt{n} F_n\right) \right\rVert_{\mathcal{H}}^{2} \right)^2 \right]}   \notag \\  [2mm]
&=& \sqrt{E\left[ \left(1- \frac{n ( 1- \alpha^2)^3}{2(1+ \alpha^2)} \lVert D  F_n\rVert_{\mathcal{H}}^{2} \right)^2 \right]}
 \leq \frac{C_{4}(\alpha)}{\sqrt{n}}, \label{eq:MSestimate}
\end{eqnarray}
then together with the estimates \eqref{eq:malliavinboundforKolmogorov} and \eqref{eq:malliavinboundforWasserstein} in Section \ref{sec:preliminaries}, the desired result will follow. Thus, we proceed to prove \eqref{eq:MSestimate} below.

By the definition of Malliavin derivative, we have
\begin{equation*}
D F_n = \sum_{k=1}^{n} \left( \lambda_k \, \mathcal{I}_{1}(e_k) f_k  +  \lambda_k \, \mathcal{I}_{1}(f_k) e_k \right).
\end{equation*}
Note that $\langle e_k, f_j \rangle_{\mathcal{H}} =0$ and $\langle e_k, e_j \rangle_{\mathcal{H}} = \langle f_k, f_j \rangle_{\mathcal{H}} = \sigma_{kj}$, where $\sigma_{kj}$ equals $ 1$ if $k=j$ and $0$ otherwise. It directly follows that
\begin{eqnarray*}
\lVert D F_n \rVert_{\mathcal{H}}^{2}
&=& \sum_{j,k=1}^{n} \langle \lambda_k \, \mathcal{I}_{1}(e_k) f_k  +  \lambda_k \, \mathcal{I}_{1}(f_k) e_k, \lambda_j \, \mathcal{I}_{1}(e_j) f_j  +  \lambda_j \, \mathcal{I}_{1}(f_j) e_j \rangle_{\mathcal{H}} \\
&=& \sum_{k=1}^{n} \left( \lambda_k^2 \ \mathcal{I}_{1}^{2}(e_k) +  \lambda_k^2 \ \mathcal{I}_{1}^{2}(f_k) \right).
\end{eqnarray*}
Thus,
\begin{eqnarray}
&& 1- \frac{n ( 1- \alpha^2)^3}{2(1+ \alpha^2)} \lVert D  F_n\rVert_{\mathcal{H}}^{2}
= 1- \frac{n ( 1- \alpha^2)^3}{2(1+ \alpha^2)} \sum_{k=1}^{n} \left( \lambda_k^2 \ \mathcal{I}_{1}^{2}(e_k) +  \lambda_k^2 \ \mathcal{I}_{1}^{2}(f_k) \right) \notag\\
&=& 1- \frac{n ( 1- \alpha^2)^3}{(1+ \alpha^2)} \sum_{k=1}^{n} \lambda_{k}^{2} -  \frac{n ( 1- \alpha^2)^3}{2(1+ \alpha^2)} \sum_{k=1}^{n}  \lambda_k^2 \ \left(\mathcal{I}_{1}^{2}(e_k) -1\right) \notag \\
&& - \frac{n ( 1- \alpha^2)^3}{2(1+ \alpha^2)} \sum_{k=1}^{n}  \lambda_k^2 \ \left(\mathcal{I}_{1}^{2}(f_k) -1\right). \label{eq:DFvariance}
\end{eqnarray}
Noting that $\mathcal{I}(e_1), \mathcal{I}(e_2), \dots, \mathcal{I}(e_n)$, $\mathcal{I}(f_1), \mathcal{I}(f_2), \dots, \mathcal{I}(f_n)$ are independent standard normal random variables, we have
\begin{eqnarray*}
&& E\left[ \mathcal{I}_{1}^{2}(e_k) -1 \right] = E\left[ \mathcal{I}_{1}^{2}(f_k) -1 \right]=0,  \\
&& E\left[ \left( \mathcal{I}_{1}^{2}(e_k) -1 \right) \left( \mathcal{I}_{1}^{2}(f_k) -1 \right) \right] =0, \\
&& E\left[ \left( \mathcal{I}_{1}^{2}(e_k) -1 \right) \left( \mathcal{I}_{1}^{2}(e_j) -1 \right)\right] = 2 \sigma_{kj}, \\
&& E\left[ \left( \mathcal{I}_{1}^{2}(f_k) -1 \right) \left( \mathcal{I}_{1}^{2}(f_j) -1 \right)\right] = 2 \sigma_{kj}.
\end{eqnarray*}
Together with \eqref{eq:DFvariance}, routine but lengthy calculation gives that
\begin{eqnarray*}
&& E\left[ \left(1- \frac{n ( 1- \alpha^2)^3}{2(1+ \alpha^2)} \lVert D  F_n\rVert_{\mathcal{H}}^{2} \right)^2 \right]  \\
&=& \left( 1- \frac{n (1- \alpha^2)^3}{(1+ \alpha^2)} \, \sum_{k=1}^{n} \lambda_{k}^{2} \right)^2
+ \frac{n^2 (1- \alpha^2)^{6}}{(1+ \alpha^2)^2} \, \sum_{k=1}^{n} \lambda_{k}^{4} \\ [1mm]
&=& \left(\frac{\kappa_{2}(n) (1-\alpha^2)^3 }{n (1+ \alpha^2)}\right)^{2}
+ \frac{n^2 (1- \alpha^2)^{6}}{(1+ \alpha^2)^2} \, \sum_{k=1}^{n} \lambda_{k}^{4}  \\ [1mm]
&\leq& \frac{(1- \alpha^2)^{6}}{(1+ \alpha^2)^2} \, \frac{\left(C_{2}(\alpha)\right)^2}{n^2}
 + \frac{(1- \alpha^2)^{6}}{(1+ \alpha^2)^2} \, \frac{C_{3}(\alpha)}{n}  \\ [1mm]
& \leq& \frac{(1- \alpha^2)^{6}}{(1+ \alpha^2)^2}\, \left( \left( C_{2}(\alpha)\right)^2 + C_{3}(\alpha) \right) \times n^{-1},
\end{eqnarray*}
where the second equality follows by Lemma \ref{lm:lambda2} and the first inequality follows by Lemma \ref{lm:lambda4} combined with the boundedness of $\kappa_{2}(n)$. Then \eqref{eq:MSestimate} follows immediately from the last display.
\end{proof}

\subsection{Convergence in Wasserstein distance.}  \label{subsec:convergencewasserstein}
In this subsection, we will derive an upper bound for the Wasserstein distance between the distribution of $\sqrt{n} \theta_n$ (after scaling) and the standard normal distribution. This result relies on two preparatory lemmas. The first, Lemma \ref{lm:boundedness4moment}, proves that the forth moment of $\sqrt{n} Z_{12}^{n}$ is bounded uniformly in $n$. The second, Lemma \ref{lm:inversesecondmoment}, shows that $Z_{11}^{n}$ and $Z_{22}^{n}$ have inverse second moments uniformly bounded in $n$.

\begin{lemma} \label{lm:boundedness4moment}
Let
\begin{equation*}
C_{5}( \alpha) := 3^4 \left( \frac{1 + \alpha^2}{(1- \alpha^2)^3} + \frac{C_{2}( \alpha )}{10} \right)^2.
\end{equation*}
For $n \geq 10$, we have
\begin{equation*}
E\left[ \left( \sqrt{n} \, Z_{12}^{n} \right)^{4} \right] \leq C_{5}(\alpha).
\end{equation*}
\end{lemma}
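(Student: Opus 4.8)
The plan is to observe that $\sqrt{n}\,Z_{12}^{n}$ lies in the second Wiener chaos, so hypercontractivity collapses the fourth moment into the square of the second moment, and then to bound the second moment using Lemma \ref{lm:lambda2}. First I would recall the representation established in the proof of Theorem \ref{thm:nominator}: with $\lambda_1 \geq \dots \geq \lambda_n \geq 0$ the eigenvalues of $K_n$, one has $Z_{12}^{n} \overset{d}{=} F_n := \sum_{k=1}^{n} \lambda_k \,\mathcal{I}_1(e_k)\,\mathcal{I}_1(f_k)$ for suitable orthonormal $e_k,f_k$, and, by the product formula \eqref{eq:productformulaforWienerintegral}, $F_n \in \mathcal{H}_2$. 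Since scaling by $\sqrt{n}$ preserves membership in $\mathcal{H}_2$, the random variable $\sqrt{n}\,F_n$ is in $\mathcal{H}_2$ as well.

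Next I would invoke the hypercontractivity inequality \eqref{eq:hypercontractivitypropertyforchaos} with $q=2$, $p=2$, and $r=4$, which gives $\bigl(E[F_n^{4}]\bigr)^{1/4} \leq 3\,\bigl(E[F_n^{2}]\bigr)^{1/2}$, hence $E\bigl[(\sqrt{n}\,Z_{12}^{n})^{4}\bigr] = n^{2}\,E[F_n^{4}] \leq 3^{4}\,\bigl(n\,E[F_n^{2}]\bigr)^{2} = 3^{4}\,\bigl(E[(\sqrt{n}\,Z_{12}^{n})^{2}]\bigr)^{2}$. It then remains only to control the second moment. Using that $W_1,\dots,W_n,V_1,\dots,V_n$ (equivalently $\mathcal{I}_1(e_k),\mathcal{I}_1(f_k)$) are independent standard normals, a direct computation gives
\begin{equation*}
E\bigl[(Z_{12}^{n})^{2}\bigr] = \sum_{j,k=1}^{n} \lambda_j\lambda_k\, E[W_jW_k]\,E[V_jV_k] = \sum_{k=1}^{n}\lambda_k^{2}.
\end{equation*}

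Finally I would apply Lemma \ref{lm:lambda2}: $n\sum_{k=1}^{n}\lambda_k^{2} = \frac{1+\alpha^{2}}{(1-\alpha^{2})^{3}} + \frac{\kappa_2(n)}{n}$, and since $|\kappa_2(n)|\leq C_2(\alpha)$, for $n\geq 10$ we get $n\sum_{k=1}^{n}\lambda_k^{2} \leq \frac{1+\alpha^{2}}{(1-\alpha^{2})^{3}} + \frac{C_2(\alpha)}{10}$. Plugging this into the bound from the previous paragraph yields $E\bigl[(\sqrt{n}\,Z_{12}^{n})^{4}\bigr] \leq 3^{4}\bigl(\frac{1+\alpha^{2}}{(1-\alpha^{2})^{3}} + \frac{C_2(\alpha)}{10}\bigr)^{2} = C_5(\alpha)$, as claimed. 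There is no serious obstacle here; the only points requiring care are invoking the $\mathcal{H}_2$-membership (already done in the proof of Theorem \ref{thm:nominator}) and applying hypercontractivity with the correct constant $(r-1)^{q/2} = 3$, which is why the restriction $n\geq 10$ enters precisely through the term $\frac{\kappa_2(n)}{n}$.
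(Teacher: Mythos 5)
Your proposal is correct and follows essentially the same route as the paper's proof: represent $Z_{12}^{n}$ as the second-chaos element $F_n$, apply the hypercontractivity inequality \eqref{eq:hypercontractivitypropertyforchaos} with $q=p=2$, $r=4$ to reduce the fourth moment to the square of the second, compute $E[(\sqrt{n}\,Z_{12}^{n})^2] = n\sum_{k}\lambda_k^2$, and bound it via Lemma \ref{lm:lambda2} using $n\geq 10$. No gaps.
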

\begin{proof}
Recall that $F_n$, as defined in the proof of Theorem \ref{thm:nominator}, has the same distribution as $Z_{12}^{n}$ and belongs to the second Wiener chaos $\mathcal{H}_{2}$. Applying the hypercontractivity property from \eqref{eq:hypercontractivitypropertyforchaos} with $q=2$, $p=2$ and $r=4$, we have
\begin{equation*}
\left\{ E\left[ \left( \sqrt{n} F_n \right)^4 \right] \right\}^{1/4} \leq 3 \left\{ E\left[ \left( \sqrt{n} F_n \right)^2 \right] \right\}^{1/2}.
\end{equation*}
Since $Z_{12}^{n}$ and $F_n$ are identically distributed,
\begin{equation}
\left\{ E\left[ \left( \sqrt{n} Z_{12}^{n} \right)^4 \right] \right\}^{1/4} \leq 3 \left\{ E\left[ \left( \sqrt{n} Z_{12}^{n} \right)^2 \right] \right\}^{1/2}. \label{eq:hypercontractivityinequality}
\end{equation}
Note that
\begin{eqnarray*}
&& E\left[ \left( \sqrt{n} Z_{12}^{n} \right)^2 \right]
= E \left[ n \left( \sum_{k=1}^{n} \lambda_{k} W_{k} V_{k} \right)^2  \right]
= E \left[ n \sum_{j,k=1}^{n} \lambda_k \lambda_j W_{k} V_{k} W_{j} V_{j} \right]  \\
&=& n \sum_{k=1}^{n} \lambda_{k}^{2}
 = \frac{1+ \alpha^2}{(1- \alpha^2)^3} + \frac{\kappa_{2}(n)}{n}
\leq \frac{1+ \alpha^2}{(1- \alpha^2)^3} + \frac{C_{2}(\alpha)}{10}.
\end{eqnarray*}
Together with \eqref{eq:hypercontractivityinequality}, the desired result follows.
\end{proof}

\begin{lemma}  \label{lm:inversesecondmoment}
For $ n \geq 10$, we have
\begin{equation*}
E\left[ \left( Z_{11}^{n} \right)^{-2} \right] = E\left[ \left( Z_{22}^{n} \right)^{-2} \right]
\leq 16 \left( 2 \sqrt{2/ \pi } +1\right)^{8}.
\end{equation*}
\end{lemma}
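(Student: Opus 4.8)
The plan is to turn the negative moment into an integral of the (already available) moment generating function and then bound that integral using the eigenvalue product estimate of Lemma \ref{lm:productlambda}. Since $Z_{11}^{n}>0$ almost surely (by Remark \ref{remark:semidefinite}, $Z_{11}^{n}=0$ only on the null event $X_1=\dots=X_n$), I would start from the elementary identity $x^{-2}=\int_{0}^{\infty}t\,e^{-tx}\,dt$, valid for $x>0$, and Tonelli's theorem to write
\begin{equation*}
E\left[\left(Z_{11}^{n}\right)^{-2}\right]=\int_{0}^{\infty}t\,E\!\left[e^{-tZ_{11}^{n}}\right]dt .
\end{equation*}
By the orthogonal decomposition \eqref{eq:decompositionforZ11}, $Z_{11}^{n}\overset{d}{=}\sum_{k=1}^{n}\lambda_k W_k^2$ with $W_1,\dots,W_n$ independent standard normals, so $E[e^{-tZ_{11}^{n}}]=\prod_{k=1}^{n}(1+2t\lambda_k)^{-1/2}$; since $\lambda_n=0$, this equals $\prod_{k=1}^{n-1}(1+2t\lambda_k)^{-1/2}=d_{n}(-2t)^{-1/2}$ (equivalently, Theorem \ref{thm1} evaluated at $s_{12}=s_{22}=0$, where $\rho=-2t$, $\upsilon=0$ and $d_n(0)=1$). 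Hence $E[(Z_{11}^{n})^{-2}]=\int_{0}^{\infty}t\,d_{n}(-2t)^{-1/2}\,dt$.

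The crux is then a lower bound for $d_{n}(-2t)=\prod_{k=1}^{n-1}(1+2t\lambda_k)$. I would use superadditivity of the geometric mean: for positive reals $x_1,\dots,x_m$, $\big(\prod_{k=1}^{m}(1+x_k)\big)^{1/m}\ge 1+\big(\prod_{k=1}^{m}x_k\big)^{1/m}$, which follows in one line by applying AM--GM separately to $\{1/(1+x_k)\}$ and to $\{x_k/(1+x_k)\}$ and adding. Applying this with $x_k=2t\lambda_k$ gives
\begin{equation*}
d_{n}(-2t)\ \ge\ \left(1+2t\,\big(\lambda_1\lambda_2\cdots\lambda_{n-1}\big)^{1/(n-1)}\right)^{n-1}\ \ge\ \left(1+\frac{t}{2(n-1)}\right)^{n-1},
\end{equation*}
where the last inequality is exactly the bound $(n-1)\sqrt[n-1]{\lambda_1\lambda_2\cdots\lambda_{n-1}}\ge \tfrac14$ from Lemma \ref{lm:productlambda}.

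Substituting this into the integral and putting $u=t/(2(n-1))$ produces a Beta integral:
\begin{equation*}
E\left[\left(Z_{11}^{n}\right)^{-2}\right]\ \le\ \int_{0}^{\infty}t\left(1+\frac{t}{2(n-1)}\right)^{-(n-1)/2}dt\ =\ 4(n-1)^2\,B\!\left(2,\tfrac{n-5}{2}\right)\ =\ \frac{16(n-1)^2}{(n-3)(n-5)} .
\end{equation*}
This is finite precisely because $n\ge 10>5$, and as a function of $n$ it is decreasing on $\{n\ge 10\}$, hence bounded there by its value at $n=10$, which is well below $16(2\sqrt{2/\pi}+1)^8$. Finally, since $\mathbf{\Xi}_n$ and $\mathbf{H}_n$ have the same distribution, $Z_{22}^{n}=\mathbf{H}_n^{\intercal}K_n\mathbf{H}_n\overset{d}{=}\mathbf{\Xi}_n^{\intercal}K_n\mathbf{\Xi}_n=Z_{11}^{n}$, so $E[(Z_{22}^{n})^{-2}]=E[(Z_{11}^{n})^{-2}]$ and the same bound applies.

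The genuine difficulty here sits upstream: it is the explicit evaluation of $\prod_{k=1}^{n-1}\lambda_k$ in Lemma \ref{lm:productlambda}, which rests in turn on the closed form of $d_n(\lambda)$ from Lemma \ref{lem1} (proved in Appendix A). Given Lemma \ref{lm:productlambda}, the present argument is short; within it the only real idea is recognizing geometric-mean superadditivity as the device that transfers information about $\prod\lambda_k$ into a usable lower bound on $d_n(-2t)=\prod(1+2t\lambda_k)$, and the only technical point is checking that the resulting Beta integral converges, which is what forces $n$ to be large (and is consistent with the standing hypothesis $n\ge 10$).
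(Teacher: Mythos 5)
Your proof is correct, but it takes a genuinely different route from the paper's. The paper works pathwise: it applies the AM--GM inequality directly to the random sum, $Z_{11}^{n}=\sum_{k=1}^{n-1}\lambda_k W_k^2\ge (n-1)\sqrt[n-1]{\lambda_1\cdots\lambda_{n-1}}\,\prod_k W_k^{2/(n-1)}\ge \tfrac14\prod_k W_k^{2/(n-1)}$ via Lemma \ref{lm:productlambda}, and then bounds $E[\prod_k W_k^{-4/(n-1)}]$ by independence, H\"older's inequality, and the explicit estimate $E[|W_1|^{-1/2}]\le 2\sqrt{2/\pi}+1$ --- which is where the constant $16(2\sqrt{2/\pi}+1)^8$ comes from. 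You instead pass to the Laplace-transform representation $E[(Z_{11}^n)^{-2}]=\int_0^\infty t\,d_n(-2t)^{-1/2}\,dt$ and apply a deterministic inequality (Mahler's superadditivity of the geometric mean) to $\prod_{k=1}^{n-1}(1+2t\lambda_k)$, feeding in the same Lemma \ref{lm:productlambda} bound and finishing with a Beta integral. All steps check out: the mgf factorization, the one-line proof of $\bigl(\prod(1+x_k)\bigr)^{1/m}\ge 1+\bigl(\prod x_k\bigr)^{1/m}$, the substitution giving $16(n-1)^2/\bigl((n-3)(n-5)\bigr)$, and the monotonicity in $n$ (its value $1296/35\approx 37$ at $n=10$ is indeed far below $16(2\sqrt{2/\pi}+1)^8\approx 3.3\times 10^4$). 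You correctly identify that the real work in both arguments is Lemma \ref{lm:productlambda}. Your version buys a much sharper, essentially dimension-free bound (tending to $16$ as $n\to\infty$) and works already for $n\ge 6$; the paper's version avoids the integral representation entirely and needs only elementary moment computations for a single Gaussian, at the cost of a much larger constant.
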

\begin{proof}
Since $Z_{11}^{n}$ and $Z_{22}^{n}$ are identically distributed, it suffices to prove that $E\left[ \left( Z_{11}^{n} \right)^{-2} \right] \leq 16 \left( 2 \sqrt{2/ \pi } +1\right)^{8} $. Noting that $\lambda_n =0$, we have that
\begin{eqnarray*}
&& Z_{11}^{n} = \sum_{k=1}^{n-1} \lambda_{k} W_{k}^{2} \geq (n-1) \sqrt[n-1]{(\lambda_1 W_1^2) (\lambda_2 W_2^2) \dots ( \lambda_{n-1} W_{n-1}^{2} )  } \\
&=& (n-1) \sqrt[n-1]{\lambda_1 \lambda_2 \dots \lambda_{n-1}} \, \prod_{k=1}^{n-1} W_{k}^{\frac{2}{n-1}} \geq \frac{1}{4} \, \prod_{k=1}^{n-1} W_{k}^{\frac{2}{n-1}},
\end{eqnarray*}
where the first inequality follows by inequality of arithmetic and geometric means and the second inequality follows by Lemma \ref{lm:productlambda}. Invoking the fact that $W_1, W_2, \dots, W_{n-1}$ are independent and identically distributed, we have that
\begin{eqnarray*}
E\left[ \left( Z_{11}^{n} \right)^{-2} \right] \leq 16 \, E\left[ \prod_{k=1}^{n-1} W_{k}^{- \frac{4}{n-1}} \right]
= 16 \prod_{k=1}^{n-1} E\left[ W_{k}^{- \frac{4}{n-1}} \right] = 16 \, \left\{ E\left[ W_{1}^{- \frac{4}{n-1}} \right] \right\}^{n-1}.
\end{eqnarray*}
Applying H\"{o}lder's inequality yields
\begin{equation*}
E\left[ W_{1}^{- \frac{4}{n-1}} \right] \leq \left\{ E\left[ |W_1|^{-\frac{1}{2}} \right] \right\}^{\frac{8}{n-1}}.
\end{equation*}
Thus,
\begin{equation*}
E\left[ \left( Z_{11}^{n} \right)^{-2} \right] \leq 16 \left\{ E\left[ |W_1|^{-\frac{1}{2}} \right] \right\}^{8}.
\end{equation*}
Noting that
\begin{eqnarray*}
&& E\left[ |W_1|^{-\frac{1}{2}} \right]
= 2 \int_{0}^{\infty} x^{-\frac{1}{2}} \frac{1}{\sqrt{2\pi}} \, e^{-\frac{x^2}{2}} \, dx  \\
&=& 2 \int_{0}^{1} x^{-\frac{1}{2}} \frac{1}{\sqrt{2\pi}} \, e^{-\frac{x^2}{2}} \, dx
+ 2 \int_{1}^{\infty} x^{-\frac{1}{2}} \frac{1}{\sqrt{2\pi}} \, e^{-\frac{x^2}{2}} \, dx  \\
&\leq& \sqrt{\frac{2}{\pi}} \int_{0}^{1} x^{-\frac{1}{2}} \, dx + 2 \int_{1}^{\infty} \frac{1}{\sqrt{2 \pi}} \, e^{-\frac{x^2}{2}} \, dx \\
&\leq& 2 \sqrt{\frac{2}{\pi}} + 1,
\end{eqnarray*}
the desired result follows.
\end{proof}

With the above preparation in hand, we are ready to derive the upper bound for the Wasserstein distance between the distribution of $\sqrt{n} \theta_n$ (after scaling) and the standard normal distribution.

\begin{theorem}
For $n \geq 10$, we have
\begin{equation*}
d_{W}\left( \sqrt{\frac{1-\alpha^2}{1+ \alpha^2}} \sqrt{n} \theta_n , \, \mathcal{N}(0,1) \right) \leq \frac{\sqrt{2/\pi} \, C_{4}(\alpha) + C_{6}(\alpha)}{\sqrt{n}},
\end{equation*}
where $C_{4}(\alpha)$ is defined in Theorem \ref{thm:nominator} and
\begin{eqnarray*}
C_{6}(\alpha) &:=& \left(4 \left(2 \sqrt{2/\pi} + 1 \right)^{4} \, \sqrt{ \frac{1- \alpha^2}{1+ \alpha^2} }  +  2 \left(2 \sqrt{2/\pi} + 1 \right)^{2} \, \frac{1- \alpha^2}{\sqrt{1+ \alpha^2}} \right)\, \left( C_{5}(\alpha) \right)^{\frac{1}{4}} \\
&& \times \left[ \frac{2(1+\alpha^2)}{1-\alpha^2} + \frac{ (1- \alpha^2)^2 \left(C_{1}(\alpha)\right)^2}{10} + \frac{(1-\alpha^2)^2 C_2(\alpha) }{5} \right]^{\frac{1}{2}} .
\end{eqnarray*}
In particular, $\sqrt{n} \theta_n$ converges in distribution to $\mathcal{N}\left( 0, (1+ \alpha^2)/(1- \alpha^2) \right)$ as $n \rightarrow \infty$.
\end{theorem}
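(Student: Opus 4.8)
The plan is to factor the scaled correlation as a product of the scaled numerator (already controlled by Theorem \ref{thm:nominator}) and a correction term that converges to $1$ at rate $n^{-1/2}$, and then to run a triangle-inequality argument in Wasserstein distance. Concretely, write
\[
\sqrt{\tfrac{1-\alpha^2}{1+\alpha^2}}\,\sqrt n\,\theta_n \;=\; A_n\,B_n,\qquad A_n:=\sqrt{\tfrac{(1-\alpha^2)^3}{1+\alpha^2}}\,\sqrt n\,Z_{12}^n,\qquad B_n:=\Big((1-\alpha^2)\sqrt{Z_{11}^n\,Z_{22}^n}\Big)^{-1}.
\]
Since $d_W$ is a metric and is dominated by the $L^1$-distance under the identity coupling,
\[
d_W\!\left(\sqrt{\tfrac{1-\alpha^2}{1+\alpha^2}}\sqrt n\,\theta_n,\ \mathcal N(0,1)\right)\ \le\ d_W\!\left(A_n,\ \mathcal N(0,1)\right)\;+\;E\big[|A_n|\,|B_n-1|\big],
\]
and the first term is at most $\sqrt{2/\pi}\,C_4(\alpha)/\sqrt n$ by Theorem \ref{thm:nominator}. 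The whole problem thus reduces to proving $E\big[|A_n|\,|B_n-1|\big]\le C_6(\alpha)/\sqrt n$.

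For that I would set $a:=(1-\alpha^2)Z_{11}^n\ge0$ and $b:=(1-\alpha^2)Z_{22}^n\ge0$, use the identity $\frac{1}{\sqrt{ab}}-1=\frac{1-\sqrt a}{\sqrt a\,\sqrt b}+\frac{1-\sqrt b}{\sqrt b}$ together with the elementary bound $|1-\sqrt x|\le|1-x|$ for $x\ge0$ to obtain $|B_n-1|\le\frac{|1-a|}{\sqrt{ab}}+\frac{|1-b|}{\sqrt b}$, and split $E[|A_n||B_n-1|]$ accordingly. In the first piece, note $\frac{|A_n|}{\sqrt{ab}}=\sqrt{\tfrac{1-\alpha^2}{1+\alpha^2}}\,|\sqrt n\,\theta_n|$, so Cauchy--Schwarz gives $\sqrt{\tfrac{1-\alpha^2}{1+\alpha^2}}\,\|\sqrt n\,\theta_n\|_2\,\|1-a\|_2$. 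In the second piece I would apply Hölder with exponents $(4,2,4)$ to the factors $|A_n|$, $|1-b|$, $b^{-1/2}$, producing $\|A_n\|_4\,\|1-b\|_2\,(1-\alpha^2)^{-1/2}\big(E[(Z_{22}^n)^{-2}]\big)^{1/4}$.

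It then remains to bound each factor by a constant times $n^{-1/2}$. (i) Since $Z_{11}^n$ and $Z_{22}^n$ are identically distributed, $\|1-a\|_2=\|1-b\|_2=\tfrac{1-\alpha^2}{\sqrt n}\sqrt{\,n\,E[(Z_{22}^n-\tfrac1{1-\alpha^2})^2]\,}$, and $n\,E[(Z_{22}^n-\tfrac1{1-\alpha^2})^2]=2n\sum_k\lambda_k^2+\kappa_1(n)^2/n$, which for $n\ge10$ is at most $\tfrac{2(1+\alpha^2)}{1-\alpha^2}+\tfrac{(1-\alpha^2)^2C_2(\alpha)}{5}+\tfrac{(1-\alpha^2)^2 C_1(\alpha)^2}{10}$ by Lemmas \ref{lm:lambda1} and \ref{lm:lambda2}; this is exactly the bracketed quantity appearing in $C_6(\alpha)$. (ii) Using the independence of $Z_{11}^n$ and $Z_{22}^n$, Cauchy--Schwarz gives $\|\sqrt n\,\theta_n\|_2^2=E\big[n(Z_{12}^n)^2/(Z_{11}^n Z_{22}^n)\big]\le\big(E[(\sqrt n Z_{12}^n)^4]\big)^{1/2}\big(E[(Z_{11}^n)^{-2}]\,E[(Z_{22}^n)^{-2}]\big)^{1/2}\le\sqrt{C_5(\alpha)}\cdot16\,(2\sqrt{2/\pi}+1)^8$ by Lemmas \ref{lm:boundedness4moment} and \ref{lm:inversesecondmoment}, so $\|\sqrt n\,\theta_n\|_2\le 4(2\sqrt{2/\pi}+1)^4\,(C_5(\alpha))^{1/4}$. (iii) Likewise $\|A_n\|_4\le\sqrt{(1-\alpha^2)^3/(1+\alpha^2)}\,(C_5(\alpha))^{1/4}$ and $(E[(Z_{22}^n)^{-2}])^{1/4}\le 2(2\sqrt{2/\pi}+1)^2$ straight from Lemmas \ref{lm:boundedness4moment} and \ref{lm:inversesecondmoment}. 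Multiplying through, the first piece is bounded by $\tfrac1{\sqrt n}\,4(2\sqrt{2/\pi}+1)^4\sqrt{\tfrac{1-\alpha^2}{1+\alpha^2}}\,(C_5(\alpha))^{1/4}\,(\text{bracket})^{1/2}$ and the second by $\tfrac1{\sqrt n}\,2(2\sqrt{2/\pi}+1)^2\tfrac{1-\alpha^2}{\sqrt{1+\alpha^2}}\,(C_5(\alpha))^{1/4}\,(\text{bracket})^{1/2}$, whose sum is precisely $C_6(\alpha)/\sqrt n$. Combining with the $d_W(A_n,\mathcal N(0,1))$ bound yields the theorem, and the final distributional statement follows immediately by letting $n\to\infty$ and rescaling by $\sqrt{(1+\alpha^2)/(1-\alpha^2)}$.

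The substantive work here is essentially bookkeeping rather than a new idea: one must arrange the Hölder and Cauchy--Schwarz splittings so that only the already-available uniform estimates enter — the uniform inverse second moments of $Z_{11}^n$ and $Z_{22}^n$ (Lemma \ref{lm:inversesecondmoment}), the uniform fourth moment of $\sqrt n\,Z_{12}^n$ (Lemma \ref{lm:boundedness4moment}), and the $O(1/n)$ control of $\mathrm{Var}(Z_{ii}^n)$ and $(EZ_{ii}^n-\tfrac1{1-\alpha^2})^2$ (Lemmas \ref{lm:lambda1}--\ref{lm:lambda2}) — and that the resulting constants match $C_6(\alpha)$ on the nose. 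The one place requiring a little care is the second-moment bound for $\sqrt n\,\theta_n$ in (ii): since the variance of $\sqrt n\,\theta_n$ is exactly the quantity whose convergence we are asserting, it is important to derive a \emph{uniform} bound for it directly from the numerator's fourth moment and the denominator's inverse moments, rather than assuming its limit.
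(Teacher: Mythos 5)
Your proposal is correct and follows essentially the same route as the paper: the same triangle-inequality reduction to bounding $E\big[|A_n|\,|B_n-1|\big]$ via $d_W(X,Y)\le E|X-Y|$, the same two-term decomposition of $B_n-1$ with the $|1-\sqrt{x}|\le|1-x|$ trick, and the same input lemmas (the uniform fourth moment of $\sqrt{n}\,Z_{12}^n$, the uniform inverse second moments of $Z_{11}^n$ and $Z_{22}^n$, and the $O(1/n)$ bound on $E[(1-(1-\alpha^2)Z_{ii}^n)^2]$ from Lemmas \ref{lm:lambda1}--\ref{lm:lambda2}). The only cosmetic difference is that for the first piece you recombine $|A_n|/\sqrt{ab}$ into $\sqrt{(1-\alpha^2)/(1+\alpha^2)}\,|\sqrt{n}\,\theta_n|$ and apply Cauchy--Schwarz twice, whereas the paper applies a single three-factor H\"older with exponents $(4,4,2)$; both yield the identical constant $C_6(\alpha)$.
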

\begin{proof}
For simplicity, we use $\sqrt{n} \, \widetilde{\theta}_{n}$ to denote $ \sqrt{ \frac{(1-\alpha^2)^3}{1 + \alpha^2}} \sqrt{n} Z_{12}^{n} $. It follows by Lemma \ref{lm:boundedness4moment} that, for $n\geq 10$,
\begin{equation}
E\left[ \left( \sqrt{n} \, \widetilde{\theta}_{n} \right)^{4} \right] \leq \frac{(1- \alpha^2)^{6}}{(1+ \alpha^2)^2} \, C_{5}(\alpha). \label{eq:4boundtildetheta}
\end{equation}
By Theorem \ref{thm:nominator} and the triangle inequality, it suffices to prove that
\begin{equation}
d_{W} \left( \sqrt{\frac{1-\alpha^2}{1+ \alpha^2}} \sqrt{n} \theta_n , \, \sqrt{n} \, \widetilde{\theta}_{n} \right) \leq \frac{C_{6}(\alpha)}{\sqrt{n}}. 
\label{eq:dwnominator}
\end{equation}
Note that
\begin{equation*}
d_{W} \left( \sqrt{\frac{1-\alpha^2}{1+ \alpha^2}} \sqrt{n} \theta_n , \, \sqrt{n} \, \widetilde{\theta}_{n} \right)
= \sup_{f \in \mathrm{Lip}(1)} \left| Ef\left( \sqrt{\frac{1-\alpha^2}{1+ \alpha^2}} \sqrt{n} \theta_n \right) - Ef\left( \sqrt{n} \, \widetilde{\theta}_{n} \right) \right|.
\end{equation*}
For every $f \in \mathrm{Lip}(1)$, and every pair of integrable random variables $(X,Y)$ on the same probability space, we have that
\begin{equation*}
| Ef(X) - Ef(Y) | \leq E | f(X) - f(Y) | \leq E|X-Y|.
\end{equation*}
We now take the supremum over $f \in \mathrm{Lip}(1)$ on both sides of the above display with $X$, $Y$ replaced with $ \sqrt{(1-\alpha^2)/(1+ \alpha^2)} \sqrt{n} \theta_n $ and $ \sqrt{n} \, \widetilde{\theta}_{n} $, respectively. This yields
\begin{equation*}
d_{W} \left( \sqrt{\frac{1-\alpha^2}{1+ \alpha^2}} \sqrt{n} \theta_n , \, \sqrt{n} \, \widetilde{\theta}_{n} \right)
\leq E \left|  \sqrt{\frac{1-\alpha^2}{1+ \alpha^2}} \sqrt{n} \theta_n - \sqrt{n} \, \widetilde{\theta}_{n} \right|.
\end{equation*}
We now only need to bound the expectation of $ | \sqrt{(1-\alpha^2)/(1+ \alpha^2)} \sqrt{n} \theta_n - \sqrt{n} \, \widetilde{\theta}_{n}  | $. Note that
\begin{eqnarray*}
&& \sqrt{\frac{1-\alpha^2}{1+ \alpha^2}} \sqrt{n} \theta_n -   \sqrt{n} \, \widetilde{\theta}_{n}
= \frac{\sqrt{n} \, \widetilde{\theta}_{n}}{\sqrt{ (1- \alpha^2) Z_{11}^{n} \times (1- \alpha^2) Z_{22}^{n} }} - \sqrt{n} \, \widetilde{\theta}_{n}  \\
&=& \frac{\sqrt{n} \, \widetilde{\theta}_{n}}{\sqrt{ (1- \alpha^2) Z_{11}^{n} \times (1- \alpha^2) Z_{22}^{n} }} \, \left( 1- \sqrt{ (1- \alpha^2) Z_{11}^{n} \times (1- \alpha^2) Z_{22}^{n} }\right)  \\
&=& \frac{\sqrt{n} \, \widetilde{\theta}_{n}}{\sqrt{ (1- \alpha^2) Z_{11}^{n} \times (1- \alpha^2) Z_{22}^{n} }} \,  \left( 1- \sqrt{(1-\alpha^2) Z_{11}^{n}} \right)   \\
&& + \frac{\sqrt{n} \, \widetilde{\theta}_{n}}{\sqrt{ (1- \alpha^2) Z_{11}^{n} \times (1- \alpha^2) Z_{22}^{n} }} \,  \sqrt{(1-\alpha^2) Z_{11}^{n} } \left( 1- \sqrt{(1-\alpha^2) Z_{22}^{n} } \right)  \\
&=& \frac{\sqrt{n} \, \widetilde{\theta}_{n}}{\sqrt{ (1- \alpha^2) Z_{11}^{n} \times (1- \alpha^2) Z_{22}^{n} }} \,  \left( 1- \sqrt{(1-\alpha^2) Z_{11}^{n}} \right)  \\
&& + \frac{\sqrt{n} \, \widetilde{\theta}_{n}}{\sqrt{ (1- \alpha^2) Z_{22}^{n} }} \, \left( 1- \sqrt{(1-\alpha^2) Z_{22}^{n} } \right).
\end{eqnarray*}
Then,
\begin{eqnarray}
&& \left| \sqrt{\frac{1-\alpha^2}{1+ \alpha^2}} \sqrt{n} \theta_n -   \sqrt{n} \, \widetilde{\theta}_{n} \right|  \notag \\
&\leq& \frac{ \left| \sqrt{n} \, \widetilde{\theta}_{n} \right| }{\sqrt{ (1- \alpha^2) Z_{11}^{n} \times (1- \alpha^2) Z_{22}^{n} }} \, \left| 1- \sqrt{(1-\alpha^2) Z_{11}^{n}}  \right|  \notag \\ [1mm]
&& + \frac{ \left|\sqrt{n} \, \widetilde{\theta}_{n} \right| }{\sqrt{ (1- \alpha^2) Z_{22}^{n} }} \, \left| 1- \sqrt{(1-\alpha^2) Z_{22}^{n}}  \right| \notag \\
&\leq& \frac{ \left| \sqrt{n} \, \widetilde{\theta}_{n} \right| }{\sqrt{ (1- \alpha^2) Z_{11}^{n} \times (1- \alpha^2) Z_{22}^{n} }} \, \left| 1- (1-\alpha^2) Z_{11}^{n}  \right|  \notag \\ [1mm]
&& + \frac{ \left|\sqrt{n} \, \widetilde{\theta}_{n} \right| }{\sqrt{ (1- \alpha^2) Z_{22}^{n} }} \, \left| 1- (1-\alpha^2) Z_{22}^{n}  \right|, \label{eq:boundthedifference}
\end{eqnarray}
where in the last inequality we have applied the inequality $|1- x| \leq |1- x^2|$ for $x \geq 0$. Note that
\begin{equation*}
E\left[ Z_{11}^{n} \right] = E\left[ \sum_{k=1}^{n} \lambda_k W_k^2  \right] = \sum_{k=1}^{n} \lambda_k,
\end{equation*}
and
\begin{eqnarray*}
&& E\left[ \left( Z_{11}^{n}\right)^2  \right] = E\left[ \left( \sum_{k=1}^{n} \lambda_k W_k^2 \right)^2 \right] = E\left[ \sum_{j,k=1}^{n} \lambda_k \lambda_j W_k^2 \, W_j^2 \right]   \\
&=& \sum_{k=1}^{n} \lambda_k^2 E\left[W_{k}^{4} \right] + \sum_{k \neq j} \lambda_k \lambda_j E\left[ W_k^2 \, W_j^2 \right]
= 3 \sum_{k=1}^{n} \lambda_k^2 + \sum_{k \neq j} \lambda_k \lambda_j \\
&=& \left(\sum_{k=1}^{n}  \lambda_k \right)^2 + 2 \sum_{k=1}^{n} \lambda_k^2.
\end{eqnarray*}
Then
\begin{eqnarray}
&& E\left[ \left( 1- (1-\alpha^2) Z_{11}^{n} \right)^2 \right]  \notag \\  
&=& 1 - 2 (1- \alpha^2) E\left[ Z_{11}^{n} \right] + (1-\alpha^2)^{2} E\left[ \left( Z_{11}^{n} \right)^2 \right]  \notag \\
&=& 1 - 2 (1- \alpha^2) \sum_{k=1}^{n} \lambda_k + (1- \alpha^2)^2 \left(\sum_{k=1}^{n}  \lambda_k \right)^2 + 2 (1- \alpha^2)^2 \sum_{k=1}^{n} \lambda_k^2 \notag \\
&=& \left( 1 - (1-\alpha^2)  \sum_{k=1}^{n} \lambda_k \right)^2 +  2 (1- \alpha^2)^2 \sum_{k=1}^{n} \lambda_k^2  \notag \\
&=& \frac{(1- \alpha^2)^2 (\kappa_{1}(n))^2}{n^2} + \frac{2(1+\alpha^2)}{1-\alpha^2} \, \frac{1}{n} + \frac{2 (1- \alpha^2)^2 \kappa_2(n)}{n^2}  \notag \\ [1mm]
&\leq& \left[ \frac{2(1+\alpha^2)}{1-\alpha^2} + \frac{ (1- \alpha^2)^2 \left(C_{1}(\alpha)\right)^2}{10} + \frac{(1-\alpha^2)^2 C_2(\alpha) }{5} \right] \times n^{-1},  \label{eq:bounddenominatorn}
\end{eqnarray}
where in the fourth equality we have invoked Lemma \ref{lm:lambda1} and Lemma \ref{lm:lambda2}. Similarly,
\begin{equation*}
E\left[ \left( 1- (1-\alpha^2) Z_{22}^{n} \right)^2 \right] \leq 
\left[ \frac{2(1+\alpha^2)}{1-\alpha^2} + \frac{ (1- \alpha^2)^2 \left(C_{1}(\alpha)\right)^2}{10} + \frac{(1-\alpha^2)^2 C_2(\alpha) }{5} \right] \times n^{-1}.
\end{equation*}
Applying H\"{o}lder's inequality twice yields
\begin{eqnarray}
&& E\left[ \frac{ \left| \sqrt{n} \, \widetilde{\theta}_{n} \right| }{\sqrt{ (1- \alpha^2) Z_{11}^{n} \times (1- \alpha^2) Z_{22}^{n} }} \, \left| 1- (1-\alpha^2) Z_{11}^{n}  \right|  \right]  \notag \\
&\leq& \left\{ E \left[\left(   \frac{1}{\sqrt{ (1- \alpha^2) Z_{11}^{n} \times (1- \alpha^2) Z_{22}^{n} }} \right)^4\right]  \right\}^{\frac{1}{4}}
\, \left\{ E\left[ \left( \sqrt{n} \, \widetilde{\theta}_{n} \right) \right]^{4} \right\}^{\frac{1}{4}} \notag  \\
&& \times \left\{ E\left[ \left( 1- (1-\alpha^2) Z_{11}^{n} \right)^2 \right] \right\}^{\frac{1}{2}} \notag \\
&=& \frac{1}{1- \alpha^2} \left\{ E\left[ \left( Z_{11}^{n} \right)^{-2} \right] \right\}^{\frac{1}{4}} \left\{ E\left[ \left( Z_{22}^{n} \right)^{-2} \right] \right\}^{\frac{1}{4}}
\, \left\{ E\left[ \left( \sqrt{n} \, \widetilde{\theta}_{n} \right) \right]^{4} \right\}^{\frac{1}{4}} \notag \\
&& \times \left\{ E\left[ \left( 1- (1-\alpha^2) Z_{11}^{n} \right)^2 \right] \right\}^{\frac{1}{2}} \notag  \\
&\leq& 4 \left(2 \sqrt{2/\pi} + 1 \right)^{4} \, \sqrt{ \frac{1- \alpha^2}{1+ \alpha^2} } \, \left( C_{5}(\alpha) \right)^{\frac{1}{4}} \notag \\
&& \times \left[ \frac{2(1+\alpha^2)}{1-\alpha^2} + \frac{ (1- \alpha^2)^2 \left(C_{1}(\alpha)\right)^2}{10} + \frac{(1-\alpha^2)^2 C_2(\alpha) }{5} \right]^{\frac{1}{2}} \times n^{-\frac{1}{2}}, \label{eq:expectationdiffterm1}
\end{eqnarray}
where the first equality follows by noting that $Z_{11}^{n}$ and $Z_{22}^{n}$ are independent and the last inequality follows by combining \eqref{eq:4boundtildetheta}, \eqref{eq:bounddenominatorn} and Lemma \ref{lm:inversesecondmoment}. Similarly,
\begin{eqnarray}
&& E\left[ \frac{ \left|\sqrt{n} \, \widetilde{\theta}_{n} \right| }{\sqrt{ (1- \alpha^2) Z_{22}^{n} }} \, \left| 1- (1-\alpha^2) Z_{22}^{n}  \right| \right] \notag \\
&\leq&  2 \left(2 \sqrt{2/\pi} + 1 \right)^{2} \, \frac{1- \alpha^2}{\sqrt{1+ \alpha^2}} \, \left( C_{5}(\alpha) \right)^{\frac{1}{4}}  \notag  \\
&& \times \left[ \frac{2(1+\alpha^2)}{1-\alpha^2} + \frac{ (1-\alpha^2)^2 \left(C_{1}(\alpha)\right)^2}{10} + \frac{(1-\alpha^2)^2 C_2(\alpha) }{5} \right]^{\frac{1}{2}} \times n^{-\frac{1}{2}}.  \label{eq:expectationdiffterm2}
\end{eqnarray}
Then \eqref{eq:dwnominator} follows by combining \eqref{eq:boundthedifference}, \eqref{eq:expectationdiffterm1} and \eqref{eq:expectationdiffterm2}. This completes the proof.
\end{proof}

\subsection{Convergence in Kolmogorov distance.} \label{subsec:convergenceKolmogorov}
In this subsection, we will derive an upper bound for the Kolmogorov distance between $\sqrt{n} \theta_n$ and the standard normal distribution. This result relies on two preparatory lemmas. The first lemma, displayed in Lemma \ref{lm:kolmogorovratio} below, is from the work of Michel and Pfanzagl \cite{michel1971accuracy}. The lemma provides upper bounds for (i) the Kolmogorov distance between the ratio of two random variables and a standard normal random variable and (ii) the Kolmogorov distance between the sum of two random variables and the standard normal random variable. The second lemma, Lemma \ref{lm:asymptoticdn}, provides the asymptotics of $ d_{n}\left( t \sqrt{n \ln n} \right) $ and plays an important role in deriving the upper bound.

\begin{lemma} \label{lm:kolmogorovratio}
Let $X$, $Y$ and $Z$ be three random variables defined on a probability space $ (\Omega, \mathcal{F}, P )$ such that $P( Y >0 ) =1$. Then, for all $\epsilon >0$, we have
\begin{enumerate}
\item $d_{Kol} \left( \frac{X}{Y}, \, N \right) \leq d_{Kol} (X, N) + P\left( |Y-1| > \epsilon \right) + \epsilon$,
\item $ d_{Kol}(X+Z, N) \leq d_{Kol}(X,N) + P \left( |Z|> \epsilon \right) + \epsilon $,
\end{enumerate}
where $N \sim \mathcal{N}(0,1)$.
\end{lemma}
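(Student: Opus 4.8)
The plan is to prove both bounds by the standard sandwiching argument for the Kolmogorov distance: control the distribution function of the perturbed variable ($X+Z$ in part 2, $X/Y$ in part 1) from above and below by suitably shifted copies of the distribution function of $X$, then pass from $X$ to the standard normal using $d_{Kol}(X,N)$ and absorb the shift into the $\epsilon$ term via a bound on the Gaussian density. Throughout I would write $\Phi$ for the standard normal distribution function and $\phi$ for its density, and note at the outset that both inequalities are trivial when $\epsilon \ge 1$ (the right-hand side then exceeds $1$, an upper bound for any Kolmogorov distance), so that it suffices to treat $0 < \epsilon < 1$.

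For part 2 I would fix $z \in \mathbb{R}$ and split on the event $\{|Z| \le \epsilon\}$, on which $X - \epsilon \le X + Z \le X + \epsilon$. This gives the inclusions
\begin{equation*}
\{X \le z - \epsilon\} \cap \{|Z| \le \epsilon\} \subseteq \{X + Z \le z\} \subseteq \{X \le z + \epsilon\} \cup \{|Z| > \epsilon\},
\end{equation*}
hence $P(X \le z - \epsilon) - P(|Z| > \epsilon) \le P(X + Z \le z) \le P(X \le z + \epsilon) + P(|Z| > \epsilon)$. Replacing $P(X \le z \pm \epsilon)$ by $\Phi(z \pm \epsilon)$ costs at most $d_{Kol}(X,N)$, and $|\Phi(z \pm \epsilon) - \Phi(z)| \le \epsilon \sup_t \phi(t) < \epsilon$; combining these and taking the supremum over $z$ yields part 2.

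For part 1 I would use $P(Y > 0) = 1$ to rewrite $\{X/Y \le z\} = \{X \le zY\}$, and observe that on $\{|Y-1| \le \epsilon\}$ the quantity $zY$ lies between $z - |z|\epsilon$ and $z + |z|\epsilon$, for every $z$ and for both signs of $z$. The same inclusion-and-shift argument as in part 2 then reduces the claim to the uniform-in-$z$ estimate $|\Phi(z \pm |z|\epsilon) - \Phi(z)| \le \epsilon$.

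This last estimate is where I expect the only real work to lie, since now the shift $|z|\epsilon$ is unbounded in $z$ (unlike the constant shift $\epsilon$ of part 2). I would handle it through the mean value theorem: $|\Phi(z \pm |z|\epsilon) - \Phi(z)| = |z|\epsilon\,\phi(\xi)$ for some $\xi$ lying between the two arguments, so $|\xi| \ge (1-\epsilon)|z|$ and therefore $|z|\epsilon\,\phi(\xi) \le \tfrac{\epsilon}{1-\epsilon}\,|\xi|\phi(\xi) \le \tfrac{\epsilon}{1-\epsilon}\sup_{t}|t|\phi(t) = \tfrac{\epsilon}{1-\epsilon}(2\pi e)^{-1/2}$, which is $\le \epsilon$ once $\epsilon$ is below an absolute constant; the remaining sub-interval of $(0,1)$ can be dispatched either by the triviality noted above or, if one wants the sharp constant, by directly maximizing $z \mapsto |\Phi(z(1\pm\epsilon)) - \Phi(z)|$ over $z$. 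Taking the supremum over $z$ then completes part 1.
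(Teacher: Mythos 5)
The paper does not actually prove this lemma --- it simply cites Michel and Pfanzagl \cite{michel1971accuracy} --- so you have supplied a self-contained argument where the paper has none. Your argument is the standard (and correct) one: the sandwiching of distribution functions in part 2 is exactly right, and the reduction of part 1 to the uniform estimate $\sup_{z}\left|\Phi\bigl(z\pm|z|\epsilon\bigr)-\Phi(z)\right|\leq\epsilon$ via the a.s.\ identity $\{X/Y\leq z\}=\{X\leq zY\}$ and the inclusion $z-|z|\epsilon\leq zY\leq z+|z|\epsilon$ on $\{|Y-1|\leq\epsilon\}$ is the right move. The only soft spot is your handling of the range of $\epsilon$ where the mean-value bound $\frac{\epsilon}{1-\epsilon}(2\pi e)^{-1/2}$ exceeds $\epsilon$ (roughly $\epsilon>0.758$): the ``triviality noted above'' covers only $\epsilon\geq 1$, not this sub-interval of $(0,1)$, so that particular fallback does not apply as stated. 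Your alternative (direct maximization) does work, but the cleanest patch is to observe that for $0<\epsilon<1$ the points $z$ and $z(1\pm\epsilon)$ have the same sign, so $\Phi$ evaluated at both lies on the same side of $1/2$ and the difference is at most $1/2\leq\epsilon$ whenever $\epsilon\geq 1/2$; combined with your mean-value estimate for $\epsilon\leq 1/2$ this covers all of $(0,1)$. With that one-line repair the proof is complete.
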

\begin{proof}
See Michel and Pfanzagl \cite{michel1971accuracy}.
\end{proof}

\begin{lemma} \label{lm:asymptoticdn}
For every fixed $t \in \mathbb{R}$, as $n$ tends to $\infty$,
\begin{eqnarray*}
d_{n}\left(t\sqrt{n \ln n}\right) = (1 + o(1)) \, e^{ - \frac{t}{1- \alpha^2} \sqrt{n \ln n} \,  -  \left[ \frac{(1+ \alpha^2)^2 \, t^2}{4(1- \alpha^2)^3} + \frac{t^2}{2(1-\alpha^2)^2} - \frac{t^2}{4(1- \alpha^2)} + o(1) \right] \ln n }.
\end{eqnarray*}
\end{lemma}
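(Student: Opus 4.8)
The plan is to work directly from the factorization $d_{n}(\lambda)=\prod_{j=1}^{n}(1-\lambda_{j}\lambda)$ in \eqref{eq:anotherformofacp}, rather than from the closed form of Lemma \ref{lem1}. Since $\lambda_{n}=0$ (Remark \ref{remark:semidefinite}), only the factors $j=1,\dots,n-1$ matter, and all the work is in taking logarithms and Taylor expanding. Set $u_{j}:=\lambda_{j}\,t\sqrt{n\ln n}$ for $j=1,\dots,n-1$. By Remark \ref{remark:boundlambda}, $\max_{1\le j\le n-1}|u_{j}|\le C_{3}(\alpha)^{1/4}|t|\,n^{-1/4}(\ln n)^{1/2}\to 0$, so for all $n$ large every factor $1-u_{j}$ is positive (which in particular guarantees $d_{n}(t\sqrt{n\ln n})>0$ and makes the logarithm below legitimate, including when $t>0$), and $\ln(1-u_{j})=-u_{j}-\tfrac12 u_{j}^{2}+\rho_{j}$ with $|\rho_{j}|\le |u_{j}|^{3}$ uniformly in $j$.

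Summing over $j$ then gives
\begin{equation*}
\ln d_{n}\!\left(t\sqrt{n\ln n}\right)= -\,t\sqrt{n\ln n}\sum_{j=1}^{n-1}\lambda_{j}\;-\;\frac{t^{2}n\ln n}{2}\sum_{j=1}^{n-1}\lambda_{j}^{2}\;+\;\sum_{j=1}^{n-1}\rho_{j}.
\end{equation*}
I would then insert the power-sum estimates. Since $\lambda_{n}=0$, Lemma \ref{lm:lambda1} gives $\sum_{j=1}^{n-1}\lambda_{j}=\frac{1}{1-\alpha^{2}}+\frac{\kappa_{1}(n)}{n}$ with $|\kappa_{1}(n)|\le C_{1}(\alpha)$, so the first term equals $-\frac{t}{1-\alpha^{2}}\sqrt{n\ln n}+O\!\big(\sqrt{\ln n/n}\,\big)$. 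Lemma \ref{lm:lambda2} gives $\sum_{j=1}^{n-1}\lambda_{j}^{2}=\frac{1+\alpha^{2}}{n(1-\alpha^{2})^{3}}+\frac{\kappa_{2}(n)}{n^{2}}$ with $\kappa_{2}$ bounded, so the second term equals $-\frac{(1+\alpha^{2})t^{2}}{2(1-\alpha^{2})^{3}}\ln n+O(\ln n/n)$. For the remainder, $\sum_{j=1}^{n-1}|\rho_{j}|\le |t|^{3}(n\ln n)^{3/2}\sum_{j=1}^{n-1}\lambda_{j}^{3}\le |t|^{3}(n\ln n)^{3/2}\big(\max_{j}\lambda_{j}\big)\sum_{j=1}^{n-1}\lambda_{j}^{2}=O\!\big(n^{-1/4}(\ln n)^{3/2}\big)=o(1)$, using Remark \ref{remark:boundlambda} and Lemma \ref{lm:lambda2} (the exponent $\tfrac32-\tfrac34-1=-\tfrac14<0$ is exactly what is needed).

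Collecting, $\ln d_{n}(t\sqrt{n\ln n})=-\frac{t}{1-\alpha^{2}}\sqrt{n\ln n}-\frac{(1+\alpha^{2})t^{2}}{2(1-\alpha^{2})^{3}}\ln n+o(1)$, equivalently $d_{n}(t\sqrt{n\ln n})=(1+o(1))\exp\!\big(-\tfrac{t}{1-\alpha^{2}}\sqrt{n\ln n}-\tfrac{(1+\alpha^{2})t^{2}}{2(1-\alpha^{2})^{3}}\ln n\big)$. To match the form in the statement it remains only to check the elementary identity $\frac{1+\alpha^{2}}{2(1-\alpha^{2})^{3}}=\frac{(1+\alpha^{2})^{2}}{4(1-\alpha^{2})^{3}}+\frac{1}{2(1-\alpha^{2})^{2}}-\frac{1}{4(1-\alpha^{2})}$, which after clearing denominators reduces to $(1+\alpha^{2})^{2}-(1-\alpha^{2})^{2}+2(1-\alpha^{2})=2(1+\alpha^{2})$.

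The main obstacle — really the only delicate point — is the uniform control of the cubic-and-higher tail $\sum_{j}\rho_{j}$: everything hinges on the pointwise bound $\lambda_{j}\le C_{3}(\alpha)^{1/4}n^{-3/4}$ from Remark \ref{remark:boundlambda}, which simultaneously makes the logarithms well defined and forces $(n\ln n)^{3/2}\sum_{j}\lambda_{j}^{3}\to 0$. As an alternative one could substitute $\lambda/n=t\sqrt{\ln n/n}\to 0$ into the closed form of Lemma \ref{lem1}, expand $\gamma_{1},\gamma_{2},\Delta$ to second order in $\lambda/n$, observe that $\gamma_{2}(\cdot)^{n}$ decays geometrically while every summand of $d_{n}$ beyond the first two carries a vanishing factor of order $\lambda/n$, and reduce the problem to the asymptotics of $n\ln\gamma_{1}(t\sqrt{\ln n/n})$; this route yields the same expansion but is longer given the eigenvalue lemmas already in hand.
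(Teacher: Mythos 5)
Your proof is correct, but it follows a genuinely different route from the paper's. The paper proves Lemma \ref{lm:asymptoticdn} by substituting $\lambda = t\sqrt{n\ln n}$ into the explicit closed form of $d_n$ from Lemma \ref{lem1}, Taylor-expanding $\gamma_1\bigl(t\sqrt{\ln n/n}\bigr)$ to second order, computing $n\ln\gamma_{1,n,\mathrm{ln}}$, showing that $\gamma_{2,n,\mathrm{ln}}^{\,n}=\alpha^{2n}\gamma_{1,n,\mathrm{ln}}^{-n}$ is exponentially negligible, and checking that the remaining eight terms of the formula are $o(1)$ relative to the leading one. You instead work from the spectral factorization \eqref{eq:anotherformofacp} together with the eigenvalue estimates of Section \ref{subsec:estimateeigenvalues}: the linear term in the exponent comes from Lemma \ref{lm:lambda1}, the $\ln n$ term from Lemma \ref{lm:lambda2}, and the cubic tail is killed by the pointwise bound of Remark \ref{remark:boundlambda} combined with Lemma \ref{lm:lambda2}, the exponent count $\tfrac32-\tfrac34-1=-\tfrac14$ being exactly right. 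Each step checks out, including the positivity of the factors $1-u_j$ for large $n$ (needed when $t>0$) and the algebraic identity
\begin{equation*}
\frac{(1+\alpha^2)^2}{4(1-\alpha^2)^3}+\frac{1}{2(1-\alpha^2)^2}-\frac{1}{4(1-\alpha^2)}=\frac{1+\alpha^2}{2(1-\alpha^2)^3},
\end{equation*}
which reconciles your coefficient with the three-term expression in the statement. Your route is shorter, reuses machinery the paper has already built for other purposes, makes transparent that the coefficient of $\ln n$ is just $\tfrac{t^2}{2}\lim_n n\sum_k\lambda_k^2$, and in fact yields a slightly stronger conclusion (an additive $o(1)$ in the exponent rather than an $o(1)\ln n$ error). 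The paper's route has the advantage of depending only on Lemma \ref{lem1} and not on Lemmas \ref{lm:lambda1}--\ref{lm:lambda4}, but since those are proved anyway, that buys little here.
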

\begin{proof}
For simplicity of notation, we use $\gamma_{1,n,\mathrm{ln}}$, $\gamma_{2,n,\mathrm{ln}}$ and $\Delta_{n, \mathrm{ln}}$ to denote $\gamma_{1}\left(t \sqrt{\ln n/n}\right)$, $\gamma_{2}\left(t \sqrt{\ln n/n}\right)$ and $ \Delta\left( t \sqrt{\ln n /n} \right) $ respectively. Then, together with Lemma \ref{lem1}, we have
\begin{eqnarray}
&& d_{n}(t \sqrt{n \ln n})  \notag  \\
&=& \frac{\left( \gamma_{1,n,\mathrm{ln}} \right)^{n+1}  - \left( \gamma_{2,n,\mathrm{ln}} \right)^{n+1}}{\sqrt{\Delta_{n, \mathrm{ln}}}}
-  \frac{\alpha^2\left[\left( \gamma_{1,n,\mathrm{ln}} \right)^{n}  - \left( \gamma_{2,n,\mathrm{ln}} \right)^{n}\right]}{\sqrt{\Delta_{n, \mathrm{ln}}}} \notag \\ [2mm]
&& + \frac{t \sqrt{n \ln n}\left[\left(\gamma_{1,n,\mathrm{ln}}\right)^{n+1}  + \left(\gamma_{2,n,\mathrm{ln}}\right)^{n+1}\right]}{n \,\Delta_{n, \mathrm{ln}}}  
  - \frac{(n-1) \alpha^2 \, t \sqrt{n \ln n}\left[\left(\gamma_{1,n,\mathrm{ln}}\right)^{n}  + \left(\gamma_{2,n,\mathrm{ln}}\right)^{n}\right]}{n^2 \,\Delta_{n, \mathrm{ln}}}  \notag \\ [2mm]
&& + \frac{2 (n-1) \alpha \, t \sqrt{n \ln n}\left[\left(\gamma_{1,n,\mathrm{ln}}\right)^{n+1}  + \left(\gamma_{2,n,\mathrm{ln}}\right)^{n+1}\right]}{n \,\Delta_{n, \mathrm{ln}} \left( n (1-\alpha)^2 - t \sqrt{n \ln n} \right)}  \notag  \\ [2mm]
&& - \frac{2 (n-2) \alpha^3 \, t \sqrt{n \ln n}\left[\left(\gamma_{1,n,\mathrm{ln}}\right)^{n}  + \left(\gamma_{2,n,\mathrm{ln}}\right)^{n}\right]}{n \,\Delta_{n, \mathrm{ln}} \left( n (1-\alpha)^2 - t \sqrt{n \ln n} \right)} \notag  \\  [2mm]
&& - \frac{2 (n+1) \alpha^2 \, t \sqrt{n \ln n}\left[\left(\gamma_{1,n,\mathrm{ln}}\right)^{n}  + \left(\gamma_{2,n,\mathrm{ln}}\right)^{n}\right]}{n \,\Delta_{n, \mathrm{ln}} \left( n (1-\alpha)^2 - t \sqrt{n \ln n} \right)}
+ \frac{2  \alpha^4 \, t \sqrt{n \ln n}\left[\left(\gamma_{1,n,\mathrm{ln}}\right)^{n-1}  + \left(\gamma_{2,n,\mathrm{ln}}\right)^{n-1}\right]}{\Delta_{n, \mathrm{ln}} \left( n (1-\alpha)^2 - t \sqrt{n \ln n} \right)} \notag \\[2mm]
&& + \frac{2 \, \alpha^{n+1} (1- \alpha) \, t \sqrt{n \ln n}\left[\gamma_{1,n,\mathrm{ln}}  + \gamma_{2,n,\mathrm{ln}}\right]}{n \,\Delta_{n, \mathrm{ln}} \left( n (1-\alpha)^2 - t \sqrt{n \ln n} \right)}  
+ \frac{4 \, \alpha^{n+2} (1- \alpha) \, t \sqrt{n \ln n}}{n \,\Delta_{n, \mathrm{ln}} \left( n (1-\alpha)^2 - t \sqrt{n \ln n} \right)}. \label{eq:dnlogscale}
\end{eqnarray}
Note that
\begin{eqnarray*}
\gamma_{1, n, \mathrm{ln}} &=& \frac{1-t \sqrt{\frac{\ln n}{n}} + \alpha^2 + \sqrt{\left(1 -t \sqrt{\frac{\ln n}{n}} + \alpha^2 \right)^2 -4\alpha^2}}{2}  \\
&=& \frac{1+ \alpha^2 - t \sqrt{\frac{\ln n}{n}} + (1- \alpha^2) \sqrt{1 - \frac{2(1+\alpha^2) t}{(1-\alpha^2)^2}  \sqrt{\frac{\ln n}{n}}+ \frac{t^2}{(1- \alpha^2)^2}\frac{\ln n}{n}}  }{2}.
\end{eqnarray*}
Applying Taylor's expansion, it follows after rearrangement that
\begin{eqnarray*}
&& \sqrt{1 - \frac{2(1+\alpha^2) t}{(1-\alpha^2)^2}  \sqrt{\frac{\ln n}{n}}+ \frac{t^2}{(1- \alpha^2)^2}\frac{\ln n}{n}}  \\
&=& 1 - \frac{(1 + \alpha^2)t}{(1- \alpha^2)^2} \sqrt{\frac{\ln n }{n}}
 -\left[ \frac{(1+ \alpha^2)^2 t^2}{2 (1-\alpha^2)^4} - \frac{t^2}{2(1- \alpha^2)^2} \right] \frac{\ln n}{n} + o\left( \frac{\ln n }{n} \right).
\end{eqnarray*}
Hence,
\begin{eqnarray*}
\gamma_{1, n, \mathrm{ln}} = 1- \frac{t}{1- \alpha^2} \sqrt{\frac{\ln n}{n}} - \left[ \frac{(1+ \alpha^2)^2 t^2}{4(1- \alpha^2)^3} - \frac{t^2}{4(1- \alpha^2)} \right] \frac{\ln n }{n} + o\left( \frac{\ln n}{n} \right).
\end{eqnarray*}
Again, applying Taylor's expansion, it follows after a rearrangement of terms that
\begin{eqnarray*}
\ln\left( \gamma_{1, n, \mathrm{ln}} \right)
= - \frac{t}{1- \alpha^2} \sqrt{\frac{\ln n}{n}} - \left[ \frac{(1+ \alpha^2)^2 t^2}{4(1- \alpha^2)^3} + \frac{t^2}{2(1-\alpha^2)^2} - \frac{t^2}{4(1- \alpha^2)} \right] \frac{\ln n }{n} + o\left( \frac{\ln n}{n} \right).
\end{eqnarray*}
Then,
\begin{eqnarray*}
 \left( \gamma_{1, n, \mathrm{ln}} \right)^n = \exp   \left(n \ln\left( \gamma_{1, n, \mathrm{ln}} \right)\right)  
 =  e^{ - \frac{t}{1- \alpha^2} \sqrt{n \ln n} \, -  \left[ \frac{(1+ \alpha^2)^2 \, t^2}{4(1- \alpha^2)^3} + \frac{t^2}{2(1-\alpha^2)^2} - \frac{t^2}{4(1- \alpha^2)} + o(1) \right] \ln n }.
\end{eqnarray*}
Note that $ \gamma_{2, n, \mathrm{ln}} = \alpha^2 / \gamma_{1, n, \mathrm{ln}}$. Combining this with the fact that $0 \leq \alpha^2 <1$, we have
\begin{eqnarray*}
&&\left(\gamma_{2, n, \mathrm{ln}}\right)^{n} = \alpha^{2n} \left( \gamma_{1, n, \mathrm{ln}} \right)^{-n} \\  [2mm]
&=&  e^{ n \ln\left(\alpha^2\right) + \frac{t}{1- \alpha^2} \sqrt{n \ln n} \, +  \left[ \frac{(1+ \alpha^2)^2 \, t^2}{4(1- \alpha^2)^3} + \frac{t^2}{2(1-\alpha^2)^2} - \frac{t^2}{4(1- \alpha^2)} + o(1) \right] \ln n }  \\ [2mm]
&=& o(1) \,  e^{ - \frac{t}{1- \alpha^2} \sqrt{n \ln n} \,  -  \left[ \frac{(1+ \alpha^2)^2 \, t^2}{4(1- \alpha^2)^3} + \frac{t^2}{2(1-\alpha^2)^2} - \frac{t^2}{4(1- \alpha^2)} + o(1) \right] \ln n }.
\end{eqnarray*}
A routine calculation gives the result that as $n \rightarrow \infty$,
\begin{eqnarray*}
\gamma_{1, n, \mathrm{ln}}  &=& 1 + o(1), \\
\gamma_{2, n, \mathrm{ln}} &=& \alpha^2 + o(1), \\
\Delta_{n, \mathrm{ln}} &=& (1- \alpha^2)^{2} + o(1).
\end{eqnarray*}
Combining the last three displays yields
\begin{eqnarray*}
&&\frac{\left( \gamma_{1,n,\mathrm{ln}} \right)^{n+1}  - \left( \gamma_{2,n,\mathrm{ln}} \right)^{n+1}}{\sqrt{\Delta_{n, \mathrm{ln}}}}
-  \frac{\alpha^2\left[\left( \gamma_{1,n,\mathrm{ln}} \right)^{n}  - \left( \gamma_{2,n,\mathrm{ln}} \right)^{n}\right]}{\sqrt{\Delta_{n, \mathrm{ln}}}} \\
&=&(1 + o(1)) \, e^{ - \frac{t}{1- \alpha^2} \sqrt{n \ln n} \,  -  \left[ \frac{(1+ \alpha^2)^2 \, t^2}{4(1- \alpha^2)^3} + \frac{t^2}{2(1-\alpha^2)^2} - \frac{t^2}{4(1- \alpha^2)} + o(1) \right] \ln n }.
\end{eqnarray*}
Then, the desired results follow by noting that the remaining items on the right-hand side of \eqref{eq:dnlogscale} are at most
\begin{equation*}
 o(1) \, e^{ - \frac{t}{1- \alpha^2} \sqrt{n \ln n} \,  -  \left[ \frac{(1+ \alpha^2)^2 \, t^2}{4(1- \alpha^2)^3} + \frac{t^2}{2(1-\alpha^2)^2} - \frac{t^2}{4(1- \alpha^2)} + o(1) \right] \ln n }.
\end{equation*}
\end{proof}

We now turn to the upper bound for the Kolmogorov distance between $\sqrt{n} \theta_n$ (after scaling) and standard normal distribution.
\begin{theorem} \label{thm:kolmogorov}
There exists a constant $C_{7}(\alpha)$ depending on $\alpha$ such that for $n$ sufficiently large,
\begin{equation*}
d_{Kol}\left( \sqrt{\frac{1-\alpha^2}{1+ \alpha^2}} \sqrt{n} \theta_n , \, \mathcal{N}(0,1) \right) \leq C_{7}(\alpha) \, \frac{ \sqrt{  \ln n}}{\sqrt{n}}.
\end{equation*}
In particular, $\sqrt{n} \theta_n$ converges in distribution to $\mathcal{N}\left( 0, (1+ \alpha^2)/(1- \alpha^2) \right)$ as $n \rightarrow \infty$.
\end{theorem}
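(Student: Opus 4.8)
The plan is to combine the convergence of the numerator (Theorem~\ref{thm:nominator}) with a sharp tail estimate for the denominator, glued together by the ratio bound of Lemma~\ref{lm:kolmogorovratio}. First I would write
\begin{equation*}
\sqrt{\frac{1-\alpha^2}{1+\alpha^2}}\,\sqrt{n}\,\theta_n = \frac{X_n}{Y_n},\qquad
X_n := \sqrt{\frac{(1-\alpha^2)^3}{1+\alpha^2}}\,\sqrt{n}\,Z_{12}^n,\qquad
Y_n := \sqrt{(1-\alpha^2)Z_{11}^n}\;\sqrt{(1-\alpha^2)Z_{22}^n},
\end{equation*}
noting that $P(Y_n>0)=1$ since $\lambda_1,\dots,\lambda_{n-1}>0$. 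By Lemma~\ref{lm:kolmogorovratio}(1), for every $\epsilon>0$,
\begin{equation*}
d_{Kol}\!\left(\frac{X_n}{Y_n},\,\mathcal{N}(0,1)\right) \le d_{Kol}\!\left(X_n,\mathcal{N}(0,1)\right) + P\!\left(|Y_n-1|>\epsilon\right) + \epsilon,
\end{equation*}
and the first term is at most $C_4(\alpha)/\sqrt{n}$ by Theorem~\ref{thm:nominator}. The remaining task is to choose $\epsilon\asymp\sqrt{(\ln n)/n}$ so that $P(|Y_n-1|>\epsilon)$ is of at most this order.

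Next I would reduce the event $\{|Y_n-1|>\epsilon\}$ to one-sided tails of $U_n:=(1-\alpha^2)Z_{11}^n$. Since $Y_n=\sqrt{U_nV_n}$ with $V_n:=(1-\alpha^2)Z_{22}^n$ independent of and identically distributed with $U_n$, the elementary facts $|\sqrt{u}-1|\le|u-1|$ (for $u\ge0$) and $[(1-\epsilon/3)^2,(1+\epsilon/3)^2]\subseteq[1-\epsilon,1+\epsilon]$ (valid once $\epsilon\le1$) give
\begin{equation*}
P\!\left(|Y_n-1|>\epsilon\right)\le 2\,P\!\left(|U_n-1|>\tfrac{\epsilon}{3}\right)\le 2\,P\!\left(U_n>1+\tfrac{\epsilon}{3}\right)+2\,P\!\left(U_n<1-\tfrac{\epsilon}{3}\right).
\end{equation*}
Each of these two tails I would bound by Chernoff's inequality using the identity $E[e^{\lambda Z_{11}^n}]=d_n(2\lambda)^{-1/2}$, valid for $2\lambda<1/\lambda_1$ and for all $\lambda<0$ (immediate from \eqref{eq:decompositionforZ11}, since $Z_{11}^n=\sum_k\lambda_kW_k^2$ with $W_k$ i.i.d.\ standard normal, or from Theorem~\ref{thm1} with $s_{12}=s_{22}=0$, where $\rho=-s_{11}$, $\upsilon=0$ and $d_n(0)=1$). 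Take $2\lambda=t\sqrt{n\ln n}$ for a fixed $t>0$; this is admissible for large $n$ because $\lambda_1\le C_3(\alpha)^{1/4}n^{-3/4}\ll (n\ln n)^{-1/2}$ by Remark~\ref{remark:boundlambda}. Writing the upper tail as $P(Z_{11}^n>\tfrac{1+\delta}{1-\alpha^2})\le e^{-\lambda(1+\delta)/(1-\alpha^2)}\,d_n(t\sqrt{n\ln n})^{-1/2}$ with $\delta=\epsilon/3$ and inserting the asymptotics of $d_n(t\sqrt{n\ln n})$ from Lemma~\ref{lm:asymptoticdn}, the $\sqrt{n\ln n}$-order exponential factors cancel exactly; treating the lower tail the same way (invoking Lemma~\ref{lm:asymptoticdn} at the negative argument $-t\sqrt{n\ln n}$, the $t^2$-dependence being even), one is left, for $\delta=c\sqrt{(\ln n)/n}$, with
\begin{equation*}
P\!\left(U_n>1+\delta\right)+P\!\left(U_n<1-\delta\right)\le (1+o(1))\,n^{\,\frac12 B(\alpha)t^2 \,-\, \frac{tc}{2(1-\alpha^2)} \,+\, o(1)},
\end{equation*}
where $B(\alpha)=\frac{(1+\alpha^2)^2}{4(1-\alpha^2)^3}+\frac{1}{2(1-\alpha^2)^2}-\frac{1}{4(1-\alpha^2)}$ is a fixed positive number. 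Choosing $c=c(\alpha,t)$ large enough that $\frac{tc}{2(1-\alpha^2)}-\frac12 B(\alpha)t^2\ge1$ forces this probability below $n^{-1}$ for all large $n$.

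Finally, setting $\epsilon:=3c\sqrt{(\ln n)/n}$ and combining the three displays gives, for $n$ large,
\begin{equation*}
d_{Kol}\!\left(\sqrt{\tfrac{1-\alpha^2}{1+\alpha^2}}\,\sqrt{n}\,\theta_n,\,\mathcal{N}(0,1)\right)\le \frac{C_4(\alpha)}{\sqrt{n}}+\frac{4}{n}+3c\sqrt{\frac{\ln n}{n}}\le C_7(\alpha)\,\frac{\sqrt{\ln n}}{\sqrt{n}},
\end{equation*}
with, say, $C_7(\alpha):=C_4(\alpha)+1+3c$, and convergence in distribution to $\mathcal{N}(0,(1+\alpha^2)/(1-\alpha^2))$ follows since the right-hand side tends to $0$. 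I expect the main obstacle to be the bookkeeping in the Chernoff step: after the cancellation of the $\sqrt{n\ln n}$-order exponents one must verify that the surviving $(\ln n)$-order exponent is exactly $\bigl(\tfrac12 B(\alpha)t^2-\tfrac{tc}{2(1-\alpha^2)}\bigr)\ln n+o(\ln n)$ — in particular that the subleading corrections in Lemma~\ref{lm:asymptoticdn} together with the $O(1/n)$ gap between $E[U_n]$ and $1$ are absorbed into the $o(1)$ — while keeping $2\lambda<1/\lambda_1$ throughout; once this is checked, the choice of $c$, and hence of $\epsilon$, is immediate, and the rate $\sqrt{(\ln n)/n}$ is dictated by the $\epsilon$-term.
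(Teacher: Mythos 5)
Your proposal is correct and follows essentially the same route as the paper's proof: the same reduction via Lemma \ref{lm:kolmogorovratio} and Theorem \ref{thm:nominator}, the same passage from the denominator to one-sided tails of $(1-\alpha^2)Z_{11}^n$, and the same Chernoff bound through $E[e^{\lambda Z_{11}^n}]=d_n(2\lambda)^{-1/2}$ combined with Lemma \ref{lm:asymptoticdn}, with the $\sqrt{n\ln n}$-order exponents cancelling and the constant $c$ chosen to beat the $(\ln n)$-order term $\tfrac12 B(\alpha)t^2\ln n$ (the paper fixes $t=2(1-\alpha^2)$ and $M_1=\tfrac{1+\alpha^2}{1-\alpha^2}+1$ up front, which is exactly your choice of $c$). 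The only cosmetic differences are your cleaner two-term union bound for the product event (the paper gets a factor $4$ instead of $2$) and the fact that an exponent of $-1+o(1)$ yields $n^{-1+o(1)}\le Cn^{-1/2}$ rather than literally $n^{-1}$, which does not affect the final rate.
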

\begin{proof}
Let
\begin{equation*}
M_{1} := \frac{1+ \alpha^2}{1- \alpha^2} +1.
\end{equation*}
Note that
\begin{equation*}
\sqrt{\frac{1-\alpha^2}{1+ \alpha^2}} \sqrt{n} \theta_n
  = \frac{\sqrt{\frac{(1- \alpha^2)^3}{1+\alpha^2}} \sqrt{n} \, Z_{12}^{n}}{ \sqrt{(1-\alpha^2)Z_{11}^{n} \times (1-\alpha^2) Z_{22}^{n}  }  }.
\end{equation*}
Invoking Lemma \ref{lm:kolmogorovratio} with $X$, $Y$ and $\epsilon$ replaced by $ \sqrt{\frac{(1- \alpha^2)^3}{1+\alpha^2}} \sqrt{n} \, Z_{12}^{n},$ $  \sqrt{(1-\alpha^2)Z_{11}^{n} \times (1-\alpha^2) Z_{22}^{n}  },$ and $ 3 \, M_{1} \sqrt{\ln n / n} $ respectively, together with Theorem \ref{thm:nominator}, it suffices to prove that
\begin{equation*}
P\left( \left|  \sqrt{(1-\alpha^2)Z_{11}^{n} \times (1-\alpha^2) Z_{22}^{n}  } -1 \right| > 3 \, M_{1} \sqrt{\ln n / n} \right)
\leq \frac{C_{8}(\alpha)}{\sqrt{n}},
\end{equation*}
for $n$ sufficiently large and for some constant $C_{8}(\alpha)$ depending on $\alpha$.

Applying the inequality $|x-1| \leq |x^2 -1|$ for $x\geq 0$, we have that
\begin{eqnarray*}
&& \left|  \sqrt{(1-\alpha^2)Z_{11}^{n} \times (1-\alpha^2) Z_{22}^{n}  } -1 \right|
\leq \left| (1-\alpha^2)Z_{11}^{n} \times (1-\alpha^2) Z_{22}^{n} -1  \right|   \\
&=& \left|  \left( (1-\alpha^2)Z_{11}^{n} -1 \right) \left( (1-\alpha^2)Z_{22}^{n} -1 \right) + \left( (1-\alpha^2)Z_{11}^{n} -1 \right)  + \left( (1-\alpha^2)Z_{22}^{n} -1 \right)  \right| \\
&\leq& \left|  \left( (1-\alpha^2)Z_{11}^{n} -1 \right) \left( (1-\alpha^2)Z_{22}^{n} -1 \right) \right| + \left|  (1-\alpha^2)Z_{11}^{n} -1 \right| 
+ \left|  (1-\alpha^2)Z_{22}^{n} -1 \right|.
\end{eqnarray*}
Then, for $n$ sufficiently large,
\begin{eqnarray}
&& P\left( \left|  \sqrt{(1-\alpha^2)Z_{11}^{n} \times (1-\alpha^2) Z_{22}^{n}  } -1 \right| > 3 \, M_{1} \sqrt{\ln n / n} \right)  \notag \\
&\leq& P\left( \left|  \left( (1-\alpha^2)Z_{11}^{n} -1 \right) \left( (1-\alpha^2)Z_{22}^{n} -1 \right) \right| > M_{1} \sqrt{\ln n / n} \right) \notag \\
&& + P\left( \left|  (1-\alpha^2)Z_{11}^{n} -1 \right|  > M_{1} \sqrt{\ln n / n}  \right) 
 +  P\left( \left|  (1-\alpha^2)Z_{22}^{n} -1 \right|  > M_{1} \sqrt{\ln n / n}  \right)  \notag \\
&\leq&  P\left( \left|  (1-\alpha^2)Z_{11}^{n} -1 \right|  > \sqrt{ M_{1} \sqrt{\ln n / n} }  \right) 
 + P\left( \left|  (1-\alpha^2)Z_{22}^{n} -1 \right|  > \sqrt{ M_{1} \sqrt{\ln n / n} }  \right)  \notag \\
&& + P\left( \left|  (1-\alpha^2)Z_{11}^{n} -1 \right|  > M_{1} \sqrt{\ln n / n}  \right) 
 +  P\left( \left|  (1-\alpha^2)Z_{22}^{n} -1 \right|  > M_{1} \sqrt{\ln n / n}  \right) \notag \\
&\leq&  2P\left( \left|  (1-\alpha^2)Z_{11}^{n} -1 \right|  > M_{1} \sqrt{\ln n / n}  \right) 
 +  2P\left( \left|  (1-\alpha^2)Z_{22}^{n} -1 \right|  > M_{1} \sqrt{\ln n / n}  \right)  \notag \\
&=& 4 P\left( \left|  (1-\alpha^2)Z_{11}^{n} -1 \right|  > M_{1} \sqrt{\ln n / n}  \right),  \label{eq1:tailDbound}
\end{eqnarray}
where the last inequality follows by the fact that $M_{1} \sqrt{\ln n /n} <1$ for $n$ sufficiently large and the last equality follows by the identical distribution of $Z_{11}^{n}$ and $Z_{22}^{n}$. Thus, we need only to bound
\begin{equation*}
P\left( \left|  (1-\alpha^2)Z_{11}^{n} -1 \right|  > M_{1} \sqrt{\ln n / n}  \right).
\end{equation*}
We first note that
\begin{eqnarray}
&& P\left( \left|  (1-\alpha^2)Z_{11}^{n} -1 \right|  > M_{1} \sqrt{\ln n / n}  \right) \notag  \\
&=& P\left( (1- \alpha^2) Z_{11}^{n} > 1 +  M_{1} \sqrt{\ln n / n} \right)
 + P\left( - (1- \alpha^2) Z_{11}^{n} >   -1 +  M_{1} \sqrt{\ln n / n} \right).  \label{eq:twotermforprobability}
\end{eqnarray}
Applying Markov's inequality yields
\begin{eqnarray}
&& P\left( (1- \alpha^2) Z_{11}^{n} > 1 +  M_{1} \sqrt{\ln n / n} \right) \notag \\
&=& P\left( \sqrt{n \ln n} (1- \alpha^2) Z_{11}^{n} > \sqrt{n \ln n} +  M_{1} \ln n \right) \notag  \\
&\leq& \frac{E \left[ e^{ \sqrt{n \ln n} (1- \alpha^2) Z_{11}^{n} } \right]}{e^{\sqrt{n \ln n} +  M_{1} \ln n }}. \label{eq:concentrationinequality}
\end{eqnarray}
It follows by Remark \ref{remark:boundlambda} that $$\sqrt{n \ln n} (1- \alpha^2) \,\lambda_k \leq C_{3}(\alpha)^{1/4} \, (1-\alpha^2) \, n^{-1/4} \, \sqrt{\ln n} < 1/2$$ for $n$ sufficiently large. Then
\begin{eqnarray*}
&& E \left[ e^{ \sqrt{n \ln n} (1- \alpha^2) Z_{11}^{n} } \right] 
=  E \left[ e^{ \sum_{k} \sqrt{n \ln n} (1- \alpha^2) \lambda_k W_{k}^{2} } \right] 
= \prod_{k=1}^{n}  E \left[ e^{  \sqrt{n \ln n} (1- \alpha^2) \lambda_k W_{k}^{2} } \right] \\
&=& \prod_{k=1}^{n} \left( 1- 2 \sqrt{n \ln n} \, (1- \alpha^2) \lambda_k \right)^{-\frac{1}{2}}
 = \left( d_{n}\left( 2 \sqrt{n \ln n} \, (1- \alpha^2) \right) \right)^{-\frac{1}{2}}  \\
&=& (1+ o(1)) \, e^{\sqrt{n \ln n}  + \left( \frac{1+\alpha^2}{1- \alpha^2}  + o(1)\right) \ln n },
\end{eqnarray*}
where the third equality follows by a standard expression for the mgf of a linear-quadratic functional of a Gaussian random variable, the fourth equality follows by the representation \eqref{eq:anotherformofacp} of $d_{n}(\lambda)$, and the last equality follows by invoking Lemma \ref{lm:asymptoticdn} with $t$ replaced by $2(1- \alpha^2)$. Note that $M_{1} = (1 + \alpha^2)/(1 - \alpha^2) + 1$. Together with \eqref{eq:concentrationinequality},  we have
\begin{eqnarray*}
&& P\left( (1- \alpha^2) Z_{11}^{n} > 1 +  M_{1} \sqrt{\ln n / n} \right)  \\
&\leq& ( 1 + o(1)) \, e^{ \left( \frac{1+\alpha^2}{1- \alpha^2}  + o(1)\right) \ln n - M_{1} \ln n }  \\ [2mm]
&=& ( 1 + o(1)) \, e^{  - ( M_{1} - \frac{1+\alpha^2}{1- \alpha^2} - o(1)) \,  \ln n }  \\  [2mm]
&=& ( 1 + o(1)) \, e^{  - ( 1 - o(1)) \,  \ln n }  \\ [2mm]
 &\leq& \frac{C_{9}(\alpha)}{\sqrt{n}},
\end{eqnarray*}
for $n$ sufficiently large and for some constant $C_{9}(\alpha)$ depending on $\alpha$. Similarly,
\begin{equation*}
P\left( - (1- \alpha^2) Z_{11}^{n} >   -1 +  M_{1} \sqrt{\ln n / n} \right)
 \leq  \frac{C_{10}(\alpha)}{\sqrt{n}}, 
\end{equation*}
for $n$ large enough for some constant $C_{10}(\alpha)$ depending on $\alpha$. Combining the last two displays with \eqref{eq:twotermforprobability} yields that, for $n$ sufficiently large,
\begin{equation*}
P\left( \left|  (1-\alpha^2)Z_{11}^{n} -1 \right|  > M_{1} \sqrt{\ln n / n}  \right)
\leq \frac{C_{9}(\alpha) +  C_{10}(\alpha) }{\sqrt{n}}. 
\end{equation*}
The desired result then follows by combining the last display and \eqref{eq1:tailDbound}.
\end{proof}

\begin{remark} \rm On a practical level, the study of the aforementioned convergence rates should be of direct use to practitioners wishing to conduct tests of independence between two AR(1) processes with smaller sample sizes. For example, suppose that a practitioner studying time series data that can be appropriately modeled as AR(1) processes is considering employing the $n^{-1/2}$ rate of convergence in Wasserstein distance as mentioned above. This rate would lead the practitioner to conclude that in order to achieve an error of 1\% between the empirical correlation and the standard Gaussian, one would need 10,000 data points. Furthermore, this informs the practitioner that, should the sample size of these two processes be considerably less than 10,000, that the exact distribution of the scaled empirical correlation (rather than its scaled asymptotic distribution) should be utilized.
\end{remark}

\section{Power for a test of independence.} \label{sec:Power}

The purpose of this section is to study a test of independence between two AR(1) processes with Gaussian increments using their path data. For the two AR(1) processes defined in \eqref{eq:defofAR(1)processes}, we propose the null hypothesis as
\begin{equation*}
H_{0}: \{X_n\}_{n=0}^{\infty} \text{ and } \{Y_n\}_{n=0}^{\infty} \text{ are independent},
\end{equation*}
and the alternative hypothesis as
\begin{equation*}
H_{a}: \{X_n\}_{n=0}^{\infty} \text{ and } \{Y_n\}_{n=0}^{\infty} \text{ are correlated with coefficient $r$}.
\end{equation*}
We pause to provide some further explanation for the two hypotheses mentioned above. For the null hypothesis, we define independence between $\{X_{n}\}_{n=0}^{\infty}$ and $\{Y_{n}\}_{n=0}^{\infty}$ as the increments $\xi_{1}, \xi_{2}, \xi_{3}, \dots, \eta_{1}, \eta_{2}, \eta_{3}, \dots$ being independent. In contrast, for the alternative hypothesis, we say that $\{X_{n}\}_{n=0}^{\infty}$ and $\{Y_{n}\}_{n=0}^{\infty}$ are correlated with coefficient $r$ if the pairs of increments $(\xi_n, \eta_{n})$ are i.i.d. Gaussian random vectors with mean zero and covariance matrix $$\begin{pmatrix}
1 & r \\
r & 1
\end{pmatrix},$$ where $|r| \leq 1$ and $r \neq 0$. 

The asymptotic normality of $\sqrt{n}\, \theta_n$, as presented in Section \ref{sec:mainconvergence}, indicates that, under the null hypothesis $H_{0}$, the rejection region shall be
\begin{equation*}
|\sqrt{n}\, \theta_n| > c_a,
\end{equation*}
where $c_a >0 $ is selected according to the given level of the test. In this section, we will study the power of the above statistical test. We will provide a lower bound for the power and prove that it converges to $1$ as $n$ tends to $\infty$. The key ingredient for doing so shall be the convergence rate of the distribution of $\sqrt{n}\, (\theta_n - r)$ to the normal distribution under the alternative hypothesis, the latter to be exposed in Section \ref{subsec:convergencerateunderHa}. Relying on the convergence rate, we study the power of the test in Section \ref{subsec:testpower}.

\subsection{Convergence rate under $H_{a}$.} \label{subsec:convergencerateunderHa}

In this subsection, we will study the Kolmogorov distance between $\sqrt{n}\, (\theta_n -r )$ (after scaling) and the standard normal distribution under the alternative hypothesis. Therefore, in the remainder of this section, we shall assume that the pairs of random variables $(\xi_n, \eta_{n})$ are i.i.d. Gaussian vectors with mean zero and covariance matrix $$\begin{pmatrix}
1 & r \\
r & 1
\end{pmatrix}.$$

For purposes of simplicity, we will continue to use the notation given in Section \ref{sec:preliminaries} and Section \ref{sec:jointmgf}. That is,
\begin{equation*}
\theta_n = \frac{Z_{12}^{n}}{ \sqrt{ Z_{11}^{n} Z_{22}^{n} } },
\end{equation*}
and $Z_{11}^{n}$, $Z_{12}^{n}$, and $Z_{22}^{n}$ can be simplified as
\begin{equation*}
Z_{11}^{n} = \sum_{k=1}^{n} \lambda_{k} \, W_{k}^{2}, \  Z_{12}^{n} = \sum_{k=1}^{n} \lambda_{k} \, W_{k} V_{k}, \ \text{and } Z_{22}^{n} = \sum_{k=1}^{n} \lambda_{k} \, V_{k}^{2},
\end{equation*}
where
\begin{eqnarray*}
&& \left( W_1, W_2, \dots, W_n \right)^{\intercal} := P_{n}\, \mathbf{\Xi}_n = P_{n} \,  \left(\xi_1, \xi_2, \dots, \xi_n \right)^{\intercal}\\
&&  \left( V_1, V_2, \dots, V_n \right)^{\intercal} := P_{n}\, \mathbf{H}_n= P_{n} \left( \eta_1, \eta_2, \dots, \eta_n \right)^{\intercal}.
\end{eqnarray*}
As defined in Section \ref{sec:jointmgf}, $P_n$ is an orthogonal matrix. By the invariance of distribution of Gaussian random vector under orthogonal transformation, it may be easily shown that the pairs of random variables $(W_n, V_{n})$ are still i.i.d. Gaussian vectors with mean zero and covariance matrix $$\begin{pmatrix}
1 & r \\
r & 1
\end{pmatrix}.$$ Then
\begin{equation}
\theta_n -r = \frac{Z_{12}^{n}}{ \sqrt{ Z_{11}^{n} Z_{22}^{n} } } - r = \frac{ Z_{12}^{n} - r \sqrt{ Z_{11}^{n} Z_{22}^{n} } }{\sqrt{ Z_{11}^{n} Z_{22}^{n} }} = \frac{\left(Z_{12}^{n}\right)^2 - r^2 \, Z_{11}^{n} Z_{22}^{n} }{\sqrt{ Z_{11}^{n} Z_{22}^{n} } \left( Z_{12}^{n} + r \sqrt{ Z_{11}^{n} Z_{22}^{n} } \right) }.  \label{eqc:reformationoftheta-r}
\end{equation}

In what follows, we will first investigate the asymptotics of the scaled numerator of $\theta_n -r$, i.e., $ \sqrt{n} \,  \left(\left(Z_{11}^{n}\right)^2 - r^2 \, Z_{11}^{n} Z_{22}^{n}\right) $. We then study the tail of the denominator of $\theta_n -r$. Finally, we will derive the asymptotics of $\theta_n -r$ by invoking Lemma \ref{lm:kolmogorovratio}. These results are encapsulated in Theorem \ref{thm:asymptoticsnumeratoralternative}, Theorem \ref{thm:denominatortailboundalternative}, and Theorem \ref{thm:mainresultthetaconvergealternative}.

\begin{theorem} \label{thm:asymptoticsnumeratoralternative}
Let
\begin{eqnarray*}
C_{11}(\alpha, r )&:=& C_{19}(\alpha, r) + \frac{1+8 r^2 + 4 r^4}{r^2} \, \frac{(1-\alpha^2)^5}{1+ \alpha^2} \, C_{3}(\alpha) +1 \\
&& +\, \frac{1+ 2 r^2 }{2 |r|} \sqrt{\frac{(1-\alpha^2)^5}{1+\alpha^2}} \left( \frac{1+\alpha^2}{(1-\alpha^2)^3}  + C_{2}(\alpha)\right),
\end{eqnarray*}
with $C_{19}(\alpha, r)$ defined in Theorem \ref{thm:2chaos+4chaosasymptotics}.
Under the alternative hypothesis $H_a$, we have
\begin{equation*}
d_{Kol} \left( \frac{1}{2 r (1 -r^2)} \sqrt{\frac{(1- \alpha^2)^5}{1 + \alpha^2}} \, \sqrt{n} \left( \left(Z_{11}^{n}\right)^2 - r^2 \, Z_{11}^{n} Z_{22}^{n} \right) , \, \mathcal{N}(0,1) \right) \leq \frac{C_{11}( \alpha, r )}{\sqrt{n}}.
\end{equation*}
\end{theorem}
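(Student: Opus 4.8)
Write $N_n := (Z_{12}^{n})^{2} - r^{2}\,Z_{11}^{n}Z_{22}^{n}$ for the numerator appearing in the last displayed identity for $\theta_n-r$, and set $\sigma_\infty^{2}:=4r^{2}(1-r^{2})^{2}(1+\alpha^{2})/(1-\alpha^{2})^{5}$. The plan is to exhibit $\sqrt{n}\,(N_n-E[N_n])$ as the sum of a second- and a fourth-Wiener-chaos random variable, to apply the quantitative central limit theorem for such sums (Theorem \ref{thm:2chaos+4chaosasymptotics}, which itself runs through the Malliavin--Stein bound \eqref{eq:firstmalliavinboundforKolmogorov}), and then to correct both for the deterministic mean $E[N_n]$ and for the mismatch between the exact standardization and $\sigma_\infty^{-1}$. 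For the chaos decomposition I would first decouple by writing $V_k=rW_k+\sqrt{1-r^{2}}\,U_k$ with $(W_k)$, $(U_k)$ independent standard normal sequences, and realize $W_k=\mathcal I_{1}(e_k)$, $U_k=\mathcal I_{1}(g_k)$ for an orthonormal family $\{e_k\}\cup\{g_k\}$ in $\mathcal H$. Then each of $Z_{11}^{n}=\sum_k\lambda_kW_k^{2}$, $Z_{22}^{n}=\sum_k\lambda_k(rW_k+\sqrt{1-r^{2}}U_k)^{2}$ and $Z_{12}^{n}=\sum_k\lambda_kW_k(rW_k+\sqrt{1-r^{2}}U_k)$ is a constant plus an element of $\mathcal H_{2}$, so the product formula \eqref{eq:productformulaforWienerintegral} gives $N_n=E[N_n]+N_n^{[2]}+N_n^{[4]}$ with $N_n^{[2]}\in\mathcal H_{2}$ and $N_n^{[4]}\in\mathcal H_{4}$. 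The crucial simplification is that every quadratic form here is diagonal in the $W_k,U_k$, so the Malliavin derivatives and contractions arising below collapse to the power sums $\sum_k\lambda_k^{j}$, precisely the quantities controlled by Lemmas \ref{lm:lambda1}, \ref{lm:lambda2} and \ref{lm:lambda4}.

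A direct computation (using $E[W^{2}V^{2}]=1+2r^{2}$) gives $E[N_n]=(1-r^{2})(1+2r^{2})\sum_k\lambda_k^{2}$, and, after centering, the dominant part of $N_n^{[2]}$ equals $(1-r^{2})\big(\sum_k\lambda_k\big)\big[r^{2}\sum_k\lambda_k(W_k^{2}-U_k^{2})+2r\sqrt{1-r^{2}}\sum_k\lambda_kW_kU_k\big]$, the remaining part of $N_n^{[2]}$ and all of $N_n^{[4]}$ having $L^{2}$-norm of order $\sum_k\lambda_k^{2}$. Invoking Lemmas \ref{lm:lambda1}, \ref{lm:lambda2} and \ref{lm:lambda4} (together with the elementary bound $\sum_k\lambda_k^{3}\le(\sum_k\lambda_k^{2})^{1/2}(\sum_k\lambda_k^{4})^{1/2}=O(n^{-2})$) then yields $E[N_n]=O(n^{-1})$, $\mathrm{Var}(N_n^{[4]})=O(n^{-2})$, and $n\,\mathrm{Var}(N_n)=\sigma_\infty^{2}+O(n^{-1/2})$, so that $\sigma_\infty^{-1}$ is indeed the standardizing constant in the statement (its sign being immaterial since $\mathcal N(0,1)$ is symmetric).

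Next I would run the Malliavin--Stein machinery on $X_n:=(\mathrm{Var}(N_n))^{-1/2}(N_n-E[N_n])$, with chaos components $X_n^{[2]},X_n^{[4]}$. Since $-L^{-1}$ acts as $\tfrac12$ on $\mathcal H_{2}$ and as $\tfrac14$ on $\mathcal H_{4}$,
\[
1-\langle DX_n,-DL^{-1}X_n\rangle_{\mathcal H}=\Big(1-\tfrac12\|DX_n^{[2]}\|_{\mathcal H}^{2}\Big)-\tfrac14\|DX_n^{[4]}\|_{\mathcal H}^{2}-\tfrac34\langle DX_n^{[2]},DX_n^{[4]}\rangle_{\mathcal H}.
\]
By the diagonal structure, $\tfrac12\|DX_n^{[2]}\|_{\mathcal H}^{2}$ is a quadratic expression in the $W_k,U_k$ with mean $1+O(n^{-1})$ and variance $O(n^{-1})$, the latter governed by $\sum_k\lambda_k^{4}$ through Lemma \ref{lm:lambda4}; hence the first term on the right has $L^{2}$-norm $O(n^{-1/2})$. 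The second term is $O(n^{-1})$ because $E\|DX_n^{[4]}\|_{\mathcal H}^{2}=4\,\mathrm{Var}(X_n^{[4]})=O(n^{-1})$ while hypercontractivity \eqref{eq:hypercontractivitypropertyforchaos} bounds its fourth moment by a constant times the square of its second, and the third term is $O(n^{-1})$ by Cauchy--Schwarz in $\mathcal H$ followed by hypercontractivity. Substituting into \eqref{eq:firstmalliavinboundforKolmogorov} gives $d_{Kol}(X_n,\mathcal N(0,1))\le C_{19}(\alpha,r)\,n^{-1/2}$; this is precisely what Theorem \ref{thm:2chaos+4chaosasymptotics} supplies.

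It remains to pass from $X_n$ to $\sigma_\infty^{-1}\sqrt n\,N_n$. Writing $\sigma_\infty^{-1}\sqrt n\,N_n=\big(1+O(n^{-1/2})\big)X_n+\sigma_\infty^{-1}\sqrt n\,E[N_n]$ (using $n\,\mathrm{Var}(N_n)=\sigma_\infty^{2}+O(n^{-1/2})$), the rescaling of $X_n$ by $1+O(n^{-1/2})$ perturbs the Kolmogorov distance to $\mathcal N(0,1)$ by $O(n^{-1/2})$, since the standard normal density is bounded; and the deterministic shift, of magnitude $\tfrac{1+2r^{2}}{2|r|}\sqrt{(1-\alpha^{2})^{5}/(1+\alpha^{2})}\,\sqrt n\sum_k\lambda_k^{2}=O(n^{-1/2})$ by Lemma \ref{lm:lambda2}, perturbs it by a further $O(n^{-1/2})$ via Lemma \ref{lm:kolmogorovratio}(2). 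Carrying the explicit constants $C_{2}(\alpha)$, $C_{3}(\alpha)$ and $C_{19}(\alpha,r)$ through these reductions produces the stated $C_{11}(\alpha,r)$. I expect the Malliavin--Stein step to be the main obstacle: controlling the fourth-chaos component and, in particular, the $\mathcal H$-inner-product cross term $\langle DX_n^{[2]},DX_n^{[4]}\rangle_{\mathcal H}$ with a fully explicit $(\alpha,r)$-dependent constant, and checking that every remainder is genuinely $O(n^{-1/2})$ rather than merely $o(1)$. The diagonalization in the $W_k,U_k$ basis, together with the power-sum bounds of Lemmas \ref{lm:lambda1}--\ref{lm:lambda4}, is what makes this feasible.
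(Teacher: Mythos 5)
Your proposal is correct and follows essentially the same route as the paper: decouple $V_k=rW_k+\sqrt{1-r^2}\,U_k$, decompose $(Z_{12}^n)^2-r^2Z_{11}^nZ_{22}^n$ into a deterministic mean plus second- and fourth-chaos components, apply the Malliavin--Stein bound for a $2{+}4$ chaos sum (the content of Theorem \ref{thm:2chaos+4chaosasymptotics} via Lemma \ref{lm:citefromDousissi}), and absorb the variance mismatch and the $O(n^{-1/2})$ deterministic shift, with all power-sum estimates supplied by Lemmas \ref{lm:lambda1}--\ref{lm:lambda4}. The only differences are organizational --- the paper peels off the small second-chaos remainder $A_1$ and handles it by Chebyshev plus Lemma \ref{lm:kolmogorovratio}(2) rather than folding it into the Stein computation --- and you have correctly read the numerator as $(Z_{12}^n)^2$ rather than the $(Z_{11}^n)^2$ appearing in the theorem statement.
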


\begin{proof}
See Appendix C.
\end{proof}

Note that $M_{1} = \frac{1+\alpha^2}{1- \alpha^2} + 1$, as was defined in the proof of Theorem \ref{thm:kolmogorov}. We also define
\begin{equation*}
M_{2} := \frac{1+r^2}{2 r^2} \, \frac{1+\alpha^2}{1 - \alpha^2} + 1.
\end{equation*}

We now continue with Theorem \ref{thm:denominatortailboundalternative} below.

\begin{theorem} \label{thm:denominatortailboundalternative}
Under the alternative hypothesis $H_a$, there exists a constant $C_{12}(\alpha, r)$ such that for $n$ sufficiently large,
\begin{equation*}
P\left( \left|  \frac{ Z_{12}^{n} \sqrt{ Z_{11}^{n} Z_{22}^{n} } + r Z_{11}^{n} Z_{22}^{n} }{2 r / (1-\alpha^2)^2 }  -1 \right| 
 > \frac{1}{2} (6 M_1 + M_2 + 3M_1 M_2) \sqrt{\frac{\ln n }{n}} \right) \leq \frac{C_{12}(\alpha, r)}{\sqrt{n}}.
\end{equation*}
\end{theorem}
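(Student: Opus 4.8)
The plan is to reduce the claimed tail estimate to three separate, simpler tail estimates — one for each of $(1-\alpha^2)Z_{11}^n-1$, $(1-\alpha^2)Z_{22}^n-1$, and $(1-\alpha^2)Z_{12}^n/r-1$ — and then to glue them together by an elementary algebraic bound on the ``good event'' where all three are small. Write $A_n:=(1-\alpha^2)Z_{11}^n-1$, $B_n:=(1-\alpha^2)Z_{22}^n-1$, $C_n:=(1-\alpha^2)Z_{12}^n-r$, and let $Q_n$ denote the random variable inside the probability. Since $Z_{11}^n=(1+A_n)/(1-\alpha^2)$, $Z_{22}^n=(1+B_n)/(1-\alpha^2)$, $Z_{12}^n=(r+C_n)/(1-\alpha^2)$, a direct substitution gives, on any event where $1+A_n,1+B_n>0$,
\[
Q_n=\frac{(r+C_n)\sqrt{(1+A_n)(1+B_n)}+r(1+A_n)(1+B_n)}{2r},\qquad
Q_n-1=\frac{(r+C_n)\sqrt{(1+A_n)(1+B_n)}-r}{2r}+\frac{(1+A_n)(1+B_n)-1}{2}.
\]
On the event $G_n:=\{|A_n|\le M_1\sqrt{\ln n/n}\}\cap\{|B_n|\le M_1\sqrt{\ln n/n}\}\cap\{|C_n|\le |r|M_2\sqrt{\ln n/n}\}$, for $n$ large enough that $M_1\sqrt{\ln n/n},M_2\sqrt{\ln n/n},\sqrt{\ln n/n}\le 1$, one uses $|(1+A_n)(1+B_n)-1|=|A_n+B_n+A_nB_n|\le 3M_1\sqrt{\ln n/n}$, the inequalities $|\sqrt x-1|\le|x-1|$ and $\sqrt x\le 1+|x-1|$ for $x\ge 0$, and $|C_n|\le|r|M_2\sqrt{\ln n/n}$, to bound each summand; the cross term of order $M_1M_2(\ln n/n)$ is absorbed into $M_1M_2\sqrt{\ln n/n}$, and the bookkeeping lands exactly on $|Q_n-1|\le \tfrac12(6M_1+M_2+3M_1M_2)\sqrt{\ln n/n}$. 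This deterministic estimate is the algebraic core of the argument: the event in the statement is contained in $G_n^c$, so it remains only to show $P(G_n^c)\le C_{12}(\alpha,r)/\sqrt n$.

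By the union bound it suffices to bound the three tail probabilities separately. For $Z_{11}^n$ and $Z_{22}^n$, the key observation is that $Z_{11}^n=\sum_k\lambda_k W_k^2$ and $Z_{22}^n=\sum_k\lambda_k V_k^2$ have, under $H_a$, exactly the same laws as under $H_0$, because the marginals of $W_k$ and $V_k$ remain standard normal; hence the bound $P(|(1-\alpha^2)Z_{11}^n-1|>M_1\sqrt{\ln n/n})\le C/\sqrt n$ established inside the proof of Theorem \ref{thm:kolmogorov} applies verbatim (and likewise for $Z_{22}^n$). For the genuinely new term $Z_{12}^n$, I would use the distributional identity $Z_{12}^n\overset{d}{=}\tfrac{1+r}{2}\widetilde Z_{11}^n-\tfrac{1-r}{2}\widetilde Z_{22}^n$, which follows from $W_kV_k=\tfrac14[(W_k+V_k)^2-(W_k-V_k)^2]$ together with the independence of $W_k+V_k$ and $W_k-V_k$; here $\widetilde Z_{11}^n,\widetilde Z_{22}^n$ are independent, each distributed as $Z_{11}^n$. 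This produces the closed form
\[
E\bigl[e^{s(1-\alpha^2)Z_{12}^n/r}\bigr]=\Bigl(d_n\bigl(\tfrac{(1+r)(1-\alpha^2)}{r}s\bigr)\Bigr)^{-1/2}\Bigl(d_n\bigl(-\tfrac{(1-r)(1-\alpha^2)}{r}s\bigr)\Bigr)^{-1/2},
\]
which is finite for $s$ of order $\sqrt{n\ln n}$ by Remark \ref{remark:boundlambda}, and which depends on $r$ only through $r^2$. Applying Markov's inequality to the upper and lower tails with $s=\sqrt{n\ln n}$ and invoking Lemma \ref{lm:asymptoticdn}, together with the simplification $\tfrac{(1+\alpha^2)^2}{4(1-\alpha^2)^3}+\tfrac{1}{2(1-\alpha^2)^2}-\tfrac{1}{4(1-\alpha^2)}=\tfrac{1+\alpha^2}{2(1-\alpha^2)^3}$, one finds that the product of the two $d_n$-factors behaves like $e^{s}\,n^{\mu+o(1)}$ with $\mu=\tfrac{(1+\alpha^2)(1+r^2)}{2r^2(1-\alpha^2)}$, so the Markov bound equals $(1+o(1))\,n^{-M_2+\mu+o(1)}$, which is $(1+o(1))\,n^{-1+o(1)}\le C/\sqrt n$ precisely because $M_2=\mu+1$.

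The main obstacle is this last estimate. One must carry the $\sqrt{n\ln n}$- and $\ln n$-order contributions through Lemma \ref{lm:asymptoticdn} accurately; confirm that the product of the two $d_n$-factors has leading exponential factor $e^{s\cdot 1}$, as it must since $E[(1-\alpha^2)Z_{12}^n/r]\to 1$; and verify that the constant $M_2$ is calibrated so that the $\ln n$-exponent in the Markov bound is $-1+o(1)$ rather than merely negative. Everything else is routine: the identity for $Q_n-1$, the deterministic estimate on $G_n$, and the reuse of the $Z_{11}^n$, $Z_{22}^n$ tail bounds. Summing the three tail probabilities and absorbing all constants into a single $C_{12}(\alpha,r)$ completes the proof.
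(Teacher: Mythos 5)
Your proposal is correct and follows essentially the same route as the paper: the same reuse of the null-hypothesis tail bounds for $Z_{11}^n$ and $Z_{22}^n$, the same exponential-moment bound for $(1-\alpha^2)Z_{12}^n/r$ via the factorization into the two $d_n$-factors at $\pm\tfrac{(1\pm r)(1-\alpha^2)}{r}\sqrt{n\ln n}$ combined with Lemma \ref{lm:asymptoticdn} and the calibration $M_2=\mu+1$, and the same union-bound-plus-deterministic-algebra combination landing on the constant $\tfrac12(6M_1+M_2+3M_1M_2)$. The only cosmetic differences are that you derive the mgf through the rotation $W_kV_k=\tfrac14[(W_k+V_k)^2-(W_k-V_k)^2]$ rather than quoting the standard bivariate Gaussian quadratic-form mgf, and you perform the final algebra in a single good-event estimate rather than the paper's two-stage decomposition.
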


\begin{proof}
See Appendix C.
\end{proof}

With the above two theorems, we arrive at our main theorem in this section.

\begin{theorem} \label{thm:mainresultthetaconvergealternative}
Under the alternative hypothesis $H_a$, there exists a constant $C_{13}(\alpha, r)$ such that for $n$ large enough,
\begin{equation*}
d_{Kol} \left( \frac{1}{1-r^2} \sqrt{\frac{1-\alpha^2}{1+\alpha^2}} \sqrt{n} (\theta_n -r), \, \mathcal{N}(0,1) \right)
\leq C_{13}(\alpha, r) \times \sqrt{\frac{\ln n}{n}}.
\end{equation*}
\begin{proof}
Note that
\begin{eqnarray*}
\frac{1}{1-r^2} \sqrt{\frac{1-\alpha^2}{1+\alpha^2}} \sqrt{n} (\theta_n -r)
= \frac{ \frac{1}{2 r (1 -r^2)} \sqrt{\frac{(1- \alpha^2)^5}{1 + \alpha^2}} \, \sqrt{n} \left( \left(Z_{11}^{n}\right)^2 - r^2 \, Z_{11}^{n} Z_{22}^{n} \right) }{\frac{ Z_{12}^{n} \sqrt{ Z_{11}^{n} Z_{22}^{n} } + r Z_{11}^{n} Z_{22}^{n} }{2 r / (1-\alpha^2)^2 } }.
\end{eqnarray*}
Together with Theorem \ref{thm:asymptoticsnumeratoralternative} and Theorem \ref{thm:denominatortailboundalternative}, the desired result follows immediately by invoking Lemma \ref{lm:kolmogorovratio} with
\begin{eqnarray*}
&& X = \frac{1}{2 r (1 -r^2)} \sqrt{\frac{(1- \alpha^2)^5}{1 + \alpha^2}} \, \sqrt{n} \left( \left(Z_{11}^{n}\right)^2 - r^2 \, Z_{11}^{n} Z_{22}^{n} \right), \\
&& Y = \frac{ Z_{12}^{n} \sqrt{ Z_{11}^{n} Z_{22}^{n} } + r Z_{11}^{n} Z_{22}^{n} }{2 r / (1-\alpha^2)^2 }, \\
&& \epsilon =  \frac{1}{2} (6 M_1 + M_2 + 3M_1 M_2) \sqrt{\frac{\ln n }{n}}.
\end{eqnarray*}
\end{proof}
\end{theorem}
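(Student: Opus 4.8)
\emph{Proof proposal.}
The plan is short: reduce the claim to the two preparatory results Theorem~\ref{thm:asymptoticsnumeratoralternative} and Theorem~\ref{thm:denominatortailboundalternative} via the ratio bound in part~1 of Lemma~\ref{lm:kolmogorovratio}. The starting point is the algebraic identity
\begin{equation*}
\theta_n - r = \frac{\left(Z_{12}^{n}\right)^2 - r^2\, Z_{11}^{n} Z_{22}^{n}}{\sqrt{Z_{11}^{n} Z_{22}^{n}}\,\left(Z_{12}^{n} + r\sqrt{Z_{11}^{n} Z_{22}^{n}}\right)},
\end{equation*}
obtained by multiplying the numerator and denominator of $\theta_n - r = (Z_{12}^{n} - r\sqrt{Z_{11}^{n}Z_{22}^{n}})/\sqrt{Z_{11}^{n}Z_{22}^{n}}$ by the conjugate factor $Z_{12}^{n} + r\sqrt{Z_{11}^{n}Z_{22}^{n}}$. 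Multiplying through by $\frac{1}{1-r^2}\sqrt{\frac{1-\alpha^2}{1+\alpha^2}}\sqrt{n}$ and inserting the common normalizing constant $2r/(1-\alpha^2)^2$ into both numerator and denominator writes the target random variable as a ratio $X/Y$ in which $X$ is precisely the normalized scaled numerator of Theorem~\ref{thm:asymptoticsnumeratoralternative} and $Y$ is precisely the normalized denominator of Theorem~\ref{thm:denominatortailboundalternative}.

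With this identification in hand I would proceed in three steps. First, observe that $P(Y>0)=1$: by the Cauchy--Schwarz inequality applied to the representations \eqref{eq:decompositionforZ11}, \eqref{eq:decompositionforZ12} and \eqref{eq:decompositionforZ22} (all $\lambda_k\ge 0$) one has $|Z_{12}^{n}|\le\sqrt{Z_{11}^{n}Z_{22}^{n}}$, hence $Y\ge 0$, and strict positivity holds almost surely since the Cauchy--Schwarz equality event has probability zero. Second, Theorem~\ref{thm:asymptoticsnumeratoralternative} yields $d_{Kol}(X,\mathcal{N}(0,1))\le C_{11}(\alpha,r)/\sqrt{n}$, while Theorem~\ref{thm:denominatortailboundalternative} yields $P\big(|Y-1|>\epsilon_n\big)\le C_{12}(\alpha,r)/\sqrt{n}$ with $\epsilon_n = \tfrac12(6M_1 + M_2 + 3M_1M_2)\sqrt{\ln n/n}$. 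Third, applying part~1 of Lemma~\ref{lm:kolmogorovratio} with these $X$, $Y$ and $\epsilon_n$ gives
\begin{equation*}
d_{Kol}\!\left(\tfrac{X}{Y},\,\mathcal{N}(0,1)\right) \le \frac{C_{11}(\alpha,r)}{\sqrt{n}} + \frac{C_{12}(\alpha,r)}{\sqrt{n}} + \tfrac12(6M_1 + M_2 + 3M_1M_2)\sqrt{\frac{\ln n}{n}},
\end{equation*}
and since $1/\sqrt{n}\le\sqrt{\ln n/n}$ for $n\ge 3$ this is at most $C_{13}(\alpha,r)\sqrt{\ln n/n}$ with $C_{13}(\alpha,r) := C_{11}(\alpha,r) + C_{12}(\alpha,r) + \tfrac12(6M_1 + M_2 + 3M_1M_2)$.

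For this theorem itself the ratio argument is essentially the entire proof, so the genuine difficulty is pushed into Appendix~C, where Theorem~\ref{thm:asymptoticsnumeratoralternative} and Theorem~\ref{thm:denominatortailboundalternative} are established. The harder of the two is the numerator asymptotics: $(Z_{12}^{n})^2 - r^2 Z_{11}^{n} Z_{22}^{n}$ is a degree-four polynomial in the jointly Gaussian pairs $(W_k,V_k)$ and lies in the direct sum of the second and fourth Wiener chaos, so one must split it into its chaos components, isolate the dominant second-chaos piece (whose variance produces the normalizing constant appearing in Theorem~\ref{thm:asymptoticsnumeratoralternative}), control the fourth-chaos remainder via contraction and fourth-moment estimates built on Lemma~\ref{lm:lambda2} and Lemma~\ref{lm:lambda4}, and then invoke the Nourdin--Peccati bound \eqref{eq:firstmalliavinboundforKolmogorov}. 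The denominator tail bound in Theorem~\ref{thm:denominatortailboundalternative} is obtained, as in the proof of Theorem~\ref{thm:kolmogorov}, by expressing the relevant moment generating functions through $d_n$ and applying the asymptotics of Lemma~\ref{lm:asymptoticdn} together with Markov's inequality.
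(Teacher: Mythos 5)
Your proposal is correct and follows essentially the same route as the paper: the conjugate-factor identity for $\theta_n - r$, the identification of the normalized numerator and denominator with the $X$ and $Y$ of Theorems \ref{thm:asymptoticsnumeratoralternative} and \ref{thm:denominatortailboundalternative}, and a single application of part~1 of Lemma \ref{lm:kolmogorovratio} with $\epsilon = \tfrac12(6M_1+M_2+3M_1M_2)\sqrt{\ln n/n}$, absorbing the $n^{-1/2}$ terms into $\sqrt{\ln n/n}$. One caveat: your verification of the hypothesis $P(Y>0)=1$ does not actually go through as stated, since Cauchy--Schwarz gives $Z_{12}^{n}+r\sqrt{Z_{11}^{n}Z_{22}^{n}} \ge (r-1)\sqrt{Z_{11}^{n}Z_{22}^{n}}$, which is negative on the event $\theta_n < -r$ (for $r>0$); this positivity hypothesis is left unaddressed in the paper's own proof as well, and in practice one works on the high-probability event where $Y$ is near $1$.
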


\subsection{Power.} \label{subsec:testpower}
In Section \ref{subsec:convergencerateunderHa}, we derived the rate of convergence of $\theta_n - r$ (after scaling) to the normal distribution under the alternative hypothesis $H_a$. We now proceed to study the power of the test with the rejection region 
\begin{equation*}
|\sqrt{n}\, \theta_n| > c_a.
\end{equation*}
We first recall a trivial property of the Kolmogorov distance. For a pair of random variables $(X, Y)$, we have that for any constant $c$,
\begin{eqnarray*}
&& \left| P\left( X >c \right) - P\left( Y > c \right) \right| \leq d_{Kol}(X,Y),  \\
&& \left| P\left( X  < c \right) - P\left( Y  < c \right) \right| \leq d_{Kol}(X,Y).
\end{eqnarray*}
Indeed, by the definition of Kolmogorov distance,
\begin{eqnarray*}
&& \left| P\left( X >c \right) - P\left( Y > c \right) \right|
= \left| \left( 1- P(X \leq c) \right) - \left( 1- P(Y \leq c) \right) \right| \\
&=& \left| P(X \leq c) - P(Y \leq c)\right| \leq d_{Kol}(X,Y).
\end{eqnarray*}
Further,
\begin{eqnarray*}
\left| P\left( X  < c \right) - P\left( Y  < c \right) \right|
= \left| \lim_{k \rightarrow \infty} \left( P\left( X \leq c- 1/k \right) -P\left( Y \leq c- 1/k \right) \right) \right|
\leq d_{Kol}(X,Y).
\end{eqnarray*}

We now turn to the power of the test. Let $\Phi(x)$ be the tail of the standard normal distribution, i.e.,
\begin{equation*}
\Phi(x) = \int_{x}^{\infty} \, \frac{1}{\sqrt{2 \pi}} \, e^{-\frac{y^2}{2}} \, dy.
\end{equation*}
Under the alternative hypothesis $H_a$, the power is
\begin{eqnarray*}
P\left( | \sqrt{n}\, \theta_n  | > c_a \Big| H_a \right)
= P\left(  \sqrt{n}\, \theta_n  > c_a \Big| H_a \right) + P\left(  \sqrt{n}\, \theta_n   < - c_a \Big| H_a \right).
\end{eqnarray*}
Applying Theorem \ref{thm:mainresultthetaconvergealternative} and the aforementioned property of the Kolmogorov distance, we have for $n$ sufficiently large,
\begin{eqnarray*}
&& P\left(  \sqrt{n}\, \theta_n  > c_a \Big| H_a \right)
= P \left( \frac{1}{1-r^2} \sqrt{\frac{1-\alpha^2}{1 + \alpha^2}} \sqrt{n} \left( \theta_n -r \right) >  \frac{1}{1-r^2} \sqrt{\frac{1-\alpha^2}{1 + \alpha^2}} \left( c_a - \sqrt{n} r \right) \right)  \\
&\geq&  \Phi\left( \frac{1}{1-r^2} \sqrt{\frac{1-\alpha^2}{1 + \alpha^2}} \left(   c_a - \sqrt{n} r \right) \right) - C_{13}(\alpha,r) \sqrt{\frac{\ln n}{n}}.
\end{eqnarray*}
Similarly,
\begin{eqnarray*}
P\left(  \sqrt{n}\, \theta_n   < - c_a \Big| H_a \right)
\geq  \Phi\left( \frac{1}{1-r^2} \sqrt{\frac{1-\alpha^2}{1 + \alpha^2}} \left(   c_a + \sqrt{n} r \right) \right) - C_{13}(\alpha,r) \sqrt{\frac{\ln n}{n}}.
\end{eqnarray*}
Combining the last three displays yields
\begin{eqnarray}
 P\left( | \sqrt{n}\, \theta_n  | > c_a \Big| H_a \right)   
&\geq&  \Phi\left( \frac{1}{1-r^2} \sqrt{\frac{1-\alpha^2}{1 + \alpha^2}} \left(   c_a - \sqrt{n} r \right) \right) \notag \\
&&  + \, \Phi\left( \frac{1}{1-r^2} \sqrt{\frac{1-\alpha^2}{1 + \alpha^2}} \left(   c_a + \sqrt{n} r \right) \right)
  - 2 C_{13}(\alpha, r) \sqrt{\frac{\ln n}{n}}. \label{eq:poweralternative}
\end{eqnarray}
In the case $r \neq 0$, we can see easily that the right-hand side of \eqref{eq:poweralternative} tends to $1$ as $n$ tends to $\infty$. Therefore, we conclude that our test with the rejection region $| \sqrt{n} \, \theta_n| > c_a$ has asymptotic power tending towards 1.

\newpage

\bibliographystyle{plain}
\bibliography{reference_AR(1)}

\newpage

\section{Appendix A}
The alternative characteristic polynomial $d_{n}(\lambda)$ plays a key role in calculating the joint moment generating function $\phi_{n}(s_{11},s_{12}, s_{22})$ (Theorem \ref{thm1}). An explicit formula for $d_{n}(\lambda)$ is given in Lemma \ref{lem1}. The purpose of this appendix is to prove Lemma \ref{lem1}.

\subsection{Proof of Lemma \ref{lem1}}
For simplicity, we will start with $d_{n}(n^2 \lambda)$. By the definition of $d_{n}(\lambda)$, we have
\begin{equation*}
d_{n}(n^2 \lambda) = \det(I_{n} - n^2 \lambda K_n)
= \det\left( I_{n} - \left\{ n \lambda \, \frac{\alpha^{|k-j|} - \alpha^{k+j}}{1- \alpha^2} - \lambda\, \frac{(1-\alpha^k)(1- \alpha^j)}{(1-\alpha)^2} \right\}_{j,k=1}^{n} \right).
\end{equation*}
Let $A_n$ denote the $n\times n$ matrix
\begin{equation*}
I_n - \left\{ n \lambda \, \frac{\alpha^{|k-j|} - \alpha^{k+j}}{1- \alpha^2} \right\}_{j,k=1}^{n},
\end{equation*}
and let $B_n$ denote the $n\times n$ matrix
\begin{equation*}
\left\{ \lambda\, \frac{(1-\alpha^k)(1- \alpha^j)}{(1-\alpha)^2} \right\}_{j,k=1}^{n}.
\end{equation*}
Then $d_{n}(n^2\lambda) = \det(A_n +B_n)$.
 
We now perform elementary column operations and then perform elementary row operations to $A_n$ and $B_n$ simultaneously. We do so by first adding $(-\alpha)\times$ the $(j-1)$-th column of $A_n$ to its $j$-th column in the order $j=n , n-1, \dots, 2$. We then obtain
\begin{equation*}
\bar{A}_{n} := \left(
\begin{matrix}
1 - n\lambda &  -\alpha  & 0 & \cdots  &  0  & 0 \\
-n\lambda \alpha  & 1- n\lambda &  -\alpha   &\cdots  & 0 & 0  \\
 -n\lambda \alpha^2 & -n\lambda \alpha  & 1- n\lambda & \cdots  & 0 & 0  \\
 \vdots & \vdots &  \vdots &  \ddots  &  \vdots & \vdots \\
 -n\lambda \alpha^{n-2} & -n\lambda \alpha^{n-3}  & -n\lambda \alpha^{n-4} & \cdots  & 1- n \lambda & -\alpha  \\
 -n\lambda \alpha^{n-1} & -n\lambda \alpha^{n-2}  & -n\lambda \alpha^{n-3} & \cdots  & - n \lambda \alpha & 1- n \lambda
\end{matrix}
\right).
\end{equation*}
Performing the same column operations on $B_n$, we obtain
\begin{equation*}
\bar{B}_{n} := \left(
\begin{matrix}
\lambda \frac{1- \alpha}{1- \alpha} & \lambda \frac{1- \alpha}{1- \alpha} & \lambda \frac{1- \alpha}{1- \alpha} & \cdots & \lambda \frac{1- \alpha}{1- \alpha} \vspace{1ex} \\ 
\lambda \frac{1- \alpha^2}{1- \alpha} & \lambda \frac{1- \alpha^2}{1- \alpha} & \lambda \frac{1- \alpha^2}{1- \alpha} & \cdots & \lambda \frac{1- \alpha^2}{1- \alpha} \vspace{1ex} \\
\lambda \frac{1- \alpha^3}{1- \alpha} & \lambda \frac{1- \alpha^3}{1- \alpha} & \lambda \frac{1- \alpha^3}{1- \alpha} & \cdots & \lambda \frac{1- \alpha^3}{1- \alpha} \\
\vdots & \vdots & \vdots & \ddots & \vdots \\
\lambda \frac{1- \alpha^n}{1- \alpha} & \lambda \frac{1- \alpha^n}{1- \alpha} & \lambda \frac{1- \alpha^n}{1- \alpha} & \cdots & \lambda \frac{1- \alpha^n}{1- \alpha} \\
\end{matrix}
\right).
\end{equation*}
Since the determinant is invariant by adding one column multiplied by a scalar to another column, we have that $\det(A_n + B_n) = \det(\bar{A}_n + \bar{B}_n)$. We then add $(-\alpha)\times$ the $(j-1)$-th row of $\bar{A}_n$ to its $j$-th row in the order $j=n, n-1, \dots, 2$ and obtain a tri-diagonal matrix
\begin{equation*}
\widetilde{A}_n := \left(
\begin{matrix}
1- n\lambda   &  -\alpha   &   0   &  \cdots   &  0   &  0  \\
-\alpha  &  1- n\lambda + \alpha^2  &   -\alpha   &  \cdots   &  0   &  0  \\
0   &   -\alpha  &  1- n\lambda  + \alpha^2   &  \cdots   &  0   &  0  \\
\vdots & \vdots &  \vdots &  \ddots & \vdots & \vdots \\
0  &   0  &  0  &  \cdots  &  1- n\lambda +\alpha^2  &  -\alpha  \\
0  &   0  &  0  &  \cdots  &  -\alpha  &  1- n\lambda +\alpha^2
\end{matrix}
\right).
\end{equation*}
We make the same row operations to $\bar{B}_n$ and obtain
\begin{equation*}
\widetilde{B}_{n} := \lambda \, b_n b_n^{\intercal},
\end{equation*}
where $b_n := (1,1, \dots ,1)^{\intercal}$ is a $n \times 1$ column vector. Again, since the determinant is invariant to the addition of one row multiplied by a scalar to another row, we have $\det(\bar{A}_n + \bar{B}_n) = \det(\widetilde{A}_n + \widetilde{B}_n) = \det(\widetilde{A}_n + \lambda \, b_n b_n^{\intercal}) $. Hence, 
\begin{equation}
d_{n}(n^2\lambda) = \det(\widetilde{A}_n + \lambda \, b_n b_n^{\intercal}).  \label{eq:d_n1}
\end{equation}

Before proceeding to calculate $d_{n}(n^2 \lambda)$, we pause here to introduce two new determinants, closely related to $d_{n}(n^2 \lambda)$. For $n \in \mathds{N}_{+}$, let us denote by $p_{n}(\lambda)$ the following $n \times n$ determinant
\begin{equation}
\left|
\begin{matrix}
1- \lambda + \alpha^2  &  -\alpha   &   0   &  \cdots   &  0   &  0  \\
-\alpha  &  1- \lambda + \alpha^2  &   -\alpha   &  \cdots   &  0   &  0  \\
0   &   -\alpha  &  1- \lambda  + \alpha^2   &  \cdots   &  0   &  0  \\
\vdots & \vdots &  \vdots &  \ddots & \vdots & \vdots \\
0  &   0  &  0  &  \cdots  &  1- \lambda +\alpha^2  &  -\alpha  \\
0  &   0  &  0  &  \cdots  &  -\alpha  &  1- \lambda +\alpha^2
\end{matrix}
\right|. \label{eq:determinantoftridiagnol}
\end{equation}
Let us denote by $q_{n}(\lambda)$ the following $n \times n$ determinant
\begin{equation*}
\left|
\begin{matrix}
1- \lambda   &  -\alpha   &   0   &  \cdots   &  0   &  0  \\
-\alpha  &  1- \lambda + \alpha^2  &   -\alpha   &  \cdots   &  0   &  0  \\
0   &   -\alpha  &  1- \lambda  + \alpha^2   &  \cdots   &  0   &  0  \\
\vdots & \vdots &  \vdots &  \ddots & \vdots & \vdots \\
0  &   0  &  0  &  \cdots  &  1- \lambda +\alpha^2  &  -\alpha  \\
0  &   0  &  0  &  \cdots  &  -\alpha  &  1- \lambda +\alpha^2
\end{matrix}
\right|.
\end{equation*}
By convention, $p_{0}(\lambda) = q_{0}(\lambda) =1$. It follows immediately that, for $n \in \mathds{N}_{+}$, $q_{n}(\lambda)= p_{n}(\lambda) - \alpha^2 \, p_{n-1}(\lambda)$. To make sure this equation also holds for $n = 0$, we let $p_{-1}(\lambda)=0$. Further, expanding determinant \eqref{eq:determinantoftridiagnol} by its first row, we have that for $n \geq 3$,
\begin{equation*}
p_{n}(\lambda) = (1 - \lambda + \alpha^2)\, p_{n-1}(\lambda) - \alpha^2 \, p_{n-2} (\lambda).
\end{equation*}
We can easily check that the above iterative formula for $p_{n}(\lambda)$ also holds for $n=1,2$. Further, this iterative formula tells us that for $\lambda \neq (1+\alpha)^2$ or $(1- \alpha)^2$, $p_{n}(\lambda)$ must have the form
\begin{equation*}
C_1 \, \gamma_{1}^{n} + C_2 \, \gamma_{2}^{n},
\end{equation*}
where
\begin{eqnarray*}
&& \gamma_1 = \gamma_{1} (\lambda) = \frac{(1-\lambda + \alpha^2) + \sqrt{(1-\lambda+\alpha^2)^2 - 4 \alpha^2}}{2},  \\ [1mm]
&& \gamma_2 = \gamma_{2} (\lambda) = \frac{(1-\lambda + \alpha^2) - \sqrt{(1-\lambda+\alpha^2)^2 - 4 \alpha^2}}{2},
\end{eqnarray*}
and $C_1$, $C_2$ are two constants. Note that as defined in \eqref{eq:defDelta},
\begin{equation*}
\Delta = \Delta(\lambda) = (1-\lambda+\alpha^2)^2 - 4 \alpha^2.
\end{equation*}
Since $p_{1}(\lambda) = 1 - \lambda + \alpha^2$ and $p_{2}(\lambda) = (1-\lambda+\alpha^2)^2 -  \alpha^2$, the constants $C_1$ and $C_2$ can be determined as
\begin{equation*}
C_1 = \frac{\gamma_1}{\sqrt{\Delta}} \quad \text{and} \quad  C_2 = -\frac{\gamma_2}{\sqrt{\Delta}}.
\end{equation*}
Hence, for $n \geq -1$ and $\lambda \neq (1+\alpha)^2$ or $(1-\alpha)^2$,
\begin{eqnarray}
p_{n}(\lambda) &=& \frac{\gamma_{1}^{n+1} - \gamma_{2}^{n+1}}{\sqrt{\Delta}}  \notag  \\
&=& \frac{1}{\sqrt{(1-\lambda+\alpha^2)^2 - 4 \alpha^2}} \left( \frac{(1-\lambda + \alpha^2) + \sqrt{(1-\lambda+\alpha^2)^2 - 4 \alpha^2}}{2} \right)^{n+1}  \notag \\
&& - \frac{1}{\sqrt{(1-\lambda+\alpha^2)^2 - 4 \alpha^2}} \left( \frac{(1-\lambda + \alpha^2) - \sqrt{(1-\lambda+\alpha^2)^2 - 4 \alpha^2}}{2} \right)^{n+1} \notag \\
&=&  \frac{1}{2^n} \, \sum_{k=1}^{[n/2]+1} \binom{n+1}{2k-1}  \,(1-\lambda+ \alpha^2)^{n+2 -2k} \, \left((1-\lambda+\alpha^2)^2 - 4 \alpha^2\right)^{k-1}, \label{eq:p_n}
\end{eqnarray}
where $[x]$ is the largest integer less than or equal to $x$. We note that since both sides of \eqref{eq:p_n} are continuous functions of $\lambda$, the expression holds for every $\lambda \in \mathbb{R}$ and $n \geq -1$.

We now return to the expression $d_{n}(n^2 \lambda)$. Note that $\det(\widetilde{A}_{n}) = q_{n}(n\lambda)$ is a polynomial of $\lambda$ and that $\det{\widetilde{A}_{n}}$ is nonzero for all but finitely many $\lambda$. Then $\widetilde{A}_n$ is invertible for all but finitely many $\lambda$. If $\widetilde{A}_n$ is invertible, then
\begin{equation*}
\begin{pmatrix}
1 &   0  \\
b_n  &  \widetilde{A}_{n} + \lambda\, b_n b_n^{\intercal}
\end{pmatrix}
\begin{pmatrix}
1  &  - \lambda\, b_{n}^{\intercal}  \\
0  &  I_{n}
\end{pmatrix}
\begin{pmatrix}
1  &  0  \\
-\widetilde{A}_{n}^{-1} b_n  & I_n
\end{pmatrix}
=
\begin{pmatrix}
1 + \lambda \, b_n^{\intercal} \widetilde{A}_{n}^{-1} b_n  &  - \lambda \, b_n^{\intercal}  \\
0  &  \widetilde{A}_{n}
\end{pmatrix}.
\end{equation*}
Taking determinants on both sides of the last display yields
\begin{equation*}
\det(\widetilde{A}_{n} + \lambda\, b_n b_n^{\intercal})
= (1 + \lambda \, b_n^{\intercal} \widetilde{A}_{n}^{-1} b_n ) \, \det(\widetilde{A}_{n})
= \det(\widetilde{A}_{n}) + \lambda \, b_n^{\intercal} \left(\det(\widetilde{A}_{n}) \widetilde{A}_{n}^{-1}\right) b_n.
\end{equation*}
Together with \eqref{eq:d_n1}, we have
\begin{equation}
d_{n}(n^2 \lambda) = \det(\widetilde{A}_{n}) + \lambda \, b_n^{\intercal} \left(\det(\widetilde{A}_{n}) \widetilde{A}_{n}^{-1}\right) b_n, \label{eq:d_nexpressionbymatrix}
\end{equation}
for all but finitely many $\lambda$. Note that $\det(\widetilde{A}_{n}) \widetilde{A}_{n}^{-1}$ is the adjoint matrix of $\widetilde{A}_{n}$. A direct calculation gives the entry in the $j$-th row and $k$-th column of $\det(\widetilde{A}_{n}) \widetilde{A}_{n}^{-1}$ as
\begin{equation*}
(-1)^{j+k} \, q_{j\wedge k -1} (n \lambda) \, (-\alpha)^{|k-j|} \, p_{n- j \vee k}(n \lambda)
= \alpha^{|k-j|} \, q_{j\wedge k -1} (n \lambda) \, p_{n- j \vee k}(n \lambda),
\end{equation*}
where $j\wedge k := \min(j,k)$ and $j\vee k := \max(j,k)$. Hence,
\begin{eqnarray}
&& b_n^{\intercal} \left(\det(\widetilde{A}_{n}) \widetilde{A}_{n}^{-1}\right) b_n
= \sum_{j,k=1}^{n} \alpha^{|k-j|} \, q_{j\wedge k -1} (n \lambda) \, p_{n- j \vee k}(n \lambda) \notag \\
&=& \sum_{j,k=1}^{n} \alpha^{|k-j|} \, \left( p_{j\wedge k -1} (n \lambda) - \alpha^{2} \,  p_{j\wedge k -2} (n \lambda)\right) \, p_{n- j \vee k}(n \lambda) \notag \\
&=& \sum_{j,k=1}^{n} \alpha^{|k-j|} \, p_{j\wedge k -1} (n \lambda) \, p_{n- j \vee k}(n \lambda)
- \alpha^2 \sum_{j,k=1}^{n} \alpha^{|k-j|} \, p_{j\wedge k -2} (n \lambda) \, p_{n- j \vee k}(n \lambda).  \label{eq:bAb}
\end{eqnarray}
It follows immediately that $b_n^{\intercal} \left(\det(\widetilde{A}_{n}) \widetilde{A}_{n}^{-1}\right) b_n$ is a continuous function of $\lambda$. Hence, the right-hand side of \eqref{eq:d_nexpressionbymatrix} is a continuous function of $\lambda$. Further, the left-hand side of \eqref{eq:d_nexpressionbymatrix} is also a continuous function of $\lambda$. We may thus conclude that \eqref{eq:d_nexpressionbymatrix} holds for all $\lambda$.

In what follows, we let $\gamma_{1}(n \lambda)$, $\gamma_{2}(n \lambda)$ and $\Delta(n\lambda)$ be abbreviated as $\gamma_{1,n}$, $\gamma_{2,n}$ and $\Delta_n$ respectively. Then
\begin{equation*}
p_{k}(n \lambda) = \frac{\gamma_{1,n}^{k+1} - \gamma_{2,n}^{k+1}}{\sqrt{\Delta_n}}.
\end{equation*}
Hence,
\begin{eqnarray}
&& \sum_{j,k=1}^{n} \alpha^{|k-j|} \, p_{j\wedge k -1} (n \lambda) \, p_{n- j \vee k}(n \lambda)  \notag  \\
&=& \sum_{j,k=1}^{n} \alpha^{|k-j|} \, \left(\frac{\gamma_{1,n}^{j\wedge k} - \gamma_{2,n}^{j\wedge k}}{\sqrt{\Delta_n}}\right) \, 
\left(\frac{\gamma_{1,n}^{n +1 - j \vee k} - \gamma_{2,n}^{n + 1- j \vee k}}{\sqrt{\Delta_n}}\right) \notag  \\
&=& \frac{1}{\Delta_n}\, \sum_{j,k=1}^{n} \alpha^{|k-j|} \,\left( \gamma_{1,n}^{n+1 - |k-j|} + \gamma_{2,n}^{n+1 - |k-j|}  -  \gamma_{1,n}^{ j\wedge k } \,\gamma_{2,n}^{n - j\vee k +1}- \gamma_{2,n}^{ j\wedge k } \, \gamma_{1,n}^{n - j\vee k +1} \right)  \notag  \\
&=& \frac{1}{\Delta_n}\, \sum_{j,k=1}^{n} \alpha^{|k-j|} \, \gamma_{1,n}^{n+1 - |k-j|}
 + \frac{1}{\Delta_n}\, \sum_{j,k=1}^{n} \alpha^{|k-j|} \, \gamma_{2,n}^{n+1 - |k-j|}  \notag  \\
 && - \frac{1}{\Delta_n}\, \sum_{j,k=1}^{n} \alpha^{|k-j|} \, \left( \gamma_{1,n}^{ j\wedge k } \,\gamma_{2,n}^{n - j\vee k +1} + \gamma_{2,n}^{ j\wedge k } \, \gamma_{1,n}^{n - j\vee k +1} \right). \label{eq:app0}
\end{eqnarray}
Further,
\begin{eqnarray}
&& \sum_{j,k=1}^{n} \alpha^{|k-j|} \, \gamma_{1,n}^{n+1 - |k-j|}
= n \, \gamma_{1,n}^{n+1} + 2 \sum_{i=1}^{n-1} (n-i) \alpha^{i} \gamma_{1,n}^{n+1- i}  \notag \\
&=& n \, \gamma_{1,n}^{n+1} + 2 \sum_{i=1}^{n-1} i \, \alpha^{n-i} \gamma_{1,n}^{i+1}
= n \, \gamma_{1,n}^{n+1} + 2  \alpha^{n} \gamma_{1,n} \sum_{i=1}^{n-1} i \left( \gamma_{1,n}/ \alpha \right)^{i} \notag \\
&=& n \, \gamma_{1,n}^{n+1} + 2 \alpha^{n} \gamma_{1,n} \frac{\gamma_{1,n}/ \alpha  + (n-1) \left(\gamma_{1,n}/ \alpha \right)^{n+1} - n \left(\gamma_{1,n}/ \alpha \right)^n}{(1- \gamma_{1,n}/ \alpha )^2}  \notag \\ [1mm]
&=& n \, \gamma_{1,n}^{n+1} + 2 \alpha \gamma_{1,n}^{2}\, \frac{\alpha^n  + (n-1)  \gamma_{1,n}^{n} - n \alpha \gamma_{1,n}^{n-1} }{(\gamma_{1,n} -\alpha)^2},\label{eq:app1}
\end{eqnarray}
where the second equality holds by making the change of variables $i = n-i$ and the fourth equality follows by the fact that $\sum_{i=1}^{n-1} i x^{i} = \left(x + (n-1) x^{n+1} - n x^n\right)/(1-x)^2$. Similarly,
\begin{equation}
\sum_{j,k=1}^{n} \alpha^{|k-j|} \, \gamma_{2,n}^{n+1 - |k-j|}
= n \, \gamma_{2,n}^{n+1} + 2 \alpha \gamma_{2,n}^{2}\, \frac{\alpha^n  + (n-1)  \gamma_{2,n}^{n} - n \alpha \gamma_{2,n}^{n-1} }{(\gamma_{2,n} -\alpha)^2}. \label{eq:app2}
\end{equation}
Noting that $\gamma_{1,n} \gamma_{2,n} = \alpha^2$, 
\begin{eqnarray}
&& \sum_{j,k=1}^{n} \alpha^{|k-j|} \, \left( \gamma_{1,n}^{ j\wedge k } \,\gamma_{2,n}^{n - j\vee k +1} + \gamma_{2,n}^{ j\wedge k } \, \gamma_{1,n}^{n - j\vee k +1} \right)  \notag \\
&=& \sum_{j,k=1}^{n} \alpha^{|k-j|} \, \gamma_{1,n}^{ j\wedge k } \gamma_{2,n}^{ j\wedge k } \left( \gamma_{1,n}^{n - j\vee k +1 - j\wedge k} +  \gamma_{2,n}^{n - j\vee k +1 - j\wedge k} \right)  \notag  \\
&=& \sum_{j,k=1}^{n} \alpha^{|k-j|} \, \alpha^{2( j\wedge k )}  \left( \gamma_{1,n}^{n + 1 -j -k} +  \gamma_{2,n}^{n+1 -j -k} \right) \notag  \\
&=& \sum_{j,k=1}^{n} \alpha^{j+k}  \left( \gamma_{1,n}^{n + 1 -j -k} +  \gamma_{2,n}^{n+1 -j -k} \right) \notag \\
&=& \gamma_{1,n}^{n+1} \sum_{j,k=1}^{n} \left( \alpha/\gamma_{1,n} \right)^{j+k}  +   \gamma_{2,n}^{n+1} \sum_{j,k=1}^{n} \left( \alpha/\gamma_{2,n} \right)^{j+k}  \notag \\
&=& \gamma_{1,n}^{n+1} \left( \sum_{k=1}^{n} \left( \alpha/\gamma_{1,n} \right)^{k} \right)^{2}
  + \gamma_{2,n}^{n+1} \left( \sum_{k=1}^{n} \left( \alpha/\gamma_{2,n} \right)^{k} \right)^{2}  \notag  \\
&=& \gamma_{1,n}^{n+1} \left( \frac{(\alpha/ \gamma_{1,n})(1- (\alpha/ \gamma_{1,n})^{n})}{1- \alpha/ \gamma_{1,n}} \right)^2
  + \gamma_{2,n}^{n+1} \left( \frac{(\alpha/ \gamma_{2,n})(1- (\alpha/ \gamma_{2,n})^{n})}{1- \alpha/ \gamma_{2,n}} \right)^2  \notag  \\
&=& \frac{\alpha^2 \gamma_{1,n}^{n+1} - 2\alpha^{n+2} \gamma_{1,n} + \alpha^{2n+2} \gamma_{1,n}^{-(n-1)} }{(\gamma_{1,n} -\alpha)^2}  
  + \frac{\alpha^2 \gamma_{2,n}^{n+1} - 2\alpha^{n+2} \gamma_{2,n} + \alpha^{2n+2} \gamma_{2,n}^{-(n-1)} }{(\gamma_{2,n} -\alpha)^2}. \label{eq:app3}
\end{eqnarray}
Combining \eqref{eq:app0}, \eqref{eq:app1}, \eqref{eq:app2}, \eqref{eq:app3} with the fact that $\left( \gamma_{1,n} -\alpha \right)^2 /\gamma_{1,n} = \left( \gamma_{2,n} -\alpha \right)^2 /\gamma_{2,n} = (1-\alpha)^2 - n\lambda$, it follows (after rearrangement of terms) that
\begin{eqnarray}
&& \sum_{j,k=1}^{n} \alpha^{|k-j|} \, p_{j\wedge k -1} (n \lambda) \, p_{n- j \vee k}(n \lambda)  \notag  \\
&=&  n  \frac{\gamma_{1,n}^{n+1} + \gamma_{2,n}^{n+1}}{\Delta_n}  
+ \frac{2 \alpha^{n+1}  (\gamma_{1,n} + \gamma_{2,n}) }{ \Delta_n \left((1-\alpha)^2 - n\lambda\right)} 
+ \frac{2 (n-1) \alpha  (\gamma_{1,n}^{n+1} + \gamma_{2,n}^{n+1}) }{\Delta_n \left((1-\alpha)^2 - n\lambda\right)}  \notag  \\
&& - \frac{2 (n+1) \alpha^2  (\gamma_{1,n}^{n} + \gamma_{2,n}^{n}) }{\Delta_n \left((1-\alpha)^2 - n\lambda\right)}
 + \frac{4 \alpha^{n+2}}{\Delta_n \left((1-\alpha)^2 - n\lambda\right)}.  \label{eq:napp}
\end{eqnarray}
Similarly,
\begin{eqnarray}
&& \sum_{j,k=1}^{n} \alpha^{|k-j|} \, p_{j\wedge k -2} (n \lambda) \, p_{n- j \vee k}(n \lambda)  \notag   \\
&=&  (n-1)  \frac{\gamma_{1,n}^{n} + \gamma_{2,n}^{n}}{\Delta_n}  
+ \frac{2 \alpha^{n}  (\gamma_{1,n} + \gamma_{2,n}) }{ \Delta_n \left((1-\alpha)^2 - n\lambda\right)} 
+ \frac{2 (n-2) \alpha  (\gamma_{1,n}^{n} + \gamma_{2,n}^{n}) }{\Delta_n \left((1-\alpha)^2 - n\lambda\right)}  \notag  \\
&& - \frac{2 n \alpha^2  (\gamma_{1,n}^{n-1} + \gamma_{2,n}^{n-1}) }{\Delta_n \left((1-\alpha)^2 - n\lambda\right)}
 + \frac{4 \alpha^{n+1}}{\Delta_n \left((1-\alpha)^2 - n\lambda\right)}. \label{eq:n-1app}
\end{eqnarray}
Combining \eqref{eq:bAb}, \eqref{eq:napp} and \eqref{eq:n-1app} yields
\begin{eqnarray*}
&& b_n^{\intercal} \left(\det(\widetilde{A}_{n}) \widetilde{A}_{n}^{-1}\right) b_n  \\
&=& n  \frac{\gamma_{1,n}^{n+1} + \gamma_{2,n}^{n+1}}{\Delta_n} 
 - (n-1)  \frac{ \alpha^2 \left(\gamma_{1,n}^{n} + \gamma_{2,n}^{n}\right)}{\Delta_n}  
 + \frac{2 \alpha^{n+1} (1-\alpha) (\gamma_{1,n} + \gamma_{2,n}) }{ \Delta_n \left((1-\alpha)^2 - n\lambda\right)}  \\ [1mm]
&&  + \frac{2 (n-1) \alpha  (\gamma_{1,n}^{n+1} + \gamma_{2,n}^{n+1}) }{\Delta_n \left((1-\alpha)^2 - n\lambda\right)}
  - \frac{2 (n-2) \alpha^3  (\gamma_{1,n}^{n} + \gamma_{2,n}^{n}) }{\Delta_n \left((1-\alpha)^2 - n\lambda\right)}  \\ [1mm]
&& - \frac{2 (n+1) \alpha^2  (\gamma_{1,n}^{n} + \gamma_{2,n}^{n}) }{\Delta_n \left((1-\alpha)^2 - n\lambda\right)}
+ \frac{2 n \alpha^4  (\gamma_{1,n}^{n-1} + \gamma_{2,n}^{n-1}) }{\Delta_n \left((1-\alpha)^2 - n\lambda\right)}
+ \frac{4 \alpha^{n+2} (1-\alpha)}{\Delta_n \left((1-\alpha)^2 - n\lambda\right)}.
\end{eqnarray*}
Together with \eqref{eq:d_nexpressionbymatrix} and the fact that
\begin{equation*}
\det( \widetilde{A}_n ) = q_{n}(n\lambda) = p_{n}(n\lambda) - \alpha^2\, p_{n-1}(n\lambda)
= \frac{\gamma_{1,n}^{n+1} - \gamma_{2,n}^{n+1}}{\sqrt{\Delta_n}} - \frac{\alpha^2 \left(\gamma_{1,n}^{n} - \gamma_{2,n}^{n}\right)}{\sqrt{\Delta_n}},
\end{equation*}
we have that
\begin{eqnarray*}
d_{n}(n^2 \lambda) &=& \frac{\gamma_{1,n}^{n+1} - \gamma_{2,n}^{n+1}}{\sqrt{\Delta_n}} - \frac{\alpha^2 \left(\gamma_{1,n}^{n} - \gamma_{2,n}^{n}\right)}{\sqrt{\Delta_n}}  
+ \frac{2 \alpha^{n+1} (1-\alpha) \, \lambda \, (\gamma_{1,n} + \gamma_{2,n}) }{ \Delta_n \left((1-\alpha)^2 - n\lambda\right)}\\ [2mm]
&& + n  \, \frac{ \lambda\left(\gamma_{1,n}^{n+1} + \gamma_{2,n}^{n+1}\right)}{\Delta_n} 
 - (n-1) \, \frac{ \alpha^2 \, \lambda \left(\gamma_{1,n}^{n} + \gamma_{2,n}^{n}\right)}{\Delta_n}    \\ [2mm]
&&  + \frac{2 (n-1) \alpha \, \lambda \, (\gamma_{1,n}^{n+1} + \gamma_{2,n}^{n+1}) }{\Delta_n \left((1-\alpha)^2 - n\lambda\right)}
  - \frac{2 (n-2) \alpha^3 \, \lambda \, (\gamma_{1,n}^{n} + \gamma_{2,n}^{n}) }{\Delta_n \left((1-\alpha)^2 - n\lambda\right)}  \\ [2mm]
&& - \frac{2 (n+1) \alpha^2 \, \lambda \,  (\gamma_{1,n}^{n} + \gamma_{2,n}^{n}) }{\Delta_n \left((1-\alpha)^2 - n\lambda\right)}
+ \frac{2 n \alpha^4 \, \lambda \, (\gamma_{1,n}^{n-1} + \gamma_{2,n}^{n-1}) }{\Delta_n \left((1-\alpha)^2 - n\lambda\right)}
+ \frac{4 \alpha^{n+2} (1-\alpha) \, \lambda}{\Delta_n \left((1-\alpha)^2 - n\lambda\right)}.
\end{eqnarray*}
By making change of variables $\lambda = n^2 \lambda$, it follows immediately that
\begin{eqnarray*}
d_{n}(\lambda) &=& \frac{\left(\gamma_{1}(\lambda/n)\right)^{n+1}  - \left(\gamma_{2}(\lambda/n)\right)^{n+1}}{\sqrt{\Delta(\lambda/n)}}
-  \frac{\alpha^2\left[\left(\gamma_{1}(\lambda/n)\right)^{n}  - \left(\gamma_{2}(\lambda/n)\right)^{n}\right]}{\sqrt{\Delta(\lambda/n)}} \\ [2mm]
&& + \frac{\lambda\left[\left(\gamma_{1}(\lambda/n)\right)^{n+1}  + \left(\gamma_{2}(\lambda/n)\right)^{n+1}\right]}{n \,\Delta(\lambda/n)}  
  - \frac{(n-1) \alpha^2 \, \lambda\left[\left(\gamma_{1}(\lambda/n)\right)^{n}  + \left(\gamma_{2}(\lambda/n)\right)^{n}\right]}{n^2 \,\Delta(\lambda/n)}  \\ [2mm]
&& + \frac{2 (n-1) \alpha \, \lambda\left[\left(\gamma_{1}(\lambda/n)\right)^{n+1}  + \left(\gamma_{2}(\lambda/n)\right)^{n+1}\right]}{n \,\Delta(\lambda/n) \left( n (1-\alpha)^2 - \lambda \right)}
- \frac{2 (n-2) \alpha^3 \, \lambda\left[\left(\gamma_{1}(\lambda/n)\right)^{n}  + \left(\gamma_{2}(\lambda/n)\right)^{n}\right]}{n \,\Delta(\lambda/n) \left( n (1-\alpha)^2 - \lambda \right)}  \\  [2mm]
&& - \frac{2 (n+1) \alpha^2 \, \lambda\left[\left(\gamma_{1}(\lambda/n)\right)^{n}  + \left(\gamma_{2}(\lambda/n)\right)^{n}\right]}{n \,\Delta(\lambda/n) \left( n (1-\alpha)^2 - \lambda \right)}
+ \frac{2  \alpha^4 \, \lambda\left[\left(\gamma_{1}(\lambda/n)\right)^{n-1}  + \left(\gamma_{2}(\lambda/n)\right)^{n-1}\right]}{\Delta(\lambda/n) \left( n (1-\alpha)^2 - \lambda \right)} \\[2mm]
&& + \frac{2 \, \alpha^{n+1} (1- \alpha) \, \lambda\left[\gamma_{1}(\lambda/n)  + \gamma_{2}(\lambda/n)\right]}{n \,\Delta(\lambda/n) \left( n (1-\alpha)^2 - \lambda \right)}  
+ \frac{4 \, \alpha^{n+2} (1- \alpha) \, \lambda}{n \,\Delta(\lambda/n) \left( n (1-\alpha)^2 - \lambda \right)}.
\end{eqnarray*}

\newpage

\section{Appendix B}
The purpose of this section is to provide the proofs of Lemma \ref{lm:lambda2}, Lemma \ref{lm:lambda4}, and Lemma \ref{lm:productlambda}.

\subsection{Proof of Lemma \ref{lm:lambda2}}
We first recall the definition of $K_n$ as presented in \eqref{eq:defKn} and note that $\lambda_1, \dots, \lambda_n$ are the eigenvalues of $K_n$.
It is immediate that
\begin{eqnarray}
&&\sum_{k=1}^{n} \lambda_{k}^{2} = \mathrm{tr} (K_{n}^{2}) 
= \sum_{j,k=1}^{n} \left[ \frac{1}{n}\, \frac{\alpha^{|k-j|} - \alpha^{k+j}}{1- \alpha^2}  - \frac{1}{n^2} \, \frac{(1-\alpha^k)(1- \alpha^{j})}{(1- \alpha)^2}\right]^2 \notag  \\ [1mm]
&=& \frac{1}{n^2} \sum_{j,k=1}^{n} \frac{\left(\alpha^{|k-j|} - \alpha^{k+j}\right)^2}{(1- \alpha^2)^2}
- \frac{2}{n^3} \, \sum_{j,k=1}^{n} \frac{(\alpha^{|k-j|} - \alpha^{k+j})(1- \alpha^k)(1- \alpha^j)}{(1- \alpha)^2 ( 1- \alpha^2)} \notag \\ [1mm]
&& + \frac{1}{n^4} \sum_{j,k=1}^{n} \frac{(1- \alpha^k)^2 (1-\alpha^j)^2}{(1-\alpha)^4}. \label{eq:lambda2main}
\end{eqnarray}
Further
\begin{eqnarray}
&& \sum_{j,k=1}^{n} \frac{\left(\alpha^{|k-j|} - \alpha^{k+j}\right)^2}{(1- \alpha^2)^2}
= \sum_{j,k=1}^{n}\frac{\alpha^{2|k-j|} - 2 \alpha^{|k-j|+k+j} + \alpha^{2 k + 2j} }{(1-\alpha^2)^2}  \notag \\ [1mm]
&=& \sum_{j,k=1}^{n} \frac{\alpha^{2|k-j|}}{(1- \alpha^2)^2} - 2 \sum_{j,k=1}^{n} \frac{\alpha^{|k-j|+k+j}}{(1-\alpha^2)^2}
   + \sum_{j,k=1}^{n} \frac{\alpha^{2k+2j}}{(1-\alpha^2)^2}.  \label{eq:lambda2term1}
\end{eqnarray}
Lengthy but direct calculations yield
\begin{eqnarray*}
&&\sum_{j,k=1}^{n} \frac{\alpha^{2|k-j|}}{(1- \alpha^2)^2}
= \sum_{k=1}^{n}\frac{1}{(1-\alpha^2)^2} + 2 \sum_{k=1}^{n} \sum_{j=1}^{k-1} \frac{\alpha^{2(k-j)}}{(1-\alpha^2)^2}  \\
&=& \frac{n}{(1- \alpha^2)^2} + \frac{2 n \, \alpha^2}{(1- \alpha^2)^3} - \frac{2\alpha^2 (1- \alpha^{2n})}{(1- \alpha^2)^4}  
= \frac{n (1 + \alpha^2)}{(1- \alpha^2)^3} - \frac{2\alpha^2 (1- \alpha^{2n})}{(1- \alpha^2)^4},  
\end{eqnarray*}
\begin{eqnarray*}
&& \sum_{j,k=1}^{n} \frac{\alpha^{|k-j|+k+j}}{(1-\alpha^2)^2}
= \sum_{k=1}^{n} \frac{\alpha^{2k}}{(1- \alpha^2)^2} + 2 \sum_{k=1}^{n} \sum_{j=1}^{k-1} \frac{\alpha^{2k}}{(1-\alpha^2)^2}  \\
&=& \frac{\alpha^2 ( 1- \alpha^{2n})}{(1- \alpha^2)^3} + 2 \sum_{k=1}^{n} \frac{(k-1) \alpha^{2k}}{(1- \alpha^2)^2}
= \frac{\alpha^2 ( 1- \alpha^{2n})}{(1- \alpha^2)^3} + \frac{2 \alpha^4 ( 1- \alpha^{2n})}{(1- \alpha^2)^4} - \frac{2 n \alpha^{2n+2}}{(1- \alpha^2)^3},
\end{eqnarray*}
and
\begin{eqnarray*}
\sum_{j,k=1}^{n} \frac{\alpha^{2k+2j}}{(1-\alpha^2)^2}
= \left( \sum_{j=1}^{n} \frac{\alpha^{2j}}{1- \alpha^2} \right)  \left( \sum_{k=1}^{n} \frac{\alpha^{2k}}{1- \alpha^2} \right)
 = \frac{\alpha^4 (1- \alpha^{2n})^2}{(1- \alpha^2)^4}.
\end{eqnarray*}
Together with \eqref{eq:lambda2term1}, it follows (after rearrangement of terms) that
\begin{eqnarray}
&& \sum_{j,k=1}^{n} \frac{\left(\alpha^{|k-j|} - \alpha^{k+j}\right)^2}{(1- \alpha^2)^2}  \notag \\
&=& \frac{n ( 1+ \alpha^2)}{(1- \alpha^2)^3} + \frac{4n \alpha^{2n+2}}{(1-\alpha^2)^3} - \frac{4\alpha^2 + \alpha^{4} - 4 \alpha^{2n+2}  - \alpha^{4n+4}}{(1- \alpha^2)^4}.  \label{eq:lambda2term1final}
\end{eqnarray}
Combining \eqref{eq:lambda2main} and \eqref{eq:lambda2term1final} yields
\begin{eqnarray*}
\sum_{k=1}^{n} \lambda_{k}^{2}
&=& \frac{( 1+ \alpha^2)}{n(1- \alpha^2)^3} + \frac{1}{n^2} \Bigg[ \frac{4n \alpha^{2n+2}}{(1-\alpha^2)^3} - \frac{4\alpha^2 + \alpha^{4} - 4 \alpha^{2n+2}  - \alpha^{4n+4}}{(1- \alpha^2)^4} \\
&&  \quad- \frac{2}{n} \, \sum_{j,k=1}^{n} \frac{(\alpha^{|k-j|} - \alpha^{k+j})(1- \alpha^k)(1- \alpha^j)}{(1- \alpha)^2 ( 1- \alpha^2)} 
+ \frac{1}{n^2} \sum_{j,k=1}^{n} \frac{(1- \alpha^k)^2 (1-\alpha^j)^2}{(1-\alpha)^4}\Bigg].
\end{eqnarray*}
Let
\begin{eqnarray}
\kappa_2(n) &:=& \frac{4n \alpha^{2n+2}}{(1-\alpha^2)^3} - \frac{4\alpha^2 + \alpha^{4} - 4 \alpha^{2n+2}  - \alpha^{4n+4}}{(1- \alpha^2)^4} \notag \\
&& - \frac{2}{n} \, \sum_{j,k=1}^{n} \frac{(\alpha^{|k-j|} - \alpha^{k+j})(1- \alpha^k)(1- \alpha^j)}{(1- \alpha)^2 ( 1- \alpha^2)} 
+ \frac{1}{n^2} \sum_{j,k=1}^{n} \frac{(1- \alpha^k)^2 (1-\alpha^j)^2}{(1-\alpha)^4}. \label{eq:defkappa}
\end{eqnarray}
Now, we are only left with the task of showing the boundedness of $\kappa_2(n)$.

Since $|\alpha|<1$, $n \alpha^{2n+2}$ tends to $0$ as $n \rightarrow \infty$. It follows that $\sup_{n} n \alpha^{2n+2} < \infty$. Hence the absolute value of the first term on the right-hand side of \eqref{eq:defkappa} is less than or equal to
\begin{equation}
\frac{4}{(1- \alpha^2)^3} \, \sup_{n \in \mathds{N}_{+}} n \alpha^{2n+2}. \label{eq:kammaboundedness1}
\end{equation}
It is easy to check that
\begin{equation*}
0 \leq 4\alpha^2 + \alpha^{4} - 4 \alpha^{2n+2}  - \alpha^{4n+4} \leq 4 \alpha^2 + \alpha^4.
\end{equation*}
Then
\begin{equation}
\left| \frac{4\alpha^2 + \alpha^{4} - 4 \alpha^{2n+2}  - \alpha^{4n+4}}{(1- \alpha^2)^4}  \right|
 \leq \frac{4 \alpha^2 + \alpha^4}{(1- \alpha^2)^4}.   \label{eq:kammaboundedness2}
\end{equation}
Noting that
\begin{equation*}
\left|  \alpha^{|k-j|} - \alpha^{j+k}  \right| = |\alpha|^{|k-j|} - |\alpha|^{k+j} \leq |\alpha|^{|k-j|}
\end{equation*}
and $$|(1- \alpha^{k})(1- \alpha^j)| \leq 4,$$ it follows that
\begin{eqnarray}
&& \left| \frac{2}{n} \, \sum_{j,k=1}^{n} \frac{(\alpha^{|k-j|} - \alpha^{k+j})(1- \alpha^k)(1- \alpha^j)}{(1- \alpha)^2 ( 1- \alpha^2)}  \right| 
\leq \frac{8}{n} \, \sum_{j,k=1}^{n} \frac{|\alpha|^{|k-j|}}{(1- \alpha)^2 ( 1- \alpha^2)}  \notag \\
&=& \frac{8}{n} \, \sum_{k=1}^{n} \frac{1}{(1- \alpha)^2 ( 1- \alpha^2)} + \frac{16}{n} \, \sum_{k=1}^{n} \sum_{j=1}^{k-1} \frac{|\alpha|^{k-j}}{(1- \alpha)^2 ( 1- \alpha^2)} \notag \\ [1mm]
&=& \frac{8}{(1- \alpha)^2 ( 1- \alpha^2)} + \frac{16|\alpha|}{(1- |\alpha|)(1- \alpha)^2 ( 1- \alpha^2)} - \frac{16}{n}\, \frac{|\alpha| (1- |\alpha|^{n})}{(1-|\alpha|)^2 (1- \alpha)^2 ( 1- \alpha^2)} \notag  \\ [1mm]
&\leq& \frac{8}{(1- \alpha)^2 ( 1- \alpha^2)} + \frac{16|\alpha|}{(1- |\alpha|)(1- \alpha)^2 ( 1- \alpha^2)}.  \label{eq:kammaboundedness3}
\end{eqnarray}
Finally,
\begin{eqnarray}
&& \frac{1}{n^2} \sum_{j,k=1}^{n} \frac{(1- \alpha^k)^2 (1-\alpha^j)^2}{(1-\alpha)^4}
= \frac{1}{n^2} \left( \sum_{k=1}^{n} \frac{(1- \alpha^k)^2}{(1- \alpha)^2} \right)^2  \notag \\
&\leq& \frac{1}{n^2} \left( \sum_{k=1}^{n} \frac{2^2}{(1- \alpha)^2} \right)^2 
= \frac{16}{(1- \alpha)^4}. \label{eq:kammaboundedness4}
\end{eqnarray}
The desired result follows by combining \eqref{eq:defkappa}--\eqref{eq:kammaboundedness4}.

\subsection{Proof of Lemma \ref{lm:lambda4}}

For ease of notation, we use $u_{kj}$ to denote the entry of $K_n$ in the $k$-th row and the $j$-th column. That is,
\begin{equation*}
u_{kj} = \frac{1}{n} \, \frac{\alpha^{|k-j|} - \alpha^{k+j}}{1- \alpha^2} - \frac{1}{n^2} \, \frac{(1-\alpha^k)(1- \alpha^j)}{(1-\alpha)^2}.
\end{equation*}
Let
\begin{equation*}
g_{n}(k) := \left(\frac{|\alpha|^{|k|}}{n (1- \alpha^2)} + \frac{4}{n^2 (1- \alpha)^2}\right) \mathds{1}_{\{ |k| <n \}}
\end{equation*}
be a symmetric positive function on $\mathds{Z}$. Noting that
\begin{equation*}
\left| \alpha^{|k-j|}  - \alpha^{k+j}\right| = |\alpha|^{|k-j|} - |\alpha|^{j+k} \leq |\alpha|^{|k-j|},
\end{equation*}
and
\begin{equation*}
\left| (1-\alpha^k) (1- \alpha^j) \right| \leq 4,
\end{equation*}
we have
\begin{eqnarray*}
&& |u_{kj}| = \left| \frac{1}{n}\, \frac{\alpha^{|k-j|} - \alpha^{k+j}}{1- \alpha^2}  - \frac{1}{n^2} \, \frac{(1-\alpha^k)(1- \alpha^{j})}{(1- \alpha)^2} \right|  \\
&\leq& \frac{|\alpha|^{|k-j|}}{n (1- \alpha^2)} + \frac{4}{n^2 (1- \alpha)^2} = g_{n}(k-j).
\end{eqnarray*}
Then
\begin{eqnarray}
&& \sum_{k=1}^{n} \lambda_{k}^{4} = \mathrm{tr}(K_{n}^{4}) = \sum_{k,j,i,l=1}^{n} \, u_{kj} \, u_{ji} \, u_{il} \, u_{lk} 
\leq \sum_{k,j,i,l=1}^{n} \, |u_{kj}| \, |u_{ji}| \, |u_{il}| \, |u_{lk}|  \notag  \\
&\leq& \sum_{k,j,i,l=1}^{n} \, g_{n}(k-j)   g_{n}(j-i)  g_{n}(i-l)  g_{n}(l-k)  \notag \\
&=& \sum_{k,j,i,l=1}^{n} \, g_{n}(k-j)   g_{n}(j-i)  g_{n}(l-i)  g_{n}(k-l) \notag \\
&=& \sum_{i=1}^{n} \sum_{k=1}^{n} \, \left(\sum_{j=1}^{n} \, g_{n}(k-j)   g_{n}(j-i)\right)  \left( \sum_{l=1}^{n} \, g_{n}(l-i)  g_{n}(k-l) \right) \notag \\
&\leq& \sum_{i=1}^{n} \sum_{k=1}^{n} \, \left(g_{n} * g_{n} (k-i)\right)^2  
\leq \sum_{i=1}^{n} \sum_{k=-\infty}^{\infty} \,  \left(g_{n} * g_{n} (k)\right)^2 \notag  \\
&=& n \sum_{k=-\infty}^{\infty} \,  \left(g_{n} * g_{n} (k)\right)^2,  \label{eq:lambda4}
\end{eqnarray}
where the star $*$ denotes the convolution operator, i.e., $g_{n}*g_{n} (j) = \sum_{k= - \infty}^{\infty} g_{n}(j-k) g_{n}(k)$. Applying Young's convolution inequality yields
\begin{equation*}
\left( \sum_{k=-\infty}^{\infty} \,  \left(g_{n} * g_{n} (k)\right)^2\right)^{\frac{1}{2}}
\leq \left( \sum_{k=-\infty}^{\infty} \left( g_{n}(k) \right)^{\frac{4}{3}} \right)^{\frac{3}{2}}.
\end{equation*}
Then
\begin{eqnarray*}
&& \sum_{k=-\infty}^{\infty} \,  \left(g_{n} * g_{n} (k)\right)^2
\leq \left( \sum_{k=-\infty}^{\infty} \left( g_{n}(k) \right)^{\frac{4}{3}} \right)^{3}  \\
&=& \left(\sum_{k= -(n-1)}^{n-1} \left( \frac{|\alpha|^{|k|}}{n (1- \alpha^2)} + \frac{4}{n^2 (1- \alpha)^2} \right)^{\frac{4}{3}}\right)^{3}  \\
&\leq& \left( \sum_{k= -(n-1)}^{n-1} \left(  2^{\frac{1}{3}} \frac{|\alpha|^{4|k|/3}}{n^{4/3} (1- \alpha^2)^{4/3}} + 2^{\frac{1}{3}} \frac{4^{4/3}}{n^{8/3} (1- \alpha)^{8/3}} \right) \right)^{3}  \\
& \leq & \left( 2 \sum_{k= 0}^{n-1} \left(  2^{\frac{1}{3}} \frac{|\alpha|^{4|k|/3}}{n^{4/3} (1- \alpha^2)^{4/3}} + 2^{\frac{1}{3}} \frac{4^{4/3}}{n^{8/3} (1- \alpha)^{8/3}} \right) \right)^{3} \\
&=& \left( 2^{\frac{4}{3}} \, \frac{1- |\alpha|^{4n/3}}{n^{4/3}  (1- \alpha^2)^{4/3} ( 1- |\alpha|^{4/3}) }  + \frac{16}{n^{5/3} (1- \alpha)^{8/3}} \right)^{3}  \\
&\leq& \left( n^{-\frac{4}{3}} \, \left( \frac{2^{4/3}}{ (1- \alpha^2)^{4/3} ( 1- |\alpha|^{4/3}) }  + \frac{16}{(1-\alpha)^{8/3}}\right)\right)^{3} \\
&=& n^{-4} \, \left( \frac{2^{4/3}}{ (1- \alpha^2)^{4/3} ( 1- |\alpha|^{4/3}) }  + \frac{16}{(1-\alpha)^{8/3}}\right)^3,
\end{eqnarray*}
where the first inequality follows by Jensen's inequality.
Together with \eqref{eq:lambda4}, the desired result follows.

\subsection{Proof of Lemma \ref{lm:productlambda}}
From the representation \eqref{eq:anotherformofacp} of $d_{n}(\lambda)$ and the fact that $\lambda_n =0$, we have
\begin{equation*}
d_{n}(\lambda) = (1 - \lambda_1 \, \lambda)(1- \lambda_2 \, \lambda) \dots (1- \lambda_{n-1} \, \lambda).
\end{equation*}
Then
\begin{equation*}
\prod_{k=1}^{n-1} \lambda_k = \lim_{\lambda \rightarrow \infty} \frac{d_{n}(-\lambda)}{\lambda^{n-1}}.
\end{equation*}
It follows from Lemma \ref{lem1} that
\begin{eqnarray}
&& d_{n}(-\lambda)  \notag \\
&=& \frac{\left(\gamma_{1}(-\lambda/n)\right)^{n+1}  - \left(\gamma_{2}(-\lambda/n)\right)^{n+1}}{\sqrt{\Delta(-\lambda/n)}}
-  \frac{\alpha^2\left[\left(\gamma_{1}(-\lambda/n)\right)^{n}  - \left(\gamma_{2}(-\lambda/n)\right)^{n}\right]}{\sqrt{\Delta(-\lambda/n)}} \notag \\ [2mm]
&& - \frac{\lambda\left[\left(\gamma_{1}(-\lambda/n)\right)^{n+1}  + \left(\gamma_{2}(-\lambda/n)\right)^{n+1}\right]}{n \,\Delta(-\lambda/n)}  
  + \frac{(n-1) \alpha^2 \, \lambda\left[\left(\gamma_{1}(-\lambda/n)\right)^{n}  + \left(\gamma_{2}(-\lambda/n)\right)^{n}\right]}{n^2 \,\Delta(-\lambda/n)} \notag  \\ [2mm]
&& - \frac{2 (n-1) \alpha \, \lambda\left[\left(\gamma_{1}(-\lambda/n)\right)^{n+1}  + \left(\gamma_{2}(-\lambda/n)\right)^{n+1}\right]}{n \,\Delta(-\lambda/n) \left( n (1-\alpha)^2 + \lambda \right)}
+ \frac{2 (n-2) \alpha^3 \, \lambda\left[\left(\gamma_{1}(-\lambda/n)\right)^{n}  + \left(\gamma_{2}(-\lambda/n)\right)^{n}\right]}{n \,\Delta(-\lambda/n) \left( n (1-\alpha)^2 + \lambda \right)} \notag \\  [2mm]
&& + \frac{2 (n+1) \alpha^2 \, \lambda\left[\left(\gamma_{1}(-\lambda/n)\right)^{n}  + \left(\gamma_{2}(-\lambda/n)\right)^{n}\right]}{n \,\Delta(-\lambda/n) \left( n (1-\alpha)^2 + \lambda \right)}
- \frac{2  \alpha^4 \, \lambda\left[\left(\gamma_{1}(-\lambda/n)\right)^{n-1}  + \left(\gamma_{2}(-\lambda/n)\right)^{n-1}\right]}{\Delta(-\lambda/n) \left( n (1-\alpha)^2 + \lambda \right)} \notag \\[2mm]
&& - \frac{2 \, \alpha^{n+1} (1- \alpha) \, \lambda\left[\gamma_{1}(-\lambda/n)  + \gamma_{2}(-\lambda/n)\right]}{n \,\Delta(-\lambda/n) \left( n (1-\alpha)^2 + \lambda \right)}  
- \frac{4 \, \alpha^{n+2} (1- \alpha) \, \lambda}{n \,\Delta(-\lambda/n) \left( n (1-\alpha)^2 + \lambda \right)}. \label{eq:productlambdamain}
\end{eqnarray}
A routine calculation gives that as $\lambda \rightarrow \infty$
\begin{eqnarray*}
 \gamma_{1}(-\lambda/n) &=& ( 1/n + o(1)) \lambda,  \\
 \gamma_{2}(-\lambda/n) &=&  (n \alpha^2 + o(1)) \lambda^{-1},  \\
 \Delta(- \lambda /n) &=& (1/n^2 + o(1)) \lambda^2, \\
 (n (1-\alpha)^2 + \lambda ) &=& (1 + o(1)) \lambda.
\end{eqnarray*}
Then
\begin{eqnarray}
&& \frac{\left(\gamma_{1}(-\lambda/n)\right)^{n+1}  - \left(\gamma_{2}(-\lambda/n)\right)^{n+1}}{\sqrt{\Delta(-\lambda/n)}}
- \frac{\lambda\left[\left(\gamma_{1}(-\lambda/n)\right)^{n+1}  + \left(\gamma_{2}(-\lambda/n)\right)^{n+1}\right]}{n \,\Delta(-\lambda/n)} \notag \\ [1mm]
&=& \frac{\left( \gamma_{1}(-\lambda/n) \right)^{n+1}}{\Delta(-\lambda/n)} \left( \sqrt{\Delta(-\lambda/n)} - \frac{\lambda}{n} \right)
 - \frac{\left( \gamma_{2}(-\lambda/n) \right)^{n+1}}{\Delta(-\lambda/n)} \left( \sqrt{\Delta(-\lambda/n)} + \frac{\lambda}{n} \right) \notag  \\ [1mm]
 &=& \frac{\left( \gamma_{1}(-\lambda/n) \right)^{n+1}}{\Delta(-\lambda/n)} \, \frac{ \Delta(-\lambda/n) - (\lambda/n)^{2} }{\sqrt{\Delta(-\lambda/n)} + \lambda/n}
 - \frac{\left( \gamma_{2}(-\lambda/n) \right)^{n+1}}{\Delta(-\lambda/n)} \left( \sqrt{\Delta(-\lambda/n)} + \frac{\lambda}{n} \right) \notag  \\ [1mm]
&=& \frac{\left( \gamma_{1}(-\lambda/n) \right)^{n+1}}{\Delta(-\lambda/n)} \, \frac{2(1+ \alpha^2) \lambda/n + (1- \alpha^2)^2}{\sqrt{\Delta(-\lambda/n)} + \lambda/n}
 - \frac{\left( \gamma_{2}(-\lambda/n) \right)^{n+1}}{\Delta(-\lambda/n)} \left( \sqrt{\Delta(-\lambda/n)} + \frac{\lambda}{n} \right) \notag \\ [1mm]
&=& \frac{\left((1/n)^{n+1} +o(1) \right) \lambda^{n+1} }{(1/n^2 + o(1)) \lambda^2} \, \frac{\left( 2(1+\alpha^2)/n + o(1) \right) \lambda}{(2/n + o(1))\lambda}
- \frac{\mathcal{O}( \lambda^{-(n+1)} )}{ (1/n^2 + o(1)) \lambda^2 } \times \mathcal{O}(\lambda)  \notag \\ [1mm]
&=& \left( (1+\alpha^2) n^{-(n-1)} + o(1) \right) \lambda^{n-1}. \label{eq:productlambda1}
\end{eqnarray}
Lengthy but routine calculations yield that 
\begin{eqnarray}
 \frac{\alpha^2\left[\left(\gamma_{1}(-\lambda/n)\right)^{n}  - \left(\gamma_{2}(-\lambda/n)\right)^{n}\right]}{\sqrt{\Delta(-\lambda/n)}}
 &=& \left(\alpha^2 n^{-(n-1)} + o(1) \right) \, \lambda^{n-1} , \label{eq:productlambda2} \\
 \frac{(n-1) \alpha^2 \, \lambda\left[\left(\gamma_{1}(-\lambda/n)\right)^{n}  + \left(\gamma_{2}(-\lambda/n)\right)^{n}\right]}{n^2 \,\Delta(-\lambda/n)} 
&=& \left(\alpha^2 (n-1) n^{-n} + o(1) \right) \lambda^{n-1}, \label{eq:productlambda3} \\
 \frac{2 (n-1) \alpha \, \lambda\left[\left(\gamma_{1}(-\lambda/n)\right)^{n+1}  + \left(\gamma_{2}(-\lambda/n)\right)^{n+1}\right]}{n \,\Delta(-\lambda/n) \left( n (1-\alpha)^2 + \lambda \right)}
 &=&  \left(2 \alpha (n-1) n^{-n} +o(1) \right) \, \lambda^{n-1}. \label{eq:productlambda4}
\end{eqnarray}
Further, the five remaining terms on the right-hand side of \eqref{eq:productlambdamain} are $ \mathcal{O}( \lambda^{n-2})$. Together with \eqref{eq:productlambdamain}--\eqref{eq:productlambda4}, we have
\begin{equation*}
\prod_{k=1}^{n-1} \lambda_k = \lim_{\lambda \rightarrow \infty} \frac{d_{n}(-\lambda)}{\lambda^{n-1}}
= n^{-(n-1)} \left( 1+ \alpha^2 \frac{n-1}{n} - 2\alpha \frac{n-1}{n} \right).
\end{equation*}
This completes the proof of the first assertion. We now turn to the second assertion. Note that
\begin{eqnarray*}
&& (n-1) \sqrt[n-1]{ \lambda_1 \lambda_2 \dots \lambda_{n-1}}
= \frac{n-1}{n} \, \sqrt[n-1]{1 + \alpha^2 \frac{n-1}{n} - 2 \alpha \frac{n-1}{n} }  \\
&=& \frac{n-1}{n} \sqrt[n-1]{ \frac{1}{n}  + (1 - \alpha)^2 \frac{n-1}{n}}
\geq \frac{n-1}{n}  \sqrt[n-1]{\frac{1}{n}} \geq \frac{1}{2}  \sqrt[n-1]{\frac{1}{n}} 
\geq \frac{1}{4},
\end{eqnarray*}
where the second last inequality is due to the fact $n \geq 2$ and the last inequality follows by applying the inequality $ \sqrt[n-1]{1/n} \geq 1/2$. For $n \geq 2$, this inequality is equivalent to $n \leq 2^{n-1}$. This completes the proof of the second assertion.

\newpage

\section{Appendix C}

This section is devoted to the proofs of Theorem \ref{thm:asymptoticsnumeratoralternative} and Theorem \ref{thm:denominatortailboundalternative}.

\subsection{Proof of Theorem \ref{thm:asymptoticsnumeratoralternative}}
As introduced in Section \ref{subsec:convergencerateunderHa}, $Z_{11}^{n}$, $Z_{12}^{n}$ and $Z_{22}^{n}$ can be represented as
\begin{equation*}
Z_{11}^{n} = \sum_{k=1}^{n} \lambda_{k} \, W_{k}^{2}, \  Z_{12}^{n} = \sum_{k=1}^{n} \lambda_{k} \, W_{k} V_{k}, \ \text{and } Z_{22}^{n} = \sum_{k=1}^{n} \lambda_{k} \, V_{k}^{2}
\end{equation*}
where the pairs of random variables $(W_n, V_{n})$ are i.i.d. Gaussian vectors with mean zero and covariance matrix $$\begin{pmatrix}
1 & r \\
r & 1
\end{pmatrix}.$$ There then exist orthonormal functions $e_{1}, e_{2}, \dots, e_{n}, f_{1}, f_{2}, \dots, f_{n}$ on $\mathbb{R}_{+}$ such that
\begin{eqnarray*}
&& \left( W_{1},  \dots, W_{n}, V_{1}, \dots, V_{n} \right) \\
&\overset{d}{=}& \left( \mathcal{I}_{1} (e_{1}),  \dots, \mathcal{I}_{1} (e_{n}), \mathcal{I}_{1} (r e_{1} + \sqrt{1- r^2} f_1),  \dots, \mathcal{I}_{1} (r e_{n} + \sqrt{1- r^2} f_n) \right).
\end{eqnarray*}

Before representing $Z_{11}^{n}$, $Z_{12}^{n}$, and $Z_{22}^n$ in the form of multiple Wiener integrals, we first define six symmetric functions $h_1, \dots, h_6$ on $\mathcal{H}^{\odot 2}$ and study their inner products and contractions. Indeed, $h_1, \dots, h_6$ are the integrands of the multiple Wiener integrals related to $Z_{11}^{n}$, $Z_{12}^{n}$, and $Z_{22}^n$. We define
\begin{eqnarray*}
&& h_1 := \sum_{k=1}^{n} \lambda_{k} \, e_{k} \otimes e_{k}, \quad
 h_2 := \sum_{k=1}^{n} \lambda_{k} \, f_{k} \otimes f_{k}, \quad
 h_3 := \sum_{k=1}^{n} \lambda_{k} \, e_{k} \, \widetilde{\otimes} \, f_{k}, \\
&& h_4 := \sum_{k=1}^{n} \lambda_{k}^{2} \, e_{k} \otimes e_{k}, \quad
 h_5 := \sum_{k=1}^{n} \lambda_{k}^{2} \, f_{k} \otimes f_{k}, \quad
 h_6 := \sum_{k=1}^{n} \lambda_{k}^{2} \, e_{k} \, \widetilde{\otimes} \, f_{k}.
\end{eqnarray*}

We now proceed with Lemma \ref{lm:innerproductandcontraction} below.

\begin{lemma} \label{lm:innerproductandcontraction}
The following three statements hold.
\begin{enumerate}[(a)]
\item For $j,k \in \{1,2,3\}$, 
\begin{equation*}
\langle h_1, h_1  \rangle_{\mathcal{H}^{\otimes 2}} 
= \langle h_2, h_2  \rangle_{\mathcal{H}^{\otimes 2}} 
= 2 \langle h_3, h_3  \rangle_{\mathcal{H}^{\otimes 2}} 
=\sum_{k=1}^{n} \lambda_{k}^{2},
\end{equation*}
and
\begin{equation*}
\langle h_j, h_k  \rangle_{\mathcal{H}^{\otimes 2}} =0 \text{ if $j \neq k$.}
\end{equation*}
\item We have
\begin{equation*}
h_{1} \otimes_{1} h_{1} = h_{4}, \quad h_{2} \otimes_{1} h_{2} = h_{5}, \quad h_{3} \otimes_{1} h_{3} = \frac{1}{4} h_{4} + \frac{1}{4} h_{5},
\end{equation*}
\begin{equation*}
h_{1} \otimes_{1} h_{2} = h_{2} \otimes_{1} h_{1} =0,
\end{equation*}
\begin{equation*}
h_{1} \otimes_{1} h_{3} = h_{3} \otimes_{1} h_{2} = \frac{1}{2} \sum_{k=1}^{n} \lambda_{k}^{2} \, e_{k} \otimes f_{k},
\end{equation*}
\begin{equation*}
h_{3} \otimes_{1} h_{1} = h_{2} \otimes_{1} h_{3} = \frac{1}{2} \sum_{k=1}^{n} \lambda_{k}^{2} \, f_{k} \otimes e_{k}.
\end{equation*}
 \item For $j,k \in \{4,5,6\}$, 
\begin{equation*}
\langle h_4, h_4  \rangle_{\mathcal{H}^{\otimes 2}} 
= \langle h_5, h_5  \rangle_{\mathcal{H}^{\otimes 2}} 
= 2 \langle h_6, h_6  \rangle_{\mathcal{H}^{\otimes 2}} 
=\sum_{k=1}^{n} \lambda_{k}^{4},
\end{equation*}
and
\begin{equation*}
\langle h_j, h_k  \rangle_{\mathcal{H}^{\otimes 2}} =0 \text{ if $j \neq k$.}
\end{equation*}
\end{enumerate}
\end{lemma}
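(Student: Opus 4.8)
The plan is to prove the three parts of Lemma \ref{lm:innerproductandcontraction} by reducing everything to the orthonormality relations $\langle e_k, e_j\rangle_{\mathcal H} = \langle f_k, f_j\rangle_{\mathcal H} = \sigma_{kj}$ and $\langle e_k, f_j\rangle_{\mathcal H} = 0$, together with the elementary identities for inner products and contractions of simple tensors $a\otimes b$. The only slightly delicate point is bookkeeping the symmetrizations $e_k\,\widetilde\otimes\, f_k = \tfrac12(e_k\otimes f_k + f_k\otimes e_k)$ that appear in $h_3$ and $h_6$, so I would first record the two facts I will use repeatedly: for unit vectors with $\langle e,f\rangle_{\mathcal H}=0$, $\langle a\otimes b, c\otimes d\rangle_{\mathcal H^{\otimes 2}} = \langle a,c\rangle_{\mathcal H}\langle b,d\rangle_{\mathcal H}$, and $(a\otimes b)\otimes_1(c\otimes d) = \langle b,c\rangle_{\mathcal H}\, a\otimes d$.

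For part (a), I would expand $\langle h_1,h_1\rangle_{\mathcal H^{\otimes 2}} = \sum_{j,k}\lambda_j\lambda_k \langle e_j\otimes e_j, e_k\otimes e_k\rangle = \sum_{j,k}\lambda_j\lambda_k\sigma_{jk}^2 = \sum_k\lambda_k^2$, and identically for $h_2$. For $h_3$, expanding $e_k\,\widetilde\otimes\,f_k$ gives $\langle h_3,h_3\rangle = \sum_{j,k}\lambda_j\lambda_k\cdot\tfrac14\langle e_j\otimes f_j + f_j\otimes e_j,\ e_k\otimes f_k + f_k\otimes e_k\rangle$; the cross terms vanish because $\langle e,f\rangle_{\mathcal H}=0$, and the two surviving terms each contribute $\sigma_{jk}$, giving $\tfrac14\cdot 2\sum_k\lambda_k^2 = \tfrac12\sum_k\lambda_k^2$, i.e. $2\langle h_3,h_3\rangle = \sum_k\lambda_k^2$. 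The off-diagonal vanishing $\langle h_1,h_2\rangle$, $\langle h_1,h_3\rangle$, $\langle h_2,h_3\rangle$ all follow instantly because every pairing forces at least one factor $\langle e,f\rangle_{\mathcal H}=0$.

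For part (b), I would compute each order-one contraction termwise. Since the $e_k$ (resp.\ $f_k$) are orthonormal, $h_1\otimes_1 h_1 = \sum_{j,k}\lambda_j\lambda_k\langle e_j,e_k\rangle_{\mathcal H}\, e_j\otimes e_k = \sum_k\lambda_k^2 e_k\otimes e_k = h_4$, and similarly $h_2\otimes_1 h_2 = h_5$. For $h_3\otimes_1 h_3$, I write out $(e_j\otimes f_j + f_j\otimes e_j)\otimes_1(e_k\otimes f_k + f_k\otimes e_k)$ using the contraction rule: the contraction pairs the second slot of the left factor with the first slot of the right factor, so only the terms $\langle f_j,e_k\rangle$ (zero) and $\langle e_j,e_k\rangle = \sigma_{jk}$ and $\langle f_j,f_k\rangle = \sigma_{jk}$ survive, producing $\tfrac14(f_k\otimes f_k + e_k\otimes e_k)$ after summing; hence $h_3\otimes_1 h_3 = \tfrac14 h_4 + \tfrac14 h_5$. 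The identities $h_1\otimes_1 h_2 = h_2\otimes_1 h_1 = 0$ hold since each term has $\langle e_j,f_k\rangle_{\mathcal H}=0$; and the mixed contractions $h_1\otimes_1 h_3$, $h_3\otimes_1 h_1$, etc., are obtained the same way, keeping only the single nonzero pairing in each case and reading off $\tfrac12\sum_k\lambda_k^2\, e_k\otimes f_k$ or $\tfrac12\sum_k\lambda_k^2\, f_k\otimes e_k$ as stated.

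Part (c) is literally part (a) with $\lambda_k$ replaced by $\lambda_k^2$ — the functions $h_4,h_5,h_6$ have the same tensor structure as $h_1,h_2,h_3$ but with coefficients $\lambda_k^2$ — so the same expansions give $\langle h_4,h_4\rangle = \langle h_5,h_5\rangle = 2\langle h_6,h_6\rangle = \sum_k\lambda_k^4$ and vanishing off-diagonal inner products. I do not anticipate a genuine obstacle here; the main thing to get right is the $\tfrac14$ and $\tfrac12$ factors coming from the symmetrizations of $e_k\otimes f_k$, and the fact that in an order-one contraction the contracted pair of arguments is fixed by the definition (last $r$ slots of the first tensor against last $r$ slots of the second), which determines exactly which of the cross terms survive. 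I would close by remarking that all these identities feed into the subsequent Malliavin-calculus computation of $\langle DX, -DL^{-1}X\rangle_{\mathcal H}$ for the relevant combination of $Z_{11}^n, Z_{12}^n, Z_{22}^n$.
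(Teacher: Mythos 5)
Your proof is correct and follows essentially the same route as the paper's: bilinearity of the inner product and of the contraction, the orthonormality relations $\langle e_k,e_j\rangle_{\mathcal H}=\langle f_k,f_j\rangle_{\mathcal H}=\sigma_{kj}$, $\langle e_k,f_j\rangle_{\mathcal H}=0$, and careful bookkeeping of the $\tfrac12$ factors coming from $e_k\,\widetilde{\otimes}\,f_k=\tfrac12 e_k\otimes f_k+\tfrac12 f_k\otimes e_k$. One cosmetic remark: the paper's definition of $\otimes_1$ contracts the last argument of each factor, so $(a\otimes b)\otimes_1(c\otimes d)=\langle b,d\rangle_{\mathcal H}\,a\otimes c$ rather than your $\langle b,c\rangle_{\mathcal H}\,a\otimes d$; since every $h_i$ here is symmetric, this does not change any of your conclusions.
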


\begin{proof}
We use $\sigma_{jk}$ to denote the Kronecker delta, which is $1$ if $j=k$ and $0$ if $j \neq k$.\\

We first prove statement (a). By the bilinearity of the inner product, we have
\begin{eqnarray*}
&&\langle h_1, h_1  \rangle_{\mathcal{H}^{\otimes 2}} 
= \left\langle \sum_{j=1}^{n} \lambda_{j} \, e_{j} \otimes e_{j}, \sum_{k=1}^{n} \lambda_{k} \, e_{k} \otimes e_{k}  \right\rangle_{\mathcal{H}^{\otimes 2}}
= \sum_{j,k=1}^{n} \lambda_j \lambda_k \langle  e_{j} \otimes e_{j},  e_{k} \otimes e_{k} \rangle_{\mathcal{H}^{\otimes 2}} \\
&=& \sum_{j,k=1}^{n} \lambda_j \lambda_k \int_{\mathbb{R}_{+}^{2}} e_{j}(s) e_{j}(t) e_{k}(s) e_{k}(t) \, dsdt
= \sum_{j,k=1}^{n} \lambda_j \lambda_k \, \sigma_{jk} = \sum_{k=1}^{n} \lambda_{k}^{2}.
\end{eqnarray*}
Similarly, we have $ \langle h_2, h_2 \rangle_{\mathcal{H}^{\otimes 2}} = \sum_{k=1}^{n} \lambda_{k}^{2} $. Note that $e_{k} \, \widetilde{\otimes} \, f_{k} = \frac{1}{2} e_{k} \otimes f_{k} + \frac{1}{2} f_{k} \otimes e_{k}$. This gives
\begin{eqnarray*}
&& \left\langle e_{j} \, \widetilde{\otimes} \, f_{j}, e_{k} \, \widetilde{\otimes}\, f_{k}  \right\rangle_{\mathcal{H}^{\otimes 2}}
= \left\langle \frac{1}{2} e_{j} \otimes f_{j} + \frac{1}{2} f_{j} \otimes e_{j}, \frac{1}{2} e_{k} \otimes f_{k} + \frac{1}{2} f_{k} \otimes e_{k}  \right\rangle_{\mathcal{H}^{\otimes 2}}  \\
&=& \frac{1}{4} \langle e_{j} \otimes f_{j},  e_{k} \otimes f_{k} \rangle_{\mathcal{H}^{\otimes 2}}
 + \frac{1}{4}  \langle e_{j} \otimes f_{j},  f_{k} \otimes e_{k} \rangle_{\mathcal{H}^{\otimes 2}} \\
&& +  \,\frac{1}{4}  \langle f_{j} \otimes e_{j},  e_{k} \otimes f_{k} \rangle_{\mathcal{H}^{\otimes 2}}
 + \frac{1}{4}  \langle f_{j} \otimes e_{j},  f_{k} \otimes e_{k} \rangle_{\mathcal{H}^{\otimes 2}} \\
&=& \frac{1}{4} \sigma_{jk} + 0 + 0 + \frac{1}{4} \sigma_{jk} = \frac{1}{2} \sigma_{jk}.
\end{eqnarray*}
Then
\begin{eqnarray*}
&&\langle h_3, h_3  \rangle_{\mathcal{H}^{\otimes 2}} 
= \left\langle \sum_{j=1}^{n} \lambda_{j} \, e_{j} \, \widetilde{\otimes} \, f_{j}, \sum_{k=1}^{n} \lambda_{k} \, e_{k} \, \widetilde{\otimes} \, f_{k}  \right\rangle_{\mathcal{H}^{\otimes 2}} \\
&=& \sum_{j,k=1}^{n} \lambda_j \lambda_k \langle  e_{j} \, \widetilde{\otimes} \, f_{j},  e_{k} \, \widetilde{\otimes} \, f_{k} \rangle_{\mathcal{H}^{\otimes 2}}
= \sum_{j,k=1}^{n} \left(\lambda_j \lambda_k \times \frac{1}{2} \sigma_{jk}\right) = \frac{1}{2} \sum_{k=1}^{n} \lambda_{k}^{2}.
\end{eqnarray*}
Direct calculation yields that
\begin{equation*}
\left \langle e_{j} \otimes e_{j}, f_{k} \otimes f_{k} \right \rangle_{\mathcal{H}^{\otimes 2}}
= \left \langle e_{j} \otimes e_{j}, e_{k} \, \widetilde{\otimes} \, f_{k} \right \rangle_{\mathcal{H}^{\otimes 2}}
= \left \langle f_{j} \otimes f_{j}, e_{k} \, \widetilde{\otimes}  \, f_{k} \right \rangle_{\mathcal{H}^{\otimes 2}}
=0,
\end{equation*}
regardless of the values of $j$ and $k$. Then, by the bilinearity of the inner product, we deduce that
\begin{equation*}
\langle h_j, h_k  \rangle_{\mathcal{H}^{\otimes 2}} =0,
\end{equation*}
if $j \neq k$ and $j,k \in \{1,2,3\}$. This completes the proof of statement (a).

We now proceed to the proof of statement (b). For positive integers $j$ and $k$,
\begin{equation*}
\left( e_{j} \otimes e_{j} \right) \otimes_{1} \left( e_{k} \otimes e_{k} \right)
= e_{j} \otimes e_{k} \langle e_j, e_k\rangle_{\mathcal{H}}
= \sigma_{jk} \, e_{j} \otimes e_{k}.
\end{equation*}
Similarly, we have 
\begin{equation*}
\left( f_{j} \otimes f_{j} \right) \otimes_{1} \left( f_{k} \otimes f_{k} \right) = \sigma_{jk} \, f_{j} \otimes f_{k},
\end{equation*}
and
\begin{equation*}
\left( e_{j} \otimes e_{j} \right) \otimes_{1} \left( f_{k} \otimes f_{k} \right)
= \left( f_{j} \otimes f_{j} \right) \otimes_{1} \left( e_{k} \otimes e_{k} \right)
=0.
\end{equation*}
Noting that $e_{k} \, \widetilde{\otimes} \, f_{k} = \frac{1}{2} e_{k} \otimes f_{k} + \frac{1}{2} f_{k} \otimes e_{k}$, we have that
\begin{eqnarray*}
&& \left( e_{j} \otimes e_{j} \right) \otimes_{1} \left( e_{k} \, \widetilde{\otimes} \, f_{k} \right)
= \left( e_{j} \otimes e_{j} \right) \otimes_{1} \left( \frac{1}{2} e_{k} \otimes f_{k} +   \frac{1}{2} f_{k} \otimes e_{k} \right)  \\
&=& \frac{1}{2} \left( e_{j} \otimes e_{j} \right) \otimes_{1} \left( e_{k} \otimes f_{k} \right)
+ \frac{1}{2} \left( e_{j} \otimes e_{j} \right) \otimes_{1} \left( f_{k} \otimes e_{k} \right) \\
&=& \frac{1}{2} e_{j} \otimes e_{k} \langle e_{j}, f_{k} \rangle_{\mathcal{H}}
 + \frac{1}{2} e_{j} \otimes f_{k} \langle e_{j}, e_{k} \rangle_{\mathcal{H}}
 = 0 + \frac{1}{2} \sigma_{jk} \, e_{j} \otimes f_{k}
 = \frac{1}{2} \sigma_{jk}\, e_{j} \otimes f_{k}.
\end{eqnarray*} 
By a similar argument, we also have that
\begin{eqnarray*}
&& \left( e_{j} \, \widetilde{\otimes} \, f_{j} \right) \otimes_{1} \left( e_{k} \otimes e_{k} \right) = \frac{1}{2} \sigma_{jk} \, f_j \otimes e_k, \\
&& \left( f_{j} \otimes f_{j} \right) \otimes_{1} \left( e_{k} \, \widetilde{\otimes} \, f_{k} \right) = \frac{1}{2} \sigma_{jk} \, f_j \otimes e_k, \\
&& \left( e_{j} \, \widetilde{\otimes} \, f_{j} \right) \otimes_{1} \left( f_{k} \otimes f_{k} \right) = \frac{1}{2} \sigma_{jk} \, e_j \otimes f_k, \\
&& \left( e_{j} \, \widetilde{\otimes} \, f_{j} \right) \otimes_{1} \left( e_{k} \, \widetilde{\otimes} \, f_{k} \right) = \frac{1}{4} \sigma_{jk} \, e_j \otimes e_k + \frac{1}{4} \sigma_{jk} f_j \otimes f_k.
\end{eqnarray*}
Statement (b) now follows directly by the bilinearity of contraction. Since the proof of statement (c) is nearly identical to the proof of statement (a), we omit the details. This completes the proof.
\end{proof}

Given the definitions of $h_i$'s, we can now express $Z_{11}^{n}$, $Z_{12}^{n}$ and $Z_{22}^{n}$ in terms of multiple Wiener integrals. Since
\begin{eqnarray*}
&& \left( W_{1},  \dots, W_{n}, V_{1}, \dots, V_{n} \right) \\
&\overset{d}{=}& \left( \mathcal{I}_{1} (e_{1}),  \dots, \mathcal{I}_{1} (e_{n}), \mathcal{I}_{1} (r e_{1} + \sqrt{1- r^2} f_1),  \dots, \mathcal{I}_{1} (r e_{n} + \sqrt{1- r^2} f_n) \right),
\end{eqnarray*}
then
\begin{eqnarray*}
Z_{12}^{n} = \sum_{k=1}^{n} \lambda_{k} W_{k} V_{k}
\overset{d}{=}  \sum_{k=1}^{n} \lambda_{k} \, \mathcal{I}_{1} (e_{k}) \mathcal{I}_{1} (r e_{k} + \sqrt{1- r^2} f_k)
\end{eqnarray*}
Applying the product formula in \eqref{eq:productformulaforWienerintegral} yields
\begin{eqnarray*}
&& \mathcal{I}_{1} (e_{k}) \mathcal{I}_{1} (r e_{k} + \sqrt{1- r^2} f_k) \\
&=& \mathcal{I}_{2} \left( e_{k} \, \widetilde{\otimes} \left( r e_{k} + \sqrt{1- r^2} f_k \right) \right) + \langle e_{k} , r e_{k} + \sqrt{1- r^2} f_k \rangle_{\mathcal{H}} \\
&=& r \, \mathcal{I}_{2} ( e_{k} \otimes e_{k} ) + \sqrt{1- r^2} \,  \mathcal{I}_{2} (e_{k} \, \widetilde{\otimes} \, f_{k}) + r.
\end{eqnarray*}
Then,
\begin{eqnarray*}
Z_{12}^{n} &\overset{d}{=}& \sum_{k=1}^{n} \lambda_{k} \, \left( r \, \mathcal{I}_{2} ( e_{k} \otimes e_{k} ) + \sqrt{1- r^2} \,  \mathcal{I}_{2} (e_{k} \, \widetilde{\otimes} \, f_{k}) + r \right) \\
&=& r \, \mathcal{I}_{2}( h_1 ) + \sqrt{1- r^2} \,  \mathcal{I}_{2} (h_3) + r \sum_{k=1}^{n} \lambda_k.
\end{eqnarray*}
By a similar argument, we have
\begin{eqnarray*}
&& Z_{11}^{n} \overset{d}{=} \mathcal{I}_{2}(h_1) + \sum_{k=1}^{n} \lambda_{k}, \\
&& Z_{22}^{n} \overset{d}{=} r^2 \, \mathcal{I}_{2}(h_1) + (1-r^2) \, \mathcal{I}_{2}(h_2) + 2 r \sqrt{1-r^2} \, \mathcal{I}_{2}(h_3) + \sum_{k=1}^{n} \lambda_k.
\end{eqnarray*}
A routine calculation gives
\begin{eqnarray*}
&& \left( Z_{12}^{n} \right)^{2} - r^2 \, Z_{11}^{n} Z_{22}^{n} \\
&\overset{d}{=}&  (1-r^2) \left( r^2\, \mathcal{I}_{2}^{2}(h_1) +  \mathcal{I}_{2}^{2}(h_3) + 2r\sqrt{1- r^2} \, \mathcal{I}_{2}(h_1)  \mathcal{I}_{2}(h_3) - r^2  \, \mathcal{I}_{2}(h_1)  \mathcal{I}_{2}(h_2) \right)
 \\
&& + r(1- r^2) \left( \sum_{k=1}^{n} \lambda_k \right) \left( r \, \mathcal{I}_{2}(h_1) + 2 \sqrt{1- r^2} \, \mathcal{I}_{2}(h_3) - r \, \mathcal{I}_{2}(h_2) \right).
\end{eqnarray*}
Applying the product formula in \eqref{eq:productformulaforWienerintegral} with $p=q=2$, together with Lemma \ref{lm:innerproductandcontraction}, we have
\begin{eqnarray*}
&& \mathcal{I}_{2}^{2}(h_1) = \mathcal{I}_{4} ( h_1 \, \widetilde{\otimes} \, h_1 ) + 4 \, \mathcal{I}_{2} ( h_4) + 2 \sum_{k=1}^{n} \lambda_k^2, \\
&& \mathcal{I}_{2}^{2}(h_3) = \mathcal{I}_{4} ( h_3 \, \widetilde{\otimes} \, h_3 ) +  \mathcal{I}_{2} ( h_4 + h_5) +  \sum_{k=1}^{n} \lambda_k^2,  \\
&& \mathcal{I}_{2}(h_1) \mathcal{I}_{2}(h_3) = \mathcal{I}_{4}( h_1 \, \widetilde{\otimes} \, h_3 ) + 2 \mathcal{I}_{2} (h_6), \\ [2mm]
&& \mathcal{I}_{2}(h_1) \mathcal{I}_{2}(h_2) = \mathcal{I}_{4}( h_1 \, \widetilde{\otimes} \, h_2 ).
\end{eqnarray*}
Combining the last two displays yields
\begin{eqnarray}
&& \frac{1}{1-r^2} \left(\left( Z_{12}^{n} \right)^{2} - r^2 \, Z_{11}^{n} Z_{22}^{n}\right) \notag \\
&\overset{d}{=}&   \left( r^2\, \mathcal{I}_{4}( h_1 \, \widetilde{\otimes} \, h_1 ) +   \mathcal{I}_{4}( h_3 \, \widetilde{\otimes} \, h_3 ) + 2r\sqrt{1- r^2} \,  \mathcal{I}_{4}( h_1 \, \widetilde{\otimes} \, h_3 ) - r^2  \,  \mathcal{I}_{4}( h_1 \, \widetilde{\otimes} \, h_2 ) \right) \notag \\
&& + \, r \left( \sum_{k=1}^{n} \lambda_k \right) \left( r\, \mathcal{I}_{2}(h_1) + 2 \sqrt{1-r^2} \, \mathcal{I}_{2}(h_3) - r\, \mathcal{I}_{2}(h_2) \right) \notag \\
&& + \,  \left( (1+4r^2)\, \mathcal{I}_{2}(h_4) + \mathcal{I}_{2}(h_5) + 4 r \sqrt{1-r^2} \mathcal{I}_{2}(h_6) \right) \notag \\
&& + \, (1+2 r^2) \sum_{k=1}^{n} \lambda_k^2  \notag \\
&=& \mathcal{I}_{4} (w) + \mathcal{I}_{2}(v) + A_{1} + A_{2},  \label{eq:expressionfornumeratoralternative}
\end{eqnarray}
where
\begin{eqnarray*}
&& w := r^2 \, h_1 \, \widetilde{\otimes} \, h_1 + h_3 \, \widetilde{\otimes} \, h_3 + 2r\sqrt{1-r^2} \, h_1 \, \widetilde{\otimes} \, h_3 - r^2 \, h_1 \, \widetilde{\otimes} \, h_2 , \\
&& v := r \left( \sum_{k=1}^{n} \lambda_k \right) \left( r \, h_1 + 2 \sqrt{1-r^2} \,  h_3 - r\, h_2 \right), \\
&& A_{1} := (1+4r^2)\, \mathcal{I}_{2}(h_4) + \mathcal{I}_{2}(h_5) + 4 r \sqrt{1-r^2} \mathcal{I}_{2}(h_6) , \\
&& A_{2} := (1+2 r^2) \sum_{k=1}^{n} \lambda_k^2 .
\end{eqnarray*}

In what follows, we will investigate the asymptotics of $ \mathcal{I}_{4} (w) + \mathcal{I}_{2}(v) $, and then derive the asymptotics of $ \left(\left( Z_{12}^{n} \right)^2 - Z_{11}^{n} Z_{22}^{n}\right)/(1-r^2) $ by applying Lemma \ref{lm:kolmogorovratio}. As before, we use tools from Malliavin calculus and Stein's method to study the convergence rate of $ \mathcal{I}_{4} (w) + \mathcal{I}_{2}(v) $ (after scaling). We shall rely on Lemma \ref{lm:citefromDousissi} below.

\begin{lemma} \label{lm:citefromDousissi}
Let $F = \mathcal{I}_{2}(v) + \mathcal{I}_{4}(w)$, where $v \in \mathcal{H}^{\odot 2}$ and $w \in \mathcal{H}^{\odot 4}$. Then
\begin{eqnarray}
&& d_{Kol}\left( \frac{F}{\sqrt{EF^2}}, \, \mathcal{N}(0,1) \right) \notag  \\
&\leq& \frac{2}{EF^2} \bigg[ \sqrt{2} \,\lVert v \otimes_1 v \rVert_{\mathcal{H}^{\otimes2}} + 2 \sqrt{6!} \, \lVert w \otimes_1 w \rVert_{\mathcal{H}^{\otimes6}} + 18\sqrt{4!} \, \lVert w \otimes_2 w \rVert_{\mathcal{H}^{\otimes4}} \notag  \\
&& \quad \quad \quad +  \, 36\sqrt{2} \, \lVert w \otimes_3 w \rVert_{\mathcal{H}^{\otimes2}}  
 + 9\sqrt{2}\, \sqrt{\left\langle  v\otimes v, w \otimes_2 w \right\rangle_{\mathcal{H}^{\otimes4}}}  \notag   \\
&& \quad \quad \quad  + 3 \sqrt{4!} \, \sqrt{\lVert v \otimes_1 v \rVert_{\mathcal{H}^{\otimes2}} \lVert w \otimes_3 w \rVert_{\mathcal{H}^{\otimes2}}  } \bigg].  \label{eq:citefromDouissi}
\end{eqnarray}
Moreover, letting $R_{F}$ be the bracketed term on the right-hand side of \eqref{eq:citefromDouissi}, for any constant $\sigma>0$, we have
\begin{equation*}
d_{Kol}\left( \frac{F}{\sigma}, \, \mathcal{N}(0,1) \right) \leq \frac{2}{\sigma^2} R_{F} + \left| 1- \frac{EF^2}{\sigma^2} \right|.
\end{equation*}
\end{lemma}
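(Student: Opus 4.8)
The plan is to invoke the Malliavin--Stein bound \eqref{eq:firstmalliavinboundforKolmogorov} and to expand the quantity $\langle DF,-DL^{-1}F\rangle_{\mathcal{H}}$ by means of the product formula \eqref{eq:productformulaforWienerintegral}. First I would record, using \eqref{eq:innerproductWeinerIntegral}, that $E[F^2]=2\|v\|_{\mathcal{H}^{\otimes2}}^2+24\|w\|_{\mathcal{H}^{\otimes4}}^2$, and, using $D_t\mathcal{I}_q(f)=q\,\mathcal{I}_{q-1}(f(\cdot,t))$ together with the action $-L^{-1}\mathcal{I}_q(f)=q^{-1}\mathcal{I}_q(f)$ recalled in Section \ref{sec:preliminaries}, that $D_tF=2\,\mathcal{I}_1(v(\cdot,t))+4\,\mathcal{I}_3(w(\cdot,t))$ and $-D_tL^{-1}F=\mathcal{I}_1(v(\cdot,t))+\mathcal{I}_3(w(\cdot,t))$. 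Hence
\[
\langle DF,-DL^{-1}F\rangle_{\mathcal{H}}=\int_{\mathbb{R}_{+}}\!\Big(2\,\mathcal{I}_1(v(\cdot,t))^2+6\,\mathcal{I}_1(v(\cdot,t))\mathcal{I}_3(w(\cdot,t))+4\,\mathcal{I}_3(w(\cdot,t))^2\Big)\,dt.
\]

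Next I would expand each integrand by \eqref{eq:productformulaforWienerintegral}: $\mathcal{I}_1(v(\cdot,t))^2$ produces an $\mathcal{I}_2$ term plus the scalar $\|v(\cdot,t)\|_{\mathcal{H}}^2$; $\mathcal{I}_3(w(\cdot,t))^2$ produces $\mathcal{I}_6$, $\mathcal{I}_4$, $\mathcal{I}_2$ terms plus the scalar $\|w(\cdot,t)\|_{\mathcal{H}^{\otimes3}}^2$; and $\mathcal{I}_1(v(\cdot,t))\mathcal{I}_3(w(\cdot,t))$ produces $\mathcal{I}_4$ and $\mathcal{I}_2$ terms. Integrating in $t$ converts the slice-wise contractions into the global contractions $v\otimes_1v$, $w\otimes_1w$, $w\otimes_2w$, $w\otimes_3w$ and a mixed $v$--$w$ kernel, while the scalar parts integrate to $2\|v\|^2+24\|w\|^2=E[F^2]$. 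Therefore $\langle DF,-DL^{-1}F\rangle_{\mathcal{H}}-E[F^2]$ is a finite sum of multiple Wiener integrals of orders $2$, $4$ and $6$, with kernels equal to symmetrizations of the above contractions and explicit combinatorial coefficients of the form $r!\binom{p}{r}\binom{q}{r}$.

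I would then compute $\mathrm{Var}\big(\langle DF,-DL^{-1}F\rangle_{\mathcal{H}}\big)$: since Wiener chaoses of distinct orders are orthogonal in $L^2(\Omega)$, the variance equals the sum of the squared $\mathcal{H}^{\otimes q}$-norms of the order-$q$ kernels; applying $\|\mathcal{I}_q(\tilde f)\|_{L^2(\Omega)}\le\sqrt{q!}\,\|\tilde f\|_{\mathcal{H}^{\otimes q}}\le\sqrt{q!}\,\|f\|_{\mathcal{H}^{\otimes q}}$ and Cauchy--Schwarz to the two cross terms inside the second chaos (which produce the summands $\sqrt{\langle v\otimes v,\,w\otimes_2w\rangle_{\mathcal{H}^{\otimes4}}}$ and $\sqrt{\|v\otimes_1v\|_{\mathcal{H}^{\otimes2}}\,\|w\otimes_3w\|_{\mathcal{H}^{\otimes2}}}$) gives $\sqrt{\mathrm{Var}(\langle DF,-DL^{-1}F\rangle_{\mathcal{H}})}\le 2R_{F}$, with $R_F$ the bracketed quantity in \eqref{eq:citefromDouissi}. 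Since $1-\langle D(F/\sqrt{EF^2}),-DL^{-1}(F/\sqrt{EF^2})\rangle_{\mathcal{H}}=\big(E[F^2]-\langle DF,-DL^{-1}F\rangle_{\mathcal{H}}\big)/E[F^2]$ has mean zero, \eqref{eq:firstmalliavinboundforKolmogorov} gives the first inequality. For the second, I would apply \eqref{eq:firstmalliavinboundforKolmogorov} directly to $F/\sigma$, write $1-\langle D(F/\sigma),-DL^{-1}(F/\sigma)\rangle_{\mathcal{H}}=\big(1-E[F^2]/\sigma^2\big)+\big(E[F^2]-\langle DF,-DL^{-1}F\rangle_{\mathcal{H}}\big)/\sigma^2$, and apply the triangle inequality in $L^2(\Omega)$ together with the variance estimate just obtained.

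The main obstacle is the bookkeeping in the middle step: tracking every contraction and its symmetrization through the product formula, keeping the combinatorial factors straight, and then matching the resulting collection of norms against the precise constants $\sqrt2$, $2\sqrt{6!}$, $18\sqrt{4!}$, $36\sqrt2$, $9\sqrt2$, $3\sqrt{4!}$ in \eqref{eq:citefromDouissi}. The computation is routine Malliavin calculus in principle but lengthy and error-prone; in practice one verifies only that $v\in\mathcal{H}^{\odot2}$ and $w\in\mathcal{H}^{\odot4}$ and invokes the corresponding computation of Douissi et al.
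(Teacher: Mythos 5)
Your proposal is correct and follows the same route as the paper, which simply cites Theorem 8 of Douissi et al.\ and notes that the only change is replacing the total-variation Malliavin--Stein bound \eqref{eq:firstmalliavinboundfortotalvariation} by the Kolmogorov bound \eqref{eq:firstmalliavinboundforKolmogorov}; your sketch just unpacks that cited computation (the expansion of $\langle DF,-DL^{-1}F\rangle_{\mathcal{H}}$ via the product formula, chaos orthogonality, and the mean-zero/variance argument), and the constants and cross terms you identify all match. No gaps.
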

\begin{proof}
The proof is similar to the proof of Theorem 8 in \cite{douissi2020ar}. The only difference is that we substitute the application of \eqref{eq:firstmalliavinboundfortotalvariation} with that of \eqref{eq:firstmalliavinboundforKolmogorov}. We therefore omit the details.
\end{proof}

We proceed by presenting two corollaries which follow from Lemma \ref{lm:citefromDousissi}.

\begin{corollary} \label{corochaos:generalboundforKol}
Let $F = \mathcal{I}_{4}(w)$ where $w \in \mathcal{H}^{\odot 4}$. For any constant $\sigma >0$,
\begin{eqnarray*}
d_{Kol} \left( \frac{F}{\sigma},  \mathcal{N} (0,1) \right)
&\leq& \frac{24}{\sigma^2} \Big[ 2 \sqrt{5} \,  \left\lVert w \otimes_1 w \right\rVert_{\mathcal{H}^{\otimes 6}} + 
3\sqrt{6} \,  \left\lVert w \otimes_2 w \right\rVert_{\mathcal{H}^{\otimes 4}}  \\
&& + 3\sqrt{2} \,  \left\lVert w \otimes_3 w \right\rVert_{\mathcal{H}^{\otimes 2}} \Big]
+ \left| 1- \frac{E F^2}{\sigma^2} \right|.
\end{eqnarray*}
\end{corollary}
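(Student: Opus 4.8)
The plan is to obtain Corollary~\ref{corochaos:generalboundforKol} as the degenerate case $v=0$ of Lemma~\ref{lm:citefromDousissi}. First I would observe that the zero function belongs to $\mathcal{H}^{\odot 2}$ and that $\mathcal{I}_{2}(0)=0$, so that $F=\mathcal{I}_{4}(w)$ may be written as $F=\mathcal{I}_{2}(v)+\mathcal{I}_{4}(w)$ with $v=0$; both displays of Lemma~\ref{lm:citefromDousissi} then apply verbatim to this $F$.

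The second step is to simplify the remainder term $R_{F}$, i.e.\ the bracketed quantity on the right-hand side of \eqref{eq:citefromDouissi}. Taking $v=0$ annihilates the three summands containing $v$, namely $\sqrt{2}\,\lVert v\otimes_{1}v\rVert_{\mathcal{H}^{\otimes 2}}$, the cross term $9\sqrt{2}\,\sqrt{\langle v\otimes v,\,w\otimes_{2}w\rangle_{\mathcal{H}^{\otimes 4}}}$, and $3\sqrt{4!}\,\sqrt{\lVert v\otimes_{1}v\rVert_{\mathcal{H}^{\otimes 2}}\,\lVert w\otimes_{3}w\rVert_{\mathcal{H}^{\otimes 2}}}$, so that
\[
R_{F}=2\sqrt{6!}\,\lVert w\otimes_{1}w\rVert_{\mathcal{H}^{\otimes 6}}+18\sqrt{4!}\,\lVert w\otimes_{2}w\rVert_{\mathcal{H}^{\otimes 4}}+36\sqrt{2}\,\lVert w\otimes_{3}w\rVert_{\mathcal{H}^{\otimes 2}}.
\]

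Finally I would substitute this into the inequality $d_{Kol}(F/\sigma,\mathcal{N}(0,1))\le \tfrac{2}{\sigma^{2}}R_{F}+\bigl|\,1-EF^{2}/\sigma^{2}\,\bigr|$ from Lemma~\ref{lm:citefromDousissi} and reduce the numerical constants: since $\sqrt{6!}=12\sqrt{5}$ one has $\tfrac{2}{\sigma^{2}}\cdot 2\sqrt{6!}=\tfrac{48\sqrt{5}}{\sigma^{2}}=\tfrac{24}{\sigma^{2}}\cdot 2\sqrt{5}$; since $\sqrt{4!}=2\sqrt{6}$ one has $\tfrac{2}{\sigma^{2}}\cdot 18\sqrt{4!}=\tfrac{72\sqrt{6}}{\sigma^{2}}=\tfrac{24}{\sigma^{2}}\cdot 3\sqrt{6}$; and $\tfrac{2}{\sigma^{2}}\cdot 36\sqrt{2}=\tfrac{72\sqrt{2}}{\sigma^{2}}=\tfrac{24}{\sigma^{2}}\cdot 3\sqrt{2}$. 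Pulling the common factor $\tfrac{24}{\sigma^{2}}$ out of the three resulting terms yields exactly the stated bound.

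The argument is essentially bookkeeping, so there is no substantive obstacle; the only points requiring care are verifying that the choice $v=0$ is admissible within the hypotheses of Lemma~\ref{lm:citefromDousissi} and carrying out the arithmetic simplification of $\sqrt{6!}$ and $\sqrt{4!}$ so that the factor $24/\sigma^{2}$ is extracted cleanly.
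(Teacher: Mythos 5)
Your proposal is correct and follows exactly the paper's route: the paper's proof of Corollary~\ref{corochaos:generalboundforKol} is precisely ``this follows immediately by letting $v=0$'' in Lemma~\ref{lm:citefromDousissi}, and your arithmetic ($\sqrt{6!}=12\sqrt{5}$, $\sqrt{4!}=2\sqrt{6}$, extraction of the factor $24/\sigma^{2}$) checks out. Nothing further is needed.
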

\begin{proof}
This follows immediately by letting $v=0$.
\end{proof}

\begin{corollary} \label{coro:Kolmogorovbound}
Let $F = \mathcal{I}_{2}(v) + \mathcal{I}_{4}(w)$, where $v \in \mathcal{H}^{\odot 2}$ and $w \in \mathcal{H}^{\odot 4}$. For any constant $\sigma >0$, we have
\begin{eqnarray*}
&& d_{Kol}\left( \frac{F}{\sigma}, \, \mathcal{N}(0,1) \right) \\
&\leq& \frac{2}{\sigma^2} \bigg[ (\sqrt{2} + 3 \sqrt{6}) \, \lVert v \otimes_1 v \rVert_{\mathcal{H}^{\otimes2}}
+ ( 2 \sqrt{6!} + 39\sqrt{6} + 36 \sqrt{2} ) \, \lVert w \rVert_{\mathcal{H}^{\otimes4}}^{2} \\
&& \quad \quad \quad + 9\sqrt{2} \, \lVert v \rVert_{\mathcal{H}^{\otimes2}} \lVert w \rVert_{\mathcal{H}^{\otimes4}} \bigg] 
 +  \left| 1- \frac{EF^2}{\sigma^2} \right|.
\end{eqnarray*}
\end{corollary}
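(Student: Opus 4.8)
The plan is to derive Corollary~\ref{coro:Kolmogorovbound} directly from Lemma~\ref{lm:citefromDousissi} by dominating every contraction norm occurring in the bracketed quantity $R_F$ on the right-hand side of \eqref{eq:citefromDouissi} in terms of the plain tensor norms $\lVert v \rVert_{\mathcal{H}^{\otimes 2}}$ and $\lVert w \rVert_{\mathcal{H}^{\otimes 4}}$, while retaining $\lVert v \otimes_1 v \rVert_{\mathcal{H}^{\otimes 2}}$ as a separate term. By the ``moreover'' assertion of Lemma~\ref{lm:citefromDousissi},
\[
d_{Kol}\!\left( \frac{F}{\sigma}, \, \mathcal{N}(0,1) \right) \le \frac{2}{\sigma^2} R_{F} + \left| 1 - \frac{EF^2}{\sigma^2} \right|,
\]
so it suffices to show that $R_F$ is bounded above by the bracketed expression in the statement of the corollary.

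First I would invoke the elementary contraction estimate: for $f \in \mathcal{H}^{\otimes p}$, $g \in \mathcal{H}^{\otimes q}$ and $0 \le r \le p \wedge q$ one has $\lVert f \otimes_r g \rVert_{\mathcal{H}^{\otimes(p+q-2r)}} \le \lVert f \rVert_{\mathcal{H}^{\otimes p}} \, \lVert g \rVert_{\mathcal{H}^{\otimes q}}$, a direct consequence of the Cauchy--Schwarz inequality applied in the $r$ integrated variables. Taking $f = g = w$ yields $\lVert w \otimes_r w \rVert_{\mathcal{H}^{\otimes(8-2r)}} \le \lVert w \rVert_{\mathcal{H}^{\otimes 4}}^2$ for $r = 1,2,3$, which disposes of the three pure-$w$ terms $2\sqrt{6!}\,\lVert w \otimes_1 w \rVert$, $18\sqrt{4!}\,\lVert w \otimes_2 w \rVert$ and $36\sqrt 2\,\lVert w \otimes_3 w \rVert$ (note $\sqrt{4!} = 2\sqrt 6$, so the middle coefficient is $36\sqrt 6$).

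Next, for the two mixed terms: by Cauchy--Schwarz on $\mathcal{H}^{\otimes 4}$ together with the identity $\lVert v \otimes v \rVert_{\mathcal{H}^{\otimes 4}} = \lVert v \rVert_{\mathcal{H}^{\otimes 2}}^2$ and the contraction estimate above,
\[
\langle v \otimes v, \, w \otimes_2 w \rangle_{\mathcal{H}^{\otimes 4}} \le \lVert v \rVert_{\mathcal{H}^{\otimes 2}}^2 \, \lVert w \otimes_2 w \rVert_{\mathcal{H}^{\otimes 4}} \le \lVert v \rVert_{\mathcal{H}^{\otimes 2}}^2 \, \lVert w \rVert_{\mathcal{H}^{\otimes 4}}^2,
\]
so $\sqrt{\langle v \otimes v, w \otimes_2 w \rangle_{\mathcal{H}^{\otimes 4}}} \le \lVert v \rVert_{\mathcal{H}^{\otimes 2}} \lVert w \rVert_{\mathcal{H}^{\otimes 4}}$, contributing $9\sqrt 2 \lVert v \rVert_{\mathcal{H}^{\otimes 2}} \lVert w \rVert_{\mathcal{H}^{\otimes 4}}$. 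For the final term I would bound $\lVert w \otimes_3 w \rVert_{\mathcal{H}^{\otimes 2}} \le \lVert w \rVert_{\mathcal{H}^{\otimes 4}}^2$ and then apply Young's inequality $ab \le \tfrac12 a^2 + \tfrac12 b^2$ with $a = \sqrt{\lVert v \otimes_1 v \rVert_{\mathcal{H}^{\otimes 2}}}$ and $b = \lVert w \rVert_{\mathcal{H}^{\otimes 4}}$, giving $3\sqrt{4!}\,\sqrt{\lVert v \otimes_1 v \rVert \lVert w \otimes_3 w \rVert} \le 3\sqrt 6\,\lVert v \otimes_1 v \rVert_{\mathcal{H}^{\otimes 2}} + 3\sqrt 6\,\lVert w \rVert_{\mathcal{H}^{\otimes 4}}^2$. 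Substituting all of these bounds into $R_F$ and collecting the coefficients of $\lVert v \otimes_1 v \rVert_{\mathcal{H}^{\otimes 2}}$, of $\lVert w \rVert_{\mathcal{H}^{\otimes 4}}^2$, and of $\lVert v \rVert_{\mathcal{H}^{\otimes 2}} \lVert w \rVert_{\mathcal{H}^{\otimes 4}}$ produces exactly $\sqrt 2 + 3\sqrt 6$, $\,2\sqrt{6!} + 39\sqrt 6 + 36\sqrt 2$, and $9\sqrt 2$, as claimed.

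There is no genuine obstacle: the argument is essentially bookkeeping on top of Lemma~\ref{lm:citefromDousissi}. The only point requiring a little care is the deliberate choice to keep $\lVert v \otimes_1 v \rVert_{\mathcal{H}^{\otimes 2}}$ rather than crudely replacing it by $\lVert v \rVert_{\mathcal{H}^{\otimes 2}}^2$ (in the intended applications $v \otimes_1 v$ is of strictly smaller order), which is precisely why the last mixed term must be split by Young's inequality instead of simply bounded by $\lVert v \rVert_{\mathcal{H}^{\otimes 2}} \lVert w \rVert_{\mathcal{H}^{\otimes 4}}$.
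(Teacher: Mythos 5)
Your proposal is correct and follows essentially the same route as the paper's proof: Cauchy--Schwarz gives $\lVert w \otimes_r w\rVert \le \lVert w\rVert_{\mathcal{H}^{\otimes 4}}^2$ and $\sqrt{\langle v\otimes v, w\otimes_2 w\rangle_{\mathcal{H}^{\otimes 4}}} \le \lVert v\rVert_{\mathcal{H}^{\otimes 2}}\lVert w\rVert_{\mathcal{H}^{\otimes 4}}$, the arithmetic--geometric mean inequality splits the final mixed term while preserving $\lVert v\otimes_1 v\rVert_{\mathcal{H}^{\otimes 2}}$, and the coefficients collect exactly as you state. No gaps.
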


\begin{proof}
By the definition of contraction, we have
\begin{eqnarray*}
\left( w \otimes_1 w \right) (s_1, s_2, s_3, t_1, t_2, t_3)
= \int_{\mathbb{R}_{+}} w(s_1, s_2, s_3, u) w(t_1, t_2, t_3, u) \, du.
\end{eqnarray*}
Applying Cauchy–Schwarz yields
\begin{eqnarray*}
\left( w \otimes_1 w \right)^{2} (s_1, s_2, s_3, t_1, t_2, t_3)
\leq \int_{\mathbb{R}_{+}} w^{2}(s_1, s_2, s_3, u) \, du \int_{\mathbb{R}_{+}} w^{2}(t_1, t_2, t_3, u) \, du.
\end{eqnarray*}
We proceed to calculate
\begin{eqnarray*}
&& \lVert w \otimes_1 w \rVert_{\mathcal{H}^{\otimes6}}^{2}
= \int_{\mathbb{R}_{+}^{6}} \left( w \otimes_1 w \right)^{2} (s_1, s_2, s_3, t_1, t_2, t_3) \, ds_{1} \, ds_{2} \, ds_{3} \, dt_{1} \, dt_{2} \, dt_{3} \\
&\leq& \int_{\mathbb{R}_{+}^{6}} \left( \int_{\mathbb{R}_{+}} w^{2}(s_1, s_2, s_3, u) \, du \int_{\mathbb{R}_{+}} w^{2}(t_1, t_2, t_3, u) \, du \right) \, ds_{1} \, ds_{2} \, ds_{3} \, dt_{1} \, dt_{2} \, dt_{3} 
= \lVert w \rVert_{\mathcal{H}^{\otimes 4}}^{4},
\end{eqnarray*}
and so
\begin{equation}
\lVert w \otimes_1 w \rVert_{\mathcal{H}^{\otimes6}} \leq \lVert w \rVert_{\mathcal{H}^{\otimes 4}}^{2}. \label{eq:contraction1less}
\end{equation}
Similarly, we have
\begin{equation}
\lVert w \otimes_2 w \rVert_{\mathcal{H}^{\otimes4}} \leq \lVert w \rVert_{\mathcal{H}^{\otimes 4}}^{2}, \quad 
\lVert w \otimes_3 w \rVert_{\mathcal{H}^{\otimes2}} \leq \lVert w \rVert_{\mathcal{H}^{\otimes 4}}^{2}. \label{eq:contraction23less}
\end{equation}
Again by Cauchy–Schwarz, together with \eqref{eq:contraction23less}, we have
\begin{eqnarray}
 \sqrt{\left\langle  v\otimes v, w \otimes_2 w \right\rangle_{\mathcal{H}^{\otimes4}}}
&\leq& \sqrt{  \lVert v \otimes v \rVert_{\mathcal{H}^{\otimes 4}} \, \lVert w \otimes_2 w \rVert_{\mathcal{H}^{\otimes 4}} } \notag \\
&\leq& \sqrt{  \lVert v \rVert_{\mathcal{H}^{\otimes 2}}^{2}  \, \lVert w \rVert_{\mathcal{H}^{\otimes 4}}^{2} }
= \lVert v \rVert_{\mathcal{H}^{\otimes 2}}  \, \lVert w \rVert_{\mathcal{H}^{\otimes 4}}. \label{eq:squarerootinnerproductless}
\end{eqnarray}
By the inequality of arithmetic and geometric means, 
\begin{eqnarray}
\sqrt{\lVert v \otimes_1 v \rVert_{\mathcal{H}^{\otimes2}} \lVert w \otimes_3 w \rVert_{\mathcal{H}^{\otimes2}}  }
&\leq& \frac{1}{2} \lVert v \otimes_1 v \rVert_{\mathcal{H}^{\otimes 2}} + \frac{1}{2} \lVert w \otimes_{3} w \rVert_{\mathcal{H}^{\otimes 2}} \notag  \\
&\leq& \frac{1}{2} \lVert v \otimes_1 v \rVert_{\mathcal{H}^{\otimes 2}} + \frac{1}{2} \lVert w \rVert_{\mathcal{H}^{\otimes 4}}^{2}. \label{eq:squarerootproductless}
\end{eqnarray}
Applying Lemma \ref{lm:citefromDousissi}, and invoking \eqref{eq:contraction1less}-- \eqref{eq:squarerootproductless}, the desired result follows.
\end{proof}

To apply Corollary \ref{coro:Kolmogorovbound}, we need to estimate $ \lVert w \rVert_{\mathcal{H}^{\otimes 4}} $, $ \lVert v \otimes_1 v \rVert_{\mathcal{H}^{\otimes 2}} $ and $ \lVert v \rVert_{\mathcal{H}^{\otimes 2}}$. This is our next task. The following lemma will be helpful in estimating $ \lVert w \rVert_{\mathcal{H}^{\otimes 4}} $.

\begin{lemma} \label{lm:innerproductsymmetrization}
For $h_i, h_j, h_k, h_l \in \mathcal{H}^{\odot 2} $, we have
\begin{eqnarray*}
\left \langle  h_i \, \widetilde{\otimes} \, h_j ,  h_k \, \widetilde{\otimes} \, h_l \right \rangle_{\mathcal{H}^{\otimes 4}}
&=& \frac{1}{6} \left\langle  h_i  ,  h_k  \right \rangle_{\mathcal{H}^{\otimes 2}} \left\langle  h_j  ,  h_l  \right \rangle_{\mathcal{H}^{\otimes 2}}
 + \frac{1}{6} \left\langle  h_i  ,  h_l  \right \rangle_{\mathcal{H}^{\otimes 2}} \left\langle  h_j  ,  h_k  \right \rangle_{\mathcal{H}^{\otimes 2}} \\
&& + \,  \frac{2}{3} \left\langle  h_i \otimes_1 h_k  ,  h_l \otimes_1 h_j \right \rangle_{\mathcal{H}^{\otimes 2}}.
\end{eqnarray*} 
\end{lemma}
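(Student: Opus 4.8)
The plan is to expand both symmetrizations explicitly and reduce the identity to six elementary integrals. First I would recall from Section~\ref{sec:preliminaries} that, for $f,g\in\mathcal{H}^{\otimes 2}$, the function $f\,\widetilde{\otimes}\,g$ on $\mathbb{R}_+^4$ is the average over all $4!$ permutations of the coordinates of $f\otimes g$. Because $h_i$ and $h_j$ are each symmetric in their two arguments, all permutations assigning the same unordered pair of coordinate slots to $h_i$ give the same function, so
\[
h_i\,\widetilde{\otimes}\,h_j \;=\; \frac{1}{6}\sum_{\{a,b\}\subseteq\{1,2,3,4\}} h_i(t_a,t_b)\,h_j(t_c,t_d),
\]
a sum of $\binom{4}{2}=6$ terms, where $\{c,d\}$ is the complement of $\{a,b\}$. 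Since $\langle F,G\rangle_{\mathcal{H}^{\otimes 4}}=\langle \widetilde F,G\rangle_{\mathcal{H}^{\otimes 4}}$ whenever $G$ is symmetric, I would replace the right-hand entry $h_k\,\widetilde{\otimes}\,h_l$ of the inner product by $h_k(t_1,t_2)\,h_l(t_3,t_4)$, reducing the left-hand side to $\tfrac{1}{6}$ times a sum of six integrals over $\mathbb{R}_+^4$.

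Next I would evaluate those six integrals. The two ``aligned'' choices $\{a,b\}=\{1,2\}$ and $\{a,b\}=\{3,4\}$ factorize immediately, yielding $\langle h_i,h_k\rangle_{\mathcal{H}^{\otimes2}}\langle h_j,h_l\rangle_{\mathcal{H}^{\otimes2}}$ and $\langle h_i,h_l\rangle_{\mathcal{H}^{\otimes2}}\langle h_j,h_k\rangle_{\mathcal{H}^{\otimes2}}$. For each of the four ``crossed'' choices --- for instance $\{a,b\}=\{1,3\}$, with integrand $h_i(t_1,t_3)h_j(t_2,t_4)h_k(t_1,t_2)h_l(t_3,t_4)$ --- I would integrate out the shared variables one at a time, which produces a factor $h_i\otimes_1 h_k$ contracted against a factor $h_j\otimes_1 h_l$; after using the symmetry of the individual kernels together with $(g\otimes_1 h)(x,y)=(h\otimes_1 g)(y,x)$ to align the free arguments, each of the four crossed terms equals $\langle h_i\otimes_1 h_k,\;h_l\otimes_1 h_j\rangle_{\mathcal{H}^{\otimes2}}$. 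Collecting the two aligned contributions and the four identical crossed contributions and dividing by $6$ then gives the claimed formula, since $\tfrac{4}{6}=\tfrac{2}{3}$.

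The only delicate point is the bookkeeping in the crossed terms: one must contract over the correct shared variable and keep track of which kernel occupies which argument slot of the order-one contractions, so as to land on exactly $\langle h_i\otimes_1 h_k,\;h_l\otimes_1 h_j\rangle_{\mathcal{H}^{\otimes2}}$ rather than a permuted version of it. The symmetry $h_\bullet(s,t)=h_\bullet(t,s)$ of all four kernels is precisely what forces the four crossed integrals to collapse to a single expression; everything else is a routine application of Fubini's theorem and the definitions of $\widetilde{\otimes}$ and $\otimes_1$ recalled in Section~\ref{sec:preliminaries}.
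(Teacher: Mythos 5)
Your proof is correct and follows essentially the same route as the paper: expand the symmetrization into its six distinct terms (using the symmetry of each kernel) and evaluate the resulting integrals, with the two aligned pairings giving the product-of-inner-products terms and the four crossed pairings each collapsing to $\langle h_i\otimes_1 h_k,\,h_l\otimes_1 h_j\rangle_{\mathcal{H}^{\otimes 2}}$. Your use of the self-adjointness of symmetrization to replace $h_k\,\widetilde{\otimes}\,h_l$ by $h_k\otimes h_l$ is a nice streamlining that cuts the bookkeeping from $36$ terms to $6$, but it does not change the substance of the argument.
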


\begin{proof}
Note that $h_i \, \widetilde{\otimes} \, h_j$ is the symmetrization of $ h_i \otimes h_j $ and that $h_i$ and $h_j$ are symmetric. Thus,
\begin{eqnarray*}
&& h_i \, \widetilde{\otimes} \, h_j \left( x_1, x_2, x_3, x_4 \right)  \\
&=& \frac{1}{6} \, h_{i}( x_1 , x_2 ) h_{j}(x_3, x_4) + \frac{1}{6} \, h_{i}( x_1 , x_3 ) h_{j}(x_2, x_4)  + \frac{1}{6} \, h_{i}( x_1 , x_4 ) h_{j}(x_2, x_3) \\
&& + \, \frac{1}{6} \, h_{i}( x_2 , x_3 ) h_{j}(x_1, x_4) + \frac{1}{6} \, h_{i}( x_2 , x_4 ) h_{j}(x_1, x_3) + \frac{1}{6} \, h_{i}( x_3 , x_4 ) h_{j}(x_1, x_2).
\end{eqnarray*}
Similarly,
\begin{eqnarray*}
&& h_k \, \widetilde{\otimes} \, h_l \left( x_1, x_2, x_3, x_4 \right)  \\
&=& \frac{1}{6} \, h_{k}( x_1 , x_2 ) h_{l}(x_3, x_4) + \frac{1}{6} \, h_{k}( x_1 , x_3 ) h_{l}(x_2, x_4)  + \frac{1}{6} \, h_{k}( x_1 , x_4 ) h_{l}(x_2, x_3) \\
&& + \, \frac{1}{6} \, h_{k}( x_2 , x_3 ) h_{l}(x_1, x_4) + \frac{1}{6} \, h_{k}( x_2 , x_4 ) h_{l}(x_1, x_3) + \frac{1}{6} \, h_{k}( x_3 , x_4 ) h_{l}(x_1, x_2).
\end{eqnarray*}
The desired result then follows by routine calculation.
\end{proof}

With Lemma \ref{lm:innerproductsymmetrization} in hand, we are now ready to estimate $ \left \lVert  w \right \rVert_{\mathcal{H}^{\otimes 4}} $.

\begin{lemma}  \label{lm:estimatew}
We have
\begin{equation*}
\left \lVert w \right \rVert_{ \mathcal{H}^{\otimes 4} }^{2}
=  \frac{1}{12}\left( 1 + 4 r^2 + 2 r^4 \right) \left( \sum_{k=1}^{n} \lambda_{k}^{2} \right)^{2}
 + \frac{1}{12}\left( 1 + 4 r^2 \right) \sum_{k=1}^{n} \lambda_{k}^{4}.
\end{equation*}
Let
\begin{eqnarray*}
C_{14}(\alpha, r)= \frac{1}{12}\left(1 + 4 r^2 + 2 r^4 \right) \left( \frac{1+\alpha^2}{(1-\alpha^2)^3} + C_{2}(\alpha) \right)^{2} + \frac{1}{12} ( 1+ 4r^2) C_{3}(\alpha),
\end{eqnarray*}
with $C_{2}(\alpha)$ and $C_{3}(\alpha)$ defined in Lemma \ref{lm:lambda2} and Lemma \ref{lm:lambda4} respectively. Then
\begin{equation}
\left \lVert w \right \rVert_{ \mathcal{H}^{\otimes 4} }^{2} \leq C_{14}(\alpha, r) \times n^{-2}.  \label{eq:upperboundforw}
\end{equation}
\end{lemma}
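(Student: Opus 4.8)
The plan is to first compute $\lVert w \rVert_{\mathcal{H}^{\otimes 4}}^{2}$ exactly by expanding $\langle w, w\rangle_{\mathcal{H}^{\otimes 4}}$ bilinearly over the four summands defining $w$, and then to derive the bound \eqref{eq:upperboundforw} by inserting the estimates of Lemma \ref{lm:lambda2} and Lemma \ref{lm:lambda4}. Throughout I write $S_2 := \sum_{k=1}^{n}\lambda_k^2$ and $S_4 := \sum_{k=1}^{n}\lambda_k^4$.

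Recall $w = r^2\, h_1\,\widetilde{\otimes}\, h_1 + h_3\,\widetilde{\otimes}\, h_3 + 2r\sqrt{1-r^2}\, h_1\,\widetilde{\otimes}\, h_3 - r^2\, h_1\,\widetilde{\otimes}\, h_2$, so $\langle w,w\rangle_{\mathcal{H}^{\otimes 4}}$ is a linear combination of the ten inner products $\langle h_i\,\widetilde{\otimes}\, h_j,\, h_k\,\widetilde{\otimes}\, h_l\rangle_{\mathcal{H}^{\otimes 4}}$ with $(i,j),(k,l)\in\{(1,1),(3,3),(1,3),(1,2)\}$. For each of these I would apply Lemma \ref{lm:innerproductsymmetrization}, which reduces it to the scalar products $\langle h_a,h_b\rangle_{\mathcal{H}^{\otimes 2}}$ (all known from Lemma \ref{lm:innerproductandcontraction}(a): equal to $S_2$, $S_2$, $\tfrac12 S_2$ on the diagonal for $h_1,h_2,h_3$ and $0$ off-diagonal) and to the mixed contraction terms $\langle h_a\otimes_1 h_c,\, h_b\otimes_1 h_d\rangle_{\mathcal{H}^{\otimes 2}}$. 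The contractions $h_a\otimes_1 h_c$ are given explicitly by Lemma \ref{lm:innerproductandcontraction}(b) (each is $h_4$, $h_5$, $\tfrac14 h_4+\tfrac14 h_5$, one of $\tfrac12\sum_k\lambda_k^2\, e_k\otimes f_k$ or $\tfrac12\sum_k\lambda_k^2\, f_k\otimes e_k$, or zero), and their inner products then follow from Lemma \ref{lm:innerproductandcontraction}(c) together with the orthogonality $\langle e_j, f_k\rangle_{\mathcal{H}}=0$.

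Working through the ten terms, one finds that the only surviving contributions are the four diagonal inner products — equal respectively to $\tfrac13 S_2^2 + \tfrac23 S_4$ for $(h_1,h_1)$, to $\tfrac1{12}S_2^2 + \tfrac1{12}S_4$ for $(h_3,h_3)$, to $\tfrac1{12}S_2^2 + \tfrac16 S_4$ for $(h_1,h_3)$, and to $\tfrac16 S_2^2$ for $(h_1,h_2)$ — together with the single cross term $\langle h_3\,\widetilde{\otimes}\, h_3,\, h_1\,\widetilde{\otimes}\, h_2\rangle_{\mathcal{H}^{\otimes 4}} = \tfrac16 S_4$; every cross term involving $h_1\,\widetilde{\otimes}\, h_1$ or $h_1\,\widetilde{\otimes}\, h_3$ vanishes because the $e$'s and $f$'s are mutually orthogonal. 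Weighting by the squared coefficients $r^4$, $1$, $4r^2(1-r^2)$, $r^4$ and by the cross coefficient $2\cdot 1\cdot(-r^2)$, and collecting the coefficients of $S_2^2$ and of $S_4$, one obtains after elementary simplification the coefficient $\tfrac{1}{12}(1+4r^2+2r^4)$ in front of $S_2^2$ and $\tfrac{1}{12}(1+4r^2)$ in front of $S_4$, which is precisely the claimed identity.

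Finally, to prove \eqref{eq:upperboundforw}, I would use Lemma \ref{lm:lambda2} to bound $S_2 = \frac{1+\alpha^2}{n(1-\alpha^2)^3} + \frac{\kappa_2(n)}{n^2} \le n^{-1}\bigl(\frac{1+\alpha^2}{(1-\alpha^2)^3} + C_2(\alpha)\bigr)$, hence $S_2^2 \le n^{-2}\bigl(\frac{1+\alpha^2}{(1-\alpha^2)^3} + C_2(\alpha)\bigr)^2$, and Lemma \ref{lm:lambda4} to bound $S_4 \le C_3(\alpha)\, n^{-3} \le C_3(\alpha)\, n^{-2}$. Substituting these into the exact formula and recognizing the resulting constant as $C_{14}(\alpha,r)$ yields $\lVert w\rVert_{\mathcal{H}^{\otimes 4}}^2 \le C_{14}(\alpha,r)\, n^{-2}$. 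The only genuine difficulty here is the bookkeeping in the middle step — keeping track of which of the many mixed contractions vanish — but this is entirely mechanical given Lemmas \ref{lm:innerproductandcontraction} and \ref{lm:innerproductsymmetrization}.
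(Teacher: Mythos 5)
Your proposal is correct and follows essentially the same route as the paper: expand $\langle w,w\rangle_{\mathcal{H}^{\otimes 4}}$ bilinearly, evaluate the ten pairings via Lemma \ref{lm:innerproductsymmetrization} together with Lemma \ref{lm:innerproductandcontraction}, keep the four diagonal terms and the single surviving cross term $\langle h_3\,\widetilde{\otimes}\,h_3,\,h_1\,\widetilde{\otimes}\,h_2\rangle_{\mathcal{H}^{\otimes 4}}=\tfrac16\sum_k\lambda_k^4$, and then insert the bounds from Lemmas \ref{lm:lambda2} and \ref{lm:lambda4}. The individual inner-product values and the final coefficient bookkeeping you report all match the paper's computation.
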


\begin{proof}
Combining Lemma \ref{lm:innerproductandcontraction} and Lemma \ref{lm:innerproductsymmetrization} yields
\begin{eqnarray*}
&& \left \langle  h_1 \, \widetilde{\otimes} \, h_1 ,  h_1 \, \widetilde{\otimes} \, h_1 \right \rangle_{\mathcal{H}^{\otimes 4}}
= \frac{1}{3} \left\langle  h_1  ,  h_1  \right \rangle_{\mathcal{H}^{\otimes 2}}^{2} + \frac{2}{3} \left\langle  h_1 \otimes_1 h_1  ,  h_1 \otimes_1 h_1 \right \rangle_{\mathcal{H}^{\otimes 2}}  \\
&=& \frac{1}{3} \left\langle  h_1  ,  h_1  \right \rangle_{\mathcal{H}^{\otimes 2}}^{2} + \frac{2}{3} \left\langle  h_4  ,  h_4 \right \rangle_{\mathcal{H}^{\otimes 2}}
= \frac{1}{3} \left( \sum_{k=1}^{n} \lambda_{k}^{2} \right)^{2} + \frac{2}{3} \sum_{k=1}^{n} \lambda_{k}^{4}.
\end{eqnarray*}
Similarly, we have
\begin{eqnarray*}
&& \left \langle  h_3 \, \widetilde{\otimes} \, h_3 ,  h_3 \, \widetilde{\otimes} \, h_3 \right \rangle_{\mathcal{H}^{\otimes 4}}
= \frac{1}{12} \left( \sum_{k=1}^{n} \lambda_{k}^{2} \right)^{2} + \frac{1}{12} \sum_{k=1}^{n} \lambda_{k}^{4},  \\
&& \left \langle  h_1 \, \widetilde{\otimes} \, h_3 ,  h_1 \, \widetilde{\otimes} \, h_3 \right \rangle_{\mathcal{H}^{\otimes 4}}
= \frac{1}{12} \left( \sum_{k=1}^{n} \lambda_{k}^{2} \right)^{2} + \frac{1}{6} \sum_{k=1}^{n} \lambda_{k}^{4},  \\
&& \left \langle  h_1 \, \widetilde{\otimes} \, h_2 ,  h_1 \, \widetilde{\otimes} \, h_2 \right \rangle_{\mathcal{H}^{\otimes 4}}
= \frac{1}{6} \left( \sum_{k=1}^{n} \lambda_{k}^{2} \right)^{2}, \\
&& \left \langle  h_3 \, \widetilde{\otimes} \, h_3 ,  h_1 \, \widetilde{\otimes} \, h_2 \right \rangle_{\mathcal{H}^{\otimes 4}}
= \frac{1}{6} \sum_{k=1}^{n} \lambda_{k}^{4},
\end{eqnarray*}
and
\begin{eqnarray*}
&& \left \langle  h_1 \, \widetilde{\otimes} \, h_1 ,  h_3 \, \widetilde{\otimes} \, h_3 \right \rangle_{\mathcal{H}^{\otimes 4}}
= \left \langle  h_1 \, \widetilde{\otimes} \, h_1 ,  h_1 \, \widetilde{\otimes} \, h_3 \right \rangle_{\mathcal{H}^{\otimes 4}}
= \left \langle  h_1 \, \widetilde{\otimes} \, h_1 ,  h_1 \, \widetilde{\otimes} \, h_2 \right \rangle_{\mathcal{H}^{\otimes 4}}  \\ [2mm]
&=& \left \langle  h_3 \, \widetilde{\otimes} \, h_3 ,  h_1 \, \widetilde{\otimes} \, h_3 \right \rangle_{\mathcal{H}^{\otimes 4}}
= \left \langle  h_1 \, \widetilde{\otimes} \, h_3 ,  h_1 \, \widetilde{\otimes} \, h_2 \right \rangle_{\mathcal{H}^{\otimes 4}} = 0.
\end{eqnarray*}
We note that
\begin{equation*}
 w = r^2 \, h_1 \, \widetilde{\otimes} \, h_1 + h_3 \, \widetilde{\otimes} \, h_3 + 2r\sqrt{1-r^2} \, h_1 \, \widetilde{\otimes} \, h_3 - r^2 \, h_1 \, \widetilde{\otimes} \, h_2.
\end{equation*}
It then follows by routine calculation that
\begin{eqnarray*}
\left \lVert w \right \rVert_{ \mathcal{H}^{\otimes 4} }^{2}
=  \frac{1}{12}\left( 1 + 4 r^2 + 2 r^4 \right) \left( \sum_{k=1}^{n} \lambda_{k}^{2} \right)^{2}
 + \frac{1}{12}\left( 1 + 4 r^2 \right) \sum_{k=1}^{n} \lambda_{k}^{4}.
\end{eqnarray*}
By Lemma \ref{lm:lambda2} and Lemma \ref{lm:lambda4}, we have
\begin{eqnarray*}
&& \sum_{k=1}^{n} \lambda_{k}^{2} \leq \left( \frac{1+ \alpha^2}{(1- \alpha^2)^{3}} + C_{2}(\alpha) \right) \times n^{-1},  \\
&& \sum_{k=1}^{n} \lambda_{k}^{4} \leq C_{3}(\alpha) \times n^{-3} \leq C_{3}(\alpha) \times n^{-2}.
\end{eqnarray*}
The inequality in \eqref{eq:upperboundforw} then follows by combining the last two displays.
\end{proof}

We now proceed with Lemma \ref{lm:estimatev}.

\begin{lemma} \label{lm:estimatev}
We have
\begin{eqnarray}
&& \left \lVert v \right \rVert_{ \mathcal{H}^{\otimes 2} }^{2}
= 2 r^2 \left( \sum_{k=1}^{n} \lambda_{k} \right)^{2} \left( \sum_{k=1}^{n} \lambda_{k}^{2} \right), \label{eq:valueofv} \\
&& \left \lVert v \otimes_1 v \right \rVert_{ \mathcal{H}^{\otimes 2} }^{2}
= 2 r^4 \left( \sum_{k=1}^{n} \lambda_{k} \right)^{4} \left( \sum_{k=1}^{n} \lambda_{k}^{4} \right). \label{eq:valuevotimes1v}
\end{eqnarray}
Let
\begin{eqnarray*}
&& C_{15}(\alpha,r):= 2 r^2 \left( \frac{1}{1-\alpha^2} + C_{1}(\alpha) \right)^{2} \left( \frac{1+\alpha^2}{(1-\alpha^2)^3} + C_{2}(\alpha) \right), \\
&& C_{16}(\alpha, r):= 2 r^4 \left( \frac{1}{1-\alpha^2} + C_{1}(\alpha) \right)^{4} C_{3}(\alpha),
\end{eqnarray*}
with $C_{1}(\alpha)$, $C_{2}(\alpha)$ and $C_{3}(\alpha)$ defined in Lemma \ref{lm:lambda1}, Lemma \ref{lm:lambda2} and Lemma \ref{lm:lambda4} respectively. Then
\begin{eqnarray}
&& \left \lVert v \right \rVert_{ \mathcal{H}^{\otimes 2} }^{2} \leq C_{15}(\alpha,r) \times n^{-1}, \label{eq:upperboundforv} \\ [2mm]
&& \left \lVert v \otimes_1 v \right \rVert_{ \mathcal{H}^{\otimes 2} }^{2} \leq C_{16}(\alpha, r) \times n^{-3}. \label{eq:upperboundforvtimes1v}
\end{eqnarray}
\end{lemma}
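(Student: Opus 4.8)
The plan is to exploit the factorization $v = r\bigl(\sum_{k=1}^{n}\lambda_k\bigr)\, g$ with $g := r\,h_1 + 2\sqrt{1-r^2}\,h_3 - r\,h_2$, so that $\lVert v\rVert_{\mathcal H^{\otimes 2}}^{2} = r^2\bigl(\sum_k\lambda_k\bigr)^2\,\lVert g\rVert_{\mathcal H^{\otimes 2}}^{2}$ and, since the contraction is bilinear and $v$ is a scalar multiple of $g$, $v\otimes_1 v = r^2\bigl(\sum_k\lambda_k\bigr)^2\,(g\otimes_1 g)$. It then suffices to compute $\lVert g\rVert_{\mathcal H^{\otimes 2}}^{2}$ and $\lVert g\otimes_1 g\rVert_{\mathcal H^{\otimes 2}}^{2}$ directly from Lemma~\ref{lm:innerproductandcontraction}.

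First, for \eqref{eq:valueofv}: expanding $\lVert g\rVert_{\mathcal H^{\otimes 2}}^{2}$ by bilinearity and invoking part (a) of Lemma~\ref{lm:innerproductandcontraction}, the mixed terms $\langle h_i,h_j\rangle$ with $i\neq j$ vanish, $\langle h_1,h_1\rangle_{\mathcal H^{\otimes 2}} = \langle h_2,h_2\rangle_{\mathcal H^{\otimes 2}} = \sum_k\lambda_k^2$, and $\langle h_3,h_3\rangle_{\mathcal H^{\otimes 2}} = \tfrac12\sum_k\lambda_k^2$, hence $\lVert g\rVert_{\mathcal H^{\otimes 2}}^{2} = r^2\sum_k\lambda_k^2 + 4(1-r^2)\cdot\tfrac12\sum_k\lambda_k^2 + r^2\sum_k\lambda_k^2 = 2\sum_k\lambda_k^2$. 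Multiplying by $r^2\bigl(\sum_k\lambda_k\bigr)^2$ gives \eqref{eq:valueofv}.

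Next, for \eqref{eq:valuevotimes1v}: expand $g\otimes_1 g$ by bilinearity of the contraction and substitute the nine products $h_i\otimes_1 h_j$ from part (b) of Lemma~\ref{lm:innerproductandcontraction}. The point is that the coefficients of $h_1$ and $h_2$ in $g$ are $r$ and $-r$, which are opposite; consequently the rank-two tensors $\tfrac12\sum_k\lambda_k^2\,e_k\otimes f_k$ and $\tfrac12\sum_k\lambda_k^2\,f_k\otimes e_k$ produced by the mixed products $h_1\otimes_1 h_3,\,h_3\otimes_1 h_2$ (respectively $h_3\otimes_1 h_1,\,h_2\otimes_1 h_3$) enter with coefficients summing to zero and cancel, while $h_1\otimes_1 h_2 = h_2\otimes_1 h_1 = 0$. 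What remains is $g\otimes_1 g = \bigl(r^2 + (1-r^2)\bigr)h_4 + \bigl((1-r^2)+r^2\bigr)h_5 = h_4 + h_5$. By part (c) of Lemma~\ref{lm:innerproductandcontraction}, $\lVert h_4+h_5\rVert_{\mathcal H^{\otimes 2}}^{2} = \lVert h_4\rVert_{\mathcal H^{\otimes 2}}^{2} + \lVert h_5\rVert_{\mathcal H^{\otimes 2}}^{2} = 2\sum_k\lambda_k^4$, and multiplying by $r^4\bigl(\sum_k\lambda_k\bigr)^4$ yields \eqref{eq:valuevotimes1v}.

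Finally, the bounds \eqref{eq:upperboundforv} and \eqref{eq:upperboundforvtimes1v} follow at once: $K_n$ is positive semi-definite so $\sum_k\lambda_k \geq 0$, and Lemma~\ref{lm:lambda1} gives $0 \leq \sum_k\lambda_k = \tfrac{1}{1-\alpha^2} + \tfrac{\kappa_1(n)}{n} \leq \tfrac{1}{1-\alpha^2} + C_1(\alpha)$ for every $n\in\mathds N_+$; Lemma~\ref{lm:lambda2} gives $\sum_k\lambda_k^2 = \tfrac{1+\alpha^2}{n(1-\alpha^2)^3} + \tfrac{\kappa_2(n)}{n^2} \leq \bigl(\tfrac{1+\alpha^2}{(1-\alpha^2)^3}+C_2(\alpha)\bigr)n^{-1}$; and Lemma~\ref{lm:lambda4} gives $\sum_k\lambda_k^4 \leq C_3(\alpha)\,n^{-3}$. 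Substituting these into the identities \eqref{eq:valueofv} and \eqref{eq:valuevotimes1v} produces exactly $C_{15}(\alpha,r)\,n^{-1}$ and $C_{16}(\alpha,r)\,n^{-3}$. The only place demanding care is the bookkeeping in the expansion of $g\otimes_1 g$ — in particular keeping in mind that $\otimes_1$ is not symmetric in its two arguments, so $h_1\otimes_1 h_3$ and $h_3\otimes_1 h_1$ are distinct tensors — but once the pairwise cancellations induced by the opposite $h_1,h_2$ coefficients are noticed, the computation collapses cleanly, so I do not anticipate a genuine obstacle here.
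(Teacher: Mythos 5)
Your proposal is correct and follows essentially the same route as the paper's proof: factor out the scalar $r\sum_k\lambda_k$, compute $\lVert g\rVert_{\mathcal H^{\otimes 2}}^{2}=2\sum_k\lambda_k^2$ and $g\otimes_1 g=h_4+h_5$ from parts (a)--(c) of Lemma \ref{lm:innerproductandcontraction} (the cancellation of the $e_k\otimes f_k$ and $f_k\otimes e_k$ terms coming from the opposite $h_1,h_2$ coefficients is exactly the mechanism the paper uses), and then insert the eigenvalue bounds from Lemmas \ref{lm:lambda1}, \ref{lm:lambda2} and \ref{lm:lambda4}. No gaps.
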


\begin{proof}
Note that
\begin{equation*}
v = r \left( \sum_{k=1}^{n} \lambda_k \right) \left( r \, h_1 + 2 \sqrt{1-r^2} \,  h_3 - r\, h_2 \right).
\end{equation*}
Then \eqref{eq:valueofv} follows directly by statement (a) of Lemma \ref{lm:innerproductandcontraction}. Invoking statement (b) of Lemma \ref{lm:innerproductandcontraction}, we have
\begin{eqnarray*}
&& \left( r \, h_1 + 2 \sqrt{1-r^2} \,  h_3 - r\, h_2 \right) \otimes_1 \left( r \, h_1 + 2 \sqrt{1-r^2} \,  h_3 - r\, h_2 \right) \\ [2mm]
&=& r^{2} h_{1} \otimes_1 h_{1} + 2r \sqrt{1-r^2} h_{1} \otimes_1 h_{3} - r^2 h_{1} \otimes_1 h_{2}  \\ [2mm]
&& + \, 2r \sqrt{1-r^2} h_{3} \otimes_1 h_{1}  + 4(1-r^2) h_{3} \otimes_1 h_{3} - 2r\sqrt{1-r^2} h_{3} \otimes_1 h_{2} \\ [2mm]
&& - \, r^2 h_{2} \otimes_1 h_{1} - 2r\sqrt{1-r^2} h_{2} \otimes_1 h_{3} + r^2 h_{2} \otimes_1 h_{2}  \\ [2mm]
&=& h_{4} + h_{5}.
\end{eqnarray*}
Hence,
\begin{equation*}
v \otimes_1 v = r^2 \left( \sum_{k=1}^{n} \lambda_{k} \right)^{2} (h_4 + h_5),
\end{equation*}
and \eqref{eq:valuevotimes1v} follows directly by statement (c) of Lemma \ref{lm:innerproductandcontraction}.
By Lemma \ref{lm:lambda1}, Lemma \ref{lm:lambda2} and Lemma \ref{lm:lambda4}, we have
\begin{eqnarray*}
&& \left|  \sum_{k=1}^{n} \lambda_k \right| \leq \frac{1}{1-\alpha^2} + C_{1}(\alpha), \\
&& \sum_{k=1}^{n} \lambda_{k}^{2} \leq \left( \frac{1+ \alpha^2}{(1- \alpha^2)^{3}} + C_{2}(\alpha) \right) \times n^{-1},  \\
&& \sum_{k=1}^{n} \lambda_{k}^{4} \leq C_{3}(\alpha) \times n^{-3} .
\end{eqnarray*}
Together with \eqref{eq:valueofv} and \eqref{eq:valuevotimes1v}, \eqref{eq:upperboundforv} and \eqref{eq:upperboundforvtimes1v} follow.
\end{proof}

With Corollary \ref{coro:Kolmogorovbound}, Lemma \ref{lm:estimatew} and Lemma \ref{lm:estimatev} in hand, we are now ready to study the asymptotics of $\mathcal{I}_{2}(v) + \mathcal{I}_{4}(w)$.

\begin{theorem} \label{thm:2chaos+4chaosasymptotics}
Let
\begin{eqnarray*}
C_{17}(\alpha)&:=& 2 (1-\alpha^2) C_{1}(\alpha) + \frac{(1-\alpha^2)^3}{1+\alpha^2} C_{2}(\alpha) + 2 \frac{(1-\alpha^2)^{4}}{1+\alpha^2}  C_{1}(\alpha) C_{2}(\alpha) \\
&& + \, (1-\alpha^2)^2 C_{1}(\alpha)
+ \frac{(1-\alpha^2)^5}{1+\alpha^2} C_{1}(\alpha)^{2} C_{2}(\alpha), \\ 
C_{18}(\alpha, r) &:=& \left( \sqrt{2} + 3 \sqrt{6} \right) \sqrt{C_{16}(\alpha,r)} + \left( 2 \sqrt{6!} + 39\sqrt{6} + 36\sqrt{2} \right) C_{14}(\alpha,r) \\ [1mm]
&& + \, 9\sqrt{2} \sqrt{C_{14}(\alpha,r) C_{15}(\alpha,r)}, \\  [1mm]
C_{19}(\alpha, r) &:=& \frac{1}{2 r^2} \frac{(1-\alpha^2)^5}{1+\alpha^2}( C_{18}(\alpha, r) + 12 C_{14}(\alpha, r)  ) + C_{17}(\alpha).
\end{eqnarray*}
Under the alternative hypothesis $H_a$, we have
\begin{equation*}
d_{Kol}\left( \frac{1}{2r} \sqrt{\frac{(1-\alpha^2)^{5}}{1+\alpha^2}} \, \sqrt{n} \left( \mathcal{I}_{2}(v) + \mathcal{I}_{4}(w) \right), \, \mathcal{N}(0,1) \right) \leq \frac{C_{19}(\alpha, r)}{\sqrt{n}}.
\end{equation*}
\end{theorem}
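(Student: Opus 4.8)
The plan is to apply Corollary \ref{coro:Kolmogorovbound} to the fourth-chaos-plus-second-chaos random variable $F := \mathcal{I}_{2}(v) + \mathcal{I}_{4}(w)$, taking the normalizing constant
\begin{equation*}
\sigma := 2|r|\sqrt{\frac{1+\alpha^2}{(1-\alpha^2)^{5}\, n}},\qquad\text{so that}\qquad \sigma^2 = \frac{4r^2(1+\alpha^2)}{n(1-\alpha^2)^5}.
\end{equation*}
With this choice $F/\sigma$ agrees, up to an irrelevant sign, with the random variable $\frac{1}{2r}\sqrt{(1-\alpha^2)^5/(1+\alpha^2)}\,\sqrt{n}\,F$ in the statement, so the two have the same Kolmogorov distance to $\mathcal{N}(0,1)$. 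Corollary \ref{coro:Kolmogorovbound} then bounds $d_{Kol}(F/\sigma,\mathcal{N}(0,1))$ by the sum of a ``contraction term'' $\tfrac{2}{\sigma^2}\big[(\sqrt2+3\sqrt6)\lVert v\otimes_1 v\rVert_{\mathcal{H}^{\otimes2}} + (2\sqrt{6!}+39\sqrt6+36\sqrt2)\lVert w\rVert_{\mathcal{H}^{\otimes4}}^2 + 9\sqrt2\,\lVert v\rVert_{\mathcal{H}^{\otimes2}}\lVert w\rVert_{\mathcal{H}^{\otimes4}}\big]$ and a variance-mismatch term $\lvert 1 - EF^2/\sigma^2\rvert$, and the two pieces are handled separately.

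For the contraction term I would simply feed in the estimates already established: $\lVert w\rVert_{\mathcal{H}^{\otimes4}}^2 \le C_{14}(\alpha,r)\,n^{-2}$ from Lemma \ref{lm:estimatew}, and $\lVert v\rVert_{\mathcal{H}^{\otimes2}}^2\le C_{15}(\alpha,r)\,n^{-1}$, $\lVert v\otimes_1 v\rVert_{\mathcal{H}^{\otimes2}}^2\le C_{16}(\alpha,r)\,n^{-3}$ from Lemma \ref{lm:estimatev}. Combining these with $1/\sigma^2 = n(1-\alpha^2)^5/\big(4r^2(1+\alpha^2)\big)$ and the trivial bound $n^{-2}\le n^{-3/2}$ (for $n\ge1$), every term in the bracket, after multiplication by $2/\sigma^2$, is $O(n^{-1/2})$, and collecting the coefficients reproduces exactly the constant $C_{18}(\alpha,r)$; that is, the contraction term is at most $\tfrac{1}{2r^2}\tfrac{(1-\alpha^2)^5}{1+\alpha^2}\,C_{18}(\alpha,r)\,n^{-1/2}$.

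For the variance-mismatch term I would first compute $EF^2$ exactly: since Wiener chaoses of different orders are orthogonal and by the isometry \eqref{eq:innerproductWeinerIntegral}, $EF^2 = 2!\,\lVert v\rVert_{\mathcal{H}^{\otimes2}}^{2} + 4!\,\lVert w\rVert_{\mathcal{H}^{\otimes4}}^{2} = 2\lVert v\rVert_{\mathcal{H}^{\otimes2}}^{2} + 24\lVert w\rVert_{\mathcal{H}^{\otimes4}}^{2}$. Splitting $\lvert 1-EF^2/\sigma^2\rvert \le \lvert 1 - 2\lVert v\rVert_{\mathcal{H}^{\otimes2}}^{2}/\sigma^2\rvert + 24\lVert w\rVert_{\mathcal{H}^{\otimes4}}^{2}/\sigma^2$, the second summand is at most $\tfrac{1}{2r^2}\tfrac{(1-\alpha^2)^5}{1+\alpha^2}\cdot 12\,C_{14}(\alpha,r)\,n^{-1}$ by Lemma \ref{lm:estimatew}. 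For the first summand I would substitute $\lVert v\rVert_{\mathcal{H}^{\otimes2}}^{2} = 2r^2\big(\sum_k\lambda_k\big)^2\big(\sum_k\lambda_k^2\big)$ from Lemma \ref{lm:estimatev}, then insert the expansions $\sum_k\lambda_k = \tfrac{1}{1-\alpha^2} + \tfrac{\kappa_1(n)}{n}$ (Lemma \ref{lm:lambda1}) and $\sum_k\lambda_k^2 = \tfrac{1+\alpha^2}{n(1-\alpha^2)^3} + \tfrac{\kappa_2(n)}{n^2}$ (Lemma \ref{lm:lambda2}); the leading product term cancels $\sigma^2$ exactly (this is the reason for the factor $(1-\alpha^2)^5/(1+\alpha^2)$), and the six cross terms are all $O(1/n)$, so bounding each with $\lvert\kappa_1(n)\rvert\le C_1(\alpha)$, $\lvert\kappa_2(n)\rvert\le C_2(\alpha)$ and $n^{-2}\le n^{-1}$ gives $\lvert 1-2\lVert v\rVert_{\mathcal{H}^{\otimes2}}^{2}/\sigma^2\rvert\le C_{17}(\alpha)\,n^{-1}$. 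Adding the three contributions and using $n^{-1}\le n^{-1/2}$ yields the claimed bound $C_{19}(\alpha,r)/\sqrt{n}$.

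I expect the main obstacle to be the last step, namely the careful bookkeeping of the variance-mismatch term: one must track all six cross products arising from multiplying the two asymptotic expansions, recognize which collapse into the leading constant $1$ and which persist as $O(1/n)$ remainders, and bound each remainder by the correct combination of $C_1(\alpha)$, $C_2(\alpha)$ and powers of $(1-\alpha^2)$ so that their sum is precisely $C_{17}(\alpha)$. Everything else is a mechanical substitution of Lemmas \ref{lm:lambda1}, \ref{lm:lambda2}, \ref{lm:estimatew}, \ref{lm:estimatev} into Corollary \ref{coro:Kolmogorovbound}, together with the routine identity $EF^2 = 2\lVert v\rVert_{\mathcal{H}^{\otimes2}}^{2}+24\lVert w\rVert_{\mathcal{H}^{\otimes4}}^{2}$.
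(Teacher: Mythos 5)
Your proposal is correct and follows essentially the same route as the paper's proof: the same choice of $\sigma$, the same application of Corollary \ref{coro:Kolmogorovbound}, the identical use of Lemmas \ref{lm:estimatew} and \ref{lm:estimatev} for the contraction terms, and the same expansion of $2\lVert v\rVert_{\mathcal{H}^{\otimes 2}}^{2}/\sigma^2$ via Lemmas \ref{lm:lambda1} and \ref{lm:lambda2} to extract $C_{17}(\alpha)$ from the variance mismatch.
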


\begin{proof}
Let $F = \mathcal{I}_{2}(v) + \mathcal{I}_{4}(w)$ and $ \sigma = 2r \sqrt{(1+\alpha^2)/(1-\alpha^2)^5/n} $. Then
\begin{eqnarray*}
&&\frac{2 \left \lVert v \right \rVert_{ \mathcal{H}^{\otimes 2} }^{2}}{\sigma^2} -1
= \frac{4 r^2 \left( \sum_{k=1}^{n} \lambda_{k} \right)^{2} \left( \sum_{k=1}^{n} \lambda_{k}^{2} \right)}{4 r^2 (1+\alpha^2)/(1-\alpha^2)^5/n } -1  \\
&=& \left( 1 + \frac{(1-\alpha^2) \kappa_{1}(n) }{n} \right)^{2}\left( 1+ \frac{(1-\alpha^2)^3}{1+\alpha^2} \, \frac{\kappa_{2}(n)}{n} \right) -1 \\
&=& \left( 2 (1-\alpha^2) \kappa_{1}(n) + \frac{(1-\alpha^2)^3}{1+\alpha^2} \kappa_{2}(n) \right) \times \frac{1}{n} \\
&& + \, \left( 2 \frac{(1-\alpha^2)^4}{1 + \alpha^2} \kappa_{1}(n) \kappa_{2}(n) + (1-\alpha^2)^2 \kappa_{1}(n) \right) \times \frac{1}{n^2} 
 + \frac{(1-\alpha^2)^5}{1+\alpha^2} \kappa_{1}^{2}(n) \kappa_{2}(n) \times \frac{1}{n^3},
\end{eqnarray*}
where the second equality follows by Lemma \ref{lm:lambda1} and Lemma \ref{lm:lambda2}. Note that $|\kappa_{1}(n) | \leq C_{1}(\alpha)$ and $| \kappa_{2}(n) |\leq C_{2}(\alpha)$. Hence, it follows immediately that
\begin{equation*}
\left| 1 - \frac{2 \left \lVert v \right \rVert_{ \mathcal{H}^{\otimes 2} }^{2}}{\sigma^2} \right| \leq \frac{C_{17}(\alpha)}{n}.
\end{equation*}
Invoking \eqref{eq:innerproductWeinerIntegral} yields
\begin{equation*}
EF^{2} = E\left[ \left( \mathcal{I}_{2}(v) + \mathcal{I}_{4}(w) \right)^2 \right] = 2 \left \lVert v \right \rVert_{ \mathcal{H}^{\otimes 2} }^{2} + 4! \left \lVert w \right \rVert_{ \mathcal{H}^{\otimes 4} }^{2}.
\end{equation*}
Hence,
\begin{eqnarray}
&& \left| 1 - \frac{E F^2}{\sigma^2} \right| = \left| 1 - \frac{ 2 \left \lVert v \right \rVert_{ \mathcal{H}^{\otimes 2} }^{2} + 4! \left \lVert w \right \rVert_{ \mathcal{H}^{\otimes 4} }^{2} }{\sigma^2} \right|
\leq \left|  1-  \frac{2 \left \lVert v \right \rVert_{ \mathcal{H}^{\otimes 2} }^{2}}{\sigma^2} \right| + \frac{4! \left \lVert w \right \rVert_{ \mathcal{H}^{\otimes 4} }^{2}}{\sigma^2} \notag  \\
&\leq& \frac{C_{17}(\alpha)}{n} + \frac{4! \left \lVert w \right \rVert_{ \mathcal{H}^{\otimes 4} }^{2}}{\sigma^2}
\leq  \frac{C_{17}(\alpha)}{n}  + \frac{6}{r^2} \frac{(1-\alpha^2)^5}{1+\alpha^2} C_{14}(\alpha,r) \times \frac{1}{n}, \label{eq:alternativeupperboundtools1}
\end{eqnarray}
where the last inequality follows by Lemma \ref{lm:estimatew} and plugging in the value of $\sigma$. Again, invoking Lemma \ref{lm:estimatew} and Lemma \ref{lm:estimatev}, after arrangement, we have
\begin{eqnarray}
&& (\sqrt{2} + 3 \sqrt{6}) \, \lVert v \otimes_1 v \rVert_{\mathcal{H}^{\otimes2}}
+ ( 2 \sqrt{6!} + 39\sqrt{6} + 36 \sqrt{2} ) \, \lVert w \rVert_{\mathcal{H}^{\otimes4}}^{2}  + 9\sqrt{2} \, \lVert v \rVert_{\mathcal{H}^{\otimes2}} \lVert w \rVert_{\mathcal{H}^{\otimes4}} \notag \\
&\leq& \frac{C_{18}(\alpha, r)}{n\sqrt{n}}. \label{eq:alternativeupperboundtools2}
\end{eqnarray}
Applying Corollary \ref{coro:Kolmogorovbound}, together with \eqref{eq:alternativeupperboundtools1} and \eqref{eq:alternativeupperboundtools2}, the desired result follows easily.
\end{proof}

With the above preparations in hand, we now return to the proof of Theorem \ref{thm:asymptoticsnumeratoralternative}. For purposes of simplicity, let
\begin{eqnarray*}
 C_{20}(\alpha, r) &=& \frac{1+ 2 r^2 }{2 |r|} \sqrt{\frac{(1-\alpha^2)^5}{1+\alpha^2}} \left( \frac{1+\alpha^2}{(1-\alpha^2)^3}  + C_{2}(\alpha)\right), \\ [2mm]
\sigma &=& 2 r \sqrt{\frac{(1+\alpha^2)}{(1-\alpha^2)^5}} \frac{1}{\sqrt{n}}.
\end{eqnarray*}
From \eqref{eq:expressionfornumeratoralternative}, we have
\begin{eqnarray*}
&& \frac{1}{2 r (1 -r^2)} \sqrt{\frac{(1- \alpha^2)^5}{1 + \alpha^2}} \, \sqrt{n} \left( \left(Z_{11}^{n}\right)^2 - r^2 \, Z_{11}^{n} Z_{22}^{n} \right) \\
&\overset{d}{=}& \frac{\mathcal{I}_{2}(v) + \mathcal{I}_{4}(w)}{\sigma} + \frac{A_1}{\sigma} + \frac{A_2}{\sigma}.
\end{eqnarray*}
Since Kolmogorov distance is at the level of distribution, we need only prove that
\begin{equation*}
d_{Kol}\left(  \frac{\mathcal{I}_{2}(v) + \mathcal{I}_{4}(w)}{\sigma} + \frac{A_1}{\sigma} + \frac{A_2}{\sigma}, \, \mathcal{N}(0,1) \right) \leq \frac{C_{11}(\alpha,r)}{\sqrt{n}}.
\end{equation*}
Invoking Lemma \ref{lm:kolmogorovratio} with $X$, $Z$ and $\epsilon$ replaced by $ \left( \mathcal{I}_{2}(v) + \mathcal{I}_{4}(w) \right)/\sigma $, $ A_{1}/\sigma + A_{2}/\sigma $ and $  \left( 1+ C_{20}(\alpha, r) \right)/ \sqrt{n} $, together with Theorem \ref{thm:2chaos+4chaosasymptotics}, it suffices to prove that
\begin{equation}
P\left( \left| \frac{A_1}{\sigma} + \frac{A_2}{\sigma} \right| > \frac{1+ C_{20}(\alpha, r) }{\sqrt{n}} \right) \leq \frac{1+8 r^2 + 4 r^4}{r^2} \, \frac{(1-\alpha^2)^5}{1+ \alpha^2} \, \frac{C_{3}(\alpha)}{\sqrt{n}}. \label{eqa:addingtermboundsum}
\end{equation}
Note that
\begin{equation*}
A_{1} = (1+4r^2)\, \mathcal{I}_{2}(h_4) + \mathcal{I}_{2}(h_5) + 4 r \sqrt{1-r^2} \mathcal{I}_{2}(h_6).
\end{equation*}
We now apply \eqref{eq:innerproductWeinerIntegral} and invoke statement (c) of Lemma \ref{lm:innerproductandcontraction}. Routine calculations yield
\begin{equation*}
E\left[ A_{1}^{2} \right] = 4 ( 1+ 8 r^2 + 4 r^4 ) \sum_{k=1}^{n} \lambda_{k}^{4}.
\end{equation*}
Plugging in the value of $\sigma$, we have
\begin{eqnarray*}
E\left[ \frac{A_{1}^{2}}{\sigma^2} \right]
= \frac{1+8 r^2 + 4 r^4}{r^2} \, \frac{(1-\alpha^2)^5}{1+ \alpha^2} \, n \sum_{k=1}^{n} \lambda_{k}^{4}
\leq \frac{1+8 r^2 + 4 r^4}{r^2} \, \frac{(1-\alpha^2)^5}{1+ \alpha^2}\, \frac{C_{3}(\alpha)}{n^2},
\end{eqnarray*}
where the last inequality follows by Lemma \ref{lm:lambda4}. By Chebyshev's Inequality, we have
\begin{eqnarray}
P\left( \left|  \frac{A_{1}}{\sigma} \right| > \frac{1}{\sqrt{n}} \right)
\leq \frac{E\left[ \frac{A_{1}^{2}}{\sigma^2} \right]}{\frac{1}{n}}
\leq \frac{1+8 r^2 + 4 r^4}{r^2} \, \frac{(1-\alpha^2)^5}{1+ \alpha^2}\, \frac{C_{3}(\alpha)}{n}. \label{eqa:addingtermbound1}
\end{eqnarray}
Note that
\begin{equation*}
A_{2} = (1 + 2r^2) \sum_{k=1}^{n} \lambda_{k}^2.
\end{equation*}
By Lemma \ref{lm:lambda2}, we have
\begin{eqnarray*}
\left|  \frac{A_2}{\sigma} \right|
= \frac{1+ 2 r^2}{2|r|} \sqrt{\frac{(1-\alpha^2)^5}{1+\alpha^2}} \left( \frac{1+\alpha^2}{(1- \alpha^2)^3 \sqrt{n} } + \frac{\kappa_{2}(n)}{n^{3/2}} \right)
\leq \frac{C_{20}(\alpha,r)}{\sqrt{n}}.
\end{eqnarray*}
Hence
\begin{equation}
P\left( \left|  \frac{A_2}{\sigma} \right| > \frac{C_{20}(\alpha,r)}{\sqrt{n}} \right) =0. \label{eqa:addingtermbound2}
\end{equation}
Combining \eqref{eqa:addingtermbound1} and \eqref{eqa:addingtermbound2}, we have
\begin{eqnarray*}
&& P\left( \left| \frac{A_1}{\sigma} + \frac{A_2}{\sigma} \right| > \frac{1+ C_{20}(\alpha, r) }{\sqrt{n}} \right)  \\
&\leq& P\left( \left|  \frac{A_{1}}{\sigma} \right| > \frac{1}{\sqrt{n}} \right) + P\left( \left|  \frac{A_2}{\sigma} \right| > \frac{C_{20}(\alpha,r)}{\sqrt{n}} \right) \\  [2mm]
&\leq& \frac{1+8 r^2 + 4 r^4}{r^2} \, \frac{(1-\alpha^2)^5}{1+ \alpha^2}\, \frac{C_{3}(\alpha)}{n}.
\end{eqnarray*}
which in turn implies \eqref{eqa:addingtermboundsum}. This completes the proof.

\subsection{Proof of Theorem \ref{thm:denominatortailboundalternative}}
We now turn to the proof of Theorem \ref{thm:denominatortailboundalternative}. Note that
\begin{eqnarray*}
M_{1} = \frac{1+ \alpha^2}{1- \alpha^2} + 1, \quad M_{2} = \frac{1+r^2}{2r^2} \, \frac{1+ \alpha^2}{1- \alpha^2} + 1.
\end{eqnarray*}

We first claim that, under the alternative hypothesis $H_a$, we still have (for $n$ sufficiently large)
\begin{eqnarray}
&& P\left( \left|  \sqrt{(1-\alpha^2)Z_{11}^{n} \times (1-\alpha^2) Z_{22}^{n}  } -1 \right| > 3 \, M_1 \sqrt{\ln n / n} \right) \notag \\
&\leq& P\left( \left|  (1-\alpha^2)Z_{11}^{n} \times (1-\alpha^2) Z_{22}^{n}   -1 \right| > 3 \, M_1 \sqrt{\ln n / n} \right) \notag \\
&\leq& \frac{C_{8}(\alpha)}{\sqrt{n}}. \label{eq:denominatorcontrol}
\end{eqnarray}
We note that the proof of \eqref{eq:denominatorcontrol} is exactly the same as the proof under the null hypothesis $H_0$, and that this is contained in the proof of Theorem \ref{thm:kolmogorov}. We therefore omit the details of the proof of \eqref{eq:denominatorcontrol}.

We next claim under that alternative hypothesis $H_a$, for some constant $C_{21}(\alpha,r)$ depending on $\alpha$ and $r$ and for $n$ sufficiently large,
\begin{equation}
P\left( \left| \left(1-\alpha^2\right)Z_{12}^{n}/r -1 \right| > M_{2} \sqrt{\frac{\ln n }{n}} \right) \leq \frac{C_{21}(\alpha,r)}{\sqrt{n}}, \label{eq:tailestimateforZ12n}
\end{equation}
We begin our proof of this claim by noting that
\begin{eqnarray*}
&& P\left( \left| \left(1-\alpha^2\right)Z_{12}^{n}/r -1 \right| > M_{2} \sqrt{\frac{\ln n }{n}} \right) \\ [1mm]
&\leq&  P\left( \left(1-\alpha^2\right) Z_{12}^{n}/r > 1 + M_{2} \sqrt{\ln n/n} \right)  
 +  P\left( -\left(1-\alpha^2\right) Z_{12}^{n}/r > -1 + M_{2} \sqrt{\ln n/n} \right).
\end{eqnarray*}
Applying Markov's inequality yields
\begin{eqnarray}
&& P\left( \left(1-\alpha^2\right) Z_{12}^{n}/r > 1 + M_{2} \sqrt{\ln n/n} \right)  \notag  \\
&=& P\left( \sqrt{n \ln n} \left( 1- \alpha^2 \right) Z_{12}^{n}/r > \sqrt{n \ln n} + M_{2} \ln n  \right) \notag \\ [1mm]
&\leq& \frac{E\left[ e^{ \sqrt{n \ln n} \left( 1- \alpha^2 \right) Z_{12}^{n}/r } \right]}{e^{ \sqrt{n \ln n} + M_{2} \ln n }}. \label{eqa:markovinequalityalternative}
\end{eqnarray}
It follows by Remark \ref{remark:boundlambda} that $$\left| (1 \pm r) \sqrt{n \ln n} (1- \alpha^2) \,\lambda_k /r \right| \leq 2 C_{3}(\alpha)^{1/4} \, (1-\alpha^2) \, n^{-1/4} \, \sqrt{\ln n}/ |r| < 1/2$$ for $n$ sufficiently large. Since under alternative hypothesis $H_a$, $ (W_k, V_k) $ are i.i.d. Gaussian random vectors  with mean zero and covariance matrix $$\begin{pmatrix}
1 & r \\
r & 1
\end{pmatrix},$$ we have
\begin{eqnarray*}
&& E\left[ e^{ \sqrt{n \ln n} \left( 1- \alpha^2 \right) Z_{12}^{n}/r } \right]
= E\left[   e^{ \sqrt{n \ln n} \left( 1- \alpha^2 \right) \sum_{k} \lambda_k W_k V_k /r} \right]
= \prod_{k=1}^{n} E \left[ e^{ \sqrt{n \ln n} \left( 1- \alpha^2 \right) \lambda_k W_k V_k /r} \right]  \\
&=& \prod_{k=1}^{n} \left( 1+ \frac{1-r}{r} \left( 1- \alpha^2 \right) \sqrt{n \ln n} \,  \lambda_k \right)^{-1/2} \left( 1 - \frac{1+r}{r} \left( 1- \alpha^2 \right) \sqrt{n \ln n} \,  \lambda_k \right)^{-1/2}  \\
&=& d_{n}\left( - \frac{1-r}{r}  \left( 1- \alpha^2 \right) \sqrt{n \ln n} \right)^{-1/2}  d_{n}\left(  \frac{1+r}{r}  \left( 1- \alpha^2 \right) \sqrt{n \ln n} \right)^{-1/2},
\end{eqnarray*}
where the third equality follows by a standard expression for the mgf of a linear-quadratic functional of a Gaussian random vector and the last equality follows by the representation \eqref{eq:anotherformofacp} of $d_{n}(\lambda)$. Applying Lemma \ref{lm:asymptoticdn} twice with $t$ replaced by $-(1-r)(1-\alpha^2)/r$ and $(1+r)(1-\alpha^2)/r$, respectively, we obtain
\begin{eqnarray*}
&& d_{n} \left( - \frac{1-r}{r} \left( 1-\alpha^2 \right) \sqrt{n \ln n}  \right)
= \left(1 + o(1) \right) e^{ \frac{1-r}{r}  \sqrt{n \ln n} - \left( \frac{(1-r)^2}{ 2r^2 } \frac{1+\alpha^2}{1- \alpha^2 } + o(1) \right) \ln n  }, \\
&&  d_{n} \left( \frac{1+r}{r} \left( 1-\alpha^2 \right) \sqrt{n \ln n}  \right)
= \left(1 + o(1) \right) e^{ - \frac{1+r}{r}  \sqrt{n \ln n} - \left( \frac{(1+r)^2}{ 2r^2 } \frac{1+\alpha^2}{1- \alpha^2 } + o(1) \right) \ln n  }.
\end{eqnarray*}
Combining the last two displays yields
\begin{eqnarray*}
E\left[ e^{ \sqrt{n \ln n} \left( 1- \alpha^2 \right) Z_{12}^{n}/r } \right]
= \left( 1+ o(1) \right) e^{ \sqrt{n \ln n} + \left( \frac{1+r^2}{2 r^2} \frac{1+\alpha^2}{1- \alpha^2} +o(1) \right) \ln n }.
\end{eqnarray*}
Together with \eqref{eqa:markovinequalityalternative} and the fact that $M_2 = \frac{1+r^2}{2r^2} \, \frac{1+ \alpha^2}{1- \alpha^2} + 1 $, we have, for $n$ sufficiently large and for some constant $C_{22}(\alpha, r)$ depending on $\alpha$ and $r$,
\begin{eqnarray*}
&& P\left( \left(1-\alpha^2\right) Z_{12}^{n}/r > 1 + M_{2} \sqrt{\ln n/n} \right) \\ [1mm]
&\leq& \left(1 + o(1) \right) e^{ \left( \frac{1+r^2}{2 r^2} \frac{1+\alpha^2}{1- \alpha^2} + o(1) \right) \ln n - M_{2} \ln n }  \\ [1mm]
&=& \left( 1 + o(1) \right) e^{ -(1 + o(1)) \ln n  } \\ [1mm]
&\leq& \frac{C_{22}( \alpha, r )}{\sqrt{n}},
\end{eqnarray*}
 Similarly, for $n$ sufficiently large, there exists a constant $C_{23}(\alpha, r)$ such that
\begin{equation*}
P\left( -\left(1-\alpha^2\right) Z_{12}^{n}/r > -1 + M_{2} \sqrt{\ln n/n} \right)
\leq \frac{C_{23}(\alpha, r)}{\sqrt{n}}.
\end{equation*}
Then \eqref{eq:tailestimateforZ12n} follows immediately by considering the last two displays. We now prove that, for $n$ sufficiently large,
\begin{equation*}
P\left( \left|  \frac{Z_{12}^{n} \sqrt{Z_{11}^{n} Z_{22}^{n} } }{r / \left( 1- \alpha^2 \right)^2}  -1  \right| > \left( 3M_1 + M_2 + 3 M_1 M_2 \right) \sqrt{ \frac{\ln n}{n} } \right) \leq \frac{2 \left( C_{8}( \alpha) + C_{21}( \alpha, r ) \right) }{\sqrt{n}}.
\end{equation*}
Note that
\begin{equation*}
\frac{Z_{12}^{n} \sqrt{Z_{11}^{n} Z_{22}^{n} } }{r / \left( 1- \alpha^2 \right)^2}
 = \left( \left( 1- \alpha^2 \right) Z_{12}^{n}/r \right) \times \sqrt{\left(1-\alpha^2 \right) Z_{11}^{n} \times \left( 1- \alpha^2\right) Z_{22}^{n} }.
\end{equation*}
Then
\begin{eqnarray*}
  \frac{Z_{12}^{n} \sqrt{Z_{11}^{n} Z_{22}^{n} } }{r / \left( 1- \alpha^2 \right)^2}  -1  
 &=& \left( \left( 1- \alpha^2 \right) Z_{12}^{n}/r  -1 \right) \times \left(  \sqrt{\left(1-\alpha^2 \right) Z_{11}^{n} \times \left( 1- \alpha^2\right) Z_{22}^{n} } -1 \right) \\
 && + \, \left( \left( 1- \alpha^2 \right) Z_{12}^{n}/r  -1 \right)
 + \left(  \sqrt{\left(1-\alpha^2 \right) Z_{11}^{n} \times \left( 1- \alpha^2\right) Z_{22}^{n} } -1 \right).
\end{eqnarray*}
Hence,
\begin{eqnarray}
&& P\left( \left|  \frac{Z_{12}^{n} \sqrt{Z_{11}^{n} Z_{22}^{n} } }{r / \left( 1- \alpha^2 \right)^2}  -1  \right| > \left( 3M_1 + M_2 + 3 M_1 M_2 \right) \sqrt{ \frac{\ln n}{n} } \right)  \notag \\
&\leq& P\left( \left| \left( \left( 1- \alpha^2 \right) Z_{12}^{n}/r  -1 \right) \times \left(  \sqrt{\left(1-\alpha^2 \right) Z_{11}^{n} \times \left( 1- \alpha^2\right) Z_{22}^{n} } -1 \right) \right|  > 3 M_{1} M_{2} \sqrt{\frac{\ln n }{n}} \right)  \notag \\
&& + \, P\left( \left| \left( 1- \alpha^2 \right) Z_{12}^{n}/r  -1 \right| > M_{2} \sqrt{\frac{\ln n}{n}} \right) \notag \\
&& + \,  P \left( \left| \sqrt{\left(1-\alpha^2 \right) Z_{11}^{n} \times \left( 1- \alpha^2\right) Z_{22}^{n} } -1  \right| > 3 M_{1} \sqrt{\frac{\ln n}{n}} \right) \notag \\
&\leq& P\left(  \left| \left( 1- \alpha^2 \right) Z_{12}^{n}/r  -1 \right| > M_{2} \sqrt[4]{\frac{\ln n }{n}} \right) +  P\left( \left| \left( 1- \alpha^2 \right) Z_{12}^{n}/r  -1 \right| > M_{2} \sqrt{\frac{\ln n}{n}} \right)  \notag \\
&& + \, P \left( \left| \sqrt{\left(1-\alpha^2 \right) Z_{11}^{n} \times \left( 1- \alpha^2\right) Z_{22}^{n} } -1  \right| > 3 M_{1} \sqrt[4]{\frac{\ln n}{n}} \right) \notag \\
&& + \, P \left( \left| \sqrt{\left(1-\alpha^2 \right) Z_{11}^{n} \times \left( 1- \alpha^2\right) Z_{22}^{n} } -1  \right| > 3 M_{1} \sqrt{\frac{\ln n}{n}} \right)  \notag \\
&\leq& 2 P\left( \left| \left( 1- \alpha^2 \right) Z_{12}^{n}/r  -1 \right| > M_{2} \sqrt{\frac{\ln n}{n}} \right) \notag \\
&& + \,2 P \left( \left| \sqrt{\left(1-\alpha^2 \right) Z_{11}^{n} \times \left( 1- \alpha^2\right) Z_{22}^{n} } -1  \right| > 3 M_{1} \sqrt{\frac{\ln n}{n}} \right)  \notag \\
&\leq& \frac{2 \left( C_{8}( \alpha) + C_{21}( \alpha, r ) \right) }{\sqrt{n}}, \label{eqa:tailofproductoftwo}
\end{eqnarray}
where the second to last inequality follows by the fact that $\ln n/n \leq 1$ and the last inequality follows by combining \eqref{eq:denominatorcontrol} and \eqref{eq:tailestimateforZ12n}.

With the above preparation in hand, we now return to the proof of Theorem \ref{thm:denominatortailboundalternative}. Note that
\begin{equation*}
 \left|  \frac{ Z_{12}^{n} \sqrt{ Z_{11}^{n} Z_{22}^{n} } + r Z_{11}^{n} Z_{22}^{n} }{2 r / (1-\alpha^2)^2 }  -1 \right|
 \leq \frac{1}{2} \left|   \frac{Z_{12}^{n} \sqrt{Z_{11}^{n} Z_{22}^{n} } }{r / \left( 1- \alpha^2 \right)^2}  -1  \right|
 + \frac{1}{2} \left|  \left( 1- \alpha^2 \right) Z_{11}^{n} \times \left(1- \alpha^2 \right) Z_{22}^{n} -1 \right|.
\end{equation*}
Together with \eqref{eq:denominatorcontrol} and \eqref{eqa:tailofproductoftwo}, we have
\begin{eqnarray*}
&& P\left( \left|  \frac{ Z_{12}^{n} \sqrt{ Z_{11}^{n} Z_{22}^{n} } + r Z_{11}^{n} Z_{22}^{n} }{2 r / (1-\alpha^2)^2 }  -1 \right| 
 > \frac{1}{2} (6 M_1 + M_2 + 3M_1 M_2) \sqrt{\frac{\ln n }{n}} \right)  \\
&\leq& P\left( \left|  \frac{Z_{12}^{n} \sqrt{Z_{11}^{n} Z_{22}^{n} } }{r / \left( 1- \alpha^2 \right)^2}  -1  \right| > \left( 3M_1 + M_2 + 3 M_1 M_2 \right) \sqrt{ \frac{\ln n}{n} } \right)  \\
&& + \, P\left( \left| \left( 1-\alpha^2 \right) Z_{11}^{n} \times \left( 1- \alpha^2 \right) Z_{22}^{n} -1  \right| > 3 M_{1} \sqrt{\frac{\ln n}{n}} \right) \\
&\leq& \frac{3 C_{8}(\alpha) + 2 C_{21}(\alpha, r)}{\sqrt{n}}.
\end{eqnarray*}
The proof is now completed by letting $C_{12}(\alpha,r) = 3 C_{8}(\alpha) + 2 C_{21}(\alpha, r)$.

\end{document}